\DeclareMathAlphabet{\mathpzc}{OT1}{pzc}{m}{it}
\newtheorem{thm}[equation]{Theorem}
\newtheorem{Ex}[equation]{Example}
\newtheorem{rmk}[equation]{Remark}\newtheorem{Rmk}[equation]{Remark}\newtheorem{remark}[equation]{Remark}
\newtheorem{prop}[equation]{Proposition}
\newtheorem{cor}[equation]{Corollary}
\newtheorem{lem}[equation]{Lemma}\newtheorem{Lem}[equation]{Lemma}
\newtheorem{lemma}[equation]{Lemma}
\newtheorem{Def}[equation]{Definition}
\numberwithin{equation}{section}
\numberwithin{equation}{section}
\newcommand{\be}{begin{equation}}
\newcommand{\bH}{\mathbb H}
\newcommand{\e}{\epsilon}
\newcommand{\z}{\mathbb{Z}}
\newcommand{\br}{\mathbb{R}}
\newcommand{{\grinv}}{{\Cal G}^{-r}}
\newcommand{\ba}{\backslash}
\newcommand{\G}{\Gamma}
\newcommand{\Cal}{\mathcal}
\newcommand{\la}{\langle}
\newcommand{\ra}{\rangle}
\newcommand{\SL}{\operatorname{SL}}
\newcommand{\bp}{\begin{pmatrix}}
\newcommand{\ep}{\end{pmatrix}}
\renewcommand{\be}{\begin{equation}}
\newcommand{\ee}{\end{equation}}
\renewcommand{\bp}{{\rm bp}}
\newcommand{\PGL}{\operatorname{PGL}}
\renewcommand{\L}{\Cal L}
\newcommand{\PSL}{\op{PSL}}
\newcommand{\PS}{\op{PS}}\newcommand{\Haar}{\op{Haar}}
\newcommand{\norm}[1]{\lVert #1 \rVert}
\newcommand{\op}{\operatorname}\newcommand{\supp}{\operatorname{supp}}
\newcommand{\BR}{\operatorname{BR}}
\newcommand{\BMS}{\operatorname{BMS}}
\renewcommand{\setminus}{-}
\newcommand{\Z}{\z}\newcommand{\Om}{\Omega}
\newcommand{\ga}{\gamma}
\newcommand{\F}{\mathcal F}
\newcommand{\La}{\Lambda}
\renewcommand{\i}{\op{i}}
\newcommand{\FF}{\F^{(2)}}
\def\scrB{{\mathcal B}}
\def\scrC{{\mathcal C}}
\def\scrF{{\mathcal F}}
\def\scrL{{\mathcal L}}
\def\scrO{{\mathcal O}}
\def\e{\mathrm{e}}
\def\i{\mathrm{i}}
\def\dim{\operatorname{dim}}
\def\dist{\operatorname{dist}}
\def\PGL{\operatorname{PGL}}
\def\SL{\operatorname{SL}}
\def\PSL{\operatorname{PSL}}
\def\supp{\operatorname{supp}}
\newcommand{\GaG}{\Gamma\backslash G}
\newcommand{\sfrac}[2]{{\textstyle \frac {#1}{#2}}}
\newcommand{\fg}{\mathfrak{g}}
\newcommand{\fp}{\mathfrak{p}}
\newcommand{\fa}{\mathfrak{a}}
\newcommand{\fb}{\mathfrak{b}}
\newcommand{\fk}{\mathfrak{k}}
\newcommand{\fn}{\mathfrak{n}}
\newcommand{\ad}{\mathrm{ad}}
\newcommand{\RR}{\br}\newcommand{\Ga}{\Gamma}\newcommand{\bb}{\mathbb}
\newcommand{\cal}{\mathcal}\newcommand{\NN}{\mathbb N}
\renewcommand{\e}{\varepsilon}
\renewcommand{\epsilon}{\e}
\newcommand{\dg}{D_\Gamma^{\star}}
\newcommand{\inte}{\op{int}}
\newcommand{\fc}{\mathcal C}
\newcommand{\vo}{\mathsf v_0}
\begin{document}

\title[Anosov groups]{Anosov groups: local mixing, counting, and equidistribution.}

\author{Sam Edwards, Minju Lee, and Hee Oh}
\address{Mathematics department, Yale university, New Haven, CT 06520 }
\begin{abstract} Let $G$ be a connected semisimple real algebraic group, and $\Gamma<G$ be
 a  Zariski dense Anosov  subgroup with respect to a minimal parabolic subgroup.
We describe the asymptotic behavior of matrix coefficients $\la (\exp tv).  f_1, f_2\ra$ in $L^2(\Gamma\ba G)$ as $t\to \infty$
for any $f_1, f_2\in C_c(\Gamma\ba G)$ and any vector $v$ in the interior of the limit cone of $\Gamma$. 
These asymptotics involve higher rank analogues of Burger-Roblin measures which are introduced in this paper.
As an application, for any affine symmetric subgroup $H$ of $G$, we obtain a bisector counting result for $\Gamma$-orbits with respect to the corresponding generalized Cartan decomposition of $G$. Moreover, we obtain analogues of the results of Duke-Rudnick-Sarnak and Eskin-McMullen for counting discrete $\Gamma$-orbits in affine symmetric spaces $H\ba G$. 
\end{abstract}

\email{samuel.edwards@yale.edu}
\email{minju.lee@yale.edu}
\email{hee.oh@yale.edu}
\thanks{Edwards was supported by postdoctoral scholarship 2017.0391 from the Knut and Alice Wallenberg Foundation and Oh was supported in part by NSF grants}

\maketitle

\tableofcontents
\section{Introduction}
 Let $G$ be a connected semisimple real algebraic group. We fix a Cartan decomposition $G=K (\exp \fa^+) K$, where $K$ is a maximal compact subgroup and $\exp \fa^+$ is a positive Weyl chamber of a maximal real split torus of $G$. Let $\Gamma<G$ be a Zariski dense discrete subgroup. Consider a matrix coefficient of $L^2(\GaG)$ given by
 \be\label{hm} \la \exp(tu) f_1, f_2\ra= \int_{\Gamma\ba G} f_1(x\exp (tu) ) f_2(x)\, dx ,\ee
 where $u\in \fa^+-\{0\}$ and $dx$ denotes the $G$-invariant measure on $\Gamma\ba G$. Understanding its asymptotic behavior as $t\to \infty$ is of basic importance in the study of dynamics of flows in $\Gamma\ba G$, and has many applications, including to equidistribution and counting problems. 
A classical result  due to Howe-Moore \cite{HM} implies that
 \be\label{hm2} \lim_{t\to \infty} \la \exp(tu) f_1, f_2\ra=\frac{1}{\op{Vol}(\Gamma\ba G) }\int f_1 \,dx \int f_2\, dx .\ee
In particular, if $\Gamma$ has infinite co-volume in $G$, then 
\begin{equation}\label{e11}\lim_{t\to \infty}  \la \exp(tu) f_1, f_2\ra = 0.
\end{equation} 

This leads us to the following local mixing type question: for a given unit vector $u\in  \fa^+$, do there exist a normalizing function $\Psi_{\Gamma,u} :(0,\infty) \to (0,\infty)$ and locally finite Borel measures $\mu_{u}, \mu^*_u$ on $\Gamma\ba G$ such that for any $f_1, f_2\in C_c(\Gamma \ba G)$\footnote{for a topological space $X$, the notation $C_c(X)$ means the space of all continuous functions on $X$ with compact support, and
if $M$ is a compact group acting continuously on $X$, the notation $C_c(X)^M$  means  the subspace of $C_c(X)$ consisting of $M$-invariant functions.}
 \be\label{e2} \lim_{t\to \infty } \Psi_{\Gamma,u} (t)  \la \exp(tu) f_1, f_2\ra = \mu_u(f_1)\mu_u^*(f_2)\, ?\ee

 When $G$ has rank one, this was completely answered by Roblin and Winter (\cite{Ro}, \cite{Wi})  for geometrically finite subgroups and by Oh and Pan \cite{OP} for co-abelian subgroups of convex cocompact subgroups.
 
When $G$ has rank at least two, the location of the vector $u$ relative to the \emph{limit cone} of $\Gamma$ turns out to play an important role.
The limit cone of $\Gamma$, which we denote by $\L_\Ga $, is defined as the smallest closed cone in $\fa^+$ containing the Jordan projection
of $\Gamma$. Benoist showed that $\L_\Ga$ is  convex and has non-empty interior \cite{Ben}.
 Indeed, it is not hard to show that if $u\notin \L_\Gamma$, then for any $f_1, f_2\in C_c(\Gamma \ba G)$,
$$\ \la \exp(tu) f_1, f_2\ra =0\quad\text{ for all $t$ large enough; }$$ see Proposition \ref{vanish}.

The main goal of this paper is to prove the local mixing result,
giving  a positive answer to Question  \eqref{e2} for all directions $u$ in the interior of $\L_\Ga$ and describe 
applications to counting and equidistribution results associated to a symmetric subgroup $H$ of $G$ for a large class of discrete subgroups, called Anosov subgroups. Anosov subgroups of $G$ are defined with respect to any parabolic subgroup. In this paper, we focus on Zariski dense Anosov groups  with respect to a minimal parabolic subgroup $P$ of $G$.
Let $\F:=G/P$ the Furstenberg boundary.
We denote by $\F^{(2)}$ the unique open $G$-orbit in $\F\times \F$ via the diagonal action.
A Zariski dense discrete subgroup $\Gamma<G$ is called {\it Anosov} with respect to $P$ 
if there exists a finitely generated word hyperbolic group $\Sigma$ such that $\Gamma=\Phi(\Sigma)$ where
 $\Phi: \Sigma \to G$ is a  $P$-Anosov representation, i.e., $\Phi$ induces a continuous equivariant map $\zeta$ from the Gromov boundary $\partial \Sigma$ to $\F$ such that 
for all $x\ne  y\in \partial \Sigma$, $(\zeta(x),\zeta(y))$ belongs to $\F^{(2)}$.  The definition of a $P$-Anosov representation for a general discrete subgroup requires a certain contraction property, which is automatic for Zariski dense subgroups (see \cite[Theorem 1.5]{GW}).  
The notion of Anosov representations was first introduced by Labourie for surface groups \cite{La}, and then extended by Guichard and Wienhard \cite{GW} to general word hyperbolic groups.

If $G$ has rank one, the class of Zariski dense Anosov groups coincides with the class of Zariski dense convex cocompact subgroups of $G$ \cite[Theorem 5.15]{GW}.
Guichard and Wienhard \cite[Theorem 1.2]{GW} showed that $P$-Anosov representations form an open subset of the space $\op{Hom}(\Sigma, G)$; this abundance of Anosov subgroups contrasts
with the fact that there are only countably many lattices in any simple algebbraic group not locally isomorphic to $\op{PSL}_2(\br)$.
The class of Anosov subgroups includes  subgroups of a real-split simple algebraic group which arise as the Zariski dense image of a Hitchin representation \cite{Hi} of a surface subgroup studied by Labourie and Fock-Goncharov (\cite{La}, \cite{FG}), as well as
 Schottky groups (cf. Lemma \ref{Scc}). We refer to (\cite{KLP}, \cite{GG}, \cite{BPS}, etc.)  for other equivalent definitions of Anosov subgroups, and
to (\cite{Ka}, \cite{Win}) for excellent survey articles.
 
 \medskip
 
In the whole paper, by an Anosov subgroup, we mean a Zariski dense Anosov subgroup with respect to a minimal parabolic subgroup. In the rest of the introduction, we let $\Gamma<G$ be an Anosov  subgroup.

We recall the definition of the growth indicator function $\psi_{\Gamma}\,:\,\fa^+ \rightarrow \br \cup\lbrace- \infty\rbrace$ given by Quint  \cite{Quint1}: 
for any vector $u\in \fa^+$,
\begin{equation}\label{grow}
\psi_{\Gamma}(u):=\|u\| \inf_{\underset{u \in\scrC}{\mathrm{open\;cones\;}\scrC\subset \fa^+}}
\tau_{\cal C}
\end{equation}
where $\tau_{\cal C}$ is the abscissa of convergence of the series $\sum_{\ga\in\Ga,\,\mu(\ga)\in\cal C}e^{-t\norm{\mu(\ga)}}$.
Here $\mu:G\to \mathfrak{a}^+$ is the Cartan projection and $\|\cdot \|$ is the norm on $\mathfrak{a}$ induced from a left invariant Riemannian metric on $G/K$.
 Observe that in the rank one case, $\psi_\Gamma$ is simply the critical exponent of $\Gamma$. Quint \cite{Quint1} showed that $\psi_\Gamma $ is a concave and upper semi-continuous function which is positive on $\op{int}\L_\Ga$; here $\inte \L_\Ga$ denotes the interior of $\L_\Ga$. 
If $2\rho\in\fa^*$ denotes the sum of all positive roots with respect to the choice of $\fa^+$, then $\psi_\Gamma \le 2\rho$.
When $\Gamma$ is a lattice,  it follows from \cite{GO} that $\psi_\Gamma=2\rho$. On the other hand,
when $\Gamma$ is of infinite co-volume in a simple Lie group of rank at least $2$, 
Quint deduced from \cite{Oh} that $\psi_\Gamma \le 2(\rho-\eta_G) $,
where $2\eta_G$ is the sum of the maximal strongly orthogonal subset of the root system of $G$  \cite{Q3}.

\medskip

\noindent{\bf Local mixing.} 
Let $N^+$ and $N^-$ denote the maximal expanding and contracting horospherical subgroups, respectively, associated with $\fa^+$ (see \eqref{defN1}, \eqref{defN2}), and $M$  the centralizer of $\exp \fa $ in $K$. 
For each $u\in \op{int} \L_\Gamma$, Quint \cite{Quint2} constructed a higher-rank analogue of the Patterson-Sullivan density supported on the limit set $\Lambda_\Ga$, which is the minimal $\Gamma$-invariant subset of $\F$.  Using this, we define the $N^{\pm} M$-invariant Burger-Roblin measures $ m^{\BR}_{u} $ and $ m^{\BR_*}_{u}$, respectively, on $\Gamma\ba G$ (see  \eqref{def.BR} and \eqref{dualb}), which can be considered as the higher rank generalizations of the Burger-Roblin measures in the rank one case (\cite{Bu}, \cite{Ro}, \cite{OhShah}). We denote by $\i$ the opposition involution of $\fa$ (Definition \ref{op}), and set  $r:=\op{rank }( G)=\dim\fa \ge 1$.

  \begin{thm} \label{m1}
 For any $u\in \op{int}\L_\Gamma$, there exists $\kappa_u>0$ such that for all $f_1, f_2\in C_c(\Gamma\ba G)^M$,
$$\lim_{t\to +\infty} t^{(r-1)/2} e^{t(2\rho-\psi_\Ga)(u)}  \int_{\Gamma\ba G} f_1(x \exp (tu)) f_2(x) \,dx 
= \kappa_u \cdot m^{\BR}_{\i(u)} (f_1) \,m^{\BR_*}_{u} (f_2).$$
\end{thm}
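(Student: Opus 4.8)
The plan is to reduce the asymptotics of the matrix coefficient to a statement about the equidistribution of translated ``horospherical'' pieces, controlled by the higher-rank Patterson--Sullivan/Burger--Roblin machinery of Quint. First I would lift everything to the frame bundle and use the decomposition of the Haar measure on $\Gamma\ba G$ adapted to the $N^+AN^-$ (Bruhat) coordinates, together with the Hopf parametrization of $\Gamma\ba G/M$ by pairs of boundary points and an $\fa$-coordinate. In these coordinates $\int_{\Gamma\ba G} f_1(x\exp(tu))f_2(x)\,dx$ becomes an integral against the Patterson--Sullivan densities $\{\nu_\xi\}$ on $\Lambda_\Gamma\times\Lambda_\Gamma$: after disintegrating, the inner integral over the strong-stable/strong-unstable directions is a convolution-type integral whose leading asymptotics as $t\to\infty$ are governed by how the $\exp(tu)$-flow contracts the transverse directions. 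Because $u\in\op{int}\L_\Gamma$, the relevant Busemann-type exponent appearing is exactly $\psi_\Gamma(u)$ on the contracting side and $(2\rho)(u)$ from the Jacobian of the Haar measure, which produces the factor $e^{-t(2\rho-\psi_\Gamma)(u)}$.

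The source of the polynomial correction $t^{-(r-1)/2}$ is the key new phenomenon in higher rank: the Patterson--Sullivan density for the direction $u$ is, roughly, the one maximizing $\psi_\Gamma$ against the linear functional dual to $u$, and the $(r-1)$-dimensional family of ``nearby'' directions contributes a Gaussian (Laplace-method) integral. Concretely, I would write the flowed integral as $\int_{\fa} e^{-t\,\ell(H)}\,d(\text{something})(H)$ where the density near the critical direction $u$ has a nondegenerate Hessian coming from the strict concavity of $\psi_\Gamma$ transverse to the ray $\br_{>0}u$ (this strict concavity, for Anosov $\Gamma$, is where one invokes the regularity results on $\psi_\Gamma$ and the analyticity/real-analyticity of the limit cone structure for Anosov groups). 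Applying the Laplace method in the $(r-1)$ transverse variables yields the $t^{-(r-1)/2}$ factor and pins down the constant $\kappa_u>0$ in terms of the Hessian determinant and the normalization of the densities; along the ray direction itself the integral does not decay polynomially, which is why only $r-1$ and not $r$ variables enter.

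The main obstacle I expect is making the Laplace-method step rigorous: one must show that the Patterson--Sullivan densities $\nu_\xi$, pushed around by the flow, genuinely concentrate (in the appropriate weak sense, after renormalization) on the single direction $u$ with a quantitatively nondegenerate second-order behavior, uniformly enough over the compact supports of $f_1,f_2$ to justify interchanging limits. This requires (i) a shadow-lemma-type estimate in higher rank controlling $\nu_\xi$ of shadows of balls, which for Anosov groups follows from the known fact that $\Gamma$ acts as a ``uniform'' Gromov-hyperbolic group on $\F$ and the density is $\psi_\Gamma$-conformal; (ii) the strict concavity of $\psi_\Gamma$ at $u$ in directions transverse to $u$, so that the Hessian is positive-definite on $u^\perp$; and (iii) a uniform error bound in the Laplace expansion. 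Once these are in place, identifying the limiting bilinear form with $m^{\BR}_{\i(u)}(f_1)\,m^{\BR_*}_u(f_2)$ is a matter of unwinding the definitions \eqref{def.BR} and \eqref{dualb}: the $N^+M$-invariant Burger--Roblin measure appears from integrating $f_1$ against the unstable density while the dual $N^-M$-invariant measure appears from $f_2$ against the stable density, the opposition involution $\i$ entering because the unstable boundary point of $x\exp(tu)$ limits to the direction governed by $\i(u)$. The reduction to $M$-invariant test functions is what allows one to work on $\Gamma\ba G/M$ throughout and keep the Hopf coordinates clean; the general (non-$M$-invariant) case, if needed later, would require an additional averaging argument over $M$.
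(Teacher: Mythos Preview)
Your proposal has a genuine gap in the mechanism producing the polynomial factor $t^{-(r-1)/2}$, and this is exactly where the paper's route diverges from yours.

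The paper does not attempt to extract $t^{-(r-1)/2}$ by any Laplace or stationary-phase argument applied to the Haar matrix coefficient. Instead it takes as a black box the local mixing of the \emph{BMS} measure (Theorem~\ref{mixing2}, due to Thirion, Sambarino, and Chow--Sarkar via the reparametrization theorem of \cite{BCLS} and a local limit theorem for the symbolic flow): for $u\in\op{int}\L_\Gamma$ there is $\kappa_u>0$ with
\[
\lim_{t\to\infty} t^{(r-1)/2}\int_{\GaG} f_1(x)\,f_2(x\exp(tu))\,dm^{\BMS}_u(x)=\kappa_u\,m^{\BMS}_u(f_1)\,m^{\BMS}_u(f_2).
\]
The polynomial $t^{-(r-1)/2}$ is already present here; it arises because $m^{\BMS}_u$ is $A$-invariant (hence infinite when $r\ge 2$) and the one-parameter flow in direction $u$ spreads mass over the $(r-1)$ transverse $\fa$-directions at a CLT rate. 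The paper then transfers this to Haar mixing in two steps via the product structure of the measures along the $N^+P^-$ decomposition: first BMS mixing implies equidistribution of PS-measures on $N^+$-orbits (Proposition~\ref{p1}), then a second thickening converts PS to Lebesgue on $N^+$ at the cost of the density factor $e^{t(2\rho-\psi_\Gamma)(u)}$ (Proposition~\ref{p2}), and finally integrating over $P^-$ yields Haar mixing (Proposition~\ref{prop.mixH0}). The identification of the limit with $m^{\BR}_{\i(u)}(f_1)\,m^{\BR_*}_u(f_2)$ falls out of the product formulas \eqref{PNgood} and \eqref{BRPN}.

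Your proposed source for $t^{-(r-1)/2}$ --- writing the flowed integral as $\int_{\fa} e^{-t\ell(H)}\,d(\cdots)(H)$ and applying Laplace in the directions transverse to $u$ --- does not correspond to any integral visible in the Haar matrix coefficient: in Hopf coordinates the $\fa$-factor of $dx$ is Lebesgue and $\exp(tu)$ acts by translation, so there is no exponential weight in the $\fa$-variable to which stationary phase applies. The strict concavity of $\psi_\Gamma$ is indeed what ultimately controls the Gaussian variance in the local limit theorem, but making that connection rigorous \emph{is} the content of the thermodynamic-formalism results cited above; a higher-rank shadow lemma gives only coarse upper and lower bounds on $\nu$-measures of shadows and cannot by itself produce the precise second-order asymptotics. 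In short, you are implicitly proposing to reprove Theorem~\ref{mixing2} by geometric means, and that is where the real difficulty lies.
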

We mention that this theorem is not expected to hold for $u\in \partial \L_\Gamma$ in view of
\cite[Theorem 1.1]{DG}.
See Theorem \ref{m11} for a more refined version of this theorem; in fact it is this refined version which is needed in the application to counting problems as  stated in Theorems \ref{m3} and \ref{thm3}.

\medskip

\noindent{\bf Equidistribution of maximal horospheres.}  We also obtain the following equidistribution result for translates of maximal horospheres:

\begin{thm}\label{m2}  For any $u\in \op{int} \L_\Gamma$,  $f\in C_c(\Gamma \ba G)^M$, $\phi\in C_c(N^+)$, and $x=[g]\in \Gamma\ba G$, we have
\begin{equation*}
\lim_{t\rightarrow\infty} t^{(r-1)/2}e^{t (2\rho-\psi_\Ga)(u)}\int_{N^+}f(x n \exp (tu))\phi(n)\,dn= \kappa_u \cdot  m^{\mathrm{BR}}_{\i( u)}(f)\, 
\mu_{ gN^+, u}^{\mathrm{PS}}(\phi),
\end{equation*}
where $dn$ and $\mu_{ gN^+, u}^{\PS}$ are respectively the Lebesgue and Patterson-Sullivan measures on $gN^+$ as defined in
  \eqref{eq.Leb2} and \eqref{eq.PSN}.
\end{thm}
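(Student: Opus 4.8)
The plan is to derive this horospherical equidistribution statement from the local mixing Theorem \ref{m1} (in its refined form Theorem \ref{m11}) by the standard "thickening" or "unfolding" argument. Fix $u\in\inte\L_\Gamma$, $x=[g]\in\GaG$, $f\in C_c(\GaG)^M$, and $\phi\in C_c(N^+)$. First I would choose a small relatively compact neighborhood of the identity in $G$ of the form $\scrO = N^+_\epsilon\, (\exp\mathfrak{a}_\epsilon)\, N^-_\epsilon M$ adapted to the decomposition $G = N^+(\exp\mathfrak{a})N^- M$ near $e$, together with a bump function $\Psi_\epsilon\in C_c(G)$ supported in $\scrO$ with $\int \Psi_\epsilon = 1$, which we further average over $M$ so that $\Psi_\epsilon$ is $M$-invariant on the right. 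Using $\Psi_\epsilon$ we build, for the fixed $g$, a test function $F_\epsilon\in C_c(\GaG)^M$ that is (approximately) supported on a thin $\scrO$-neighborhood of the horosphere piece $g\,\supp(\phi)\subset gN^+$, weighted by $\phi$ along $N^+$ and by $\Psi_\epsilon$ in the transverse $(\exp\mathfrak{a})N^-$ directions; concretely $F_\epsilon(\Gamma h) = \sum_{\gamma\in\Gamma}\, \tilde\phi(\gamma h)$ where $\tilde\phi$ is supported near $gN^+$ and restricts to $\phi\cdot\Psi_\epsilon^{\mathrm{transverse}}$. The discreteness of $\Gamma$ and compactness of $\supp\phi$ guarantee this is a finite sum locally and $F_\epsilon\in C_c(\GaG)^M$.

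The key step is then to evaluate the matrix coefficient $\langle \exp(tu) f, F_\epsilon\rangle$ in two ways. On one hand, Theorem \ref{m11} gives
\[
\lim_{t\to\infty} t^{(r-1)/2} e^{t(2\rho-\psi_\Gamma)(u)} \langle \exp(tu) f, F_\epsilon\rangle = \kappa_u\cdot m^{\BR}_{\i(u)}(f)\, m^{\BR_*}_{u}(F_\epsilon).
\]
On the other hand, unfolding the integral $\int_{\GaG} f(h\exp(tu)) F_\epsilon(h)\,dh$ against the sum defining $F_\epsilon$ converts it to an integral over a neighborhood of $gN^+$ in $G$: writing $h = g n\, a\, n^- m$ in the transverse coordinates with $n\in N^+$, the integral becomes approximately $\int_{N^+}\!\int_{\text{transverse}} f(g n\, a n^- m \exp(tu))\,\phi(n)\,\Psi_\epsilon(a n^- m)\, dn\, d(\text{transverse})$. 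As $t\to\infty$, for $v=u$ in the interior of the limit cone the flow $\exp(tu)$ expands $N^+$ and contracts $N^-$; standard estimates (continuity of $f$, compact support, and the fact that $a n^- m$ is within $\scrO$ of $e$ while conjugation by $\exp(tu)$ shrinks the $N^-$-part) show this transverse integral is, up to error $o_\epsilon(1)$ uniform in $t$, equal to $\int_{N^+} f(g n \exp(tu))\phi(n)\,dn$ times a normalization. Matching the two expressions and then letting $\epsilon\to 0$ yields the claimed limit, once one identifies the limit of $m^{\BR_*}_{u}(F_\epsilon)$ as $\epsilon\to 0$ with $\mu^{\PS}_{gN^+,u}(\phi)$.

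The identification $\lim_{\epsilon\to 0} m^{\BR_*}_{u}(F_\epsilon) = \mu^{\PS}_{gN^+,u}(\phi)$ is where the real content lies, and it is the step I expect to be the main obstacle. This requires the explicit local description of $m^{\BR_*}_u$ in the $(N^+, \mathfrak a, N^-, M)$-coordinates: by construction (see \eqref{dualb}) $m^{\BR_*}_u$ disintegrates with the Patterson–Sullivan density in the $N^+$-direction and Lebesgue/Haar in the transverse directions, precisely so that integrating $F_\epsilon$ — which concentrates on $gN^+$ as $\epsilon\to 0$ — against it picks out $\int_{N^+}\phi\, d\mu^{\PS}_{gN^+,u}$. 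One must check the conical-neighborhood Patterson–Sullivan density behaves continuously enough (the density is a genuine measure, not just finitely additive) that the transverse averaging converges, and that there is no loss from the $M$-averaging since $f$ and $F_\epsilon$ are already $M$-invariant. A secondary technical point is the uniformity of the $o_\epsilon(1)$ error in $t$ in the unfolding step, which should follow from uniform continuity of $f$ on a compact set together with the contraction estimate $\exp(-tu) n^- \exp(tu)\to e$ as $t\to\infty$ for $n^-\in N^-$ and $u\in\inte\mathfrak a^+$; care is needed because $u$ lies in the interior of the limit cone but only in the closed Weyl chamber, so some root values $\alpha(u)$ could a priori vanish — however, interior-of-limit-cone forces $u\in\inte\mathfrak a^+$ (the limit cone of a Zariski dense subgroup meets the chamber walls only at the origin by Benoist), so all contractions are strict. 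Assembling these pieces gives the theorem.
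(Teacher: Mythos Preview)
Your thickening strategy is sound in principle, but it runs in the \emph{opposite} logical direction from the paper, and in the paper's framework your argument would be circular.

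In the paper, the chain of implications is: BMS-mixing (Theorem~\ref{mixing2}) $\Rightarrow$ equidistribution of translates of PS-measures on $N^+$-orbits (Proposition~\ref{p1}) $\Rightarrow$ equidistribution of translates of \emph{Lebesgue} measure on $N^+$-orbits (Proposition~\ref{p2}) $\Rightarrow$ Haar mixing (Proposition~\ref{prop.mixH0}). Theorem~\ref{m2} is then simply the case $v=0$ of Proposition~\ref{p2}, and Theorem~\ref{m1} is the case $v=0$ of Proposition~\ref{prop.mixH0}. In other words, the horospherical equidistribution you are trying to prove is an \emph{input} to the Haar mixing Theorem~\ref{m1}/\ref{m11}(1), not a consequence of it. So invoking Theorem~\ref{m11} to prove Theorem~\ref{m2} uses the very statement you want to establish.

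That said, as a standalone argument your direction (Haar mixing $\Rightarrow$ horospherical equidistribution) is legitimate and is the classical route when Haar mixing is available by independent means (e.g., for lattices via Howe--Moore). The identification $\lim_{\epsilon\to 0} m^{\BR_*}_u(F_\epsilon)=\mu^{\PS}_{gN^+,u}(\phi)$ that you flag as the crux does follow from the product decomposition of $\tilde m^{\BR_*}$ in $gN^+P^-$-coordinates analogous to \eqref{BRPN}, together with the continuity Lemma~\ref{PScont}; but one must track the Jacobians carefully, since in your coordinates $g\cdot n\cdot q$ with $q\in (AN^-M)_\epsilon$ the transverse conditional of $\tilde m^{\BR_*}$ is \emph{not} simply Haar on $P^-$ but rather $d\mu^{\mathrm{Leb}}_{gnN^-}\times da\times dm$, and the normalization of $\Psi_\epsilon$ must be chosen compatibly. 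The paper sidesteps this by going the other way: it thickens the PS-horospherical integral against the BMS measure (Proposition~\ref{p1}), then passes from PS to Lebesgue by a second transversal comparison (Proposition~\ref{p2}), which has the advantage that BMS-mixing is the primitive input actually available for Anosov groups.
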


\noindent{\bf Bisector counting for a generalized Cartan decomposition.}
Let $H$ be a symmetric subgroup of $G$, i.e.\ $H$ is the identity component of the set of fixed points for an involution $\sigma$ of $G$. 
Up to a conjugation, we may assume that $\sigma$ commutes with the Cartan involution $\theta$ which fixes $K$. We then have a generalized Cartan decomposition $G=H \cal W  (\exp \fb^+) K$, where $\fb^+\subset \fa^+$ and $\cal W$ is a subgroup of the Weyl group (see Section \ref{sec.aff} for details).  Set $r_0:=\op{rank} H\ba G = \dim\fb$. Note that $1\le r_0\le r$.

\begin{thm} \label{m3}  For any $v\in \fb^+\cap \inte \L_\G$,
there exist $ c>0$ and  a norm $|\cdot |$ on $\fb$  such that for any right $H\cap M$-invariant bounded subset $\Omega_H\subset H$ with $\mu_{H,v}^{\PS}(\partial \Om_H) =0$ and any left $M$-invariant bounded subset $\Omega_K\subset K$ with  
$\mu^{\PS, *}_{K,\i(v)}(\partial \Om_K^{-1})=0$,
we have
   $$\lim_{T\to \infty}  \frac{ \# (\Gamma\cap \Omega_H \cal (\exp \fb^+_T )\, \Omega_K ) }{e^{\psi_\Ga(v) T}\cdot  T^{(r_0-r)/2}} =c\;  \mu_{H,v}^{\PS}(\Om_H) \,\mu^{\PS,*}_{K,\i(v)}(\Om_K^{-1}), $$
 where  $\fb^+_T= \{ w \in \fb^+ :|w| \le T\} $ and $\mu_{H,v}^{\PS}$ and $\mu^{\PS,*}_{K, \i(v)}$ are measures on $H$ and $K$ defined in 
Definition \ref{Hskinning} and Lemma \ref{km2} respectively.   \end{thm}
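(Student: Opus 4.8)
The plan is to derive the bisector count from the refined local mixing statement (Theorem \ref{m11}) together with the equidistribution of maximal horospheres (Theorem \ref{m2}), via the standard ``thickening'' technique of Eskin--McMullen and Duke--Rudnick--Sarnak, adapted to the generalized Cartan decomposition $G = H\,\scrW\,(\exp\fb^+)\,K$. Write $\mathcal{O} = H \cap M$ and fix $v \in \fb^+ \cap \inte\La_\G$. The first step is to set up the counting function analytically: for $\psi \in C_c(H\ba G)$ and $\phi$ supported near the identity, consider the smoothed sum $F_T(g) = \sum_{\g \in (\G\cap H)\ba \G} \psi(H\g g)$-type expressions, and more precisely realize $\#(\G \cap \Om_H\,\scrW(\exp\fb^+_T)\,\Om_K)$ as an integral of a function on $\G\ba G$ against a family of approximate-identity test functions supported on a box $\Om_H\,(\exp\fb^+_T)\,\Om_K$. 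The key is that in the coordinates given by the generalized Cartan decomposition, the Haar measure on $G$ factorizes as $dg = \xi(b)\, dh\, db\, dk$ with a density $\xi(b)$ that is, up to bounded multiplicative error, comparable to $e^{2\rho(b)}$ on the relevant region (this Jacobian computation, which should be recorded as a preliminary lemma in Section \ref{sec.aff}, is where the root data enters).

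The second step is the main analytic input. After unfolding, the smoothed count becomes a matrix coefficient-type integral $\int_{\fb^+_T} \langle (\exp b).\,\Psi_1,\,\Psi_2\rangle\, \xi(b)\, db$, where $\Psi_1$ is built from $\psi$ and the test function on $H$ and $\Psi_2$ from the test function on $K$. Here one must be slightly careful because $b$ ranges over the lower-rank subspace $\fb^+ \subset \fa^+$ rather than all of $\fa^+$; the hypothesis $v \in \inte\La_\G$ guarantees that the directions $b/\|b\|$ stay in a compact subset of $\inte\La_\G$ as $b \to \infty$ in $\fb^+_T$, so Theorem \ref{m11} applies uniformly and gives
$$\langle (\exp b).\,\Psi_1,\,\Psi_2\rangle \sim \kappa_{b}\, \|b\|^{-(r-1)/2}\, e^{-(2\rho - \psi_\G)(b)}\, m^{\BR}_{\i(b)}(\Psi_1)\, m^{\BR_*}_{b}(\Psi_2)$$
with explicit control on how $\kappa_b$, $m^{\BR}_{\i(b)}(\Psi_1)$ and $m^{\BR_*}_{b}(\Psi_2)$ vary with the direction. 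Multiplying by $\xi(b) \asymp e^{2\rho(b)}$, the exponential $e^{-(2\rho-\psi_\G)(b)}$ combines with $e^{2\rho(b)}$ to leave $e^{\psi_\G(b)}$, and the integral $\int_{\fb^+_T} e^{\psi_\G(b)}\|b\|^{-(r-1)/2}\,db$ must then be evaluated asymptotically by a Laplace/saddle-point argument: since $\psi_\G$ is concave and homogeneous of degree one, $e^{\psi_\G(b)}$ over the ball $\fb^+_T$ of radius $T$ concentrates on the ray where $\psi_\G$ is maximal on the unit sphere of $\fb$, contributing a factor $\sim e^{\psi_\G(v_*)T}\cdot T^{(r_0-1)/2}$ for the appropriate maximizing direction $v_*$; combined with the surviving $\|b\|^{-(r-1)/2} \sim T^{-(r-1)/2}$ this produces the stated normalization $e^{\psi_\G(v)T}\, T^{(r_0-r)/2}$. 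The measures $\mu^{\PS}_{H,v}$ and $\mu^{\PS,*}_{K,\i(v)}$ emerge as the limits of $m^{\BR}_{\i(b)}(\Psi_1)$ and $m^{\BR_*}_{b}(\Psi_2)$ upon specializing the test functions to approximate the indicators of $\Om_H$ and $\Om_K$; this is where one invokes the description of the Burger--Roblin measures in terms of Patterson--Sullivan densities and the disintegration along $N^+$ from Theorem \ref{m2}, and where the $\mathcal{O}$-invariance of $\Om_H$ and the $M$-invariance of $\Om_K$ are used to reduce to $M$-invariant test functions on $\G\ba G$ so that the hypotheses of Theorems \ref{m11} and \ref{m2} are met.

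The third step is the passage from smoothed to sharp counting: choosing $\phi = \phi_\e$ an approximate identity of width $\e$, one sandwiches the true count between the smoothed counts for slightly shrunk and slightly enlarged boxes, and lets $\e \to 0$; the boundary hypotheses $\mu^{\PS}_{H,v}(\partial\Om_H) = 0$ and $\mu^{\PS,*}_{K,\i(v)}(\partial\Om_K^{-1}) = 0$ ensure the upper and lower limits agree. I expect the main obstacle to be the \emph{uniformity} in Step 2: Theorem \ref{m11} gives the asymptotics for a fixed direction, but here one integrates over a family of directions $b/\|b\| \in \fb^+$ approaching the boundary of the sphere-section as $\|b\| \to \infty$, so one needs the implied constants and the measures $m^{\BR}_{\i(b)}, m^{\BR_*}_b$ to depend continuously (and with uniform error terms) on the direction over a compact neighborhood of $v$ in $\inte\La_\G$ — this is precisely why the excerpt emphasizes that it is the ``refined version'' Theorem \ref{m11} rather than Theorem \ref{m1} that feeds into the counting result. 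A secondary technical point is verifying that the relevant functions $\Psi_1$ are genuinely compactly supported on $\G\ba G$ (using that $\G\cap H$ is cocompact in the relevant directions, a consequence of the Anosov property and the convex cocompactness of $\G\cap H$ in $H$), so that the local mixing theorem, which is stated for $C_c$ functions, is applicable.
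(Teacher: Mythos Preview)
Your Step 2 contains the main gap. Theorem \ref{m11} does \emph{not} give mixing for a continuously varying direction $u=b/\|b\|$ with measures $m^{\BR}_{\i(u)}$, $m^{\BR_*}_u$ depending continuously on $u$; it gives mixing for a \emph{single fixed} direction $u$ together with a $\sqrt{t}$–window perturbation $w\in\ker\psi_u$, and the Burger--Roblin measures that appear are always $m^{\BR}_{\i(u)}$, $m^{\BR_*}_u$ for that one $u$. No continuity of $u\mapsto m^{\BR}_{\i(u)}$ or uniformity of the error over a compact set of directions is proved anywhere in the paper, so your ``apply mixing direction by direction, then Laplace over the sphere'' scheme cannot be justified from the stated results. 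Relatedly, your claim that ``the directions $b/\|b\|$ stay in a compact subset of $\inte\La_\Gamma$ as $b\to\infty$ in $\fb^+_T$'' is false: $\fb^+_T$ contains all directions in $\fb^+$, most of which typically lie outside $\La_\Gamma$.

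The paper's actual route is different and avoids this problem entirely. One first restricts to a convex cone $\mathcal C\subset\fb^+$ with $\fb^+\cap\La_\Gamma\subset\inte_\fb\mathcal C$ and $\overline{\mathcal C}\cap\ker\Theta=\{0\}$ (the complement $\fb^+\setminus\mathcal C$ contributes only finitely many lattice points, by Proposition \ref{vanish}). One then chooses the norm $|\cdot|$ so that $v\perp\ker\Theta$ and parametrizes $\mathcal C$ by $b=tv+\sqrt t\,w$ with $w\in\fb\cap\ker\Theta$; in these coordinates $|b|^2=t^2+t|w|^2$, so $\mathcal C_T$ becomes an interval $R_T(w)$ in $t$ for each fixed $w$. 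The analytic input is not Theorem \ref{m2} (horospheres) or Theorem \ref{m11} applied across directions, but Proposition \ref{eq} (equidistribution of translates of $[e]H$ by $\exp(tv+\sqrt t\,w)$), which provides both the limit $\kappa_v e^{-I(w)/2}m^{\BR}(f)\mu_H^{\PS}(\phi)$ \emph{and} a uniform bound in $(t,w)$; combined with the elementary Lemma \ref{dom} and dominated convergence over $w$, this yields Proposition \ref{lem.w1} and then Corollary \ref{mmm}. In particular only the single pair of BR measures $m^{\BR}_{\i(v)}$, $m^{\BR_*}_v$ ever appears, and the $T^{(r_0-r)/2}$ factor arises from the explicit integral $\int_{R_T(w)}t^{(r_0-r)/2}e^{\delta t}\,dt$, not from a saddle-point in the direction variable. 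Finally, your remark that ``$\Gamma\cap H$ is cocompact in the relevant directions'' is neither assumed nor used: $\tau_H$ is simply taken in $C_c(H)$, and the passage from smooth to sharp counting (Corollary \ref{mmm}) follows the standard sandwich argument with the boundary hypotheses, as you outlined in Step 3.
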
 

When $\Gamma$ is a cocompact lattice in a rank one Lie group and  $H$ is compact, this goes back to Margulis' thesis from 1970
(see \cite{Ma} for an English translation published in 2004). In the case when $\Gamma$ is a geometrically finite subgroup of a rank one Lie group, this was shown in \cite{Ro} for $H$ compact, and in  \cite{OhShah} and \cite{MO} for general symmetric subgroups.
For lattices of higher rank Lie groups, the bisector counting problem was studied in \cite{GO}, \cite{GOS}, and \cite{GOS2}. 
For non-lattices of higher rank Lie groups, it was studied for $H$ compact, by Quint \cite{Q5} and Thirion
 \cite{Thirion} for Schottky groups\footnote{Thirion's work applies to the so-called Ping-Pong groups  which are slightly more general than Schottky groups.} and by Sambarino for Anosov subgroups \cite{Samb1} (see also \cite{Ca2}). Hence the main novelty of this paper lies in our treatment of non-compact symmetric subgroups $H$ in a general higher rank case. 
It is interesting to note the presence of the decaying polynomial term $T^{(r_0-r)/2}$ when $\fa\ne \fb$, as the results
in loc. cit. have all purely exponential terms.
We mention that a related counting result was obtained for $\op{SO}(p,q-1)\ba \op{SO}(p,q)$ in a recent paper of Carvajales \cite{Ca};
in this case, $\fb$ lies in the wall of $\fa$ and hence Theorem \ref{m3} does not apply, and
the asymptotic is again purely exponential.

By the concavity and upper semi-continuity of $\psi_\Gamma$, there exists  a unique unit vector $u_\Gamma\in \fa^+$ (called the maximal growth direction) such that 
$$\psi_\Gamma (u_\Gamma)=\max_{v\in \fa^+, \,\|v\|=1}\psi_\Gamma (v).$$   
It is known that $u_\Gamma\in \inte \L_\Gamma$ (\cite{Q5}, \cite{Samb1}). When $u_\Gamma\in \fb^+$, the norm $|\cdot|$ in Theorem \ref{m3} associated to $u_\Gamma$ may simply be taken as the Euclidean norm $\|\cdot \|$ as above, i.e. the one obtained from the inner product $\langle\cdot,\cdot\rangle$ on $\fa$ induced by the Killing form. For a general vector $v\in\fb^+\cap\inte\scrL_\Ga$, one may take any norm that arises from an inner product for which $v$ and $(\nabla\psi_\Ga(v))^{\perp}=\lbrace w\in \fb\,:\langle \nabla\psi_\Ga(v),w\rangle=0\rbrace $ are orthogonal.

\medskip
\noindent{\bf Example.} When $\fa=\fb$, we automatically have $u_{\Ga}\in\fb^+\cap \inte \L_\Gamma$; so Theorem \ref{m3} applies. For groups $G$ of rank one, this is always the case for any symmetric subgroup $H$. In general, this case arises as follows: let $\iota$ be any involution of $G$ that commutes with the Cartan involution $\theta$ and fixes $\fa$ pointwise. Then defining $\sigma :=\iota\circ\theta$, we have $\sigma|_{\fa}=-1$, and hence $\fa=\fb$. For example, for any element $m\in K$ of order two which commutes with $\exp \fa$, $\iota(g):= mgm$ satisfies the above conditions. More specifically, the pair $G=\mathrm{PGL}_n(\RR)$ and $H=\op{PO}(p,n-p)$ may be realized this way by taking $m=\op{diag}( \op{Id}_p,-\op{Id}_{n-p})$.

\medskip 
\noindent{\bf Counting in affine symmetric spaces.} Around 1993, Duke-Rudnick-Sarnak \cite{DRS} and Eskin-McMullen \cite{EM} showed the following (see also \cite{BOh}, \cite{GO}, \cite{GOS}, \cite{GOS2} etc.):
 \begin{thm} [Duke-Rudnick-Sarnak, Eskin-McMullen] \label{m0} Let $\G<G$ be a lattice such that $\mathsf v_0\Gamma\subset H\ba G$ is closed  for $\vo=[H]$. Suppose  that
 $(H\cap \Gamma)\ba H$ has finite volume. We have, as $T\to \infty$,
   $$\lim_{T\to \infty}\frac{\# (\mathsf v_0\Gamma\cap \vo (\exp \fb_T^+) K ) }{  \mathsf m \big(\vo (\exp \fb_T^+) K\big) }= \frac{\op{Vol}\big((\Gamma\cap H)\ba H\big)}{\op{Vol}(\Gamma\ba G)} 
 ,$$
 where   $\fb^+_T= \{ w \in \fb^+ : \| w\| \le T\} $ and $\mathsf m$ is a suitably normalized $G$-invariant measure on $H\ba G$.
 \end{thm}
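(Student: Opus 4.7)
The plan is to follow the mixing-based strategy of Eskin--McMullen. I first reformulate the count as the value at $\Ga e$ of a function on $\Ga\ba G$. Set $B_T:=(\exp\fb^+_T)K$ and let $\chi_T$ denote the characteristic function of $\vo B_T\subset H\ba G$. After passing to a finite-index subgroup of $\Ga$ if necessary, so that $H\cap\Ga=\{\ga\in\Ga\col\vo\ga=\vo\}$, the function
$$F_T(\Ga g):=\sum_{\ga\in(H\cap\Ga)\ba\Ga}\chi_T(H\ga g)$$
is well-defined on $\Ga\ba G$ and satisfies $F_T(\Ga e)=\#(\vo\Ga\cap\vo B_T)$, which is the quantity to estimate.

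The heart of the argument is to compute the pairing $\int_{\Ga\ba G}F_T(\Ga g)\,\phi(\Ga g)\,dg$ for a test function $\phi\in C_c(\Ga\ba G)$. Standard unfolding gives
$$\int_{\Ga\ba G}F_T\,\phi\,dg=\int_{H\ba G}\chi_T(Hg)\left(\int_{(H\cap\Ga)\ba H}\phi(\Ga hg)\,dh\right)d\mathsf m(Hg),$$
reducing the problem to the asymptotics of the inner $H$-period integral as $Hg\to\infty$. This is exactly what Eskin--McMullen's wavefront lemma, combined with Howe--Moore mixing, produces: for any sequence $g_n\to\infty$ in $H\ba G$ along a regular direction in $\exp\fb^+$, one has
$$\int_{(H\cap\Ga)\ba H}\phi(\Ga hg_n)\,dh\;\longrightarrow\;\frac{\op{Vol}((H\cap\Ga)\ba H)}{\op{Vol}(\Ga\ba G)}\int_{\Ga\ba G}\phi.$$
Since the contribution of any compact region of $\supp\chi_T$ is negligible compared with $\mathsf m(\vo B_T)$ as $T\to\infty$, substituting this asymptotic in the unfolded integral yields
$$\int_{\Ga\ba G}F_T\,\phi\,dg\;\sim\;\mathsf m(\vo B_T)\cdot\frac{\op{Vol}((H\cap\Ga)\ba H)}{\op{Vol}(\Ga\ba G)}\int_{\Ga\ba G}\phi.$$

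To pass from this weak statement to the pointwise value $F_T(\Ga e)$, I use the standard sandwich technique: take $\phi_\e\geq 0$ supported in a small neighbourhood $V_\e$ of $\Ga e$ with $\int\phi_\e=1$, so that $F_T^-(\Ga e)\le F_T(\Ga g)\le F_T^+(\Ga e)$ for $g\in V_\e$, where $F_T^\pm$ count points in the inflated and deflated bodies $\vo B_T V_\e^{\pm 1}$. The desired limit then reduces to the \emph{well-roundedness} of the family $\{\vo B_T\}$: $\mathsf m(\vo B_TV_\e)/\mathsf m(\vo B_T)\to 1$ as $\e\to 0$, uniformly in $T\geq T_0$, which follows from the explicit Jacobian of the generalized Cartan decomposition $G=H\mathcal W(\exp\fb^+)K$. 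The principal obstacle in this strategy is the wavefront lemma itself: one must control the shearing of the orbit $Hg$ under translation by Cartan elements tending to infinity within the open chamber $\fb^+$, uniformly enough to justify passing to the limit under the integral. This is where the regularity hypothesis that the direction lies in $\fb^+$ (away from walls) is essential, and the estimate is carried out using the polar coordinate structure of the symmetric pair $(G,H)$.
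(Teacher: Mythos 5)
The paper does not prove Theorem \ref{m0}; it is quoted from Duke--Rudnick--Sarnak \cite{DRS} and Eskin--McMullen \cite{EM} as background, and the Anosov analogue (Theorem \ref{thm3}) is what the paper actually establishes. Your sketch correctly reproduces the standard Eskin--McMullen argument: unfolding $\la F_T,\phi\ra$ against a test function, invoking equidistribution of the translated $H$-period $\int_{(H\cap\Ga)\ba H}\phi(\Ga hg)\,dh$ as $Hg\to\infty$ in $H\ba G$ via mixing and the wavefront lemma, and then a well-roundedness/sandwich step to pass from the weak statement to the pointwise value $F_T([e])$. This is also, structurally, the route the paper later adapts to the non-lattice setting: Howe--Moore mixing gets replaced by Theorem \ref{mixing_bms}, the Haar measure on $(H\cap\Ga)\ba H$ by the skinning measure $\mu_{[e]H}^{\PS}$, and the fact that mixing is only available direction-by-direction forces the extra integral over $\ker\Theta$ in Proposition \ref{lem.w1}, which your uniform-in-$B_T$ argument has no need for in the lattice case.

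One small correction: no passage to a finite-index subgroup is required. The stabilizer of $\vo=[H]\in H\ba G$ under the right $G$-action is exactly $H$, so $\{\ga\in\Ga:\vo\ga=\vo\}=H\cap\Ga$ holds automatically and $F_T$ is well-defined as stated.
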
 
 
In order to state an analogue of Theorem \ref{m0} for $\Gamma$ Anosov, we introduce the following condition on an $H$-orbit: a closed orbit $[e]H\subset \Gamma\ba G$ is said to be
{\it uniformly proper} if there exists a right $K$-invariant neighborhood $\cal O$ of $[e]$ in $\G\ba G$ such that
$$\{[h]\in (\Ga \cap H)\ba H: [h] \exp (\fb^+\cap \L_\Ga) \cap \cal O\ne \emptyset\}$$ is bounded.\footnote{As $[e]H$ is closed, the inclusion $(H\cap \Gamma)\ba H\to \Gamma\ba G$ is a proper map and hence
the set $S_b:=\{[h]\in (\Gamma\cap H)\ba  H: [h] \exp b \cap \cal O\ne \emptyset\}$ is bounded for each $b\in \fb^+\cap \L_\Ga$.
Now the uniform properness of $[e]H$ means that the union $\bigcup_{b\in \fb^+\cap \L_\Ga} S_b$ is also bounded.}
 We show in Lemma \ref{bd} that $[e]H$ is uniformly proper whenever $r=r_0$ and the limit set $\Lambda_\Ga$ is contained in the open
set $ HP/P\subset \cal F$ (cf.\ \cite[Proposition 7.1.8]{Sch}).

 When $\vo \Gamma$ is closed, the measure $\mu_{H,v}^{\PS}$ in Theorem \ref{m3} induces a locally finite Borel measure on $(\Gamma\cap H)\ba  H$, whose total measure will be called the skinning constant 
$\op{sk}_{\Gamma, v} (H)$ of $H$ with respect to $\Gamma$ and $v$;
this constant is positive if and only if $\Lambda_\Ga \cap HP/P   \ne \emptyset$ (see \eqref{hpp}).
\begin{thm}\label{thm3} Let $\Gamma<G$ be an Anosov subgroup such that $\mathsf v_0\Gamma\subset H\ba G$ is closed for $\vo=[H]$.
Suppose that $u_\Ga\in\mathfrak b^+$ and $[e]H$ is uniformly proper. 
Then $\op{sk}_{\Gamma,u_\Ga} (H)<\infty$ and there exists a constant $c>0$ such that
\begin{equation*}
\lim_{T\to \infty}\frac{ \# (\vo \Gamma\cap \vo  (\exp \fb^+_T)  K ) }{ e^{\delta_\Ga T} \cdot T^{(r_0-r)/2}   }=c\; \op{sk}_{\Gamma,u_\Ga} (H)     
\end{equation*}
 where $\fb^+_T= \{ w \in \fb^+ :\|w\| \le T\} $ and $\delta_\Ga=\psi_\Gamma(u_\Ga)$.
\end{thm}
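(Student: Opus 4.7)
The plan is to deduce Theorem \ref{thm3} from the bisector counting Theorem \ref{m3}, in the spirit of Duke--Rudnick--Sarnak and Eskin--McMullen \cite{EM} but with Theorem \ref{m3} replacing the classical mixing input. Closedness of $\vo\Ga\subset H\backslash G$ yields the bijection $(\Ga\cap H)\backslash\Ga \to \vo\Ga$, $[\ga]\mapsto \vo\ga$, and $\vo\ga\in \vo(\exp\fb_T^+)K$ iff $\ga\in H(\exp\fb_T^+)K$, so
\begin{equation*}
N_T:=\#(\vo\Ga\cap \vo(\exp\fb_T^+)K) = \#\{[\ga]\in (\Ga\cap H)\backslash \Ga : \ga\in H(\exp\fb_T^+)K\}.
\end{equation*}
Fix a Borel fundamental domain $\mathcal D\subset H$ for the left $\Ga\cap H$-action with $\mu^{\PS}_{H,u_\Ga}(\partial\mathcal D)=0$, and an exhaustion by bounded subsets $\mathcal D_R\nearrow \mathcal D$ with negligible $\mu^{\PS}_{H,u_\Ga}$-boundaries. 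Up to a bounded multiplicity coming from the Weyl factor $\cal W$ and the generic non-uniqueness of the generalized Cartan decomposition $G=H\cal W(\exp\fb^+)K$ (both absorbed into the final constant $c$), $N_T$ equals the number of $\ga\in\Ga$ lying in $\mathcal D\cdot(\exp\fb_T^+)K$.

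Applying Theorem \ref{m3} with $\Om_H=\mathcal D_R$, $\Om_K=K$, and $v=u_\Ga\in\fb^+\cap\inte\L_\Ga$ (for which the norm $|\cdot|$ may be taken as $\|\cdot\|$) yields
\begin{equation*}
\#\bigl(\Ga\cap \mathcal D_R\cdot\mathcal W(\exp\fb_T^+)K\bigr) \sim c_0\,\mu^{\PS}_{H,u_\Ga}(\mathcal D_R)\,\mu^{\PS,*}_{K,\i(u_\Ga)}(K)\, e^{\delta_\Ga T}\, T^{(r_0-r)/2}.
\end{equation*}
The crucial geometric comparison is that the portion of $N_T$ arising from $\ga=hb$ with $h\in\mathcal D\setminus\mathcal D_R$ is negligible: if $\ga$ sits close to the identity coset in $\Ga\backslash G$ while $h$ has large image in $(\Ga\cap H)\backslash H$, then the Cartan projection $\mu(b)\in\fb^+$ approximates an element of $\fb^+\cap\L_\Ga$ (since Jordan projections of elements of $\Ga$ cluster there), and the uniform properness of $[e]H$ then forces $[h]$ to remain in a bounded subset of $(\Ga\cap H)\backslash H$, a contradiction once $R$ is large. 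Quantifying this yields
\begin{equation*}
N_T = \#(\Ga\cap\mathcal D_R\cdot(\exp\fb_T^+)K) + o\bigl(T^{(r_0-r)/2}e^{\delta_\Ga T}\bigr)
\end{equation*}
uniformly as $R\to\infty$. Letting $R\to\infty$, $\mu^{\PS}_{H,u_\Ga}(\mathcal D_R)\uparrow \op{sk}_{\Ga,u_\Ga}(H)$, and the asymptotic follows with $c=c_0\,\mu^{\PS,*}_{K,\i(u_\Ga)}(K)$ (divided by the combinatorial Cartan/Weyl multiplicity). Finiteness $\op{sk}_{\Ga,u_\Ga}(H)<\infty$ falls out of the same analysis: if the mass were infinite, the comparison would force $N_T/(T^{(r_0-r)/2}e^{\delta_\Ga T})\to\infty$, yet $N_T$ is bounded above by a finite multiple of $\#(\Ga\cap H_0\cal W(\exp\fb_T^+)K)$ for any bounded $H_0\supset \mathcal D_R$, which has the predicted growth with a finite constant by Theorem \ref{m3}.

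\textbf{Main obstacle.} The central difficulty is making the truncation error genuinely uniform in $R$: ruling out ``runaway'' contributions in which $h\in\mathcal D$ is far out in $(\Ga\cap H)\backslash H$ while the pair $\ga=hb$ with $b\in(\exp\fb_T^+)K$ still lies in $H(\exp\fb_T^+)K$. Uniform properness of $[e]H$ is precisely the qualitative hypothesis that prevents this, but a quantitative, uniform-in-$T$ version, sensitive to the direction cone $\fb^+\cap\L_\Ga$, is required to obtain the $o(T^{(r_0-r)/2}e^{\delta_\Ga T})$ error estimate. A secondary but non-trivial issue is the bookkeeping of the Weyl factor $\cal W$ and the generic multiplicity of the generalized Cartan decomposition, which determine the precise value of the constant $c$.
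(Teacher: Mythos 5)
Your general strategy (smoothed counting via equidistribution, plus a truncation controlled by uniform properness) is in the right family, but the specific route does not go through, and the gap is exactly where you flag it. Two distinct problems.

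First, the truncation argument is confused. You write that ``if $\gamma$ sits close to the identity coset in $\Gamma\backslash G$'' — but for $\gamma\in\Gamma$ this is automatic and vacuous, since $\Gamma\gamma=\Gamma$. What you actually have, writing $\gamma=h\exp(b)k$ with $h\in H$, $b\in\fb^+$, $k\in K$, is the identity $[h]\exp(b)k=[e]$ in $\Gamma\backslash G$, which is what uniform properness is built to exploit. But uniform properness hypothesizes $b\in\fb^+\cap\scrL_\Gamma$, and your count ranges over all $b\in\fb^+_T$; for $b\notin\scrL_\Gamma$ the definition gives you nothing. Your attempt to repair this by saying the ``Cartan projection $\mu(b)$ approximates an element of $\fb^+\cap\scrL_\Gamma$ since Jordan projections of $\Gamma$ cluster there'' conflates $\mu(\gamma)$ (which does stay near $\scrL_\Gamma$) with the $\fb^+$-component $b$ of the generalized Cartan decomposition of $\gamma$. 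These are unrelated when $h$ is unbounded: $h$ can swing $\mu(\gamma)$ arbitrarily far from $b$. So neither the uniformity in $R$ nor the $o(T^{(r_0-r)/2}e^{\delta_\Gamma T})$ error estimate follows from the argument as written; you have not in fact excluded the runaway contributions.

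Second, the paper does not deduce Theorem \ref{thm3} from Theorem \ref{m3} at all — it cannot, because Theorem \ref{m3} is stated for \emph{bounded} $\Omega_H$, while here $H$ is noncompact and the $h$-range is a priori unbounded. What the paper actually does (as Theorem \ref{thm33}) is: (i) work with the indicator counting function $F_T$ on $\Gamma\backslash G$ and sandwich $F_T([e])$ between $\langle F_{T\mp\e},\Phi_\e\rangle$ for a $K$-invariant bump $\Phi_\e$ supported near $[e]$ — no fundamental domain; (ii) restrict the cone to $\scrC=\fb^+\cap\scrL_\Gamma$, so that in the unfolded integral $\int_{[e]H}\Phi_\e([h]\exp(b)k)\,d[h]$ the support of $\Phi_\e$ combined with uniform properness (now applicable, since $b\in\scrC$) immediately confines $[h]$ to the bounded set $Y_0$; at that stage the compactly supported cutoff $\tau_S$ can be inserted for free, Proposition \ref{lem.w1} applies, and finiteness of $\mathrm{sk}_{\Gamma,v}(H)=|\mu^{\PS}_{[e]H}|$ drops out because the right-hand side is independent of $S$; (iii) separately prove a lemma bounding the contribution from the complementary cone $\cal Q=\fb^+-\scrC$ as $O(e^{\delta'T})$ for some $\delta'<\delta_\Gamma$, using the uniform bound \eqref{qqq} from Proposition \ref{prop.mixH2} and the angular separation of $\cal Q$ from $u_\Gamma$. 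The restriction-to-$\scrC$ step is the missing idea in your sketch — without it there is no way to invoke uniform properness on the full range $b\in\fb_T^+$.

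Finally, the worry about the Weyl factor $\cal W$ and a ``generic multiplicity'' of the generalized Cartan decomposition is a red herring: the decomposition $G=H\cal W\exp(\fb^+)K$ is a genuine (essentially unique) decomposition, and both Theorem \ref{m3} and Theorem \ref{thm3} concern only the $\omega=e$ piece $H\exp(\fb^+)K$, so nothing needs to be absorbed into $c$.
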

See Theorem \ref{thm33} for a more refined version where $u_\Gamma$ is replaced by a more general $v\in \fb^+\cap \inte \L_\Gamma$.

\begin{rmk}
\rm 
\begin{enumerate}

\item When $G$ has rank one, this is proved in \cite{OhShah} and \cite{MO} for any geometrically finite group $\Gamma$
under the finite skinning constant hypothesis, based on the fact that the Bowen-Margulis-Sullivan measure is finite.
In the higher rank case, the corresponding Bowen-Margulis-Sullivan measure is infinite (\cite[Proposition 3.5]{Samb1}, see also \cite[Corollary 4.9]{LO}) and for this reason, the finite skinning constant hypothesis
seems insufficient for our approach to work.

\item We mention a recent work of Carvajales \cite[Theorem B]{Ca2} for a counting result on the orbit $\mathsf{v}_0 \Gamma$ where
$G=\PSL_n(\br)$ and $H=\op{SO}(p,n-p)$ under the assumption that
 $\Gamma\cap H$ is finite.  In this case, 
  $\fa=\fb$ and the uniform properness of $[e]H$ easily follows, and
 hence Theorem \ref{thm3} applies. 
\end{enumerate}
\end{rmk}


\noindent{\bf On the proofs.}
The  following mixing result for the Bowen-Margulis-Sullivan measures was proved by Thirion \cite{Thirion} for Schottky groups  and by Sambarino \cite{Samb1} for  Anosov groups which arise from representations of the fundamental group of a closed negatively curved Riemannian manifold,  using thermodynamic formalism.
The general case was recently proved by Chow and Sarkar \cite{CS}, based on the fundamental work of Bridgeman, Canary, Labourie and Sambarino \cite{BCLS}:
 \begin{thm}  \label{mixing_bms}
For any $u\in \op{int} \L_\Gamma$ and  any $f_1,\,f_2\in C_c(\GaG)^M$,
\begin{equation*}
\lim_{t\rightarrow\infty} t^{(r-1)/2}\int_{\GaG} f_1(x) f_2(x \exp(tu) )\,dm_{{u}}^{\mathrm{BMS}}(x)=\kappa_{u}  \cdot m_{{u}}^{\mathrm{BMS}}(f_1)\,m_{ {u}}^{\mathrm{BMS}}(f_2),
\end{equation*}
where  $m^{\BMS}_{u}$ is the BMS-measure associated to $u$ (see \eqref{eq.BMS0} and \eqref{def.BMS2}).
\end{thm}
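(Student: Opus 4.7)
The plan is to reduce the statement to a vector-valued local central limit theorem (LCLT) for a Gibbs random walk on $\fa$, and then apply Nagaev-Guivarc'h transfer operator perturbation theory in the spirit of \cite{BCLS} and \cite{CS}. The $t^{-(r-1)/2}$ decay strongly suggests a Gaussian density evaluated at $0$ in $(r-1)$ transverse dimensions, and this is the shape of the argument.

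First, I would exploit the word hyperbolicity of $\Sigma$ to code the Gromov geodesic flow on $\partial \Sigma^{(2)} \times \mathbb{R}$ by a topologically mixing subshift of finite type $\sigma : \Sigma^+ \to \Sigma^+$ together with an $\fa^+$-valued H\"older roof function $\tau : \Sigma^+ \to \fa^+$ built out of the Cartan projection of the Anosov representation. Under the resulting conjugation the right $\exp\fa$-action on the non-wandering set in $\Gamma\backslash G/M$ becomes the $\fa$-suspension of $\sigma$ by $\tau$, and $m_u^{\BMS}$ pulls back to Lebesgue on the $\fa$-fiber times the Gibbs equilibrium state for the scalar H\"older potential $-\langle \xi_u, \tau\rangle$, where $\xi_u\in\fa^*$ is the unique linear functional tangent to $\psi_\Gamma$ at $u$ with $\xi_u(u)=\psi_\Gamma(u)$. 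This symbolic reinterpretation of the Patterson-Sullivan-Quint construction is the core output of \cite{Samb1} and \cite{BCLS}.

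Next, I would introduce the family of twisted transfer operators
\[
L_{i\xi}\phi(x) \;=\; \sum_{\sigma y = x} e^{-\langle \xi_u + i\xi,\, \tau(y)\rangle}\, \phi(y),
\]
acting on a suitable H\"older space on $\Sigma^+$, parametrized by $\xi$ in a neighborhood of $0$ in $(\fa/\RR u)^*$. The Ruelle-Perron-Frobenius theorem gives $L_0$ a simple leading eigenvalue equal to $1$ with a uniform spectral gap. Zariski density of $\Gamma$ combined with Benoist's theorem \cite{Ben} on the Jordan projection group yields the vector-valued non-arithmeticity of $\tau$ modulo $\RR u$ needed for the BCLS spectral framework to apply: for small $\xi\ne 0$, $L_{i\xi}$ has a simple leading eigenvalue $\lambda(i\xi) = 1 - \tfrac{1}{2}\langle Q\xi,\xi\rangle + O(|\xi|^3)$ with positive-definite covariance $Q$, while its spectral radius is bounded strictly below $1$ away from $0$.

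Finally, unfolding the matrix coefficient through the suspension rewrites $\int f_1 \cdot (f_2\circ \exp tu)\, dm_u^{\BMS}$ as a sum over symbolic orbits of combinatorial length $N\sim t/\bar\tau(u)$ whose $\tau$-displacement is close to $tu$. Fourier inversion in the $(r-1)$-dimensional transverse direction $\fa/\RR u$ expresses this sum as an integral of $\langle L_{i\xi}^N F_1,F_2\rangle$ against a transverse character, and substituting the perturbative expansion $L_{i\xi}^N \approx \lambda(i\xi)^N \Pi_{i\xi}$ reduces the problem to a Gaussian integral in $\xi\in\RR^{r-1}$, which produces the factor $t^{-(r-1)/2}$ multiplied by $m_u^{\BMS}(f_1)\,m_u^{\BMS}(f_2)$ and an explicit constant $\kappa_u$ built from $\det Q$, the mean roof $\bar\tau$, and the Gibbs normalization. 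The principal obstacle is precisely the vector-valued non-arithmeticity mentioned above, which is where the genuinely rank-$\ge 2$ content over Sambarino's one-parameter LCLT lives and which \cite{CS} establishes using the BCLS machinery; once this is secured, the $M$-invariance of $f_1,f_2$ ensures that all estimates descend to $\Gamma\backslash G/M$ where the symbolic coding is defined, and the product structure of $m_u^{\BMS}$ (holonomy times flow direction) delivers the factored form of the limit.
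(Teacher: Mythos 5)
The paper does not prove Theorem~\ref{mixing_bms} at all: it is stated as an imported result, attributed explicitly to Thirion \cite{Thirion} (Schottky groups), Sambarino \cite{Samb1} (fundamental groups of closed negatively curved manifolds), and Chow--Sarkar \cite{CS} (general Anosov subgroups, via the reparametrization theorem of \cite{BCLS}), with the refined version stated as Theorem~\ref{mixing2} and quoted without proof. Your sketch accurately reconstructs the strategy of those external references---symbolic coding of the non-wandering set in $\Gamma\backslash G/M$, an $\fa^+$-valued H\"older roof, the Gibbs state for $-\langle \psi_u,\tau\rangle$, twisted Ruelle transfer operators, vector-valued aperiodicity modulo $\RR u$ coming from Zariski density and Benoist's theorem, and a Nagaev--Guivarc'h local CLT in the $(r-1)$ transverse directions producing the $t^{-(r-1)/2}$ factor---so it correctly identifies the mechanism. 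Be aware, however, that you have written a route map rather than a proof: the genuinely delicate steps (constructing the Markov coding for a general word hyperbolic $\Sigma$ using \cite{BCLS} or Chow--Sarkar's coding, establishing H\"older regularity of the vector roof, verifying the non-lattice condition for $\tau$ modulo $\RR u$, and controlling the contribution away from the dominant spectral component uniformly in $t$) are precisely what \cite{CS} and \cite{Samb1} carry out, and none of it is reproduced here. Also a small imprecision worth flagging: the roof function in these constructions is built from the Iwasawa/Busemann cocycle evaluated along the limit map, not directly from the Cartan projection; the Cartan projection enters via the growth indicator $\psi_\Gamma$ rather than via $\tau$ itself.
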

This theorem is known only for Zariski dense Anosov subgroups with respect to a minimal parabolic subgroup
and this is the main reason why we restrict our attention to this class of groups in this paper.
\begin{rmk} \rm In fact, Chow and Sarkar proved in \cite{CS} that a version of this theorem holds for general $f_1, f_2\in C_c(\GaG)$ which are not necessarily $M$-invariant, provided the right hand side is replaced by the sum $\kappa_u\sum_{Y} m_{{u}}^{\mathrm{BMS}}|_Y(f_1)\,m_{ {u}}^{\mathrm{BMS}}|_Y(f_2),$
where the sum is taken over all $P^\circ$-minimal subsets $Y$ of $\Ga\ba G$. Given this result, the $M$-invariance condition in Theorems \ref{m1}, \ref{m2} and \ref{m3}  should not be necessary with an appropriate modification of the main terms.
\end{rmk}

Using the product structures of the Haar measure $dx$ and $dm^{\BMS}_{ u}(x)$, one can deduce mixing for one measure from that of the other via the study of transversal intersections. This observation is originally due to Roblin \cite{Ro} in the case of the unit tangent bundle of a rank one locally symmetric manifold, and has been extended and utilized in (\cite{MO}, \cite{OhShah}) to the frame bundle.
 This study leads us to generalize the definition of the family of Burger-Roblin measures $m^{\BR}_{u}$ and $m^{\BR_*}_{u}$ for $u\in \op{int} \L_\Gamma$, which turn out to control the asymptotic behavior of matrix coefficients as in Theorem \ref{m1} in a quite similar way as in the rank one case.

As in \cite{DRS} and \cite{EM} (also as in \cite{OhShah} and \cite{MO}), passing from Theorem \ref{m1} to Theorems \ref{m3} and \ref{thm3} requires the following equidistribution statement for translates of $H$-orbits (more precisely,
we need Proposition \ref{eq}). The idea of using mixing in the equidistribution and counting problem goes back to  \cite{Ma}:
 \begin{prop}\label{prop.mixH2.1} 
For any $v\in \fb^+\cap \inte \L_\Gamma$, there exists $\kappa_v>0$ such that for any $f\in C_c(\Gamma\ba G)^{M}$ and $\phi\in C_c(H)^{H\cap M}$, \begin{multline*}
\lim_{t\rightarrow\infty} t^{(r-1)/2}e^{(2\rho-\Theta) (tv )} \int_{H} f ([h] \exp (tv)) \phi (h)\,dh
 = \kappa_{v} \cdot  \mu_{H,v}^{\PS}(\phi)  \,m_{\i(v)}^{\mathrm{BR}}(f)
\end{multline*} where $dh$ denotes the Haar measure on $H$ and $\Theta\in \fa^*$ is given by $\Theta(w)=\langle \nabla\psi_\Ga(v),w\rangle$
as in \eqref{DefT}.
\end{prop}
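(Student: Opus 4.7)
The plan is to follow the classical strategy of Margulis, Roblin, Eskin--McMullen, and Oh--Shah: approximate the $H$-integral by a matrix coefficient on $\Ga\ba G$ via a \emph{thickening} of the $H$-orbit, apply the local mixing result Theorem~\ref{m1}, and then let the thickening shrink.

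Since $H$ is an affine symmetric subgroup of $G$ and $P=MAN^+$ is a minimal parabolic, $HP$ is open in $G$, yielding a local product decomposition of $G$ near the identity as $H$ times a transversal inside $P$. Fix a small $\epsilon>0$ and choose a nonnegative bump $\rho_\epsilon$ supported in an $\epsilon$-neighborhood of the identity in the transversal, with $\int \rho_\epsilon = 1$, chosen so that $\Phi_\epsilon(hp):=\phi(h)\rho_\epsilon(p)$ descends to $F_\epsilon\in C_c(\Ga\ba G)^M$; this uses the $(H\cap M)$-invariance of $\phi$ together with an $M$-invariant choice of $\rho_\epsilon$ along the $M$-factor of $P$. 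Apply Theorem~\ref{m1} to $(f,F_\epsilon)$ at direction $v$: by Euler's identity for the $1$-homogeneous $\psi_\Ga$ we have $\Theta(v)=\langle\nabla\psi_\Ga(v),v\rangle=\psi_\Ga(v)$, so the normalization of Theorem~\ref{m1} coincides with $e^{(2\rho-\Theta)(tv)}$, giving
\[
t^{(r-1)/2}e^{(2\rho-\Theta)(tv)}\int_{\Ga\ba G}f(x\exp(tv))F_\epsilon(x)\,dx \xrightarrow{t\to\infty}\kappa_v\,m^{\BR}_{\i(v)}(f)\,m^{\BR_*}_v(F_\epsilon).
\]
Unfolding through the local product structure,
\[
\int_{\Ga\ba G}f(x\exp(tv))F_\epsilon(x)\,dx = \int_H\phi(h)\int_P\rho_\epsilon(p)f(\Ga hp\exp(tv))\,dp\,dh.
\]
Writing $hp\exp(tv)=h\exp(tv)\cdot p_t$ with $p_t:=\exp(-tv)p\exp(tv)$, the decomposition $\fp=\fm\oplus\fa\oplus\fn^+$ shows that the $\fn^+$-component of $p_t$ contracts exponentially (because $\Ad(\exp(-tv))$ contracts $\fn^+$ for $v\in\op{int}\fa^+$, using $\op{int}\Lambda_\Ga\subset\op{int}\fa^+$) while the $\fm\oplus\fa$-component is fixed. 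Uniform continuity of $f$ then yields
\[
\int_{\Ga\ba G}f(x\exp(tv))F_\epsilon(x)\,dx = \int_H\phi(h)f([h]\exp(tv))\,dh + r_t(\epsilon),
\]
with an error $r_t(\epsilon)$ that can be made small relative to the main term.

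As $\epsilon\to 0$, the local product structure of $m^{\BR_*}_v$ (Lebesgue along $N^+$ and Quint PS density with linear form $\Theta$ in the transverse direction) identifies $\lim_{\epsilon\to 0}m^{\BR_*}_v(F_\epsilon)=C_H\cdot\mu^{\PS}_{H,v}(\phi)$ for an explicit positive constant $C_H$, which is absorbed into $\kappa_v$. The main obstacle is the circularity in bounding $r_t(\epsilon)$: obtaining $r_t(\epsilon)=o\bigl(t^{-(r-1)/2}e^{-(2\rho-\Theta)(tv)}\bigr)$ uniformly in $\epsilon$ requires an a priori decay rate for the $H$-integral that is exactly what the proposition asserts. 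The standard remedy is a two-sided squeeze: bound the $H$-integral from above and below by thickenings $F_{\epsilon_1}\le\Phi\le F_{\epsilon_2}$ with slightly different bumps, apply Theorem~\ref{m1} to each, and squeeze as $\epsilon_1,\epsilon_2\to 0$, using continuity of the transverse BR density in the thickening parameter. A secondary technical point is the disintegration in the final step: one must verify that the transverse slice of $m^{\BR_*}_v$ along an $H$-orbit matches the skinning measure of Definition~\ref{Hskinning}, which hinges on both being built from the Quint PS density at the linear form $\Theta=\nabla\psi_\Ga(v)$.
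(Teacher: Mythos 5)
Your overall strategy is essentially the paper's: thicken the $H$-orbit along a transversal inside a minimal parabolic using $HP$ open, apply the Haar local-mixing theorem to the thickened test function, push the transversal coordinate across $\exp(tv)$ using contraction, pass to a two-sided sandwich to handle the error, and identify the transverse slice of $m^{\BR_*}_v$ with $\mu^{\PS}_{H,v}$ (this last point is exactly Lemma~\ref{lem.BRPS}). Two corrections are needed to make this close, though.

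\textbf{The squeeze must be on $f$, not on $\Phi_\epsilon$.} As you correctly diagnose, uniform continuity alone cannot control $r_t(\epsilon)$ against the exponentially decaying main term. But bounding the $H$-integral between two matrix coefficients with ``slightly different bumps'' $F_{\epsilon_1},F_{\epsilon_2}$ does not work either: there is no monotonicity in $\epsilon$ that sandwiches $\int_H \phi(h) f([h]\exp(tv))\,dh$ between $\langle \exp(tv)f, \Phi_{\epsilon_1}\rangle$ and $\langle \exp(tv)f, \Phi_{\epsilon_2}\rangle$, precisely because $f([h]\exp(tv)p_t)$ versus $f([h]\exp(tv))$ differ at a fixed $\epsilon$-scale in the $M$- and $\fb$-directions no matter how large $t$ is. The paper instead smears $f$: set $f_\epsilon^\pm(y)=\sup/\inf_{g\in R_\epsilon} f(yg)$ with $R_\epsilon = N^-_\epsilon A_\epsilon N^+_\epsilon M$, and use that $a(t,w)^{-1}(nb)^{-1}a(t,w)\in R_\epsilon$ whenever $\underline a(t,w)\in \fb^+$ (inequality \eqref{eq.ee}). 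This gives
$\int_{\GaG} f_\epsilon^-(x\exp(tv))\Phi_\epsilon(x)\,dx \le \int_H \phi(h)f([h]\exp(tv))\,dh \le \int_{\GaG} f_\epsilon^+(x\exp(tv))\Phi_\epsilon(x)\,dx$,
and then $m^{\BR}(f_\epsilon^\pm)\to m^{\BR}(f)$ and $m^{\BR_*}(\Phi_\epsilon)\to\mu^{\PS}_{H,v}(\phi)$ (with no extra constant $C_H$; it is $1$) as $\epsilon\to0$. Without this swap your argument does not terminate.

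\textbf{Sign convention for $N^\pm$.} You write that $\Ad(\exp(-tv))$ contracts $\fn^+$, and take $P=MAN^+$; in this paper's conventions (see \eqref{defN1}--\eqref{defN2}: $\log N = \log N^-$ is the \emph{sum of positive root spaces}, while $N^+$ carries the negative roots), $\Ad(\exp(-tv))$ for $v\in\inte\fa^+$ contracts $\fn^-$ and \emph{expands} $\fn^+$. The minimal parabolic used is $P=MAN^-$, and the transversal is $P'=M'BN^-$; thickening in $P^+=MAN^+$ would put the conjugated defect $\exp(-tv)n\exp(tv)$ in the expanding direction and the argument would fail. Your reasoning is internally consistent with the opposite (more common) labeling, but with this paper's notation the thickening and the BR$_*$ disintegration both live on the $N^-$ side.
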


We note that when $\Ga$ is a lattice or when $G$ is of rank one, the equidistribution for 
the translates 
\begin{equation}\label{eq.HI}
\int_{H} f([h]\exp b_i)\phi(h)\,dh
\end{equation}
 is known for a general sequence $b_i\to\infty$ in $\fb^+$ (\cite[Thm. 1.2]{EM}, \cite{OhShah}).

Note that in our setting of a general Anosov subgroup, Proposition \ref{prop.mixH2.1} 
(or Proposition \ref{eq}) applies only for those sequences $b_i$ from the same fixed direction.
Such lack of uniformity in \eqref{eq.HI} makes it difficult to obtain the main terms of the counting functions in Theorems \ref{m3} and \ref{thm3} directly from Proposition \ref{prop.mixH2.1}. A similar issue as this was treated in \cite{Samb1}.
This difficulty in our setting is overcome by introducing a norm $|\cdot|$ on $\mathfrak a$ induced by an inner product with respect to which $v$ and $\op{ker} \Theta$ are orthogonal to each other, where
 $\Theta$ is a linear form on $\mathfrak a$ tangent to the growth indicator function at $v$. With respect to such a norm, the set of elements in the cone
$\cal L_\Gamma\cap \mathfrak b^+$ of norm at most $T$ can be expressed
as the union $\bigcup_{w\in \op{ker} \Theta} R_T(w)$ where $R_T(w)$ is an {\it interval} of the form $[t_w, \tfrac 12 (-|w|^2+\sqrt{|w|^4+4T^2})]$ for some $t_w\ge 0$. By analyzing an appropriate integral over the interval $R_T(w)$ (Lemma \ref{dom}), we then get the desired main term as in Proposition \ref{lem.w1}.

  \medskip
  
\noindent{\bf Organization}: We start by reviewing some basic notions, including higher rank analogues of Patterson-Sullivan measures as defined by Quint \cite{Quint2} in Section \ref{ps}. Section \ref{sec.meas} introduces generalized BMS-measures, in particular higher-rank versions of Burger-Roblin measures are defined. The product structure of these measures is discussed in Section \ref{sec.Norb}. We then deduce equidistribution of translates of PS-measures on horospheres from local mixing in Section \ref{step1}. This is then used in Section \ref{s2} to show mixing for the Haar measure and equidistribution of translates of Lebesgue measures on maximal horospheres. Properties of the main types of discrete subgroups we study are discussed in Section \ref{sec.Anosov}. The remainder of the paper is mainly devoted to proving the claimed counting statements. As a first step towards this, we prove equidistribution of translates of orbits of symmetric subgroups in $\GaG$ in Section \ref{sec.aff}. These equidistribution statements are combined with the strong wavefront property in Section \ref{gamma} to give the various counting results.

\medskip 
  
\noindent{\bf Acknowledgements.} We would like to thank Andr\'es Sambarino for helpful discussions.   We would also like to thank the anonymous
referee and Anna Wienhard for many helpful comments.
  
 \section{\texorpdfstring{$(\Gamma, \psi)$}{}-Patterson-Sullivan measures}\label{ps}
Let $G$ be a connected, semisimple real algebraic group, and $\Gamma <G$ be a Zariski dense discrete subgroup. In this section, we review the notion of $(\Gamma, \psi)$-Patterson-Sullivan measures associated to a certain class of linear forms $\psi$ on $\fa$, as constructed by Quint in \cite{Quint2}. We present these measures as analogously as possible to the Patterson-Sullivan measures on the limit set of $\Gamma$ in the rank one case.

We fix, once and for all, a Cartan involution $\theta$ of the Lie algebra $\mathfrak{g}$ of $G$, and decompose $\fg$ as $\mathfrak g=\mathfrak k\oplus\mathfrak{p}$, where $\fk$ and $\fp$ are the $+ 1$ and $-1$ eigenspaces of $\theta$, respectively. We denote by $K$ the maximal compact subgroup of $G$ with Lie algebra $\fk$. We also choose a maximal abelian subalgebra $\fa$ of $\mathfrak p$.
Fixing a left $G$-invariant and right $K$-invariant Riemannian metric on $G$ induces a Weyl-group invariant inner product and corresponding norm on $\mathfrak a$, which we denote by $\la\cdot,\cdot\ra$ and $\| \cdot \|$ respectively. Note also that the choice of this Riemannian metric induces a $G$-invariant metric $d(\cdot,\cdot)$ on $G/K$. The identity coset $[e]$ in $G/K$ is denoted by $o$.

Let $A:=\exp \mathfrak a$.
Choosing a closed positive Weyl chamber $\fa^+$ of $\fa$,  let $A^+=\exp \mathfrak a^+$. The centralizer of $A$ in $K$ is denoted by $M$, and we set 
$$N=N^-$$ to be the maximal contracting horospherical subgroup for $A$: for an element
  $a$ in the interior of $A^+$,
  \begin{equation}\label{defN1}N^-=\{g\in G: a^{-n} g a^n\to e\text{ as $n\to +\infty$}\}.\ee
  Note that $\log (N)$ is the sum of all positive root subspaces for our choice of $\fa^+$. Similarly, we will also need to consider the maximal expanding horospherical subgroup
  \begin{equation}\label{defN2}  N^+:=\{g\in G: a^n g a^{-n}\to e\text{ as $n\to +\infty$}\}.\ee
We set $$P^+=MAN^+,\quad\text{and} \quad P=MAN;$$ they are minimal parabolic subgroups of $G$. The quotient $$\F=G/P$$ is known as the Furstenberg boundary of $G$, and is isomorphic to $K/M$.
\medskip 
\begin{Def}[Busemann function] \rm The Iwasawa cocycle $\sigma: G\times \F \to \mathfrak a$ is defined as follows: for $(g, \xi)\in G\times \F$,
$\exp \sigma(g,\xi)$ is the $A$-component of $g k$ in the $KAN$ decomposition, where $\xi=[k]\in  K/M$:
$$gk\in K \exp (\sigma(g, \xi)) N.$$
The Busemann function $\beta: \F\times G/K\times G/K\to\mathfrak a $ is now defined as follows: for $\xi\in \F$ and $[g], [h]\in G/K$,
 $$\beta_\xi ( [g], [h]):=\sigma (g^{-1}, \xi)-\sigma(h^{-1}, \xi).$$
\end{Def}
\medskip
Observe that the Busemann function is continuous in all three variables. To ease notation, we will write  $\beta_\xi ( g, h)=\beta_\xi ( [g], [h])$. The following identities will be used throughout the article:
\begin{gather}\label{eq.basic0}
\begin{aligned}
\beta_\xi(g, h)+\beta_\xi&(h, q)=\beta_\xi (g, q), \\ 
\beta_{g\xi}(gh,gq)&=\beta_\xi(h,q),\text{ and}\\
\beta_\xi(e,g)=&-\sigma(g^{-1}, \xi).
\end{aligned}
\end{gather}
Geometrically, if $\xi=[k]\in K/M$, then for any unit vector $u\in\mathfrak a^+$,
$$\la \beta_\xi(g, h), u\ra =\lim_{t\to +\infty} d ([g], \xi_t)- d([h], \xi_t),$$
where $\xi_t= k \exp (tu)o\in G/K$.
\medskip

\begin{Def} [Conformal measures and densities] \rm
Given $\psi\in \mathfrak a^*$ and a closed subgroup $\Delta <G$,
a Borel probability measure $\nu_{\psi}$ on $\F$ is called a $(\Delta, \psi)$-conformal measure if, for any $\ga\in \Delta$ and $\xi\in \F$,
 \be\label{gc0} \frac{d\ga_* \nu_\psi}{d\nu_\psi}(\xi) =e^{\psi (\beta_\xi (e, \gamma))},\ee
 where $\ga_* \nu_\psi(Q)=\nu_{\psi}(\gamma^{-1} Q)$ for any Borel subset $Q\subset \F$. 
\end{Def}
  
\begin{Def} [Lebesgue measure]\rm
Let $m_o$ denote the $K$-invariant probability measure on $\F$, and
$\rho$ denote the half sum of all positive roots with respect to $\mathfrak a^+$. Then, using the decomposition of the Haar measure in the $KAN$ coordinates, one can check the following (cf. \cite[Proposition 3.3]{Q7}): if $m_o$ is a $(G, 2\rho)$-conformal measure, i.e.\ for any $g\in G$ and $\xi\in \F$,
\begin{equation}\label{Lebconform}
\frac{dg_* m_o}{dm_o}(\xi) =e^{2\rho (\beta_\xi(e,g)) }.
\end{equation}

\end{Def}
\medskip

\noindent{\bf Limit set and Limit cone.}
\begin{Def}[Limit set] \rm The limit set  $\La_\G$ of $\Gamma$ is defined to be the set of all points $x\in \F$ such that the Dirac measure $\delta_x$ is a limit point (in the space of Borel probability measures on $\scrF$) of $\lbrace \gamma_{\ast} m_o\,:\,\gamma\in\Gamma\rbrace$. \end{Def}
We refer to \cite[Lem 2.13]{LO} for an alternative definition. Benoist showed that $\La_\G$ is the minimal $\Gamma$-invariant closed subset of $\F$. Moreover, $\La_\G$ is Zariski dense in $\F$ \cite[Section 3.6]{Ben}. 

An element of $G$ is called elliptic if it is contained in a compact subgroup, and hyperbolic if it is conjugate to an element of $A^+$.
Any $g\in G$ can be written as the commuting product 
\be\label{JJJ} g=g_hg_e g_u \ee
 where $g_h$ is hyperbolic, $g_e$ is elliptic and $g_u$ is unipotent.
  An element $g\in G$ is called {\it loxodromic} if $g_h$ is conjugate to an element of $ \op{int}A^+$.

\begin{lemma} \label{Can} For any open subset $U\subset\F$ with $U\cap \La_\Ga\ne \emptyset$, 
$U\cap \La_\G$ is not contained in any smooth submanifold of $ \F$ of smaller dimension.
\end{lemma}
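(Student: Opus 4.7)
The approach is a contradiction argument that successively reduces the dimension. Suppose $U\cap\La_\Ga\subset S$ for some smooth submanifold $S\subset\F$ with $d:=\dim S<\dim\F$.

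First, using the density in $\La_\Ga$ of attracting fixed points of loxodromic elements of $\Ga$---a standard consequence of Zariski density via Benoist's work---I would select a loxodromic $\ga\in\Ga$ with attracting fixed point $y_\ga^+\in U$ (in particular $y_\ga^+\in S$). Since $\ga$ fixes $y_\ga^+$, continuity of $\ga$ yields a neighborhood $V\subset U$ of $y_\ga^+$ with $\ga V\subset U$. The $\Ga$-invariance of $\La_\Ga$ then gives
\begin{equation*}
\ga(V\cap\La_\Ga)\subset\ga V\cap\La_\Ga\subset U\cap\La_\Ga\subset S,
\end{equation*}
so $V\cap\La_\Ga\subset S\cap\ga^{-1}S$, an intersection of two smooth $d$-submanifolds both passing through $y_\ga^+$.

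The heart of the argument, and the main obstacle, is to show that $\ga$ can be chosen so that $S$ and $\ga^{-1}S$ are transverse at $y_\ga^+$, that is,
\begin{equation*}
T_{y_\ga^+}S+(d\ga_{y_\ga^+})^{-1}(T_{y_\ga^+}S)=T_{y_\ga^+}\F.
\end{equation*}
To arrange this, I would start from one loxodromic $\ga_0\in\Ga$ with attracting fixed point close to a chosen $\xi_0\in U\cap\La_\Ga$ and form the family of conjugates $\{\eta\ga_0\eta^{-1}\}_{\eta\in\Ga}$; their attracting fixed points $\eta(y_{\ga_0}^+)$ are dense in $\La_\Ga$ by minimality, while their derivatives at those fixed points, computed via the chain rule, realize conjugates of $d(\ga_0)_{y_{\ga_0}^+}$ by the derivatives $d\eta_{y_{\ga_0}^+}$. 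Since $\Ga$ is Zariski dense in $G$, these derivatives form a Zariski dense subset of the relevant semisimple subgroup of $\op{GL}(T_{y_{\ga_0}^+}\F)$, so no proper subspace can be common invariant; hence some $\ga$ in this family delivers the desired transversality.

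With a transverse $\ga$ in hand, $S\cap\ga^{-1}S$ is, near $y_\ga^+$, a smooth submanifold of dimension $\max(2d-\dim\F,0)<d$. Thus a smaller neighborhood $V'\subset V$ of $y_\ga^+$ satisfies $V'\cap\La_\Ga\subset S'$ for some smooth $S'\subset\F$ with $\dim S'<d$. Iterating the argument with $S'$ in place of $S$, the dimension strictly decreases at each step; after finitely many iterations it drops to $0$, forcing the intersection $V^{(k)}\cap\La_\Ga$ to be a single point. This contradicts the fact that $\La_\Ga$ is perfect (no isolated points, by minimality together with Zariski density), completing the proof.
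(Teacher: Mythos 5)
The central step of your argument---arranging transversality between $S$ and $\ga^{-1}S$ at $y_\ga^+$ for some loxodromic $\ga\in\Ga$---is a genuine gap. First, the objects you invoke do not live where you place them: the derivatives $d\eta_{y_{\ga_0}^+}$ for varying $\eta\in\Ga$ map $T_{y_{\ga_0}^+}\F$ to the different tangent spaces $T_{\eta(y_{\ga_0}^+)}\F$, so they do not form a subset of a single group $\op{GL}(T_{y_{\ga_0}^+}\F)$, and speaking of their Zariski density there is not meaningful. Second, even after fixing that, the transversality you need at the moved fixed point $y'=\eta(y_{\ga_0}^+)$ is
\begin{equation*}
T_{y'}S+\bigl(d\ga_{y'}\bigr)^{-1}\bigl(T_{y'}S\bigr)=T_{y'}\F,
\end{equation*}
where \emph{both} $T_{y'}S$ and $d\ga_{y'}$ vary with $\eta$ in a coupled way; ``no proper subspace is a common invariant'' is a statement about a fixed subspace against a family of maps, which is neither what is needed nor equivalent to the spanning condition above. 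Third, since $S$ is merely a smooth submanifold, its tangent distribution over $U\cap\La_\Ga$ carries no algebraic structure, so Zariski density of $\Ga$ in $G$ does not bite on it directly. One should worry, for instance, about the possibility that the plane field $\xi\mapsto T_\xi S$ is (locally) preserved by the derivatives of all relevant $\ga$, in which case $\ga^{-1}S$ agrees with $S$ to first order at every fixed point and the intersection never drops in dimension; your argument gives no mechanism to exclude this.

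By contrast, the paper's proof sidesteps transversality entirely. It fixes a single loxodromic $\ga=am$, writes $S$ locally near the attracting fixed point as a graph $x_1=f(x_2,\dots,x_d)$ in eigen-coordinates for $\op{Ad}_a$ on $\fn^+$, Taylor-expands $f$ to high enough degree $\ell$ relative to the contraction rates $c_i$, and pushes the graph equation forward by $\ga^n$. The slowest mode survives the normalization by $\beta_1^n$ while the Taylor remainder decays faster, yielding in the limit that $p_1(x)=0$ on the limit set near $[e]$---contradicting the Zariski density of $\La_\Ga$ in $\F$, which is the decisive input (your argument uses only perfection of $\La_\Ga$ at the end, which is strictly weaker: perfect Cantor sets can sit on smooth curves). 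If you wish to salvage an iterative transversality approach you would first need a precise source of genericity; Zariski density of $\Ga$ alone, applied to the smooth object $S$, does not supply it.
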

\begin{proof}  This is proved in \cite{Wi} when $G$ has rank one, and our proof is similar.  
Suppose that there exists an open subset $U\subset \F$ such that
$U\cap \La_\G\ne \emptyset$ is contained in a smooth submanifold $S$ of $\F$ of smaller dimension. Since $\Ga$ is Zariski dense,
the set of attracting fixed points of loxodromic elements of $\Ga$ is dense in $\La_\Ga$. Hence there exists a loxodromic element $\ga\in\Ga$ whose attracting fixed point
is contained in $U\cap\La_\Ga$.
We may assume without loss of generality that $\gamma=am$ where $m\in M$ and $a\in \op{int} A^+ $, and hence $[e]=P\in \F$ is the attracting fixed point of $\ga$.
Since $\mathfrak n^+:=\op{Lie} N^+$ is nilpotent, the map
$ \mathfrak n^+\to G/P$ given by $x\mapsto \exp(x)[e] $ is algebraic and its image $N^+[e]$ is Zariski open and dense in $G/P$ for $[e]=P$.
Therefore $N^+[e]\cap \Lambda_\Ga$ is Zariski dense in $N^+[e]$.
On the other hand, since $U$ is a neighborhood of $[e]$, we may assume without loss of generality that
 $U\subset N^+[e]$ by replacing $U$ by a smaller open neighborhood of $[e]$ if necessary.
 We now claim that the hypothesis that $U\cap \La_\G\subset S$ implies that
  $N^+[e]\cap \La_\Ga$ cannot be Zariski dense in $N^+[e]$, which then yields a desired contradiction.

Choose a basis of $\mathfrak n^+$ consisting of eigenvectors of $\op{Ad}_a$, and for $x\in\mathfrak n^+$, we write
 $x=(x_1,\cdots,x_d)$ for the coordinates with respect to this basis.
It follows that there exist $0<c_i<1$, $i=1, \cdots, d$ such that
\begin{equation}\label{eq.eig2}
\op{Ad}_a x=(c_1 x_1,\cdots,c_d x_d) .
\end{equation}
Choose $\ell\in\bb N$ such that $c_1>\max_{1\leq i\leq d}c_i^{\ell+1}$.
By the implicit function theorem, after shrinking $U$ and rearranging the indices if necessary, we may assume that
$
U\cap S=\{[\exp(x)]\in U : x_1=f(x_2,\cdots,x_d)\}
$
for some smooth function $f$. Let $p$ be the Taylor polynomial of $f$ of degree $\ell$.
Then, by shrinking $U$ further, there exists $C>0$ such that for all $x\in U$,
\begin{equation}\label{eq.app1}
|f(x_2,\cdots,x_d)-p(x_2,\cdots,x_d)|\leq C\norm{(x_2,\cdots,x_d)}^{\ell+1}
\end{equation}
(here $\|\cdot\|$ denotes the Euclidean norm on $\fn^+$). Since the action of $\op{Ad}_a$ on the polynomial ring $\bb R[\mathfrak n^+]$  is diagonalizable, we can write
\begin{equation}\label{eq.eig}
x_1-p(x_2,\cdots,x_d)=\sum_{i=1}^k p_i(x),
\end{equation}
where $p_i\in \bb R[\mathfrak n^+]$ are non-zero polynomials such that $p_i(\op{Ad}_a x)=\beta_i\cdot p_i(x)$
where $
1>\beta_1>\cdots>\beta_k>0.
$
Note that $\beta_1\geq c_1$, due to the presence of $x_1$ in \eqref{eq.eig}.
Since $U\cap\La_\Ga\subset S$, combining \eqref{eq.app1} and \eqref{eq.eig}, we conclude
\begin{equation}\label{eq.com}
\left|\sum_{i=1}^k p_i(x)\right |\leq C \norm{(x_2,\cdots,x_d)}^{\ell+1},\text{whenever } [\exp x] \in U\cap\La_\Ga.
\end{equation}
Now let  $x\in \mathfrak n^+$ be such that $[\exp x]\in \La_\Ga$.
Since $[e]\in U\cap\La_\Ga$ and $\op{Ad}_{\ga^n}x\to 0$ as $n\to\infty$, we have $[\exp \op{Ad}_{\ga^n}x]\in U\cap\La_\Ga$ for all sufficiently large $n$. Applying \eqref{eq.eig2} and \eqref{eq.com}, we obtain
$$
\left|\sum_{i=1}^k p_i(\op{Ad}_{\ga^n}x)\right |= \left|\sum_{i=1}^k \beta_i^{n}\cdot p_i(\op{Ad}_{m^n}x) \right|\leq C\left(\max_{1\leq i\leq d}{c_i^{\ell+1}}\right)^{n}\norm{\op{Ad}_{m^n} x}^{\ell+1}.
$$
Therefore, by dividing by $\beta^n$,
$$
\left|\,p_1( \op{Ad}_{m^n} x)+\sum_{i=2}^k \left(\tfrac{\beta_i}{\beta_1}\right)^n p_i(\op{Ad}_{m^n}x)\, \right|\leq C\left(\tfrac{\max_{1\leq i\leq d}{c_i^{\ell+1}}}{\beta_1}\right)^{n}\norm{\op{Ad}_{m^n} x}^{\ell+1}.
$$
Since $M$ is  a compact subgroup, it follows that for some sequence $n_i\to \infty$, $m^{n_i}\to e$.
Taking the limit along this subsequence yields
$p_1(x)=0$.
This shows that $\{x\in \mathfrak n^+:[\exp x] \in \La_\Ga\} \subset\{x\in\mathfrak n^+ : p_1(x)=0\},$
implying that $N^+ [e] \cap \La_\Ga$
is not Zariski dense in $N^+[e]$.
This finishes the proof.
\end{proof}

 We remark that a weaker version of this lemma was proved for $G=\SL_n(\mathbb C)$ by Cantat  (see \cite[Sec.6]{GR}).

\begin{Def} [Cartan projection] \rm The Cartan projection $\mu: G\to \fa^+$ is defined as follows:
for each $g\in G$, there exists a unique element $\mu(g)\in \mathfrak a^+$ such that
\begin{equation*}
g\in K\exp(\mu(g))K.
\end{equation*}
\end{Def}

The Jordan projection of $g$ is defined
 as $ \lambda(g)\in \fa^+$, where $\exp \lambda(g)$ is the element of $A^+$ conjugate to $g_h$ where $g_h$ is as in \eqref{JJJ}.
 
 \begin{Def}[Limit cone] \rm The \emph{limit cone} $\L_\Gamma\subset\fa^+$ of $\Gamma$ is defined as the smallest closed cone containing the \emph{Jordan projection} of $\Gamma$.
\end{Def}

Quint showed the following:
\begin{thm} \cite[Theorem IV.2.2]{Quint1} \label{growth} The growth indicator function $\psi_\Gamma$, defined in \eqref{grow}, is concave, upper-semicontinuous, and satisfies
$$\L_\Gamma= \{u\in \fa^+: \psi_\Gamma(u)>-\infty\}.$$
Moreover, $ {\psi_\Gamma}(u)$ is non-negative on  $\L_\Gamma$ and positive on $\op{int}\L_\Gamma$.
\end{thm}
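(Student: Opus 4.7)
The plan is to establish the four assertions separately, leveraging direct manipulations of the defining infimum together with deeper facts about Cartan projections of Zariski dense subgroups.

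First, upper semi-continuity falls out of the definition as an infimum over open cones: if $u_n \to u$ and $\scrC \ni u$ is an open cone with $\|u\|\tau_\scrC$ arbitrarily close to $\psi_\Gamma(u)$, then $u_n \in \scrC$ eventually, so $\psi_\Gamma(u_n) \le \|u_n\|\tau_\scrC$ and $\limsup_n \psi_\Gamma(u_n) \le \psi_\Gamma(u)$. For the equality $\L_\Gamma = \{u : \psi_\Gamma(u) > -\infty\}$ and non-negativity on $\L_\Gamma$, I would invoke Benoist's theorem that $\L_\Gamma$ equals the asymptotic cone of the Cartan projections $\mu(\Gamma)$: equivalently, $u \in \L_\Gamma$ iff every open cone $\scrC \ni u$ contains $\mu(\gamma)$ for infinitely many $\gamma \in \Gamma$. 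If $u \notin \L_\Gamma$, some such $\scrC$ meets only finitely many $\mu(\gamma)$, the series in \eqref{grow} reduces to a finite sum, $\tau_\scrC = -\infty$, and $\psi_\Gamma(u) = -\infty$. Conversely, if $u \in \L_\Gamma$, every open $\scrC \ni u$ carries infinitely many $\gamma$, forcing $\tau_\scrC \ge 0$ and so $\psi_\Gamma(u) \ge 0$.

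Concavity is the main substantive step, with the crucial input being Benoist's near-additivity for the Cartan projection in Zariski dense subgroups: there is a compact set $L \subset \fa$ so that whenever $\mu(\gamma_1)$ and $\mu(\gamma_2)$ lie in sufficiently narrow subcones of an appropriate proximal direction, $\mu(\gamma_1\gamma_2) \in \mu(\gamma_1)+\mu(\gamma_2)+L$. For fixed $u_1, u_2 \in \inte \L_\Gamma$ and $t_1+t_2=1$, I would pick narrow cones $\scrC_i \ni u_i$ and a cone $\scrC \ni t_1u_1+t_2u_2$ chosen to absorb the weighted-sum image of $\scrC_1 \times \scrC_2$ up to bounded error. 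Restricting to $\gamma_i \in \Gamma$ whose Cartan norms have the prescribed proportional sizes and applying near-additivity to compare the corresponding series then yields $\psi_\Gamma(t_1 u_1+t_2 u_2) \ge t_1\psi_\Gamma(u_1)+t_2\psi_\Gamma(u_2)$; concavity extends to all of $\L_\Gamma$ via upper semi-continuity.

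Finally, for positivity on $\inte \L_\Gamma$, I would use Benoist's Tits-type ping-pong construction to embed in $\Gamma$ a free subsemigroup on $k \ge 2$ loxodromic elements whose Jordan projections cluster near $u$. Words of length $n$ in these generators contribute $\sim k^n$ distinct elements of $\Gamma$ with Cartan norm $O(n)$ and Cartan projections lying, up to bounded error, in any prescribed open cone $\scrC \ni u$, forcing $\tau_\scrC > 0$ and hence $\psi_\Gamma(u) > 0$. The principal obstacle is the concavity step, which rests on the fine multiplicative behaviour of the Cartan projection captured in Benoist's near-additivity lemma; this is the technical heart of Quint's original argument.
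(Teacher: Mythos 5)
The paper does not supply a proof of this statement; it is quoted directly from \cite[Theorem IV.2.2]{Quint1} with no argument given here, so your proposal is a reconstruction of Quint's proof rather than a comparison with the paper's. Your sketches for upper semi-continuity (fix an open cone $\scrC\ni u$ with $\|u\|\tau_\scrC$ just above $\psi_\Gamma(u)$, note $u_n\in\scrC$ eventually), for the characterization $\L_\Gamma=\{u\in\fa^+:\psi_\Gamma(u)>-\infty\}$ via Benoist's description of $\L_\Gamma$ as the asymptotic cone of $\mu(\Gamma)$, and for non-negativity on $\L_\Gamma$ are all essentially correct.

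The concavity step, however, hinges on a lemma that is false as you state it. You posit a compact $L\subset\fa$ with $\mu(\gamma_1\gamma_2)\in\mu(\gamma_1)+\mu(\gamma_2)+L$ \emph{whenever $\mu(\gamma_1)$ and $\mu(\gamma_2)$ lie in sufficiently narrow subcones}. This makes near-additivity contingent on the Cartan projections alone, and no such statement holds. Already in $\mathrm{SL}_2(\RR)$: take $\gamma=\diag(e^t,e^{-t})$ and $\gamma'=k\gamma^{-1}k^{-1}$ for $k\in K$; then $\mu(\gamma)=\mu(\gamma')=t$ for every choice of $k$, yet $\mu(\gamma\gamma')=0$ when $k=e$ and $\mu(\gamma\gamma')=2t$ when $k$ is the rotation by $\pi/2$, so the error $\mu(\gamma\gamma')-\mu(\gamma)-\mu(\gamma')$ is unbounded as $t\to\infty$ even though all Cartan projections sit in any cone you prescribe. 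The genuine near-additivity statement is dynamical, not metric: it holds within $(r,\varepsilon)$-Schottky subsemigroups, where the hypothesis is that the attracting and repelling flags of the factors are in uniformly general position (ping-pong), not that $\mu(\gamma_i)$ lies in a narrow cone. Quint's concavity argument does indeed pass through Benoist's construction of Zariski dense Schottky subsemigroups of $\Gamma$ with prescribed direction data, applies near-additivity \emph{inside} those subsemigroups, and then compares growth indicators; your version collapses the dynamical hypothesis into a purely Cartan-theoretic one and the inequality $\psi_\Gamma(t_1u_1+t_2u_2)\ge t_1\psi_\Gamma(u_1)+t_2\psi_\Gamma(u_2)$ does not follow. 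The same flag-alignment input is tacitly needed in your positivity step as well: to ensure that Cartan projections of length-$n$ words remain in a prescribed cone $\scrC\ni u$ and have norm comparable to $n$ from below (not merely $O(n)$), you must exploit the ping-pong structure of the free subsemigroup, not just the fact that the generators' Jordan projections cluster near $u$.
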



\begin{prop}\label{vanish} Let $\mathsf m$ be a locally finite Borel measure on $\Gamma\ba G$ and $\scrC\subset\fa^+$ a closed cone with $\scrL_\Ga\subset \inte\scrC$.
For any $f_1, f_2\in C_c(\Ga\ba G)$, there exists $t_0>0$ such that for all $v \in \fa^+ -\scrC $ with $\|v\|\ge t_0$,
$$
\int_{\Ga\ba G} f_1(x\exp(v))f_2(x)\,d\mathsf m(x)=0.
$$
In particular, if $u\in \fa^+-\L_\Ga$, then
$\int_{\Ga\ba G} f_1(x\exp(tu))f_2(x)\,d\mathsf m(x)=0$ for all $t\gg 1$ large enough.
\end{prop}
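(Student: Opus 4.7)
I would prove the stronger statement that the integrand itself vanishes identically for $v\in \fa^+-\scrC$ with $\|v\|$ sufficiently large, so that the claim is independent of the measure $\mathsf m$. Choose precompact lifts $F_1, F_2\subset G$ of $\supp f_1, \supp f_2\subset \Ga\ba G$. If the conclusion fails, then there are sequences $v_n\in \fa^+-\scrC$ with $\|v_n\|\to\infty$, together with $g_n\in F_2$ and $h_n\in F_1$ and $\ga_n\in\Ga$ satisfying
\[
\ga_n g_n \exp(v_n) = h_n, \qquad \text{equivalently}\qquad \ga_n^{-1} = g_n \exp(v_n) h_n^{-1}.
\]

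The core input is the standard fact that the Cartan projection $\mu$ is almost invariant under bounded perturbations: for any compact $Q\subset G$ there exists $C=C_Q$ such that $\|\mu(q_1 g q_2)-\mu(g)\|\le C$ for all $g\in G$ and $q_1,q_2\in Q$ (this follows, for example, from the interpretation $\|\mu(g)\|=d_{G/K}(o,g\cdot o)$ together with a compactness argument on $KAK$ coordinates). Applying this with $Q=F_2\cup F_1^{-1}$ and using $\mu(\exp v_n)=v_n$ (as $v_n\in \fa^+$), I obtain
\[
\|\mu(\ga_n^{-1})-v_n\|\le C
\]
for all $n$. In particular $\|\mu(\ga_n^{-1})\|\to\infty$, and after passing to a subsequence I may assume $v_n/\|v_n\|\to u$ for some unit vector $u$; then also $\mu(\ga_n^{-1})/\|\mu(\ga_n^{-1})\|\to u$. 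Since $\fa^+-\inte\scrC$ is closed in $\fa^+$, we get $u\in \fa^+-\inte\scrC$.

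On the other hand, Benoist's identification of $\scrL_\Ga$ as the asymptotic cone of the Cartan projection $\mu(\Ga)$ (for Zariski dense $\Ga$) forces $u\in\scrL_\Ga\subset \inte\scrC$, a contradiction. This proves the first assertion. For the ``in particular'' clause, given $u\in \fa^+-\scrL_\Ga$, I would use the upper semicontinuity of $\psi_\Ga$ (Theorem \ref{growth}) together with the closedness of $\scrL_\Ga$ to produce a closed cone $\scrC\subset \fa^+$ with $\scrL_\Ga\subset \inte\scrC$ and $u\notin\scrC$, and then apply the first part to the ray $\br_{>0}u$.

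The only substantive step is the almost-invariance of $\mu$ under compact perturbations (a well-known Lipschitz-type property), together with the classical Benoist theorem that the limit cone coincides with the asymptotic cone of the Cartan projection; everything else is bookkeeping on supports and a topological closedness argument.
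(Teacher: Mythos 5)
Your proof is correct and follows essentially the same route as the paper: a compactness/contradiction setup using lifts of the supports, the invariance of $\mu$ under bounded perturbations (Benoist, \cite[Lemma 4.6]{Ben}, which you state as $\|\mu(q_1gq_2)-\mu(g)\|\le C$), and Benoist's identification of $\scrL_\Ga$ with the asymptotic cone of $\mu(\Ga)$, followed by the same cone-separation argument for the ``in particular'' clause. The only slightly loose spot is your parenthetical claim that the vector-valued bound $\|\mu(q_1gq_2)-\mu(g)\|\le C$ follows from the identity $\|\mu(g)\|=d_{G/K}(o,g\cdot o)$ (that identity only controls the norm, not the $\fa^+$-valued projection); this should simply be cited as Benoist's lemma, which is exactly what the paper does.
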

\begin{proof}
It suffices to check that for any given compact subset $L\subset G$ such that $L=L^{-1}$, we have
$$L\exp(-v)\cap \Ga L=\emptyset$$
 for all sufficiently large $v\in \mathfrak a^+-\scrC$.

Suppose that there exist a compact subset $L\subset G$, sequences  $\ell_n,\ell_n'\in L$, $\ga_n\in\Ga$, and $v_n\in \fa -\cal C$ with $\|v_n\|=t_n\to\infty$ such that 
$$\ell_n\exp(-v_n)=\ga_n\ell_n'.$$
We may assume that $v_n/t_n$ converges to some unit vector $v\in (\fa^+-\inte\scrC)$; hence $v\not\in\scrL_\Ga$.
By \cite[Lemma 4.6]{Ben}, there exists a compact subset $M=M(L)$ of
$\fa$ such that for all $g\in G$, 
$$
\mu(LgL)\subset\mu(g)+M.
$$
Note that $\cal L_\Ga$ is equal to the the asymptotic cone of $\mu(\Ga)$ \cite[Thm. 1.2]{Ben}.
Since $v\not\in\cal L_\Ga$, 
we can find an open cone $\cal D$ containing $v$ such that $\cal D\cap \mu(\Ga)$ is finite.
Then 
$$
\mu(\ga_n^{-1})=\mu(\ell_n'\exp(v_n)\ell_n^{-1})\in\mu(\exp(v_n))+M.
$$
As $\cal D$ is open, there exists $n_0$ such that
$\frac{1}{t_n} (v_n+ M) \subset\cal D$ for all  $n\ge n_0$.
Since $\mu(\exp(v_n))+M=t_n(\frac{v_n}{t_n} +\frac{1}{t_n}M)$, we conclude $\mu(\ga_n^{-1})\in \cal D$ for all $n\ge n_0$.
This yields a contradiction.

The second claim follows from the first one
 as we can find a closed cone $\cal C$ such that $u\notin \cal C$ and $\L_\Gamma\subset \inte \cal C$.
\end{proof}

Set $$D_\Gamma:=\{\psi\in \fa^*: \psi \ge \psi_\Gamma \text{ on $\mathfrak a^+$}\},$$
which  is a non-empty set \cite[Section 4.1]{Q4}.
An element $\psi\in D_\Gamma$ is said to be tangent to $\psi_\Gamma$ at $u\in \fa$
if $\psi(u)=\psi_\Gamma(u)$.
 The following collection of linear forms is of particular importance:
\be\label{ds} D_\Gamma^\star:=\{\psi\in D_\Gamma: \psi \text{ is tangent to $\psi_\Ga$ at some $u\in \mathcal L_\Ga\cap  \op{int} \fa^+ $}\}.\ee

\begin{Def}[Patterson-Sullivan measures] \rm For $\psi\in\fa^*$, a $(\Gamma,\psi)$-conformal measure supported on $\Lambda_\Gamma$ will be called a $(\Gamma, \psi)$-PS measure.
\end{Def}

Generalizing the work of Patterson-Sullivan (\cite{Pa}, \cite{Su}),
Quint \cite{Quint2} constructed  a $(\Gamma, \psi)$-PS measure for every $\psi\in \dg$.
  
\noindent{\bf Maximal growth direction.} 
  Since $\psi_\Gamma$ is concave, upper-semicontinuous, and the unit norm ball in $\fa$ is strictly convex, there exists a unique unit vector
  ${{u_\Ga}}\in 
  \cal L_\Gamma$ (called the maximal growth direction) such that
   \begin{equation}\label{tug}\delta_\Ga:=\max_{u\in\mathfrak a^+,\norm{u}=1}\psi_\Ga(u)=\psi_\Ga({u_\Ga}).\end{equation}
Note that $u_{\Gamma}$ must be stabilized by the \emph{opposition involution} (see Def. \ref{op})
\begin{Ex} \rm If $G=\PSL_3(\br)$, then $u_\Gamma=\op{diag} (\frac{1}{\sqrt 2}, 0, -\frac{1}{\sqrt 2})$ for any Zariski dense subgroup
$\Gamma$.
\end{Ex}

\subsection*{Uniqueness of tangent forms.}
The claims (2) and (3) of the following lemma follows from  \cite[Sec 4.1]{Q4} (see also \cite[Lemma 4.8]{Samb2}).
\begin{lemma}  \label{di}
\begin{enumerate}
\item
For any $u\in \op{int}\L_\Gamma$, there exists a linear form $\psi_u\in D_\Gamma^\star$ tangent to $\psi_\Gamma$ at $u$.
\item
For any unit vector $u\in \op{int} \L_\Gamma$ at which $\psi_\Ga$ is differentiable, there exists a unique linear form $\psi_u\in D_\Gamma^\star$ tangent to $\psi_\Gamma$ at $u$, and it is given by
\begin{equation*}
\psi_u(\cdot)=\la(\nabla\psi_\Ga)(u), \cdot\ra =D_u \psi_\Gamma(\cdot);
\end{equation*}
\item
If ${u_\Ga}\in\op{int}\L_\Gamma$ and $\psi_\Ga$ is differentiable at ${u_\Ga}$, then
$\psi_{{u_\Ga}}$ is given by
$$\psi_{u_\Ga} (\cdot )=\delta_\Ga\la {u_\Ga}, \cdot \ra.$$
\end{enumerate}
\end{lemma}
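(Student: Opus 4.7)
The plan is to deduce all three assertions from convex analysis, exploiting three properties of $\psi_\Ga$: concavity and upper semicontinuity (Theorem \ref{growth}), and positive $1$-homogeneity, i.e.\ $\psi_\Ga(tu)=t\psi_\Ga(u)$ for $t>0$ and $u\in\fa^+$, which is immediate from the definition \eqref{grow} since the open cones through $u$ coincide with those through $tu$ while $\tau_{\cal C}$ depends only on $\cal C$ (and is multiplied by $t\|u\|/\|u\|$ by the prefactor).

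For (1), I would fix $u\in\operatorname{int}\L_\Ga$ and use the superdifferential of the concave function $\psi_\Ga$ at the interior point $u$ of its effective domain, which is nonempty by a standard fact in convex analysis. This provides an affine functional $\phi(w)=\ell(w)+c$ with $\phi(u)=\psi_\Ga(u)$ and $\phi\ge\psi_\Ga$ on $\fa^+$, the inequality being automatic off $\L_\Ga$ where $\psi_\Ga=-\infty$. The key step is to show $c=0$: evaluating $\phi(tu)\ge\psi_\Ga(tu)=t\psi_\Ga(u)$ for all $t>0$ together with the equality at $u$ yields $c(1-t)\ge 0$ for every $t>0$, forcing $c=0$. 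The resulting linear form $\psi_u:=\ell\in\fa^*$ then lies in $D_\Ga$ and is tangent to $\psi_\Ga$ at $u\in\operatorname{int}\L_\Ga$, placing it in $\dg$.

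For (2), I would appeal to the fact that at a point of differentiability of a concave function, the superdifferential reduces to the singleton $\{\la\nabla\psi_\Ga(u),\cdot\ra\}$. Any $\psi\in\dg$ tangent to $\psi_\Ga$ at $u$ lies in this superdifferential, giving both uniqueness and the explicit formula. For (3), applying (2) at $u_\Ga$ gives $\psi_{u_\Ga}=\la\nabla\psi_\Ga(u_\Ga),\cdot\ra$. Since $u_\Ga$ is an interior maximizer of $\psi_\Ga$ on the unit sphere in $\fa^+$, Lagrange multipliers yield $\nabla\psi_\Ga(u_\Ga)=\lambda u_\Ga$ for some $\lambda\in\br$; pairing with $u_\Ga$ and using the tangency $\psi_{u_\Ga}(u_\Ga)=\psi_\Ga(u_\Ga)=\delta_\Ga$ pins down $\lambda=\delta_\Ga$, giving the claimed formula.

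The main obstacle is the linearity step in (1): positive $1$-homogeneity of $\psi_\Ga$ on the cone $\L_\Ga$ is precisely what rules out a nonzero affine constant in the supporting functional, and is therefore the technical pivot on which the remainder of the lemma rests. Everything else is a routine application of convex analysis together with the regularity assertions of Theorem \ref{growth}.
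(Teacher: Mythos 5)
Your proposal is correct, and it reaches the same conclusions by a somewhat different route from the paper. For part (1), the paper restricts to an affine hyperplane $P$ transverse to the cone, applies the supporting hyperplane theorem to the convex region lying under the graph of $\psi_\Ga$ over $P\cap\L_\Ga$, and then observes that the resulting affine support on $P$ extends uniquely to a linear form on $\fa$; the $1$-homogeneity is absorbed into this extension step rather than being invoked explicitly. You instead work directly in $\fa$, take a nonempty superdifferential at the interior point $u$, and use $1$-homogeneity to force the additive constant in the supporting affine functional to vanish. Both are sound; your version leans on the standard superdifferential machinery, while the paper's is marginally more self-contained since it only uses a finite-dimensional supporting hyperplane theorem. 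For part (2), the paper proves $\psi_u\ge\psi_\Ga$ by hand — differentiating the homogeneity relation to get the Euler identity $\la\nabla\psi_\Ga(u),u\ra=\psi_\Ga(u)$, then using one-variable concavity along transverse lines — and separately proves uniqueness; you collapse both to the singleton property of the superdifferential at a point of differentiability, which is cleaner but tacitly relies on part (1) (or the Euler identity) to guarantee that $\la\nabla\psi_\Ga(u),\cdot\ra$ actually dominates $\psi_\Ga$ rather than just being a supergradient affinely shifted by $\psi_\Ga(u)-\la\nabla\psi_\Ga(u),u\ra$; you should make that dependency explicit. For part (3), your Lagrange-multiplier argument at the spherical maximizer is the same calculation as the paper's curve argument $\la\nabla\psi_\Ga(u_\Ga),\alpha'(0)\ra=0$.
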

\begin{proof} 
Let $P\subset\mathfrak a$ be an affine hyperplane such that $P\cap \mathfrak a^+$ is an $(r-1)$-simplex and 
$P\cap\cal L_\Ga$ is a bounded convex subset of $P\simeq \bb{R}^{r-1}$. 
Since $P\cap\scrL_{\Gamma}$ is convex and $\psi_\Ga : \mathfrak a\to\bb{R}$ is concave, 
the following set $S$ is convex:
$$ S:=\lbrace (x,y)\in (P\cap\cal L_\Ga)\times\bb{R}\,:\,0\leq y\leq \psi_{\Gamma}(x)\rbrace.$$
Since $\br (P\cap \op{int} \L_\Gamma)\supset \op{int} \L_\G$, it suffices to prove (1) for $ u\in P\cap\op{int} \cal L_\Ga$.
Since  $(u,\psi_{\Gamma}(u))\in \partial S$,  the supporting hyperplane theorem implies that there exists a hyperplane $C\subset P\times \RR$ passing through $(u,\psi_{\Gamma}(u))$ such that the interior of $S$ is contained in a connected component
of $P\times\bb R \setminus C$. As $u\in\op{int}\cal L_\Ga$, such a hyperplane $C$ must be the graph of a function. We may therefore write
$C=\{(x,\varphi(x))\in P\times\bb{R} \}$
for some affine map $\varphi :P\to\bb{R}$ satisfying $\varphi(x)\geq \psi_\Gamma(x)$ for all $x\in P\cap\cal L_\Ga$.
Consider the unique linear form in $\fa^*$ which extends $\varphi$, which we also denote by $\varphi$ by abuse of notation.
Since  $\varphi(x)\geq\psi_\Ga(x)$  for all $x\in P\cap\cal L_\Ga$ and $\L_\Ga$ has non-empty interior, it follows that  $\varphi\geq\psi_\Ga$.
Since $\varphi(u)=\psi_\Ga(u)$, this proves (1).

To prove (2), define $\psi_u(\cdot):=\la(\nabla\psi_\Ga)(u), \cdot\ra.$
By differentiating $\psi_\Ga(tu)=t\psi_\Ga(u)$ with respect to $t$, we get by the chain rule
that
\begin{equation}\label{eq.scale}
\la \nabla\psi_\Ga(tu),u\ra=\psi_\Ga(u).
\end{equation}

Hence $\psi_u(u)=\psi_\Ga(u)$ by plugging $t=1$.
Next, let $\mathfrak b$ be a vector space such that $\mathfrak a=\bb Ru\oplus\mathfrak b$, and let $v\in\mathfrak b$ be arbitrary.
Consider the closed interval $I=\{s\in\bb R : u+sv\in \cal L_\Ga\} $ and let $f(s):=\psi_{\Ga}(u+sv)$.
Note that $f:I\to \br$ is concave, differentiable at $s=0$, and $f'(0)=\la\nabla\psi_\Ga(u),v\ra$.
Hence, using \eqref{eq.scale},
$$
f(s) \leq \psi_\Ga(u)+s\la\nabla\psi_\Ga(u),v\ra
=\la \nabla\psi_\Ga(u), u+sv\ra=\psi_u(u+sv).$$
As $v\in \mathfrak b$ is arbitrary, this implies $\psi_u\ge \psi_\Ga$. Hence $\psi_u\in D_\Gamma^\star$.

To show the uniqueness, suppose that $\psi\in D_\Gamma^\star$  is tangent to $\psi_\Ga$ at $u$. Let $v\in\mathfrak b$ be arbitrary.
Define $g:I\to\br$ by  $g(s):= \psi(u+sv) $.  Then $g\ge f$ and $g(0)=f(0)$. 
Since  $f$ is a concave function on an interval $I$ and
 differentiable at $0\in S$, it follows that $g(s)=f(0)+ sf'(0) $. Since $f(0)=\psi_\Gamma(u)$ and $f'(0)=\la \nabla\psi_\Ga(u), v \ra$, it follows that
 $$\psi(u+sv)=\psi_\Gamma (u) +s \la \nabla\psi_\Ga(u), v \ra = \la \nabla\psi_\Ga(u),u+s v \ra ;
$$ 
 this
 proves the uniqueness.

Next we claim that $\nabla\psi_\Ga({u_\Ga})=c {u_\Ga}$ for some $c\ne 0$. 
Consider a curve $\alpha:(-\epsilon,\epsilon)\to\bb S^{r-1}\cap\mathfrak a^+$ such that $\alpha(0)={u_\Ga}$.
By definition of ${u_\Ga}$, $s\mapsto\psi_\Ga(\alpha(s))$ achieves its maximum at $s=0$.
Hence, its derivative at $s=0$ vanishes, and
$\la\nabla\psi_\Ga({u_\Ga}),\alpha'(0)\ra=0.$
Since $\alpha'(0)\in\op{T}\bb{S}^{r-1}$ can be arbitrary, $\nabla\psi_\Ga({u_\Ga})$ is parallel to ${u_\Ga}$.
Combining this with \eqref{eq.scale},  the claim follows.
Since  $\psi_\Ga({u_\Ga})=\delta_\Gamma$, we have $c=\delta_\Gamma$.
This completes the proof of the lemma.
\end{proof}

\section{Generalized BMS measures}\label{sec.meas}
 Using the notation introduced in Section \ref{ps}, given a pair of $\Gamma$-conformal measures on $\F$, we now define an $MA$-invariant locally finite Borel measure on $\Gamma\ba G$, which we call a generalized BMS-measure.  
Haar measures, BR-measures, and BMS measures are all constructed in this way.

\begin{Def}[Opposition involution]\label{op} \rm Denote by $w_0\in K$ a representative of the unique element of the Weyl group $N_K(A)/M$ such that $\op{Ad}_{w_0}\mathfrak a^+= -\mathfrak a^+$.
  The opposition involution  $\i:\mathfrak a \to \mathfrak a$ is defined by $$\i (u)= -\op{Ad}_{w_0} (u).$$
 Note that for all $g\in G$, we have 
 $$\lambda(g^{-1})=\i(\lambda(g)), \quad \mu(g^{-1})=\i(\mu(g)),\quad\text{and}$$
 $$\i(\mathfrak a^+)=\mathfrak a^+\quad\text{and}\quad \psi_\G \circ \i=\psi_\G.$$
In particular, $\i$ preserves $\inte \L_\Ga$. \end{Def}
Note that  for all rank one groups, $\op i$ is the identity map.
\medskip

\noindent{\bf Example.} When $G=\op{PSL}_d(\bb R)$, with the Riemannian metric given by the inner product $\langle X,Y\rangle=\op{tr}( XY^t)$, we have
\begin{gather}\label{eq.PSL}
\begin{aligned}
&\mathfrak a=\{\op{diag}(t_1,\cdots,t_{d}) : t_1+\cdots+ t_{d}=0\},\; \\
&\mathfrak a^+=\{\op{diag}(t_1,\cdots,t_{d})\in\mathfrak a : t_1\geq\cdots\geq t_d\}
\end{aligned}
\end{gather}
and $\langle \cdot,\cdot \rangle :\mathfrak a\to\bb R$ is given by $\langle X,Y\rangle=\op{tr}(XY)$.
The opposition involution is given by
\begin{equation*}
\op i(\op{diag}(t_1,\cdots,t_d))=\op{diag}(-t_d,\cdots,-t_1).
\end{equation*}

   For each $g\in G$, we define 
   $$g^+:=gP\in G/P\quad\text{and}\quad g^-:=gw_0P\in G/P.$$
Observe that $(gm)^{\pm}=g^{\pm}$ for all $g\in G$, $m\in M$; we may thus also view the above as maps from $G/M$ to $\scrF$.
 Hence, for the identity element $e\in G$, $e^+=[P]$, $e^-=[w_0P]$ and $g^{\pm}=g(e^{\pm})$ for any $g\in G$.
 Let $\F^{(2)}$ denote the unique open $G$-orbit in $\F\times \F$:
$$\F^{(2)}=G. (e^+, e^-)=\{(gP, gw_0P)\in G/P\times G/P: g\in G\}.$$ 
\medskip
\noindent \textbf{Example.} If $G=\PGL_d(\RR)$, $\scrF$ may be identified with the space of complete flags
$ \{\lbrace V_1\subset \cdots \subset V_{d-1}\rbrace: \text{dim} V_i=i\}$ in $\RR^d$; $\scrF^{(2)}$ is then identified with the set of pairs of flags 
$(\lbrace V_1\subset \cdots \subset V_{d-1}\rbrace , \lbrace W_1\subset \cdots\subset
 W_{d-1} \rbrace)$  in general position, i.e., $V_i\oplus W_{d-i}=\RR^d$ for all $1\le i \le d-1$.

\begin{Def}[Hopf parameterization]\rm The homeomorphism 
  $G/M\to \FF \times \mathfrak a$
  given by $gM \mapsto (g^+, g^-, b=\beta_{g^-}(e,g)) $
  is called the Hopf parameterization of $G/M$.
\end{Def}

 \noindent{\bf Example.} Using the linear fractional transformation action of $G=\PSL_2(\br)$ on $\bH^2\cup\hat{\br}$,
 we have $P^-=\op{Stab}(\infty)$ and $P^+=\op{Stab}(0)$, where $P=P^-$ and $P^+$ are the upper and lower triangular subgroups of $G$ respectively. Hence
 $$g^+=gP=g(\infty), \;\;  g^-=gw_0 P=g(0) \in \partial \bH^2=G/P$$
where $w_0=\begin{psmallmatrix} 0 & 1\\-1&0\end{psmallmatrix}$.

\medskip

We will make use of the following identities, which are all straightforward:
\begin{gather}\label{eq.basic1}
\begin{aligned}
\beta_{g^-}(e,g)=-\sigma(g^{-1},kw_0P)&=-\i(\log a)\text{ if }g=kan\in KAN^+;\\
\beta_{g^+}(e,g)=-\sigma(g^{-1},kP)&=\log a\text{ if }g=kah\in KAN^-.
\end{aligned}
\end{gather}

In particular, for $a\in A$,
$$\beta_{e^+}(e,a)+\op i  (\beta_{e^-}(e,a))=0.$$

\noindent \textbf{The generalized BMS-measure: $m_{\nu_1,\nu_2}$.}
 Fix   a pair of linear forms $\psi_1, \psi_2\in \mathfrak a^*$. 
 Let $\nu_1=\nu_{\psi_1}$ and $\nu_2=\nu_{\psi_2}$ be respectively
 $(\Gamma,\psi_1)$ and $(\Gamma,\psi_2)$ conformal measures on $\F$.
Using the Hopf parametrization, define the following locally finite Borel measure $\tilde m_{\nu_1, \nu_2}$ on $G/M$ 
  as follows: for $g=(g^+, g^-, b)\in \F^{(2)}\times \mathfrak a$,
\begin{equation}\label{eq.BMS0}
d\tilde m_{\nu_1, \nu_2} (g)=e^{\psi_1 (\beta_{g^+}(e, g))+\psi_2( \beta_{g^-} (e, g )) } \;  d\nu_{1} (g^+) d\nu_{2}(g^-) db,
\end{equation}
  where $db=d\ell (b) $ is the Lebesgue measure on $\mathfrak a$.
\begin{lemma}\label{qa} 
The measure $\tilde m_{\nu_1, \nu_2}$ is left $\Gamma$-invariant and right $A$-quasi-invariant:
for all $a\in A$,
$$
a_*\tilde m_{\nu_1, \nu_2}=e^{(-\psi_1+\psi_2\circ \i)(\log a)}\,\tilde m_{\nu_1, \nu_2} .$$
\end{lemma}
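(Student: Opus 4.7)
The plan is to perform a direct change-of-variables computation in Hopf coordinates $G/M \simeq \FF \times \fa$. As a preliminary step, I would work out the explicit form of the left $\Gamma$-action and right $A$-action in these coordinates. For $\gamma \in \Gamma$, the $G$-equivariance $\beta_{g\xi}(gh,gq) = \beta_\xi(h,q)$ from \eqref{eq.basic0} together with the cocycle identity give $(\gamma g)^\pm = \gamma g^\pm$ and
\[
\beta_{(\gamma g)^-}(e,\gamma g) = \beta_{\gamma g^-}(e,\gamma) + \beta_{g^-}(e,g),
\]
so that $\gamma\cdot(\xi,\eta,b) = (\gamma\xi,\gamma\eta, b+\beta_{\gamma\eta}(e,\gamma))$ in Hopf coordinates. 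Since every $a \in A$ fixes both $e^+$ and $e^-$, the right $A$-action preserves the $\FF$-factor; applying \eqref{eq.basic1} to $g=a$ (which lies trivially in both $KAN^+$ and $KAN^-$) yields $\beta_{e^+}(e,a)=\log a$ and $\beta_{e^-}(e,a) = -\i(\log a)$, so right multiplication by $a$ becomes $(\xi,\eta,b) \mapsto (\xi,\eta,b-\i(\log a))$.

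For the left $\Gamma$-invariance, I would substitute $(\xi',\eta',b') = \gamma\cdot(\xi,\eta,b)$ into $\int F(\gamma g)\,d\tilde m_{\nu_1,\nu_2}(g)$. The translation of the $\fa$-coordinate preserves the Lebesgue factor $db$, and the conformality relation \eqref{gc0} applied to each of $\nu_1$ and $\nu_2$ produces the Radon--Nikodym factor $\exp\bigl(\psi_1(\beta_{\xi'}(e,\gamma)) + \psi_2(\beta_{\eta'}(e,\gamma))\bigr)$. On the other hand, the cocycle identity yields $\beta_{\gamma g^\pm}(e,\gamma g) = \beta_{\gamma g^\pm}(e,\gamma) + \beta_{g^\pm}(e,g)$, which forces the density $\rho(\xi,\eta,b) := \exp\bigl(\psi_1(\beta_{g^+}(e,g))+\psi_2(\beta_{g^-}(e,g))\bigr)$ to pick up precisely the reciprocal factor under the substitution. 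The two contributions cancel, giving the left $\Gamma$-invariance.

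For the right $A$-quasi-invariance, the $\FF$-factor is untouched and the Lebesgue measure on $\fa$ is translation-invariant, so the only contribution comes from $\rho$. Writing $g' = ga$ with Hopf coordinates $(\xi,\eta,b-\i(\log a))$, the cocycle law together with $\beta_{e^+}(e,a)=\log a$ and $\beta_{e^-}(e,a) = -\i(\log a)$ gives
\[
\beta_{g'^+}(e,g') = \beta_{g^+}(e,g) + \log a, \qquad \beta_{g'^-}(e,g') = b - \i(\log a),
\]
from which $\rho(\xi,\eta,b-\i(\log a)) = e^{\psi_1(\log a) - \psi_2(\i(\log a))}\,\rho(\xi,\eta,b)$. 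Substituting into $\int F(ga)\,d\tilde m_{\nu_1,\nu_2}(g)$ and translating the $\fa$-coordinate by $-\i(\log a)$ produces the claimed modular character $e^{(-\psi_1+\psi_2\circ\i)(\log a)}$.

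The only mild difficulty is the bookkeeping between the two Iwasawa decompositions $KAN^+$ and $KAN^-$ that appear in \eqref{eq.basic1}, which compute $\beta_{g^-}(e,g)$ and $\beta_{g^+}(e,g)$ respectively; these two Busemann values must be tracked independently. Once this is in place, the entire proof reduces to recording three parallel contributions — the conformal Radon--Nikodym factors from $\nu_1$ and $\nu_2$, the translation of the $\fa$-coordinate, and the transformation of the density $\rho$ — and checking that their product equals $1$ in the $\Gamma$-direction and exactly $e^{(-\psi_1+\psi_2\circ\i)(\log a)}$ in the $A$-direction.
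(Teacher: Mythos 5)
Your proof is correct and follows essentially the same route as the paper: a direct change of variables in Hopf coordinates, invoking the cocycle identity and the identities $\beta_{e^+}(e,a)=\log a$, $\beta_{e^-}(e,a)=-\i(\log a)$ from \eqref{eq.basic1}, with the conformality \eqref{gc0} canceling the density change in the $\Gamma$-direction and surviving as the modular character in the $A$-direction. The only cosmetic difference is that you first package the actions explicitly as maps on $\FF\times\fa$ before substituting, whereas the paper manipulates $d\tilde m_{\nu_1,\nu_2}(\gamma g)$ and $d\tilde m_{\nu_1,\nu_2}(ga)$ directly, but the computation is identical.
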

\begin{proof}
Let  $\ga\in\Ga$ and $g\in G$ be arbitrary.
Note
\begin{equation*}
\beta_{\ga g^\pm}(e,\ga g)=\beta_{\ga g^\pm}(e,\ga)+\beta_{\ga g^\pm}(\ga,\ga g)=\beta_{\ga g^\pm}(e,\ga)+\beta_{ g^\pm}(e,g).
\end{equation*}
Recall the conformality of the measures $\nu_{1}$ and $\nu_2$:
\begin{align*}
d\nu_{1}(\ga g^+)=e^{\psi_1(\beta_{g^+}(e,\ga^{-1}))}d\nu_{1}(g^+)\text{ and }
d\nu_{2}(\ga g^-)=e^{\psi_2(\beta_{g^-}(e,\ga^{-1}))}d\nu_{2}(g^-).
\end{align*}
Combining these, we have
\begin{align*}&
d\tilde m_{\nu_1, \nu_2}(\ga g)=e^{\psi_1(\beta_{\ga g^+}(e,\ga g))+\psi_2(\beta_{\ga g^-}(e,\ga g))}d\nu_{1}(\ga g^+)d\nu_{2}(\ga g^-)d\ell(b+\beta_{\ga g^-}(e,\ga))\\
&=e^{\psi_1(\beta_{g^+}(e,g))+\psi_2(\beta_{g^-}(e,g))}d\nu_{1}(g^+)d\nu_{2}(g^-)d\ell(b)=d\tilde m_{\nu_1, \nu_2}(g).
\end{align*}

Let  $a\in A$.
By the identities \eqref{eq.basic0} and \eqref{eq.basic1},
\begin{gather*}
\beta_{g^\pm}(e,ga)=\beta_{g^\pm}(e,g)+\beta_{g^\pm}(g,ga)=\beta_{g^\pm}(e,g)+\beta_{e^\pm}(e,a);\\
ga^\pm=g^\pm,\text{ and }\beta_{e^+}(e,a)=-\op i \beta_{e^-}(e,a)=\log a.
\end{gather*}
Combining with definition \eqref{eq.BMS0}, we have
\begin{align*}
&d\tilde m_{\nu_1, \nu_2}(ga)=e^{\psi_1(\beta_{g^+}(e,ga))+\psi_2(\beta_{g^-}(e,ga))}d\nu_{1}(g^+)d\nu_{2}(g^-)d\ell(b+\beta_{e^-}(e,a))\\
&=e^{\psi_1 (\beta_{g^+}(e, g)+\beta_{e^+}(e,a))+ \psi_2  (\beta_{g^-} (e,g)+\beta_{e^-}(e, a)) } \;   d\nu_{1} (g^+) d\nu_{2} (g^-) d\ell(b)\\
&=e^{(\psi_1-\psi_2\circ \i)(\log a)} d\tilde m_{\nu_1, \nu_2}(g). 
\end{align*} This proves the claim.
\end{proof}

The measure $\tilde m_{\nu_1, \nu_2}$ 
gives rise to a left $\Ga$-invariant and right $M$-invariant measure on $G$, by integrating along the fibers of $G\to G/M$ with respect to the Haar measure on $M$.
By abuse of notation, we will also denote this measure by $\tilde m_{\nu_1, \nu_2}$.
We denote by $m_{\nu_1, \nu_2}$
 the measure on $\Gamma\ba G$ induced by $\tilde m_{\nu_1, \nu_2}$, and call it the generalized BMS-measure associated to the pair $(\nu_1, \nu_2)$.

 \medskip

\noindent
 \textbf{BMS-measure: $m_{\nu_\psi, \nu_{\psi\circ\i}}^{\BMS}$.} Let $\psi\in\fa^*$  and let $\nu_{\psi}$ and $\nu_{\psi\circ \i}$ be  respectively
  $(\Gamma, \psi)$ and $(\Gamma, \psi \circ \i)$-PS measures.
We set 
\begin{equation}\label{def.BMS}
m^{\BMS}_{\nu_\psi, \nu_{\psi\circ\i}}:=m_{\nu_\psi, \nu_{\psi\circ\i}}
\end{equation}
and call it the Bowen-Margulis-Sullivan measure associated to $(\nu_\psi, \nu_{\psi\circ\i})$.
By Lemma \ref{qa}, $m^{\BMS}_{\nu_\psi,\nu_{\psi\circ\i}}$ is an $A$-invariant measure, whose support is given by 
 $$\text{supp}(m^{\BMS}_{\nu_\psi, \nu_{\psi\circ\i}})=\{x \in\Ga\ba G : x^\pm\in\La_\Ga\};$$
 since $\Lambda_\G$ is $\Gamma$-invariant, the condition $x^{\pm}\in \Lambda_\G$ is a well-defined condition.
\medskip 

 \noindent
 \textbf{BR-measures: $m^{\BR}_{\nu_\psi}$.}   We set 
 \begin{equation}\label{def.BR}
m^{\BR}_{\nu_{\psi}}=m_{m_o, \nu_{\psi}};
\end{equation} 
 and call it the Burger-Roblin measure associated to $\nu_\psi$.
 Note that the support of $m^{\BR}_{\nu_\psi}$
 is given by 
$$ \text{supp}(m^{\BR}_{\nu_\psi})=\{x \in \Gamma\ba G: x^-\in \Lambda_\Gamma\}.$$
 \begin{lemma}\label{lem.BRinv}
The Burger-Roblin measure $ m_{\nu_\psi}^{\BR}$ is  right $N^+$-invariant.
\end{lemma}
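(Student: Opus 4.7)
My plan is to verify $N^+$-invariance directly from the Hopf-coordinate expression \eqref{eq.BMS0} for the lifted measure $\tilde m_{m_o,\nu_\psi}$ on $G/M$, by tracking how right multiplication by $n\in N^+$ affects each of the three factors $dm_o(g^+)$, $d\nu_\psi(g^-)$, and $db$ (together with the exponential weight). The right $N^+$-action is well-defined on $G/M$ because $M$ normalizes $N^+$, so it suffices to prove invariance of $\tilde m_{m_o,\nu_\psi}$ upstairs.

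First I would establish that $n\in N^+$ preserves the coordinates $g^-$ and $b$. Since $N^+\subset P^+=w_0Pw_0^{-1}$ and $P^+$ stabilizes $e^-=w_0P$, we have $n\cdot e^-=e^-$, which via the identity $\beta_\xi(gh,gq)=\beta_{g^{-1}\xi}(h,q)$ gives $(gn)^-=g^-$. For the Busemann factor, I would apply \eqref{eq.basic0} to reduce
$$\beta_{g^-}(e,gn)=\beta_{g^-}(e,g)+\beta_{g^-}(g,gn)=\beta_{g^-}(e,g)+\beta_{e^-}(e,n),$$
and then use \eqref{eq.basic1} together with the Iwasawa decomposition $n^{-1}w_0=w_0\cdot e\cdot(w_0^{-1}n^{-1}w_0)\in KAN$ to conclude $\beta_{e^-}(e,n)=-\sigma(n^{-1},e^-)=0$. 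Hence both $g^-$ and $b$ are fixed by the right $N^+$-action, and consequently the factor $e^{\psi\beta_{g^-}(e,g)}\,d\nu_\psi(g^-)\,db$ is unchanged.

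The remaining, and only nontrivial, step is to show that the $g^+$-dependent part $e^{2\rho\beta_{g^+}(e,g)}\,dm_o(g^+)$ is preserved under $g\mapsto gn$. My approach is to compare with the $G$-invariant Haar measure on $G/M$: applying \eqref{eq.BMS0} to the pair $(m_o,m_o)$ and using the conformality \eqref{Lebconform} together with the standard Iwasawa decomposition of Haar, one identifies $\tilde m_{m_o,m_o}$ (up to a positive scalar) with the Haar measure on $G/M$; consistency with Lemma \ref{qa} is the fact $2\rho=2\rho\circ\i$. Since Haar is right $N^+$-invariant, the equality
$$e^{2\rho\beta_{(gn)^+}(e,gn)+2\rho\beta_{(gn)^-}(e,gn)}\,dm_o((gn)^+)\,dm_o((gn)^-)\,db = e^{2\rho\beta_{g^+}(e,g)+2\rho\beta_{g^-}(e,g)}\,dm_o(g^+)\,dm_o(g^-)\,db$$
holds; after cancelling the $g^-$-factors (equal on both sides by Step~1), I obtain the desired fiberwise identity
$$e^{2\rho\beta_{(gn)^+}(e,gn)}\,dm_o((gn)^+)=e^{2\rho\beta_{g^+}(e,g)}\,dm_o(g^+).$$
Inserting this back into the definition \eqref{def.BR} and combining with the first step finishes the proof.

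The main technical subtlety I anticipate is interpreting the above identity correctly: $g^+$ genuinely moves under the $N^+$-action (unlike $g^-$ and $b$), so the comparison with Haar must be read as a disintegration statement along the fibers of the projection $(g^+,g^-,b)\mapsto(g^-,b)$ rather than a pointwise equality. The verification $\beta_{e^-}(e,n)=0$ for $n\in N^+$ also requires a careful Iwasawa computation, but once this is in hand, everything else is bookkeeping using the cocycle identities \eqref{eq.basic0} and \eqref{eq.basic1}.
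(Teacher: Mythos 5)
Your first step — showing that right multiplication by $n\in N^+$ fixes the Hopf coordinates $g^-$ and $b$, and that $\beta_{e^-}(e,n)=0$ via the Iwasawa decomposition of $n^{-1}w_0$ — matches the paper's proof and is correct. Where you diverge is the $g^+$-factor. The paper treats it directly: it computes $dm_o\bigl((gn)^+\bigr)/dm_o(g^+)$ from the $(G,2\rho)$-conformality \eqref{Lebconform} applied to the conjugate $gng^{-1}$, and a short Busemann cocycle manipulation (three applications of \eqref{eq.basic0}) shows the resulting Jacobian exactly offsets the change in $e^{2\rho\beta_{g^+}(e,g)}$. You instead infer the same Jacobian identity indirectly, from right $N^+$-invariance of $\tilde m_{m_o,m_o}$.

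That detour is defensible but has two soft spots you should address. First, your premise that $\tilde m_{m_o,m_o}$ is Haar on $G/M$ is something the paper itself justifies (in the discussion around \eqref{Haar5}) by invoking the very lemma you are proving, so as written your argument risks circularity; you correctly note it can be established independently by a Hopf--Iwasawa change of variables, but that independent computation — which is essentially of the same difficulty as the paper's direct approach — needs to be carried out, not merely cited. Second, invariance of $\tilde m_{m_o,m_o}$ a priori yields the fiberwise identity
$$e^{2\rho\beta_{(gn)^+}(e,gn)}\,dm_o\bigl((gn)^+\bigr)=e^{2\rho\beta_{g^+}(e,g)}\,dm_o(g^+)$$
only for $m_o\times\mathrm{Leb}$-almost every $(g^-,b)$, not for every $(g^-,b)$. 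Since $\nu_\psi$ is supported on $\Lambda_\Gamma$ and is typically mutually singular to $m_o$, an $m_o$-a.e.\ statement does not automatically transfer to a $\nu_\psi$-a.e.\ one, so you cannot literally \emph{cancel} the $g^-$-factors across the two measures. You must first upgrade the identity to an everywhere statement — for instance by observing that both sides are continuous in $(g^-,b)$ (the Jacobian of $g^+\mapsto(gn)^+$ and the density $e^{2\rho\beta_{g^+}(e,g)}$ are smooth) and that $m_o\times\mathrm{Leb}$ has full support. This is not hard, but your write-up elides it; the paper's direct conformality computation avoids both of these issues entirely and is the cleaner route.
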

\begin{proof}
Let $g\in G$ and $n\in N^+$.
By the identities \eqref{eq.basic0} and \eqref{eq.basic1}, we have $gn^-=g^-$, $\beta_{n^-}(e,n)=0$ and
\begin{equation*}
\beta_{gn^-}(e,gn)=\beta_{gn^-}(e,g)+\beta_{gn^-}(g,gn)=\beta_{gn^-}(e,g)+\beta_{n^-}(e,n)=\beta_{g^-}(e,g).
\end{equation*}
On the other hand, by the conformality \eqref{gc0},
\begin{align*}
dm_o(gn^+)&=e^{2\rho(\beta_{gn^+}(gng^{-1},e))}dm_o(g^+)\\
&=e^{2\rho(\beta_{gn^+}(gng^{-1},gn)+\beta_{gn^+}(gn,e))}dm_o(g^+)\\
&=e^{2\rho(\beta_{g^+}(e,g)-\beta_{gn^+}(e,gn))}dm_o(g^+).
\end{align*}
Combining these, we have
\begin{align*}
d\tilde m_{m_o, \nu_\psi}(gn)&=e^{2\rho (\beta_{gn^+}(e, gn))+ \psi (\beta_{g^-} (e, gn)) } \;   dm_o (gn^+) d\nu_{\psi} (g^-) d\ell(b)\\
&=e^{2\rho (\beta_{g^+}(e, g))+ \psi (\beta_{g^-} (e, g)) } \;   dm_o (g^+) d\nu_{\psi} (g^-) d\ell(b)\\
&=d\tilde m_{m_o,\nu_\psi}(g).
\end{align*} \end{proof}

 Similarly, 
but with a different parametrization $g=(g^+,g^-,b=\beta_{g^+}(e,g))$, we define the following $N^-$-invariant locally finite measure:
\be\label{dualb} d\tilde m ^{\BR_*}_{\nu_\psi}(g)=e^{\psi (\beta_{g^+}(e, g))+2\rho (\beta_{g^-} (e, g )) } \; d\nu_{\psi} (g^+) dm_o (g^-)  db. \ee
\noindent
\textbf{Haar measure $m^{\Haar}$.} Recall that the $K$-invariant probability measure $m_o$ is a conformal density
for  the linear form $2\rho$.  We denote by $dx=dm^{\Haar}(x)$ the generalized BMS measure associated to the pair
$(m_o, m_o)$:
\be\label{Haar5} dx=dm^{\Haar}:=dm_{m_o,m_o}.\ee
  Since $m_o$ is a $(G, 2\rho)$-conformal measure, $dm^{\Haar}$ is $G$-invariant;
 the proofs of Lemma \ref{qa} and Lemma \ref{lem.BRinv} show that $m_o$ is invariant under $AM$ and $ N^{\pm}$. As these subgroups generate $G$,
 the $G$-invariance follows). 
 
\section{Disintegration of the \texorpdfstring{$\BMS$}{} and \texorpdfstring{$\BR$}{}-measures along \texorpdfstring{$N$}{}-orbits}\label{sec.Norb}
In this section, we fix a linear form $\psi\in \dg$,  a $(\Gamma, \psi)$-PS measure $\nu_\psi$ and
a $(\Gamma, \psi\circ \i)$-PS measure $\nu_{\psi\circ \i}$ on $\F$.
To simplify the notations, we write 
 $$\nu:=\nu_{\psi},\;\nu_{\op{i}}:=\nu_{\psi\circ \op{i}}, \; \tilde m^{\BMS}:=\tilde m^{\BMS}_{\nu_\psi, \nu_{\psi\circ \i}}, \; \tilde m^{\BR}:=\tilde m^{\BR}_{\nu_{\psi\circ \i}}, \; \tilde m^{\BR_*}:=\tilde m^{\BR_*}_{\nu_\psi}.$$

\subsection{PS-measures on $gN^{\pm}$} \label{psm} We start by defining measures on $N^{\pm}$. Firstly, for $g\in G$, define $\mu_{gN^+}^{\mathrm{PS}}:=\mu_{gN^+, \nu}^{\mathrm{PS}} $ and
$ \mu_{gN^-}^{\mathrm{PS}}:=\mu_{gN^-, \nu_\i}^{\mathrm{PS}}$ on $N^{\pm}$ by the formulas:
for $n\in N^+$ and $h\in N^-$, 
\be\label{eq.PSN}   d\mu_{gN^+}^{\mathrm{PS}}(n):=e^{\psi(\beta_{(gn)^+} (e,gn) ) }d\nu ((gn)^+)\ee
and $$d\mu_{gN^-}^{\mathrm{PS}}(h):=e^{\psi \circ \i(\beta_{(gh)^-}(e, gh)) }d\nu_\i((gh)^-).$$

These are left $\Gamma$-invariant; for any  $\gamma\in \Gamma$ and $g\in G$, 
$$\mu_{\gamma gN^\pm }^{\mathrm{PS}}= \mu_{gN^{\pm}}^{\mathrm{PS}}.$$
When $xN^{\pm}$ is closed in $\G\ba G$ for $x=[g]\in \Gamma \ba G$,
$\mu_{gN^{\pm}}^{\mathrm{PS}}$ induces a locally finite Borel measure on 
$ \op{Stab}_{N^{\pm}}(x)\ba N^{\pm} \simeq xN^{\pm}$ which we will denote by
$d\mu_{xN^{\pm}}^{\PS}$.

Recalling that $A$ normalizes $N^{\pm}$, we will use the following lemma:

\begin{lem}\label{MANconj} For any $g\in G,\,a\in A,\,n_0,n\in N^+$, we have
\begin{equation*}
d\mu_{gN^+}^{\mathrm{PS}}\big( an_0n a^{-1}\big)=e^{-\psi(\log a)}d\mu_{gan_0N^+}^{\mathrm{PS}}(n). \end{equation*}
\end{lem}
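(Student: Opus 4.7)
The plan is to verify the identity by a direct manipulation starting from the defining formula \eqref{eq.PSN} of $\mu_{gN^+}^{\mathrm{PS}}$ and invoking the Busemann function identities \eqref{eq.basic0} and \eqref{eq.basic1}. Substituting $n' = an_0na^{-1}$ into the definition, one must simplify two ingredients: the boundary point $(gn')^+$ appearing as the argument of $\nu$, and the Busemann exponent $\psi(\beta_{(gn')^+}(e,gn'))$.

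First, since $a \in A \subset P$ fixes $e^+ \in G/P$, one has $a^{-1}\cdot e^+ = e^+$, so
\[
(gan_0na^{-1})^+ = gan_0na^{-1}\cdot e^+ = gan_0n\cdot e^+ = (gan_0n)^+.
\]
This immediately identifies the $\nu$-factor on both sides. Next, I would use the cocycle relation in \eqref{eq.basic0} to split
\[
\beta_{(gan_0n)^+}(e,\,gan_0na^{-1}) = \beta_{(gan_0n)^+}(e,\,gan_0n) + \beta_{(gan_0n)^+}(gan_0n,\,gan_0na^{-1}),
\]
and then apply the $G$-equivariance $\beta_{h\xi}(hx,hy) = \beta_\xi(x,y)$ with $h = gan_0n$ (recalling $(gan_0n)^+ = h\cdot e^+$) to rewrite the second summand as $\beta_{e^+}(e,a^{-1})$. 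By the second identity in \eqref{eq.basic1} applied to the $KAN^-$-decomposition $a^{-1}=e\cdot a^{-1}\cdot e$, this equals $-\log a$.

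Combining these two reductions, the factor $e^{\psi(\beta_{(gn')^+}(e,gn'))}d\nu((gn')^+)$ becomes $e^{-\psi(\log a)}\cdot e^{\psi(\beta_{(gan_0n)^+}(e,gan_0n))}d\nu((gan_0n)^+)$, and the right-hand group recombines into $e^{-\psi(\log a)}\,d\mu_{gan_0N^+}^{\mathrm{PS}}(n)$ by definition \eqref{eq.PSN} applied with base point $gan_0$ in place of $g$. This yields the asserted equality.

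There is no substantive obstacle here; the lemma is essentially a bookkeeping exercise. The only subtle point is being careful with the sign of $\beta_{e^+}(e,a^{-1})$ and ensuring that $\psi$ is evaluated on $\log a$ rather than $-\log a$; this is dictated by \eqref{eq.basic1}, which must be applied to $a^{-1}$ (not $a$), contributing $\log(a^{-1}) = -\log a$ and yielding the factor $e^{-\psi(\log a)}$ in the final answer.
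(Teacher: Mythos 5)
Your proof is correct and follows exactly the same route as the paper's: compute $(gan_0na^{-1})^+ = (gan_0n)^+$ since $a^{-1}$ fixes $e^+$, split the Busemann exponent via the cocycle identity, identify the correction term as $\beta_{e^+}(e,a^{-1}) = -\log a$ using \eqref{eq.basic1}, and reassemble to extract the factor $e^{-\psi(\log a)}$. No discrepancies with the published argument.
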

\begin{proof}
By \eqref{eq.basic0} and \eqref{eq.basic1}, we have $(gan_0na^{-1})^+=gan_0n^+$, and
\begin{align*}
\beta_{gan_0n^+}(e,gan_0na^{-1})&=\beta_{gan_0n^+}(e,gan_0n)+\beta_{gan_0n^+}(gan_0n,gan_0na^{-1})\\
&=\beta_{gan_0n^+}(e,gan_0n)+\beta_{e^+}(e,a^{-1}).
\end{align*}
Also note that $\beta_{e^+}(e,a^{-1})=-\log a$.
Consequently,
\begin{align*}
d\mu_{gN^+}^{\mathrm{PS}}(an_0na^{-1})&=e^{\psi(\beta_{gan_0n^+}(e,gan_0na^{-1}))}d\nu (gan_0n^+)
\\&=e^{-\psi(\log a)}e^{\psi(\beta_{gan_0n^+}(e,gan_0n))}d\nu (gan_0n^+)
\\&=e^{-\psi(\log a)}d\mu_{gan_0N^+}^{\mathrm{PS}}(n).
\end{align*}
\end{proof}

The measures $\mu_{gN^{\pm}}^{\mathrm{PS}}$ allow us to decompose the BMS-measure as follows:
The product map $N^+\times P^-\to G$ is a diffeomorphism onto a Zariski open neighborhood of $e$.

\subsection{Product structure of BMS measures}\label{measdecompsec}
Given $g\in G$, the BMS measure
 $\tilde{m}^{\mathrm{BMS}}$ can be disintegrated in $g N^+P^-$ as follows. 

\begin{lem}\label{NPdecomp}
For $g\in G$, $f\in C_c(gN^+P^-)$, and $nham\in N^+N^-AM$,
\begin{equation*}
\tilde{m}^{\mathrm{BMS}}(f)=\int_{N^+}\left(\int_{N^-AM}  f(gnham)\,dm\, da \; d\mu_{gnN^-}^{\mathrm{PS}}(h)\right)d\mu_{gN^+}^{\mathrm{PS}}(n).
\end{equation*}
\end{lem}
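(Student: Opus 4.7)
The plan is to compute the pullback of the BMS measure $\tilde m^{\BMS}$ under the parameterization $\Phi:N^+\times N^-\times A\times M\to gN^+P^-$ given by $(n,h,a,m)\mapsto gnham$, which is a diffeomorphism onto the Zariski open subset $gN^+P^-\subset G$. Once this pullback is written using the Hopf description \eqref{eq.BMS0} of $\tilde m^{\BMS}$, the factorization claimed in the lemma should drop out after matching the exponential weights with the PS-density formula \eqref{eq.PSN}.

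The first step is to identify the Hopf coordinates of $x=gnham$. Since $ham\in N^-AM=P$ stabilizes $e^+=[P]$, we have $x^+=(gn)^+$. Since $AM$ stabilizes $e^-=[w_0P]$ (because $w_0^{-1}(AM)w_0=AM\subset P$) but $N^-$ does not, we have $x^-=(gnh)^-$. The cocycle identities \eqref{eq.basic0} then give
\[
\beta_{x^+}(e,x)=\beta_{(gn)^+}(e,gn)+\beta_{e^+}(e,ham),\qquad \beta_{x^-}(e,x)=\beta_{(gnh)^-}(e,gnh)+\beta_{e^-}(e,am).
\]

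The key algebraic step is to evaluate the two auxiliary terms via explicit Iwasawa-style decompositions. Writing $ham=m\cdot a\cdot(a^{-1}m^{-1}hma)\in KAN^-$ and applying \eqref{eq.basic1} yields $\beta_{e^+}(e,ham)=\log a$; writing $am=m\cdot a\cdot e\in KAN^+$ and applying \eqref{eq.basic1} yields $\beta_{e^-}(e,am)=-\i(\log a)$. This produces the crucial cancellation $\psi(\log a)+(\psi\circ\i)(-\i(\log a))=\psi(\log a)-\psi(\log a)=0$, so the two extra exponential weights in the Hopf density \eqref{eq.BMS0} annihilate each other. What survives is
\[
\bigl(e^{\psi(\beta_{(gn)^+}(e,gn))}\,d\nu((gn)^+)\bigr)\cdot\bigl(e^{\psi\circ\i(\beta_{(gnh)^-}(e,gnh))}\,d\nu_{\i}((gnh)^-)\bigr)\cdot db\,dm,
\]
and by \eqref{eq.PSN} the two bracketed factors are precisely $d\mu^{\PS}_{gN^+}(n)$ and $d\mu^{\PS}_{gnN^-}(h)$.

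To conclude, one checks that the change of variable between $a$ and $b$ has trivial Jacobian: $b=\beta_{(gnh)^-}(e,gnh)-\i(\log a)$ depends on $a$ only through the linear involution $\i$, and $|\det\i|=1$ since $\i^2=\op{id}$, so $db=da$ under the standard identification of the Haar measure on $A$ with the Lebesgue measure on $\fa$ via $\log$. Fubini then yields the claimed identity. The main subtle point is the pair of Iwasawa identifications $\beta_{e^+}(e,ham)=\log a$ and $\beta_{e^-}(e,am)=-\i(\log a)$; once these are in place, the cancellation, the recognition of the PS-measures, and the $db=da$ computation are all routine bookkeeping.
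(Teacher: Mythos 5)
Your proof is correct and follows essentially the same route as the paper: identify the Hopf coordinates $x^+=(gn)^+$, $x^-=(gnh)^-$, use the cocycle identities to split off $\beta_{e^+}$ and $\beta_{e^-}$ terms, observe the cancellation $\psi(\log a)-\psi\circ\i(\i\log a)=0$, recognize the PS-densities, and check $db=da$ via $|\det\i|=1$. The only cosmetic difference is that you carry the $M$-factor along from the start (via the Iwasawa decompositions $ham\in KAN^-$ and $am\in KAN^+$), whereas the paper first computes $d\tilde m^{\BMS}(gnha)$ with $\beta_{e^\pm}(e,a)$ and then multiplies in $dm$ at the end — both handle the $M$-fiber equivalently.
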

\begin{proof} 
By the identities \eqref{eq.basic0} and \eqref{eq.basic1}, we have $gnha^-=gnh^-$, $gnha^+=gn^+$, and
\begin{align*}
 \beta_{gnh^\pm}(e,gnha)&= \beta_{gnh^\pm}(e,gnh)+\beta_{gnh^\pm}(gnh,gnha)\\
&=\beta_{gnh^\pm}(e,gnh)+\beta_{e^\pm}(e,a).
\end{align*}
Note that $\beta_{e^-}(e,a)=-\op{i}\log a$ and
$\beta_{e^+}(e,a)=\log a$.
Hence
\begin{align*}
&d\tilde{m}^{\mathrm{BMS}}(gnha)\\
&=e^{\psi( \op{i}(\beta_{gnh^-}(e,gnha)+\beta_{gnh^+}(e,gnha)))}d\nu_{\op{i}}(gnh^-)d\nu(gn^+)\,d\ell(\beta_{gnh^-}(e,gnha))\\
&\begin{multlined}[t]
=e^{\psi( \op{i}\beta_{gnh^-}(e,gnh)-\log a+\beta_{gnh^+}(e,gnh)+\log a)}
\\
\times d\nu_{\op{i}}(gnh^-)\, d\nu(gn^+)\,d\ell(\beta_{gnh^-}(e,gnh)+\log a)
\end{multlined}\\
&=da\,d\mu_{gnN^-}^{\mathrm{PS}}(h)\,d\mu_{gN^+}^{\mathrm{PS}}(n).
\end{align*}
Hence for $nham\in N^+N^-AM$,
\begin{align*}
d\tilde{m}^{\mathrm{BMS}}(gnham)=dm\,d\tilde{m}^{\mathrm{BMS}}(gnha)=dm\,da\,d\mu_{gnN^-}^{\mathrm{PS}}(h)d\mu_{gN^+}^{\mathrm{PS}}(n),
\end{align*}
proving the claim.
\end{proof}
In a similar manner, one can decompose the BMS measure according to $gP^-N^+$:
\begin{lem}\label{PNdecmop}
For $g\in G$, $f\in C_c(gP^-N^+)$, and $hamn\in N^-AMN^+$,
\begin{equation*}
\tilde{m}^{\mathrm{BMS}}(f)=\int_{N^-AM}\left(\int_{N^+}f(ghamn)\,d\mu_{ghamN^+}^{\mathrm{PS}}(n)\right)e^{-\psi(\log a)}\,dm\,da\,d\mu_{gN^-}^{\mathrm{PS}}(h).
\end{equation*}
\end{lem}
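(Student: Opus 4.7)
The plan is to mimic the proof of Lemma \ref{NPdecomp}, using the Hopf parametrization together with the cocycle identities \eqref{eq.basic0}, \eqref{eq.basic1}. The new feature is that in the decomposition $x = ghamn$ the $A$-factor stands to the left of $N^+$, which will produce an uncancelled factor $e^{-\psi(\log a)}$.

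First I would compute the Hopf data for $x = ghamn$. Every factor of $amn \in P^+ = MAN^+$ stabilizes $w_0[P] = e^-$ (since $M, A, N^+ \subset P^+$), so $x^- = (gh)^-$. For the corresponding Busemann function, the cocycle relation and equivariance give
\begin{equation*}
\beta_{(gh)^-}(e, ghamn) = \beta_{(gh)^-}(e, gh) + \beta_{e^-}(e, amn).
\end{equation*}
Rewriting $amn = man$ (since $M$ centralizes $A$), the product lies in $KAN^+$ with $K$-part $m$ and $A$-part $a$, and $(man)^- = e^-$, so the first identity in \eqref{eq.basic1} yields $\beta_{e^-}(e, amn) = -\i\log a$. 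No explicit expression for $\beta_{x^+}(e, x)$ is needed, since it will be absorbed into the PS measure on $ghamN^+$.

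Plugging into \eqref{eq.BMS0}, the exponent becomes
\begin{equation*}
\psi\bigl(\beta_{x^+}(e, x)\bigr) + (\psi\circ\i)\bigl(\beta_{(gh)^-}(e, gh)\bigr) - \psi(\log a),
\end{equation*}
using that $(\psi\circ\i)(-\i v) = -\psi(v)$ for $v \in \fa$. By the definition \eqref{eq.PSN} of the PS measures, $e^{\psi(\beta_{x^+}(e, x))}\,d\nu(x^+) = d\mu_{ghamN^+}^{\mathrm{PS}}(n)$ (note that the basepoint is $gham$, not $g$) and $e^{(\psi\circ\i)(\beta_{(gh)^-}(e, gh))}\,d\nu_{\i}((gh)^-) = d\mu_{gN^-}^{\mathrm{PS}}(h)$. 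The Lebesgue factor $d\ell(\beta_{(gh)^-}(e, gh) - \i\log a)$, viewed as a measure in $a$ with $h$ fixed, equals $da$, since $\i = -\Ad_{w_0}$ with $w_0 \in K$ preserves the inner product on $\fa$, so $|\det \i| = 1$. Finally, the $M$-fiber of $G \to G/M$ contributes $dm$. Assembling these pieces yields the claimed decomposition.

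The structural contrast with Lemma \ref{NPdecomp} is instructive: there $a$ sits at the far right of $x = gnha$, producing a $+\psi(\log a)$ contribution in the $x^+$-Busemann term that cancels the $-\psi(\log a)$ coming from the $\i$-side. In the present parametrization, the $x^+$-Busemann contribution is instead absorbed into $d\mu_{ghamN^+}^{\mathrm{PS}}(n)$ through the shift of basepoint from $g$ to $gham$, so no such cancellation occurs and the factor $e^{-\psi(\log a)}$ survives. The main bookkeeping step is this absorption, which should cause no serious obstacle.
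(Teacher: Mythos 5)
Your proof is correct and follows essentially the same route as the paper: compute the Hopf data for $x=ghamn$ via the cocycle identities \eqref{eq.basic0}, \eqref{eq.basic1}, absorb $e^{\psi(\beta_{x^+}(e,x))}d\nu(x^+)$ into $d\mu_{ghamN^+}^{\PS}(n)$, absorb the $\psi\circ\i$ term into $d\mu_{gN^-}^{\PS}(h)$, and observe that the uncancelled $-\psi(\log a)$ survives. The only superficial difference is that the paper first substitutes $n_0=mnm^{-1}$ to peel off the $M$-fiber, whereas you keep $ghamn$ throughout and invoke the $M$-fiber integration at the end; these are the same bookkeeping.
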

\begin{proof}
For each $m\in M$, consider the change of variable $n_0=mnm^{-1}$.
Then for $hamn\in N^-AMN^+$, we have
\begin{equation*}
d\tilde m^{\BMS}(ghamn)=d\tilde m^{\BMS}(ghan_0m)=dm\,d\tilde m^{\BMS}(ghan_0).
\end{equation*}
By the identity \eqref{eq.basic1}, we have $\beta_{gh^-}(e,ghan_0)=\beta_{gh^-}(e,gh)-\i\log a$ and
$$ \op{i}(\beta_{gh^-}(e,ghan_0))+\beta_{ghan_0^+}(e,ghan_0)= \op{i}(\beta_{gh^-}(e,gh)-\i\log a)+\beta_{ghan_0^+}(e,ghan_0).
$$
It follows that
\begin{align*}
&d\tilde{m}^{\mathrm{BMS}}(ghan_0)\\
&=e^{\psi(\beta_{ghan_0^+}(e,ghan_0))}d\nu(ghan_0^+)\,e^{-\psi(\log a)}\,da\,e^{\psi\circ \op{i}(\beta_{gh^-}(e,gh))}d\nu_{\op{i}}(gh^-)\\
&=d\mu_{ghamN^+}^{\PS}(n)\,e^{-\psi(\log a)}\,da\,d\mu_{gN^-}^{\PS}(h).
\end{align*}
Hence
\begin{equation*}
d\tilde{m}^{\mathrm{BMS}}(ghamn)=d\mu_{ghamN^+}^{\PS}(n)\,e^{-\psi(\log a)}\,dm\,da\,d\mu_{gN^-}^{\PS}(h),
\end{equation*}
finishing the proof.
\end{proof}
Define, for $ham\in N^-AM$,
$$d\mu_{gP^-}^{\mathrm{PS}}(ham)=e^{-\psi(\log a)}\,dm\,da\,d\mu_{gN^-}^{\mathrm{PS}}(h).$$ 
This also allows us to succinctly rewrite the decomposition in Lemma \ref{PNdecmop} as follows:
for any $ f\in C_c(gP^-N^+)$,
\begin{equation}\label{PNgood}
\tilde{m}^{\mathrm{BMS}}(f)=\int_{P^-}\int_{N^+} f(gp n)\,d\mu_{gpN^+}^{\mathrm{PS}}(n)\,d\mu_{gP^-}^{\mathrm{PS}}(p).
\end{equation}

\noindent\textbf{Lebesgue measures on $gN^{\pm}$.} For $g\in G$, 
we note that the Haar measure on $gN^{\pm}$ can be given as follows:
for $n\in N^{\pm}$, 
\begin{equation}\label{eq.Leb}
d\mu_{gN^-}^{\mathrm{Leb}}(n)=
e^{2\rho (\beta_{(gn)^-}(e, gn)) }dm_o((gn)^-),
\end{equation}
and
\be \label{eq.Leb2} d\mu_{gN^+}^{\mathrm{Leb}}(n)=e^{2\rho (\beta_{(gn)^+} (e,gn) ) }dm_o ((gn)^+) .\ee
Using \eqref{gc0}, it can be checked that these are $N^-$ and $N^+$ invariant measures respectively.
Moreover, $d\mu_{gN^{\pm}}^{\mathrm{Leb}}$ does not depend on $g\in G$, so we simply write $dn$.

\subsection{Decomposition of $m^{\BR}$}
Similarly to Lemma \ref{PNdecmop},
 the $\BR$ and $\BR_*$ measures can be decomposed in terms of the $gP^-N^+$ decomposition of $G$:

For all $f\in C_c(g P^-N^+)$,
\begin{equation}\label{BRPN}
\tilde{m}^{\mathrm{BR}}(f)=\int_{P^-}\int_{N^+} f(g pn)\, dn\, d\mu_{gP^-}^{\mathrm{PS}}(p);
\end{equation}
$$
\tilde{m}^{\mathrm{BR}_*}(f)=\int_{P^-}\int_{N^+}f(ghamn)\,d\mu_{ g hamN^+ }^{\mathrm{PS}}(n)e^{-2\rho(\log a)}\,dm \,da\, dh.
$$

We also have the following description of the BR-measures $\tilde m_{\nu_\psi }^{\BR}$ and $\tilde m_{\nu_\psi }^{\BR_*}$ on $G/M$: \begin{lem}\label{lem.BRD}
\begin{itemize}
    \item 
For $g=k\exp(b)n\in KAN^+$ and $[g]=gM$, we have 
\begin{equation*}
d\tilde m_{\nu_\psi }^{\BR}([g])=e^{-(\psi \circ\i)(b)}\,dn\,d\ell(b)\,d\tilde\nu_{\psi} (k)
\end{equation*}
where $\tilde\nu_\psi$ is given by
 \be\label{km} d\tilde\nu_{\psi}(k):=d\nu_{\psi}(k^-)\,dm.\ee
 \item 
 For $g=k\exp(b)n\in KAN^-$ and $[g]=gM$, we have 
\begin{equation*}
d\tilde m_{\nu_\psi }^{\BR_*}([g])=e^{\psi(b)}\,dn\,d\ell(b)\,d\tilde\nu_{\psi} (k)
\end{equation*}
where $\tilde\nu_\psi$ is given by
 \begin{equation*}
     d\tilde\nu_{\psi}(k):=d\nu_{\psi}(k^+)\,dm.
 \end{equation*} 
 \end{itemize}
\end{lem}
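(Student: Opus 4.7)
The plan is to verify both formulas by a direct change of variables, starting from the Hopf-coordinate definitions of $\tilde m^{\BR}_{\nu_\psi}$ and $\tilde m^{\BR_*}_{\nu_\psi}$ and rewriting in the Iwasawa parametrization $g=k\exp(b)n\in KAN^{\pm}$. The two bullets are symmetric, so I focus on the first. For $g=k\exp(b)\tilde n\in KAN^+$, the identity \eqref{eq.basic1} yields $g^-=kw_0P=k^-$ and $\beta_{g^-}(e,g)=-\i(b)$. Since $\i$ is an isometry of $\fa$, the Hopf expression
\[
d\tilde m^{\BR}_{\nu_\psi}(g)=e^{2\rho(\beta_{g^+}(e,g))+\psi(\beta_{g^-}(e,g))}\,dm_o(g^+)\,d\nu_\psi(g^-)\,d\ell(b)
\]
immediately contributes the factor $e^{-(\psi\circ\i)(b)}$ on the exponential, while $d\nu_\psi(g^-)=d\nu_\psi(k^-)$ and $d\ell(\beta_{g^-}(e,g))=d\ell(b)$. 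Combining $d\nu_\psi(k^-)$ with the $M$-fiber measure $dm$ that appears when lifting from $G/M$ to $G$ produces $d\tilde\nu_\psi(k)$.

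The main step is the Jacobian identity
\[
e^{2\rho(\beta_{g^+}(e,g))}\,dm_o(g^+)\,dm=c\,d\tilde n
\]
as measures on $N^+$ (with $k$ fixed), for a positive normalization constant $c$. My plan is to derive it by comparing two expressions for the Haar measure on $G$. On one hand, a standard modular-function calculation using $|\det\op{Ad}_{\exp b}|_{\mathfrak n^+}|=e^{-2\rho(b)}$ gives $dg=c_1 e^{-2\rho(b)}\,dk\,db\,d\tilde n$ in $KAN^+$ coordinates. On the other hand, because $m_o$ is $(G,2\rho)$-conformal, the BMS measure $\tilde m_{m_o,m_o}$ agrees up to a constant with $dg$ on $G$ (the remark following \eqref{Haar5}), whose Hopf expression is $e^{2\rho(\beta_{g^+}(e,g))+2\rho(\beta_{g^-}(e,g))}\,dm_o(g^+)\,dm_o(g^-)\,d\ell(b)\,dm$. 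Substituting $\beta_{g^-}(e,g)=-\i(b)$ (so that $e^{2\rho(\beta_{g^-}(e,g))}=e^{-2\rho(b)}$ by $2\rho\circ\i=2\rho$), canceling the common factor $e^{-2\rho(b)}\,d\ell(b)$, and using $dk=dm_o(k^-)\,dm$ (via the $M$-fibration $K\to K/M\cong\F$ conjugated by $w_0$) yields the claimed identity.

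Assembling these steps produces the first bullet. The second bullet follows by the symmetric argument in $KAN^-$ coordinates: for $g=k\exp(b)n\in KAN^-$, \eqref{eq.basic1} gives $g^+=k^+$ and $\beta_{g^+}(e,g)=b$, and the analogous Haar comparison (with the modular factor $e^{+2\rho(b)}$ this time, since $|\det\op{Ad}_{\exp b}|_{\mathfrak n^-}|=e^{2\rho(b)}$) produces $e^{2\rho(\beta_{g^-}(e,g))}\,dm_o(g^-)\,dm=c'\,dn$ on $N^-$. The remaining assembly delivers $e^{\psi(b)}\,dn\,d\ell(b)\,d\tilde\nu_\psi(k)$ with $d\tilde\nu_\psi(k)=d\nu_\psi(k^+)\,dm$. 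The main obstacle I anticipate is the careful tracking of the opposition involution $\i$ and the modular/conjugation factors on $N^{\pm}$ when converting between Hopf and Iwasawa coordinates; once these are handled correctly, the rest is a routine change of variables.
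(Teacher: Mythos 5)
Your argument follows essentially the same route as the paper's proof: apply \eqref{eq.basic1} to get $g^-=k^-$ and $\beta_{g^-}(e,g)=-\i(b)$ (and use $2\rho\circ\i=2\rho$), then reduce the Hopf density to the Lebesgue--conformal Jacobian $e^{2\rho(\beta_{(gn)^+}(e,gn))}\,dm_o((gn)^+)=dn$ on $N^+$ together with $dk=dm_o(k^-)\,dm$ on $K$; the only real difference is that the paper quotes this Jacobian as \eqref{eq.Leb2} (``Lebesgue measures on $gN^{\pm}$'' in Section \ref{sec.Norb}), whereas you re-derive it from a Haar-measure comparison in $KAN^+$ coordinates. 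Two small points worth fixing: the Jacobian identity you state carries a stray $dm$ --- ``$e^{2\rho(\beta_{g^+}(e,g))}\,dm_o(g^+)\,dm=c\,dn$ as measures on $N^+$'' does not typecheck, since $dm$ lives on $M$; your own derivation, after cancelling $e^{-2\rho(b)}\,d\ell(b)$ and $dk=dm_o(k^-)\,dm$ on both sides, gives $e^{2\rho(\beta_{g^+}(e,g))}\,dm_o(g^+)=c\,dn$ with the $M$-fiber measure remaining attached to $d\nu_\psi(k^-)$ to form $d\tilde\nu_\psi(k)$, which is exactly what the lemma requires --- and you should note that the constant $c$ must equal $1$ for the lemma as stated, which is automatic since the paper's $dn$ is \emph{defined} by \eqref{eq.Leb2}.
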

\begin{proof} 
For $g=k\exp(b)n\in KAN^+$, 
we have $\beta_{g^-}(e,g)=-\i (b)$. Since $m_o$ is a $(G, 2\rho)$-conformal measure, we have
\begin{align*}
d\tilde m^{\BR}_{\nu_\psi} ([g])& = e^{2\rho(\beta_{k\exp(b)n^+}(e,k \exp (b)n)) }e^{-(\psi \circ\i)(b)}  dm_o(k\exp(b)n^+) db\, d \nu_{\psi}(k^-)\,dm \\
&=e^{-(\psi \circ\i)(b)}\,dn\,db\, d \tilde \nu_{\psi}(k).
\end{align*}
The second part can be proved similarly.
\end{proof}

\section{\texorpdfstring{$\BMS$}{}-mixing and translates of \texorpdfstring{$\PS$}{}-measures}\label{step1}
 In this section, we fix 
\begin{enumerate}
\item an element $u\in  \L_\Gamma\cap \op{int}\fa^+$,
\item a linear form $\psi\in \dg$ tangent to $\psi_{\Gamma}$ at $u$, 
\item  a $(\Gamma, \psi)$-PS measure  $\nu=\nu_\psi$ on $ \F$, and
\item a $(\Gamma, \psi\circ \i)$-PS measure $\nu_{\i}=\nu_{\psi\circ \i}$  on $\F$.
\end{enumerate}
 As before, we set
$$ m^{\BMS}= m^{\BMS}_{\nu, \nu_{\i}}, \quad \mu^{\PS}_{gN^+}=\mu^{\PS}_{gN^+, \nu}.$$
For all $t \geq 0$ and $v\in \ker \psi$, define $$a(t,v):=\exp(t  u+\sqrt t v)\in A .$$

\begin{Def}\label{lmp} \rm We say that $m^{\BMS}$ satisfies the \emph{local mixing property}  if there exist functions $\Psi : \bb (0,\infty)\to\bb (0,\infty)$
and $J : \ker\psi\to\bb (0,\infty)$ such that
\begin{enumerate}
\item  For all $v\in\ker\psi$ and $f_1,\,f_2\in C_c(\GaG)^M$,
\begin{equation}\label{eq.hypo}
\lim_{t\rightarrow\infty} \Psi(t)\int_{\GaG} f_1\big(x\, a(t,v) \big) f_2(x)  \,dm^{\mathrm{BMS}}(x)=
J(v)\, m^{\mathrm{BMS}}(f_1)\,m^{\mathrm{BMS}}(f_2);
\end{equation}
\item There exists $C=C(f_1,f_2)>0$ such that for all $(t,v)\in(0,\infty)\times\ker\psi$ with $a(t, v)\in A^+$,
\begin{equation*}
\left|\Psi(t)\int_{\GaG} f_1\big(x\, a(t,v) \big) f_2(x)  \,dm^{\mathrm{BMS}}(x)\right|< C.
\end{equation*}
\end{enumerate}
\end{Def}
\medskip
The main goal of this section is to establish the following:
\begin{prop}\label{p1}
Suppose that $m^{\BMS}$ satisfies the local mixing property  for the pair $(\Psi, J)$. 
Then for any $x=[g] \in \GaG$, $v\in\ker\psi$, $f\in C_c(\GaG)^M$, and $\phi\in C_c(N^+)$,
\begin{equation} \label{PSequi} 
\lim_{t\rightarrow\infty} \Psi(t)\int_{N^+} f\big(x n\, a(t,v)\big)\phi(n)\,d\mu_{gN^+}^{\mathrm{PS}}(n)= J(v)\,m^{\mathrm{BMS}}(f)\,\mu_{gN^+}^{\mathrm{PS}}(\phi),
\end{equation}
and there exists $C'=C'(f,\phi)>0$ such that
\begin{equation*}
\left|\Psi(t)\int_{N^+} f\big(x n\, a(t,v) \big)\phi(n)\,d\mu_{gN^+}^{\mathrm{PS}}(n)\right|<C'
\end{equation*}
for all $(t,v)\in(0,\infty)\times\ker\psi$ with $a(t, v)\in A^+$.
\end{prop}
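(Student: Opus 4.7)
I will apply the local-mixing hypothesis to the given $f$ and a suitable ``thickening'' $F_\epsilon$ of $\phi\cdot\mu^{\PS}_{gN^+}$ across the transversal $P^-$-direction, then extract \eqref{PSequi} by using the product structure of $m^{\BMS}$ (Lemma \ref{NPdecomp}). The idea goes back to Roblin in the rank-one case; the main subtlety in higher rank is an $n$-dependent normalizing constant that arises from the transverse PS measures.

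For each small $\epsilon>0$, pick a non-negative, right-$M$-invariant cutoff $\rho_\epsilon\in C_c(P^-)$ supported in a small neighborhood $B_\epsilon=B_\epsilon^{N^-}\cdot B_\epsilon^{AM}$ of $e$, and define $F_\epsilon\in C_c(\Gamma\backslash G)^M$ in the $N^+P^-$-chart by $F_\epsilon(\Gamma gnp):=\phi(n)\rho_\epsilon(p)$ (with $\epsilon$ below the local injectivity radius around $x\cdot\supp\phi$). The right-$M$-invariance of $\rho_\epsilon$ ensures $F_\epsilon$ is $M$-invariant. The hypothesized local mixing applied to the pair $(F_\epsilon,f)$ gives
\[
\Psi(t)\int f(y\,a(t,v))\,F_\epsilon(y)\,dm^{\BMS}(y)\;\longrightarrow\;J(v)\,m^{\BMS}(f)\,m^{\BMS}(F_\epsilon).
\]
By Lemma \ref{NPdecomp}, both sides rewrite as iterated integrals:
$m^{\BMS}(F_\epsilon)=\int_{N^+}\phi(n)\,c_\epsilon(n)\,d\mu^{\PS}_{gN^+}(n)$
with $c_\epsilon(n):=\int_{P^-}\rho_\epsilon(p)\,dm\,da\,d\mu^{\PS}_{gnN^-}(h)$ for $p=ham$, and analogously for the matrix coefficient.

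\textbf{Key approximation.} For $(n,p)\in\supp\phi\times B_\epsilon$ with $p=ham$, the $M$-invariance of $f$ and the commutativity of $A$ with $M$ give $f(gnpa(t,v))=f(gnha\cdot a(t,v))=f(gnh\,a(t,v)\cdot a)$, which by continuity of $f$ in the $A$-direction equals $f(gnh\,a(t,v))+o_\epsilon(1)$. Crucially, $N^-$ is the \emph{stable} horospherical subgroup for the forward $a(t,v)$-flow --- that is, $a(t,v)^{-1}h\,a(t,v)\to e$ uniformly on compact subsets of $N^-$ --- so continuity of $f$ yields
\[
f(gnh\,a(t,v))\;=\;f\bigl(gn\,a(t,v)\cdot a(t,v)^{-1}h\,a(t,v)\bigr)\;\longrightarrow\;f(gn\,a(t,v))
\]
uniformly on $\supp\phi\times B_\epsilon^{N^-}$ as $t\to\infty$. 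Substituting and combining with the mixing identity yields
\[
\Psi(t)\int_{N^+}c_\epsilon(n)\phi(n)f(gn\,a(t,v))\,d\mu^{\PS}_{gN^+}(n)\;\to\;J(v)\,m^{\BMS}(f)\int_{N^+}c_\epsilon(n)\phi(n)\,d\mu^{\PS}_{gN^+}(n).
\]

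\textbf{Main obstacle.} The remaining step is to remove the weighting $c_\epsilon(n)$, which depends on $n$ through the transverse PS measure $\mu^{\PS}_{gnN^-}$; this is the principal technical difficulty. The weak continuity of $n\mapsto\mu^{\PS}_{gnN^-}$ --- coming from the continuity of the Busemann cocycle and the PS density formula \eqref{eq.PSN} --- shows that $n\mapsto c_\epsilon(n)$ is continuous on the compact set $\supp\phi$, so for each $n_0\in\supp\phi$ and each $\eta>0$ there exists a neighborhood $U_{n_0}\subset N^+$ of $n_0$ with $c_\epsilon(n)/c_\epsilon(n_0)\in[1-\eta,1+\eta]$ uniformly in small $\epsilon$. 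Covering $\supp\phi$ by finitely many such $U_{n_0}$, choosing a subordinate partition of unity $\{\chi_i\}$, applying the preceding limit to each $\phi\chi_i$ (to which it applies with the same $J(v)$ and $m^{\BMS}(f)$), and sending $\eta\to 0$ yields \eqref{PSequi}. The $L^\infty$-bound in the second claim follows by applying part (2) of the mixing hypothesis to $(F_\epsilon,|f|)$ and carrying the same approximations through the inequalities rather than taking limits.
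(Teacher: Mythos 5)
Your overall route---thicken the $N^+$-integral in the transversal $P^-$-direction, apply the local mixing hypothesis to the thickened function and $f$, and disintegrate $m^{\BMS}$ via Lemma~\ref{NPdecomp}---is exactly the paper's. The gap is in how you build the thickening. You take $\rho_\epsilon\in C_c(P^-)$ supported in $B_\epsilon^{N^-}\cdot B_\epsilon^{AM}$, with the support shrinking to $\{e\}$ in \emph{all} of $P^-$ as $\epsilon\to0$, including the $N^-$-direction. Your removal step then needs the weight $c_\epsilon(n)=\int_{N^-AM}\rho_\epsilon(ham)\,dm\,da\,d\mu^{\PS}_{gnN^-}(h)$ to be strictly positive on $\supp\phi$, so that the ratio $c_\epsilon(n)/c_\epsilon(n_0)$ makes sense. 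But there is no reason for $\mu^{\PS}_{gnN^-}$ to charge a shrinking $N^-$-neighborhood of $e$: the limit set $\Lambda_\Gamma$ can miss an entire neighborhood of $(gn)^-$, making $c_\epsilon(n)=0$ for all small $\epsilon$. Your claim that $c_\epsilon(n)/c_\epsilon(n_0)\in[1-\eta,1+\eta]$ ``uniformly in small $\epsilon$'' is precisely where this surfaces: it is unjustified and false in general.

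The fix is the content of Corollary~\ref{thickcor} in the paper's proof: keep the $N^-$-part of the thickening at a \emph{fixed} radius $R$, chosen large enough (using full support of the PS measure on Zariski-open sets and the weak continuity of $n\mapsto\mu^{\PS}_{gnN^-}$) that $\mu^{\PS}_{gnN^-}(\rho_{g,\epsilon_0})>0$ for every $n$ in a compact $N^+$-neighborhood of $\supp\phi$, and let only the $A$-direction width of the thickening shrink with $\epsilon$. The large $N^-$-radius costs nothing because $a(t,v)^{-1}N^-_R\,a(t,v)$ contracts into any preassigned neighborhood once $t$ is large, so the resulting error in $f$ is $o_t(1)$ rather than $o_\epsilon(1)$. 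Once $R$ is fixed, the weight is a fixed continuous positive function of $n$ and your partition-of-unity machinery becomes superfluous: one may simply apply the established limit to $\phi/c$, or---as the paper does---divide by $\mu^{\PS}_{gnN^-}(\rho_{g,\epsilon_0})$ \emph{inside} the definition of the thickened test function, so that its $m^{\BMS}$-mass is exactly $\mu^{\PS}_{gN^+}(\phi)$ and there is no weight to remove at all.
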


For $\epsilon>0$, let $G_{\epsilon}$ denote the open ball of radius $\epsilon$ around $e$ in $G$. For a subgroup $S<G$, we define $S_{\epsilon}:=S\cap G_{\epsilon}$. The choices $S=P^\pm,\,N^{\pm},\,A$ are the only subgroups we will require. We will carry out a thickening argument using PS measures as in e.g.\ \cite{OhWinter}; the following lemma is needed:

\begin{lem}\label{fullmeas}
For any $g\in G$,
\begin{equation*}
\nu (gN^+(e^+) )>0\quad\text{and} \quad \nu_{ \op{i}}(gN^-(e^-))>0. \end{equation*}
\end{lem}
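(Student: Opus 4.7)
The plan is to show that both sets $gN^+(e^+)$ and $gN^-(e^-)$ are Zariski open subsets of $\F$, intersect the Zariski dense limit set $\La_\Ga$, and lie in the support of the respective PS measures, which will immediately give positive measure.

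First I would check Zariski openness. Since $P = MAN^-$ in our conventions, the set $N^+P = N^+MAN^-$ is precisely the big Bruhat cell of $G$ opposite to $P$, hence Zariski open in $G$. Consequently $N^+(e^+) = N^+P/P$ is Zariski open in $\F = G/P$, and so is its translate $gN^+(e^+)$. For the second set, use that $w_0$ normalizes $MA$ and conjugates $N^+$ to $N^-$, so $N^- w_0 = w_0 N^+$, and therefore
$$
gN^-(e^-) = g N^- w_0 P/P = gw_0 (N^+P/P),
$$
which is again a translate of the big Bruhat cell, hence Zariski open in $\F$.

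Next, recall that $\La_\Ga$ is Zariski dense in $\F$ (Benoist, cited in the paragraph before Lemma \ref{Can}). A Zariski dense subset cannot be contained in any proper Zariski closed subset, so
$$
\La_\Ga \cap gN^+(e^+) \neq \emptyset \quad \text{and} \quad \La_\Ga \cap gN^-(e^-) \neq \emptyset.
$$

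Finally, I claim that $\supp(\nu) = \La_\Ga = \supp(\nu_{\op{i}})$. Indeed, a PS measure is by definition a conformal probability measure supported on $\La_\Ga$; its support is therefore a nonempty closed $\Ga$-invariant subset of $\La_\Ga$, and minimality of $\La_\Ga$ as a closed $\Ga$-invariant subset of $\F$ forces equality. Since $gN^+(e^+)$ is an open subset of $\F$ meeting $\supp(\nu)$, we conclude $\nu(gN^+(e^+)) > 0$; the same argument applied to $\nu_{\op{i}}$ gives $\nu_{\op{i}}(gN^-(e^-)) > 0$. The only place where a moment's care is needed is the Zariski-openness of $gN^-(e^-)$, which is handled by the identity $N^- w_0 = w_0 N^+$; note in particular that the finer Lemma \ref{Can} is not needed, as plain Zariski density of $\La_\Ga$ already suffices.
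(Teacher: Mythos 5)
Your argument is correct and follows essentially the same route as the paper: identify $gN^{\pm}(e^{\pm})$ as (translates of) the big Bruhat cell, hence Zariski open, invoke Zariski density of $\La_\Ga$ (due to Benoist), and use $\supp(\nu)=\La_\Ga$. The paper's proof is just a terser version of the same, and your observation that Lemma \ref{Can} is not needed matches the paper's remark that Zariski density already follows from \cite{Ben}.
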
 
\begin{proof}  
The Zariski density of $\Lambda_\Gamma$ in $\F$  is proved in
\cite{Ben}. This also follows from Lemma \ref{Can}.
Since each $gN^\pm(e^{\pm}) $ is a Zariski open subset of $\cal F$ and the support of $\nu$ is equal to $\Lambda_\Ga$, the conclusion follows. \end{proof}

We will also need the following continuity property of the PS-measures \cite[Proposition 2.15]{OhShah}:
\begin{lem}\label{PScont}
 For any fixed $\rho\in C_c(N^{\pm})$ and $g\in G$,
the map $N^{\mp}\to \br$ given by $h \mapsto \mu_{gh N^{\pm}}^{\mathrm{PS}}(\rho)$ is continuous.
\end{lem}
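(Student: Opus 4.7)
I will prove continuity at an arbitrary fixed point $h_0$, treating the case $\rho\in C_c(N^+)$ with $h\in N^-$; the other case is completely analogous (it can be obtained by replacing $P$ with $P^+$ throughout). The strategy is to make a smooth change of variables $n\mapsto n'$ on $N^+$ which transports the measure $\mu_{ghN^+}^{\PS}$ to a measure absolutely continuous with respect to $\mu_{gh_0 N^+}^{\PS}$, with a continuous density, and then apply dominated convergence as $h\to h_0$.

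First, I would use the fact that the product map $N^+\times P\to G$ is a diffeomorphism onto an open set containing $e$. For $h$ sufficiently close to $h_0$ and $n$ in a fixed neighborhood of $\supp\rho$, the element $h_0^{-1}h\cdot n$ lies in this open set, so there exist unique $n'(h,n)\in N^+$ and $p(h,n)\in P$ depending continuously on $(h,n)$ with
\begin{equation*}
h_0^{-1}h\cdot n \;=\; n'(h,n)\cdot p(h,n),
\end{equation*}
and $n'(h_0,n)=n$, $p(h_0,n)=e$. Setting $\Psi_h(n):=n'(h,n)$, this is a diffeomorphism of a neighborhood of $\supp\rho$ onto its image. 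Crucially, since $p(h,n)\in P$ satisfies $p\cdot e^+=e^+$, the identity $ghn=gh_0\Psi_h(n)p(h,n)$ implies $(ghn)^+=(gh_0\Psi_h(n))^+$; hence the boundary map $n\mapsto (ghn)^+$ factors through $\Psi_h$.

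Next I would rewrite the Busemann cocycle using identities \eqref{eq.basic0} and the second identity in \eqref{eq.basic1}: writing $\xi:=(gh_0 n')^+$ and invoking $G$-equivariance together with $(gh_0n')^{-1}\xi=e^+$,
\begin{equation*}
\beta_{\xi}(e,ghn) \;=\; \beta_{\xi}(e,gh_0 n') + \beta_{e^+}(e,p(h,n)) \;=\; \beta_{\xi}(e,gh_0 n') + \log a\bigl(p(h,n)\bigr),
\end{equation*}
where $a(p)\in A$ denotes the $A$-component of $p=man\in MAN^-=P$. Combining this with the change of variables $n'=\Psi_h(n)$ in the definition \eqref{eq.PSN} and using that $\nu$ is pulled back identically along both $n\mapsto (ghn)^+$ and $n'\mapsto (gh_0n')^+$ (since the two agree via $\Psi_h$), one obtains
\begin{equation*}
\mu_{ghN^+}^{\PS}(\rho) \;=\; \int_{N^+} \rho\bigl(\Psi_h^{-1}(n')\bigr)\, e^{\psi\bigl(\log a(p(h,\Psi_h^{-1}(n')))\bigr)}\, d\mu_{gh_0 N^+}^{\PS}(n').
\end{equation*}

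Finally, as $h\to h_0$, the continuous dependence of $n'(h,n)$ and $p(h,n)$ on $h$ implies that $\Psi_h\to \mathrm{id}$ and $p(h,\cdot)\to e$ uniformly on the compact set $\supp\rho$; in particular the integrand converges pointwise to $\rho(n')$ and is dominated, for all $h$ in a small enough neighborhood of $h_0$, by a fixed continuous function of compact support in $N^+$ (a slight enlargement of $\supp\rho$). Dominated convergence against the locally finite Borel measure $\mu_{gh_0 N^+}^{\PS}$ then yields $\mu_{ghN^+}^{\PS}(\rho)\to\mu_{gh_0 N^+}^{\PS}(\rho)$, proving continuity. The only substantive step is the bookkeeping with the Busemann cocycle to verify that the change of variables absorbs the pushforward of $\nu$ cleanly into $\mu_{gh_0 N^+}^{\PS}$; once this identity is in place, the rest is routine.
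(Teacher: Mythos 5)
Your proof is correct, and it is a valid (and slightly more hands-on) variant of the paper's argument. The paper's proof avoids any change of variables: it simply views $\mu_{ghN^+}^{\PS}(\rho)=\int_{\F}\tilde\rho_g(h,\xi)\,d\nu(\xi)$ with the fixed probability measure $\nu$ and the jointly continuous kernel
$\tilde\rho_g(h,\xi)=\rho(n)e^{\psi(\beta_{ghn^+}(e,ghn))}$ for $\xi=ghn^+$ (and $0$ otherwise), then concludes from $\lvert\nu\rvert=1$ and the $\sup$-norm bound
$\lvert\mu_{gh_1N^+}^{\PS}(\rho)-\mu_{gh_2N^+}^{\PS}(\rho)\rvert\le\max_\xi\lvert\tilde\rho_g(h_1,\xi)-\tilde\rho_g(h_2,\xi)\rvert$.
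In other words, the paper pushes everything forward to the flag variety and exploits the fact that $\nu$ does not depend on $h$; continuity then follows from uniform continuity of the jointly continuous kernel on the compact set $\F$. Your proof pulls back to the chart of $N^+$ centered at $h_0$ via the factorization $h_0^{-1}h\,n=n'(h,n)\,p(h,n)\in N^+P$, turns the $h$-dependence into the continuous Radon--Nikodym factor $e^{\psi(\log a(p))}\circ\Psi_h^{-1}$ against the fixed measure $\mu_{gh_0N^+}^{\PS}$, and applies dominated convergence. The cocycle bookkeeping you indicate is correct: since $p\in P$ fixes $e^+$ we have $(ghn)^+=(gh_0n')^+$, and $\beta_\xi(e,ghn)=\beta_\xi(e,gh_0n')+\beta_{e^+}(e,p)=\beta_\xi(e,gh_0n')+\log a(p)$ by the cocycle identity and \eqref{eq.basic1}, so the displayed integral formula holds. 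What you gain is an explicit description of how $\mu_{ghN^+}^{\PS}$ transports under the $N^-$-perturbation; what you lose is the one-line uniform estimate that the paper gets for free from $\lvert\nu\rvert=1$ without choosing a base point $h_0$ or invoking DCT. Both rely on the same underlying mechanism, namely that the $h$-dependence lives entirely in a jointly continuous function once expressed against the $h$-independent measure $\nu$.
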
 
\begin{proof}  
We will only prove the case when $\rho\in C_c(N^+)$; the other case can be proved similarly.
Define a function $\tilde{\rho}_g : N^-\times G/P\to\bb R$ by
\begin{equation*}
\tilde{\rho}_g(h, \xi):=\begin{cases} \rho(n)e^{\psi (\beta_{ghn^+}(e,ghn))}\qquad&\mathrm{if\;}\xi=ghn^+\text{ for some }n \in N^+,\\0\qquad&\mathrm{otherwise.}
\end{cases}
\end{equation*}
 Since $N^+\cap P=\lbrace e\rbrace$, $\tilde{\rho}_g$ is well-defined. By continuity of the Busemann function, $\tilde \rho_g$ is continuous in $h\in N^-$. This gives
\begin{align*}
\mu_{ghN^+}^{\mathrm{PS}}(\rho)=\int_{N^+} \rho(n)e^{\psi(\beta_{ghn^+}(e,ghn))}\,d\nu(ghn^+)=\int_{G/P} \tilde{\rho}_g(h,\xi)\,d\nu(\xi),
\end{align*}
hence $|\mu_{gh_1N^+}^{\mathrm{PS}}(\rho)-\mu_{gh_2N^+}^{\mathrm{PS}}(\rho) |\leq \max_{\xi\in G/P} |\tilde{\rho}_g(h_1,\xi)-\tilde{\rho}_g(h_2,\xi)|$. The continuity of $\tilde{\rho}_g$ then implies the claimed statement.
\end{proof} 
A function on $N^{\pm}$ is said to be \emph{radial} if it is invariant under conjugation by elements of $M$ i.e. $f(mnm^{-1})=f(n)$ for all $m\in M$, and $n\in N^{\pm}$.
\begin{cor}\label{thickcor}
Given $\epsilon>0$ and $g\in G$, there exist $R>1$ and a non-negative radial function $\rho_{g,\epsilon}\in C_c(N^-_R)$ such that
for all $ n\in N_{\epsilon}^+$, 
\begin{equation*}
\mu_{gnN^-}^{\mathrm{PS}}(\rho_{g,\epsilon})>0.
\end{equation*} 
\end{cor}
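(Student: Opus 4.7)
The plan is to combine three ingredients: (i) nontriviality of the PS measure on $gnN^-$ for each fixed $n\in N^+$ (from Lemma \ref{fullmeas}), (ii) continuity of PS masses as the basepoint varies (Lemma \ref{PScont}), and (iii) compactness of $\overline{N^+_\epsilon}$. The reason a separate corollary is needed is that we must promote \emph{pointwise} nontriviality to a \emph{uniform} nontriviality over the whole ball $N^+_\epsilon$, and we must do so using a single radial test function.

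First I would produce, for each fixed $n\in\overline{N^+_\epsilon}$, a radial $\rho_n\in C_c(N^-)$ with $\mu_{gnN^-}^{\PS}(\rho_n)>0$. By Lemma \ref{fullmeas} applied to $g'=gn$, the measure $\mu_{gnN^-}^{\PS}$ has positive total mass (its definition in Section \ref{psm} shows that its density against $d\nu_{\mathrm i}$ is everywhere positive, and $\nu_{\mathrm i}(gnN^-(e^-))>0$). Pick $h_0$ in the support, choose a small non-negative $\phi\in C_c(N^-)$ with $\phi(h_0)>0$, and set $\rho_n(h):=\int_M\phi(mhm^{-1})\,dm$. Since $M$ normalizes $N^-$ and the left-$G$ right-$K$ invariant metric satisfies $\|mhm^{-1}\|=\|h\|$, the function $\rho_n$ is non-negative, radial (an immediate change of variable $m'=mm_0$ in $M$), and compactly supported in some $N^-_{R_n}$. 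Continuity of the integrand in $m$ and the fact that $\phi(mh_0m^{-1})$ is close to $\phi(h_0)>0$ for $m$ near $e$ give $\rho_n(h_0)>0$, hence $\rho_n>c>0$ on a neighborhood of $h_0$; since $h_0\in\supp(\mu_{gnN^-}^{\PS})$, we conclude $\mu_{gnN^-}^{\PS}(\rho_n)>0$.

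Next, apply Lemma \ref{PScont} to the fixed radial function $\rho_n$: the map $n'\mapsto \mu_{gn'N^-}^{\PS}(\rho_n)$ is continuous on $N^+$, so there is an open neighborhood $W_n$ of $n$ in $N^+$ on which this quantity remains positive. The set $\overline{N^+_\epsilon}=N^+\cap\overline{G_\epsilon}$ is a closed bounded subset of $N^+$, hence compact. Extract a finite subcover $W_{n_1},\ldots,W_{n_k}$ of $\overline{N^+_\epsilon}$, let $R:=\max(1+\max_i R_{n_i},2)$, and define
\begin{equation*}
\rho_{g,\epsilon}:=\sum_{i=1}^k \rho_{n_i}.
\end{equation*}
This is a non-negative radial function in $C_c(N^-_R)$ with $R>1$ as required. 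For any $n\in N^+_\epsilon$ there exists $i$ with $n\in W_{n_i}$, so $\mu_{gnN^-}^{\PS}(\rho_{n_i})>0$; since every other summand is non-negative, $\mu_{gnN^-}^{\PS}(\rho_{g,\epsilon})>0$.

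There is no serious obstacle; the only point that needs a little care is the construction of a radial bump: one must check that $M$-averaging the conjugation action preserves compact support in $N^-_{R_n}$ (which uses the $M$-invariance of the norm on $N^-$ coming from the right $K$-invariance of the metric) and that the radialization does not destroy positivity against the PS measure, which is why we build $\rho_n$ as an $M$-average of a function peaked at a point $h_0$ in the support.
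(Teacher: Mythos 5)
Your argument is correct and follows essentially the same route as the paper: Lemma \ref{fullmeas} for pointwise positivity, Lemma \ref{PScont} to spread it to an open neighborhood, then compactness of $\overline{N^+_\e}$ and a finite subcover. The only cosmetic difference is that you radialize a bump by $M$-averaging and sum the results, whereas the paper fixes a monotone sequence of radial cutoffs $\phi_j$ with $\phi_j|_{N^-_j}=1$ and selects a single sufficiently large one; both constructions are valid and interchangeable.
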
 
\begin{proof}
For each $j\in \NN$, let $\phi_j\in C_c(N_{j+1}^-)$ be a nonnegative radial function such that $\phi_j|_{N_j^-}=1$.
By Lemma \ref{fullmeas}, for each $n\in N^+$, there exists some $j_{n}\in \NN$ such that $\mu_{gnN^-}^{\mathrm{PS}}(N^-_{j_{n}})>0$.
By Lemma \ref{PScont}, for each $n\in N^+$, there exists $r_{n}>0$ such that
\begin{equation*}
\mu_{gn_0N^-}^{\mathrm{PS}}(N_{j_n}^-)>0 \quad  \text{ for all  } n_0\in \scrB(n):=\lbrace n_0\in N^+\,:\, \dist(n,n_0)<r_{n}\rbrace.
\end{equation*}
Using the relative compactness of $N_{\epsilon}^+$, we choose $n_1,\ldots,  n_k\in N^+$ such that $N_{\epsilon}^+\subset \bigcup_{i=1}^k \scrB(n_i)$.
Choosing $R:=\max(j_{n_1},\cdots,j_{n_k})+1$ and $\rho_{g,\epsilon}:=\phi_{R-1}$ completes the proof.
\end{proof}

\noindent{\bf Proof of Proposition \ref{p1}.}
Fixing $v\in \op{ker}(\psi)$, for simplicity, we denote $a_t=a(t, v)$.
Let $x=[g]$, and $\epsilon_0>0$ be such that $\phi\in C_c(N_{\epsilon_0}^+)$. 
By Corollary \ref{thickcor}, there exist $R>0$ and a nonnegative $\rho_{g,\epsilon_0}\in C_c(N^-_R)$ such that
\begin{equation*}
\mu_{gnN^-}^{\mathrm{PS}}(\rho_{g,\epsilon_0})>0\quad \text{ for all  }n\in N_{\epsilon_0}^+.
\end{equation*} 
Given arbitrary $\epsilon>0$, choose a non-negative function  $q_{\epsilon}\in C_c(A_{\epsilon})$ satisfying $\int_{A}q_{\epsilon}(a)\,da=1$.
Then 
\begin{align}\label{fthickening}
&\int_{N^+} f(x n a_t)\phi(n)\,d\mu_{gN^+}^{\mathrm{PS}}(n)
\\\notag&=\int_{N^+} f(xn a_t )\phi(n)\left( \tfrac{1}{\mu_{gnN^-}^{\mathrm{PS}}(\rho_{g,\epsilon_0})}\int_{N^-A} \rho_{g,\epsilon_0}(h)q_{\epsilon}(a)\,da\,d\mu_{gnN^-}^{\mathrm{PS}}(h)\!\right)d\mu_{gN^+}^{\mathrm{PS}}(n)
\\\notag&=\int_{N^+} \left(\int_{N^-A}f(xna_t ) \tfrac{\phi(n)\rho_{g,\epsilon_0}(h)q_{\epsilon}(a)}{\mu_{gnN^-}^{\mathrm{PS}}(\rho_{g,\epsilon_0})} \,da\,d\mu_{gnN^-}^{\mathrm{PS}}(h)\!\right)d\mu_{gN^+}^{\mathrm{PS}}(n).
\end{align}
We now define a right $M$-invariant function $\tilde \Phi_{\epsilon}\in C_c(gN_{\epsilon_0}^+N_R^-A_{\epsilon}M)\subset C_c(G)$ by
\begin{equation*}
\tilde \Phi_{\epsilon}(g_0):=\begin{cases}\frac{\phi(n)\rho_{g,\epsilon_0}(h)q_{\epsilon}(a)}{\mu_{gnN^-}^{\mathrm{PS}}(\rho_{g,\epsilon_0})}\qquad&\mathrm{if\;}g_0=gnham,\\0\qquad&\mathrm{otherwise.}
\end{cases}
\end{equation*}
Note that the continuity of $\tilde  \Phi_{\epsilon}$ is a consequence of Lemma \ref{PScont}.
Also observe that $\tilde  \Phi_{\epsilon}$ depends on our choice of representative for $x=[g]$.

We now assume without loss of generality $f\geq 0$ and define, for all $\epsilon>0$, functions $f_{\epsilon}^{\pm}$ as follows:  for all $ z\in \GaG$,
$$
f_{\epsilon}^+(z):= \sup_{b\in N_{\epsilon}^+N^-_{\epsilon}A_{\epsilon}} f(zb)\;\;\text{and} \;\;
f_{\epsilon}^-(z):= \inf_{b\in N_{\epsilon}^+N^-_{\epsilon}A_{\epsilon}} f(zb).$$

Since $u\in\inte\fa^+$, for every $\epsilon>0$, there exists $t_0(R,\epsilon)> 0$ such that
\begin{equation*}
{a_t^{-1}}  N_{R}^- a_t
 \subset N_{\epsilon}^-\qquad \text{ for all }t\geq 
 { t_0(R,\epsilon)}. 
\end{equation*}
Then, as $\mathrm{supp}(\tilde  \Phi_{\epsilon})\subset gN_{\epsilon_0}^+ N_{R}^- A_{\epsilon}M$, we have
\begin{align}\label{fplus}
f(xn{a_t} )\tilde  \Phi_\epsilon(gnha) \leq f_{2\epsilon}^+ (x nh a a_t )\tilde  \Phi_\epsilon(gnha)
\end{align}
for all $nh a\in N^+N^-A$ and $t\geq t_0(R,\epsilon)$.
Similarly,
\begin{equation*}
 f_{2\epsilon}^- (xnh a a_t )\tilde  \Phi_\epsilon(g nha)\leq f(x n a_t )\tilde  \Phi_\epsilon(g nha).
\end{equation*}
We now use $f_{2\epsilon}^+$ to give an upper bound on the limit we are interested in; $f_{2\epsilon}^-$ is used in an analogous way to provide a lower bound. Entering the definition of $\Phi_{\epsilon}$ and the above inequality \eqref{fplus} into \eqref{fthickening} gives
\begin{align*}
&\limsup_{t\rightarrow\infty}\, \Psi(t)\int_{N^+} f(xn a_t )\phi(n)\,d\mu_{gN^+}^{\mathrm{PS}}(n)
\\&\leq \limsup_{t\rightarrow\infty} \,\Psi(t)\int_{N^+} \left(\int_{N^-AM}f_{2\epsilon}^+(x nha a_t ) \tilde \Phi_{\epsilon}(g nha)dm\,da\,d\mu_{gnN^-}^{\mathrm{PS}}(h)\right)\,d\mu_{gN^+}^{\mathrm{PS}}(n)
\\&=\limsup_{t\rightarrow\infty}\, \Psi(t)\int_{G}f_{2\epsilon}^+([g_0] a_t ) \tilde  \Phi_{\epsilon}(g_0)\,d\tilde{m}^{\mathrm{BMS}}(g_0)
\\&=\limsup_{t\rightarrow\infty}\, \Psi(t)\int_{\GaG} f_{2\epsilon}^+([g_0] a_t ) {\Phi}_{\epsilon}([g_0])\,dm^{\mathrm{BMS}}([g_0]),
\end{align*}
where
\begin{equation*}
 {\Phi}_{\epsilon}([g_0]):=\sum_{\gamma\in \Gamma} \Phi_{\epsilon}(\gamma g_0),
\end{equation*}
and Lemma \ref{NPdecomp} was used in the second to last line of the above calculation. By the standing assumption \eqref{eq.hypo}, we then have
\begin{align*}
\limsup_{t\rightarrow\infty}\, \Psi(t)&\int_{N^-} f(x n a_t )\phi(n)\,d\mu_{gN^+}^{\mathrm{PS}}(n) 
\\&\leq  J(v)\, \,m^{\mathrm{BMS}}(f_{2\epsilon}^+)m^{\mathrm{BMS}}({\Phi}_{\epsilon})=J(v)\, \,m^{\mathrm{BMS}}(f_{2\epsilon}^+)\tilde{m}^{\mathrm{BMS}}( \tilde \Phi_{\epsilon}).
\end{align*}
Using Lemma \ref{NPdecomp} and the $M$-invariance of $ \tilde \Phi_\epsilon$, we have
\begin{align*}
\tilde{m}^{\mathrm{BMS}}( \tilde  \Phi_{\epsilon})&=\int_{N^+} \left(\int_{N^-A}\tilde  \Phi_{\epsilon}(g nha)\,da\,d\mu_{gnN^-}^{\mathrm{PS}}(h)\right)\,d\mu_{gN^+}^{\mathrm{PS}}(n)
\\&=\int_{N^+} \frac{\phi(n)}{\mu_{gnN^-}^{\mathrm{PS}}(\rho_{g,\epsilon_0})}\left(\int_{N^-A} \rho_{g,\epsilon_0}(h)q_{\epsilon}(a)\,da\,d\mu_{gnN^-}^{\mathrm{PS}}(h)\!\right)\,d\mu_{gN^+}^{\mathrm{PS}}(n)
\\&= \mu_{gN^+}^{\mathrm{PS}}(\phi).
\end{align*}
Since $\epsilon>0$ was arbitrary, taking $\epsilon\rightarrow0$ gives
\begin{equation*}
\limsup_{t\rightarrow\infty}\, \Psi(t)\int_{N^-} f(x n a_t )\phi(n)\,d\mu_{gN^+}^{\mathrm{PS}}(n) \leq 
J(v)\,
m^{\mathrm{BMS}}(f)\,\mu_{gN^+}^{\mathrm{PS}}(\phi).
\end{equation*}
The lower bound given by replacing $f^+_{2\epsilon}$ with $f^-_{2\epsilon}$ in the above calculations proves the 
first statement.

For the second claim of the proposition, observe that if $tu+\sqrt tv\in\mathfrak a^+$, then
$$f(x n a_t)\tilde  \Phi_\epsilon(gnha)\leq f_{R+\e}^+(xnha a_t)\tilde  \Phi_\e(gnha),$$
as in \eqref{fplus}.
Hence
$$\Psi(t)\int_{N^+} f(xn a_t)\phi(n)\,d\mu_{gN^+}^{\mathrm{PS}}(n)\leq C(f_{R+\e}^+,\Phi_\e).$$
Choosing $C'(f,\phi):=C(f_{R+\e}^+,\Phi_\e)$ finishes the proof.$\qed$

\section{Translates of Lebesgue measures and Haar mixing}\label{s2}
We continue with the setup of Section \ref{step1}: recall that we have fixed $u\in  \L_\Gamma\cap \op{int}\fa^+$, a linear form $\psi\in \dg$ such that $\psi(u)=\psi_\Gamma (u)$,  a $(\Gamma, \psi)$-PS measure $\nu=\nu_{\psi}$ and
a $(\Gamma, \psi\circ\i )$-PS measure $\nu_{\i}=\nu_{\psi\circ\i}$ on $\F$. We set
$$ m^{\BMS}= m^{\BMS}_{\nu,\nu_\i},m^{\BR}= m^{\BR}_{\nu_\i}, m^{\BR_*}= m^{\BR_*}_{\nu} ,\;\; \text{and} \; \; \mu^{\PS}_{gN^+}=\mu^{\PS}_{gN^+, \nu}.$$
The main goal in this section is to prove a local mixing statement for the Haar measure on $\GaG$. In order to do this, we first convert equidistribution of translates of $\mu_{gN^+}^{\PS}$ (Proposition \ref{p1}) into equidistribution of translates of the Lebesgue measure on $xN^+$:
\begin{prop}\label{p2} Suppose that $m^{\BMS}$ satisfies the local mixing property  for the pair $(\Psi, J)$. 
Then for any $x=[g] \in \GaG$, $v\in\ker\psi$, $f\in C_c(\GaG)^M$, and $\phi\in C_c(N^+)$,
\begin{equation*}
\lim_{t\rightarrow\infty} \Psi(t)
e^{(2\rho-\psi)(t  u+\sqrt t v)}
\int_{N^+}f\big(x n \,a(t,v)\big)\phi(n)\,dn=  J(v)\,
m^{\mathrm{BR}}(f)\, \mu_{ gN^+}^{\mathrm{PS}}(\phi),
\end{equation*}
and there exists $C''=C''(f,\phi)>0$ such that 
\begin{equation*}
\left| \Psi(t)e^{
(2\rho-\psi)(t  u+\sqrt t v)}\int_{N^+}f\big(x n \,a(t,v)\big)\phi(n)\,dn\right|<C''
\end{equation*}
for all $(t,v)\in(0,\infty)\times\ker\psi$ with $a(t, v)\in A^+$.
\end{prop}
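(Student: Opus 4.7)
The plan is to adapt the proof of Proposition \ref{p1}, replacing the Bowen--Margulis--Sullivan disintegration in $gN^+P^-$ coordinates by the analogous one for the Burger--Roblin measure, and using Proposition \ref{p1} itself as the main analytic input rather than the local mixing hypothesis directly.

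First I would establish the BR analog of Lemma \ref{NPdecomp}: for $f\in C_c(gN^+P^-)$ and $nham\in N^+N^-AM$,
$$
\tilde m^{\BR}(f)=\int_{N^+}\left(\int_{N^-AM}f(gnham)\,dm\,da\,d\mu^{\PS}_{gnN^-}(h)\right) dn.
$$
The proof is identical to that of Lemma \ref{NPdecomp}, the only modification being to replace $d\mu^{\PS}_{gN^+}(n)=e^{\psi(\beta_{gn^+}(e,gn))}d\nu(gn^+)$ by $dn=e^{2\rho(\beta_{gn^+}(e,gn))}dm_o(gn^+)$ (cf.\ \eqref{eq.Leb2}); this is the only difference between $\tilde m^{\BMS}$ and $\tilde m^{\BR}$ in the Hopf parametrisation.

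Next I would run the same thickening construction as in the proof of Proposition \ref{p1}: fix $\phi\in C_c(N^+_{\epsilon_0})$, pick $\rho_{g,\epsilon_0}\in C_c(N^-_R)$ via Corollary \ref{thickcor}, $q_\epsilon\in C_c(A_\epsilon)$ with $\int q_\epsilon\,da=1$, and set $\tilde\Phi_\epsilon(gnham):=\phi(n)\rho_{g,\epsilon_0}(h)q_\epsilon(a)/\mu^{\PS}_{gnN^-}(\rho_{g,\epsilon_0})$. The continuity argument from the proof of Proposition \ref{p1} (using the $M$-invariance of $f$ and the contraction $a_t^{-1}N^-_Ra_t\subset N^-_\epsilon$ valid for $t\ge t_0(R,\epsilon)$), combined with the BR disintegration above, yields
$$
\int_{\GaG}f^{-}_{2\epsilon}([z]a_t)\Phi_\epsilon([z])\,dm^{\BR}([z])\;\le\;\int_{N^+}f(xna_t)\phi(n)\,dn\;\le\;\int_{\GaG}f^{+}_{2\epsilon}([z]a_t)\Phi_\epsilon([z])\,dm^{\BR}([z]),
$$
where $\Phi_\epsilon$ is the $\Gamma$-periodisation of $\tilde\Phi_\epsilon$.

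The main obstacle is to evaluate $\lim_{t\to\infty}\Psi(t)e^{(2\rho-\psi)(\log a_t)}\int_{\GaG}f^{\pm}_{2\epsilon}([z]a_t)\Phi_\epsilon([z])\,dm^{\BR}([z])$. My approach is first to use the right $A$-quasi-invariance $(R_a)_*m^{\BR}=e^{-(2\rho-\psi)(\log a)}m^{\BR}$ from Lemma \ref{qa} to unfold the $a_t$-translate:
$$
e^{(2\rho-\psi)(\log a_t)}\int_{\GaG}F([z]a_t)\Phi_\epsilon([z])\,dm^{\BR}([z])=\int_{\GaG}F([z])\Phi_\epsilon([z]a_t^{-1})\,dm^{\BR}([z]),
$$
and then to use the $gP^-N^+$ product structure \eqref{BRPN} of $m^{\BR}$ to express the right-hand side as an average over $P^-$-fibres of Lebesgue $N^+$-integrals against $\mu^{\PS}_{gP^-}$, to each of which Proposition \ref{p1} is applied fibrewise. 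The uniform bound of Proposition \ref{p1}(2) justifies passing the limit under the transverse $P^-$-integral by dominated convergence, and produces the desired main term $J(v)m^{\BR}(f)\mu^{\PS}_{gN^+}(\phi)$ after letting first $t\to\infty$ and then $\epsilon\to 0$. The subtle point---and the crux of the argument---is reconciling the $m^{\BMS}(f)$ that would arise from a naive fibrewise application of Proposition \ref{p1} with the desired $m^{\BR}(f)$; this requires careful bookkeeping of the difference between $m^{\BR}$ and $m^{\BMS}$ in the $N^+$-direction, aided by the unfolding identity above. Finally, the uniform bound in the second part of the Proposition follows from the same sandwich combined with Proposition \ref{p1}(2).
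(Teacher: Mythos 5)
There is a genuine gap at the crux of your argument. Your sandwich
$$\int_{\GaG}f^{-}_{2\epsilon}([z]a_t)\Phi_\epsilon([z])\,dm^{\BR}([z])\le\int_{N^+}f(xna_t)\phi(n)\,dn\le\int_{\GaG}f^{+}_{2\epsilon}([z]a_t)\Phi_\epsilon([z])\,dm^{\BR}([z])$$
is legitimate (the $N^-$- and $A$-directions of the BR disintegration agree with those of the BMS disintegration, so the thickening via $\rho_{g,\epsilon_0}$ and $q_\epsilon$ goes through unchanged). But your plan for evaluating the outer limit does not close. After the $A$-quasi-invariance unfolding you arrive at $\int_{\GaG}F([z])\Phi_\epsilon([z]a_t^{-1})\,dm^{\BR}([z])$, where the time dependence now sits entirely inside $\Phi_\epsilon$. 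If you then disintegrate $m^{\BR}$ along $P^-N^+$ via \eqref{BRPN}, the inner $N^+$-integral is against \emph{Lebesgue} measure, not the PS measure, and Proposition \ref{p1} --- which equidistributes $a_t$-translates of $\mu_{gN^+}^{\PS}$ --- simply does not apply ``fibrewise'' to it. You have produced an object that is exactly as hard as the one you started with: a local-mixing statement against $m^{\BR}$ along the $N^+$-Lebesgue direction. You acknowledge the ``subtle point'' of replacing a putative $m^{\BMS}(f)$ by $m^{\BR}(f)$ but never identify a mechanism; the mechanism is precisely the step that is missing.

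What the paper does instead is convert the Lebesgue $N^+$-integral into a PS $N^+$-integral \emph{before} ever appealing to the analytic input. One unfolds $f=\sum_\gamma\tilde f(\gamma\,\cdot\,)$ and performs the change of variables $n\mapsto a_t n a_t^{-1}$ inside the $N^+$-Lebesgue integral; the Jacobian gives the factor $e^{-2\rho(\log a_t)}$. One then builds an auxiliary function $\tilde F\in C_c(g_0 P^-_{\epsilon_0}N_R^+)$ (using the thickening function $\rho$ on $N^+$ this time) with the defining property that, on each $P^-$-fibre $p$,
$$\int_{N^+}\tilde F(g_0pn)\,d\mu_{g_0pN^+}^{\PS}(n)=\int_{N^+}\tilde f(g_0pn)\,dn.$$
Lemma \ref{MANconj} then un-conjugates the PS measure (producing the compensating factor $e^{\psi(\log a_t)}$), and the quantity is recognized as $\int_{N^+}F([g]na_t)\phi^{\pm}(n)\,d\mu_{gN^+}^{\PS}(n)$, to which Proposition \ref{p1} applies, giving $J(v)\,m^{\BMS}(F)\,\mu_{gN^+}^{\PS}(\phi)$. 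Finally the $P^-N^+$ disintegrations of $m^{\BMS}$ and $m^{\BR}$ (equations \eqref{PNgood} and \eqref{BRPN}) show $m^{\BMS}(F)=m^{\BR}(f)$. It is exactly this transport $\tilde f\leadsto\tilde F$ --- equating fibrewise Lebesgue and PS masses so that the answer comes out as $\mu^{\PS}_{gN^+}(\phi)$ rather than $\int\phi\,dn$ and as $m^{\BR}(f)$ rather than $m^{\BMS}(f)$ --- that your proposal is missing, and the unfolding identity does not supply it.
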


\begin{proof} For $\epsilon_0>0$, set  $\scrB_{\epsilon_0}=N_{\epsilon_0}^-A_{\epsilon_0}MN_{\epsilon_0}^+$.
Note that $MN_{\epsilon_0}^+=N_{\epsilon_0}^+M$ by the choice of the invariant metric on $G$. Given $x_0\in \GaG$, let $\epsilon_0(x_0) $ denote the maximum number $r$ such that the map $G\rightarrow\GaG$ given by $h\mapsto x_0 h$ for $h\in G$ is injective on $\scrB_{r}$.
 Note that $\psi(u)=\psi_\Gamma (u)=\psi_\Gamma \big(\i(u)\big).$
Fixing $v\in\ker\psi$, we set ,for all $t\in \br$, $$a_t:=a(t,v).$$
By using a partition of unity if necessary, it suffices to prove that for any $x_0\in \GaG$ and $\e_0=\e_0(x_0)$, the claims of the proposition hold for any non-negative $f\in C(x_0 \scrB_{\epsilon_0})^M$, non-negative $\phi\in C(N_{\epsilon_0}^+)$, and $x=[g]\in x_0 \scrB_{\epsilon_0}$, that is,
\begin{equation}\label{Haarequi}
\lim_{t\rightarrow\infty} \Psi(t)
e^{(2\rho-\psi)(\log a_t)}\int_{N^+}f(x n a_t)\phi(n)\,dn= 
J(v)\,
m^{\mathrm{BR}}(f)\, \mu_{ g N^+}^{\mathrm{PS}}(\phi),
\end{equation}
and for $\log a_t\in\mathfrak a^+$, we have
\begin{equation*}
 \Psi(t)e^{(2\rho-\psi)(\log a_t)}\int_{N^+}f(x n a_t)\phi(n)\,dn<C'',
\quad\text{for some $C''=C''(f,\phi)$.}\end{equation*}
Moreover, we may assume that $f$ is given as
\begin{equation*}
f([g])=\sum_{\gamma \in \Gamma} \tilde{f} (\gamma g)\qquad \text{ for all }  g\in G,
\end{equation*}
for some non-negative $\tilde{f}\in C_c(g_0\scrB_{\epsilon_0})\subset C_c(G)$. Note that for $x=[g] \in  [g_0] \scrB_{\epsilon_0}$,
\begin{align}\label{eq.un1}
\int_{N^+} f([g]n a_t)\phi(n)\,dn=\sum_{\gamma \in \Gamma} \int_{N^+} \tilde{f}(\gamma g n a_t)\phi(n)\,dn.
\end{align}
Note that $\tilde{f}(\gamma g n a_t)=0$ unless $\gamma g n a_t
\in g_0 \scrB_{\epsilon_0}$.
Together with the fact that $\mathrm{supp}(\phi)\subset N_{\epsilon_0}^+$, it follows that the summands in \eqref{eq.un1} are non-zero for only finitely many elements $\gamma \in \Gamma \cap  g_0 \scrB_{\epsilon_0}
a_t^{-1} N_{\epsilon_0}^+ g^{-1}$.

Suppose $\gamma g N_{\epsilon_0}^+ a_t
\cap g_0 \scrB_{\epsilon_0}\neq\emptyset$. 
Then $\gamma g a_t\in g_0 N^-_{\epsilon_0}A_{\epsilon_0}MN^+$, and there are unique elements
$p_{t,\gamma}\in N^-_{\epsilon_0}A_{\epsilon_0}M$ and $n_{t,\gamma}\in N^+$ such that
\begin{equation*}
\gamma g a_t = g_0 p_{t,\gamma}n_{t,\gamma} \in g_0 P^{-}_{\epsilon_0} N^+.
\end{equation*}
Let ${\Gamma_{t,v}}$ denote the subset $\Gamma \cap g_0 (N^-_{\epsilon_0}A_{\epsilon_0}MN^+ )a_t^{-1}
 g^{-1}$.
Note that although $
\Gamma_{t,v}$ may possibly be infinite, only finitely many of the terms in the sums we consider will be non-zero.
This gives
\begin{align*}
&\int_{N^+} f([g]n a_t)\phi(n)\,dn=\sum_{\gamma \in \Gamma} \int_{N^+} \tilde{f}(\gamma g n a_t)\phi(n)\,dn\\
&=\sum_{\gamma \in\Gamma_{t,v} } \int_{N^+} \tilde{f}(\ga g a_t(a_t^{-1} n a_t)\big)\phi(n)\,dn\\
&=e^{-2\rho(\log a_t)} \sum_{\gamma \in 
\Gamma_{t,v}} \int_{N^+} \tilde{f}(\gamma g a_t n)\phi(a_tn a_t^{-1})\,dn \\ &= e^{-2\rho(\log a_t)}
\sum_{\gamma \in  \Gamma_{t,v}} \int_{N^+} \tilde{f}\big(g_0p_{t,\gamma}n_{t,\gamma} n\big)\phi(a_t n a_t^{-1})\,dn \\
&={e^{-2\rho(\log a_t)}} \sum_{\gamma \in  {\Gamma_{t,v} }
} \int_{N^+} \tilde{f}\big(g_0p_{t,\gamma} n\big)\phi\big( a_t \,n_{t,\gamma}^{-1}n\; a_t^{-1}\big)\,dn.\end{align*}
Since $\supp(\tilde{f})\subset g_0 \scrB_{\epsilon_0}$, we have 
\begin{align*}
 \sum_{\gamma \in {\Gamma_{t,v}}}& \int_{N^+} \tilde{f}\big(g_0p_{t,\gamma} n\big)\phi\big( a_t \,n_{t,\gamma}^{-1}n\; a_t^{-1}\big)\,dn
\\\leq&\sum_{\gamma \in  \Gamma_{t,v} } \left(\sup_{n\in N_{\epsilon_0}^+} \phi\big(
a_t \,n_{t,\gamma}^{-1}\;a_t^{-1}(a_t na_t^{-1}) \big) \right)\cdot \int_{N^+} \tilde{f}\big(g_0p_{t,\gamma} n\big)\,dn,\end{align*}
and 
\begin{align*}
\sum_{\gamma \in  \Gamma_{t,v} }& \int_{N^+} \tilde{f}\big(g_0p_{t,\gamma} n\big)\phi\big(a_t\,n_{t,\gamma}^{-1}n\; a_t^{-1} \big)\,dn
 \\ &\geq\sum_{\gamma \in \Gamma_{t,v}
} \left(\inf_{n\in N_{\epsilon_0}^+} \phi\big(
a_t\,n_{t,\gamma}^{-1}\;a_t^{-1} (a_t na_t^{-1})
\big) \right)
\cdot \int_{N^+} \tilde{f}\big(g_0p_{t,\gamma} n\big)\,dn.
\end{align*}
Since $u$ belongs to $\op{int}\scrL_{\Gamma}$, there exist $t_0(v)>0$ and $\alpha>0$
such that
\begin{equation*}
a_t N_r^+ a_t^{-1}\subset N_{r e^{-\alpha t}}^+\qquad \text{ for all }\, r>0\text{ and }t>t_0(v).
\end{equation*}
Therefore, for all $n\in N_{\epsilon_0}^+$
and $t>t_0(v)$,
we have
\begin{align}\label{eq.x1}
 \phi_{\epsilon_0 e^{-\alpha t}}^-\big( a_t\,n_{t,\gamma}^{-1}a_t^{-1}  \big)&\leq  \phi\big(a_t\,n_{t,\gamma}^{-1}a_t^{-1}(a_t na_t^{-1}) \big)\leq  \phi_{\epsilon_0 e^{-\alpha t}}^+\big(a_t\,n_{t,\gamma}^{-1}\;a_t^{-1} \big),
\end{align}
where
\begin{equation*}
\phi^+_{\epsilon}(n):=\sup_{b\in N_{\epsilon}^+} \phi(nb),\text{ and } \phi^-_{\epsilon}(n):=\inf_{b\in N_{\epsilon}^+} \phi(nb) \qquad \text{ for all } n\in N^+,\,\epsilon>0.
\end{equation*}
We now have the following chain of inequalities (for $t>t_0(v)$):
\begin{align}\label{intchain}
&\sum_{\gamma \in \Gamma_{t,v}} \phi_{\epsilon_0 e^{-\alpha t}}^-\big(a_t\,n_{t,\gamma}^{-1}\;a_t^{-1}
\big)\int_{N_{\epsilon_0}^+} \tilde{f}\big(g_0p_{t,\gamma} n\big)\,dn\\
\notag &\leq e^{2\rho(\log a_t)}
 \,\int_{N^+} f([g] n a_t)\phi(n)\,dn
 \\&\leq\sum_{\gamma \in \Gamma_{t,v}} \phi_{\epsilon_0 e^{-\alpha t}}^+\big(a_t\,n_{t,\gamma}^{-1}\;a_t^{-1}
\big)\int_{N_{\epsilon_0}^+} \tilde{f}\big(g_0p_{t,\gamma} n\big)\,dn.
\end{align}
By Lemmas \ref{fullmeas} and \ref{PScont}, there exist $R>0$ and a radial function $\rho\in C_c(N_R^+)$ such that $\rho(n)\geq 0$ for all $n\in N^+$, and $\mu_{ g_0pN^+}^{\mathrm{PS}}(\rho)>0$ for all $p\in N_{\epsilon_0}^-A_{\epsilon_0}M$.
Define $\tilde{F}\in C_c(g_0 N_{\epsilon_0}^-A_{\epsilon_0}MN_R^+)$ by
\begin{equation*}
\tilde{F}(g):=\begin{cases} \frac{\rho(n)}{\mu_{ g_0pN^+}^{\mathrm{PS}}(\rho)}\int_{N_{\epsilon_0}^+} \tilde{f}\big(g_0p v\big)\,dv&\qquad\mathrm{if\;}g=g_0pn\in g_0 N_{\epsilon_0}^-A_{\epsilon_0}MN_R^+,\\0&\qquad\mathrm{otherwise.}
\end{cases}
\end{equation*}
Since $\rho$ is radial, $\tilde F$ is right $M$-invariant.
The key property of $\tilde{F}$ we will use is the following: for all $p\in P_{\epsilon_0}^-$,
\begin{equation*}
\int_{N^+} \tilde{F} (g_0pn)\,d\mu_{g_0pN^+}^{\mathrm{PS}}(n)=\int_{N_R^+} \tilde{F} (g_0pn)\,d\mu_{g_0pN^+}^{\mathrm{PS}}(n)=\int_{N_{\epsilon_0}^+}\tilde{f}(g_0 pn)\,dn.
\end{equation*}
Returning to \eqref{intchain}, we now give an upper bound for $\int_{N} f([g] n 
a_t
)\phi(n)\,dn$; the lower bound can be dealt with in a similar fashion.
We observe: \begin{align*}
& e^{2\rho(\log a_t)} \int_{N^+} f([g] n a_t)\phi(n)\,dn\\
\leq&\,  
\sum_{\gamma \in \Gamma_{t,v}
} \phi_{\epsilon_0 e^{-\alpha t}}^+\big(a_t
\,n_{t,\gamma}^{-1}\;a_t^{-1}
\big)\int_{N_{\epsilon_0}^+} \tilde{f}\big(g_0p_{t,\gamma} n\big)\,dn\\
=&\,\sum_{\gamma \in \Gamma_{t,v}} \phi_{\epsilon_0 e^{-\alpha t}}^+\big(a_t\,n_{t,\gamma}^{-1}\;a_t^{-1}\big)
\int_{N_R^+} \tilde{F} (g_0p_{t,\gamma}  n)\,d\mu_{g_0p_{t,\gamma}N^+}^{\mathrm{PS}}(n)
\\=&\,
\sum_{\gamma \in \Gamma_{t,v}
} \int_{N_R^+} \tilde{F} (g_0p_{t,\gamma}  n)\phi_{\epsilon_0 e^{-\alpha t}}^+\big(a_t\,n_{t,\gamma}^{-1}\;a_t^{-1}
\big)\,d\mu_{g_0p_{t,\gamma}N^+}^{\mathrm{PS}}(n).
\end{align*}
Similarly as before, we have, for all $t>t_0(v)$ and $\,n\in N_R^+$,
\begin{align}\label{eq.x2}
\phi_{\epsilon_0 e^{-\alpha t}}^+\big(a_t
\,n_{t,\gamma}^{-1}\;a_t^{-1}
\big)&=\phi_{\epsilon_0 e^{-\alpha t}}^+\big(a_t\,n_{t,\gamma}^{-1}n(n)^{-1}\;a_t^{-1}\big)\notag\\
&\leq \phi_{(R+\epsilon_0) e^{-\alpha t}}^+\big(a_t
\,n_{t,\gamma}^{-1}n\;a_t^{-1}\big).
\end{align}
Hence \eqref{intchain} is bounded above by
\begin{align*}
&\leq 
\begin{multlined}[t]
\sum_{\gamma \in \Gamma_{t,v}
} \int_{N_R^+} \tilde{F} (g_0p_{t,\gamma}  n)\phi_{(R+\epsilon_0) e^{-\alpha t}}^+\big(a_t\,n_{t,\gamma}^{-1}n\;a_t^{-1}
\big)\,d\mu_{g_0p_{t,\gamma}N ^+}^{\mathrm{PS}}(n)
\end{multlined}
\\&=
\begin{multlined}[t]
\sum_{\gamma \in \Gamma_{t,v}
} \int_{N^+} \tilde{F} \big(g_0p_{t,\gamma} n_{t,\gamma}a_t^{-1}na_t\big)\phi_{(R+\epsilon_0) e^{-\alpha t}}^+(n) \,d\mu_{g_0p_{t,\gamma}N^+}^{\mathrm{PS}}(n_{t,\gamma}a_t^{-1}na_t).
\end{multlined}
\end{align*}
By Lemma \ref{MANconj},
\begin{equation*}
d\mu_{g_0p_{t,\gamma}N^+}^{\mathrm{PS}}(n_{t,\gamma}a_t^{-1}na_t)=e^{-\psi(\log  a_t^{-1})}d\mu_{g_0p_{t,\gamma} n_{t,\gamma}a_t^{-1}N^+}^{\mathrm{PS}}(n).
\end{equation*}
Since  $g_0p_{t,\gamma}n_{t,\gamma}a_t^{-1}=\gamma g$, it follows that for all $t>t_0(v)$,
\begin{align*}
 & e^{(2\rho-\psi)(\log a_t)} \int_{N^+} f([g] n a_t)\phi(n)\,dn\,  \\ \le 
  &\sum_{\gamma \in \Gamma_{t,v}} \!\int_{N^+} \tilde{F} (\gamma g na_t )\phi_{(R+\epsilon_0) e^{-\alpha t}}^+(n)\,d\mu_{\gamma gN^+}^{\mathrm{PS}}(n)\\
\leq&\, \int_{N^+}\left(\sum_{\gamma \in \Gamma} \tilde{F} (\gamma g na_t )\right)\phi_{(R+\epsilon_0) e^{-\alpha t}}^+(n)\,d\mu_{gN^+}^{\mathrm{PS}}(n).
\end{align*}
Define a function $F$ on $\GaG$ by
\begin{equation*}
F([g]):= \sum_{\gamma\in\Gamma} \tilde{F}(\gamma g).
\end{equation*}
Then for any $\epsilon>0$ and for all $t>t_0(v)$ such that $(R+\epsilon_0)e^{-\alpha t} \leq \epsilon$,
\begin{multline*} \Psi(t)\int_{N^+} F([g] n a_t)\phi^-_{\epsilon}(n)\,d\mu_{ gN^+}^{\mathrm{PS}}(n)
\leq\Psi(t) e^{(2\rho-\psi)(\log a_t)}\int_{N^+}f([g] n a_t)\phi(n)\,dn
\\ \leq \, \Psi(t)\int_{N^+} F([g]n  a_t )\phi^+_{\epsilon}(n)\,d\mu_{gN^+}^{\mathrm{PS}}(n).
\end{multline*}
Since $F$ is right $M$-invariant, by Proposition \ref{p1}, letting $\epsilon\rightarrow0$ gives
\begin{align*}
\lim_{t\rightarrow\infty} \Psi(t) e^{(2\rho-\psi)(\log a_t)}\int_{N^+}f([g]n  a_t )\phi(n)\,dn=J(v)\,
m^{\mathrm{BMS}}(F)\, \mu_{gN^+}^{\mathrm{PS}}(\phi).
\end{align*}
From the definition of $F$, together with Lemma \ref{PNdecmop} in the form \eqref{PNgood}, and \eqref{BRPN}, we have
\begin{align*}
&m^{\mathrm{BMS}}(F)=\tilde{m}^{\mathrm{BMS}}(\tilde{F})\\
&=\int_{ P^-} \left(\int_{N^+} \tilde{F}(g_0pn)\,d\mu_{g_0 pN^+}^{\mathrm{PS},}(n)\right)\,d\mu_{g_0P^-}^{\mathrm{PS}}(p)
\\&=\int_{ P^-} \left(\int_{N^+} \tilde{f}(g_0pn)\,dn\right)\,d\mu_{g_0P^-}^{\mathrm{PS}}(p)\\
&=\tilde{m}^{\mathrm{BR}}(\tilde{f})=m^{\mathrm{BR}}(f).
\end{align*}

This finishes the proof of the first statement.
For the second statement, note that the following inequalities corresponding to \eqref{eq.x1}, and \eqref{eq.x2} hold  with the weaker assumption $tu+\sqrt tv\in\mathfrak a^+$, rather than $t>t_0(v)$:
for all $n\in N_{\e_0}^+$,
$$ \phi\big(a_t
\,n_{t,\gamma}^{-1}\;a_t^{-1}
 (a_t na_t^{-1})
\big)\leq  \phi_{\epsilon_0}^+\big(a_t\,n_{t,\gamma}^{-1}\;a_t^{-1} \big),$$
and for all $n\in N_{R}^+$,
\begin{align*}
\phi_{\epsilon_0 }^+\big(a_t\,n_{t,\gamma}^{-1}\;a_t^{-1}\big)
=\phi_{\epsilon_0 }^+\big(a_t\,n_{t,\gamma}^{-1}n(n)^{-1}\;a_t^{-1}\big)\leq \phi_{R+\epsilon_0}^+\big(a_t\,n_{t,\gamma}^{-1}n\;a_t^{-1}\big).
\end{align*}
Now proceeding similarly as in the proof of the first statement, we have
\begin{align*}
&\int_{N^+} f([g] n a_t)\phi(n)\,dn\\
&\leq\,
 e^{(\psi-2\rho)(\log a_t)}
\int_{N^+}\left(\sum_{\gamma \in \Gamma} \tilde{F} (\gamma g na_t )\right)\phi_{R+\epsilon_0}^+(n)\,d\mu_{gN^+}^{\mathrm{PS}}(n),
\end{align*}
and hence
\begin{align*}
&\Psi(t) e^{(2\rho-\psi)(\log a_t)}\int_{N^+}f([g] n a_t)\phi(n)\,dn\\
& \leq \, \Psi(t)\int_{N^+} F([g]n a_t)\phi^+_{R+\epsilon_0}(n)\,d\mu_{gN^+}^{\mathrm{PS}}(n)\leq C'(F,\phi_{R+\e_0}^+),
\end{align*}
provided $\log a_t\in\mathfrak a^+$.
By setting $C''(f, \phi):=C'(F,\phi_{R+\e_0}^+)$,  this finishes the proof of the proposition.
\end{proof}

With the help of Proposition \ref{p1}, we are now ready to prove:
\begin{prop}\label{prop.mixH0}
 Suppose that $m^{\BMS}$ satisfies the local mixing property  for the pair $(\Psi, J)$. 
Then for any $f_1, f_2\in C_c(\Gamma\ba G)^M$ and $v\in\ker\psi$, we have
\begin{align*}
\lim_{t\rightarrow\infty} \Psi(t)e^{(2\rho-\psi)(t  u+\sqrt t v)} &\int_{\GaG} f_1 (x a(t,v))f_2(x)\,dx
=J(v)\,m^{\mathrm{BR}}(f_1)\, m^{\mathrm{BR}_*}(f_2),
\end{align*}
and there exists $C_0=C_0(f_1,f_2)>0$ such that if $a(t,v)\in\mathfrak a^+$,
$$
\left| \Psi(t)
e^{(2\rho-\psi)(t  u+\sqrt t v)}  \int_{\GaG} f_1 (x a(t,v))f_2(x)\,dx\right| <C_0.
$$
\end{prop}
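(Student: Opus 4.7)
My plan is to mirror the passage from Proposition \ref{p1} to Proposition \ref{p2}: decompose the Haar measure along the $P^-N^+$ local product structure, apply Proposition \ref{p2} to the inner $N^+$-integral, and then identify the remaining outer integral as $m^{\BR_*}(f_2)$ via the $P^-N^+$ decomposition of $m^{\BR_*}$ displayed just after \eqref{BRPN}. After a partition of unity on $\GaG$, it suffices to treat the case where $f_2$ is supported in a neighbourhood $x_0\scrB_{\e_0}$ of some $x_0=[g_0]\in\GaG$, with $\e_0<\e_0(x_0)$ so that $(p,n)\mapsto [g_0 pn]$ is injective on $P^-_{\e_0}\times N^+_{\e_0}$.

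Since the Haar measure equals $m_{m_o,m_o}$ and $m_o$ is the $(G,2\rho)$-conformal density (so that $d\mu^{\mathrm{Leb}}_{gN^+}=dn$ coincides with the $N^+$-Haar measure), the analogue of Lemma \ref{PNdecmop} with $\psi$ replaced by $2\rho$ and PS measures replaced by Lebesgue measures will yield
$$\int_{\GaG}f_1(x a_t)f_2(x)\,dx=\int_{P^-_{\e_0}}\!\left(\int_{N^+_{\e_0}}\! f_1([g_0 pn]a_t)\,\phi_p(n)\,dn\right)d\lambda(p),$$
where $a_t:=a(t,v)$, $\phi_p(n):=f_2([g_0 pn])$, and $d\lambda(p):=e^{-2\rho(\log a)}\,dh\,dm\,da$ for $p=ham\in N^-AM$. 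For each fixed $p\in P^-_{\e_0}$, Proposition \ref{p2} applied with $x=[g_0 p]$, $f=f_1$, and $\phi=\phi_p$ will give the pointwise limit
$$\lim_{t\to\infty}\Psi(t)\,e^{(2\rho-\psi)(\log a_t)}\int_{N^+} f_1([g_0 pn]a_t)\,\phi_p(n)\,dn = J(v)\,m^{\BR}(f_1)\,\mu^{\PS}_{g_0 pN^+}(\phi_p).$$

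To interchange this pointwise limit with the outer integration $\int_{P^-_{\e_0}}d\lambda(p)$ I will invoke dominated convergence, using the second part of Proposition \ref{p2} as a majorant: it provides a bound $C''(f_1,\phi_p)$ whenever $a_t\in A^+$, and since $f_2$ is continuous with compact support, the family $\{\phi_p\}_{p\in P^-_{\e_0}}$ is uniformly bounded in supremum norm with support in a fixed compact subset of $N^+$, so this constant can be chosen independent of $p$. Finally, the $P^-N^+$ decomposition of $\tilde m^{\BR_*}$ recorded after \eqref{BRPN} will identify
$$\int_{P^-_{\e_0}}\mu^{\PS}_{g_0 pN^+}(\phi_p)\,d\lambda(p)=m^{\BR_*}(f_2),$$
producing the claimed main term $J(v)\,m^{\BR}(f_1)\,m^{\BR_*}(f_2)$. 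The uniform bound $C_0(f_1,f_2)$ of the second statement then follows by integrating the uniform bound from Proposition \ref{p2} over $P^-_{\e_0}$.

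The hard part will be verifying that the constant $C''(f_1,\phi_p)$ from Proposition \ref{p2} can be chosen uniformly in $p\in P^-_{\e_0}$. Inspecting its proof, $C''$ takes the form $C'(F,\phi^+_{R+\e_0})$, where $F$ is built from an auxiliary radial thickening $\rho$ and $R$ depends on the support of $\phi$; one must check that these auxiliary data, and hence the resulting constant, can be controlled uniformly as $p$ varies over the compact set $P^-_{\e_0}$, which will ultimately come down to the continuity of the PS measures (Lemma \ref{PScont}) and the uniform compactness of $\operatorname{supp}(\phi_p)$.
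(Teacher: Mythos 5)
Your plan is essentially the paper's own proof: partition $f_2$ into pieces supported in small $N^-AMN^+$-boxes, decompose the Haar integral via $d(hamn)=e^{-2\rho(\log a)}\,dh\,dm\,da\,dn$, apply Proposition~\ref{p2} to the inner $N^+$-integral, and identify the outer $P^-$-integral with $m^{\BR_*}(f_2)$ via the disintegration of $\tilde m^{\BR_*}$ recorded after \eqref{BRPN}, with the second statement obtained by integrating the uniform bound from Proposition~\ref{p2} over $P^-_{\e_0}$. The uniformity-in-$p$ of the dominating constant $C''(f_1,\phi_p)$ that you flag as the hard part is a genuine point (the paper passes over it silently), but it is resolvable exactly as you indicate: the auxiliary data $(R,\rho)$ in the proof of Proposition~\ref{p2} is already chosen uniformly over $p\in N^-_{\e_0}A_{\e_0}M$ via Lemmas~\ref{fullmeas} and~\ref{PScont}, and the family $\{\phi_p\}_{p\in P^-_{\e_0}}$ is uniformly bounded with support in a fixed compact subset of $N^+$.
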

\begin{proof}
Note that the hypotheses above coincide with those of Propositions \ref{p1} and \ref{p2}; this allows us to apply Proposition \ref{p2} in the following argument.

By compactness, we can find $\epsilon_0>0$ and $x_i\in\Ga\ba G$, $i=1,\cdots , \ell$ such that the map $G\to\Ga\ba G$ given by $g\to x_ig$ is injective on $R_{\epsilon_0}=N^-_{ \epsilon_0}A_{ \epsilon_0}N_{ \epsilon_0}^+M$, and $\bigcup_{i=1}^\ell x_i R_{\epsilon_0/2}$ contains both $\supp f_1$ and $\supp f_2$. As before, set $a_t=\exp (tu+\sqrt t v)$.
We use continuous partitions of unity to write $f_1$ and $f_2$ as finite sums $f_1=\sum_{i=1}^\ell f_{1,i}$ and $f_2=\sum_{j=1}^\ell f_{2,j}$ with $\supp f_{1,i}\subset x_iR_{\epsilon_0/2}$ and $\supp f_{2,j}\subset x_jR_{\epsilon_0/2}$.
Writing $p=ham\in N^-AM$ and using the following decomposition of the Haar measure on $G$: 
$$d(hamn)=e^{-2\rho(\log a)}\,dn\,dm\,da\,dh$$
(cf. \cite[Prop 8.45]{Knapp}),  we have
\begin{align}\label{eq.pu1}
&\int_{\GaG} f_1 (x
a_t
)f_2(x)\,d x\\
&=\sum_{i,j} \int_{R_{\e_0}} f_{1,i} (x_ipn
a_t
)f_{2,j} (x_ipn )e^{-2\rho(\log 
a
)}\,dn\,dm\,da\,dh\notag
\\&=\sum_{i,j} \int_{N_{\e_0}^-A_{\e_0}M}\left( \int_{N_{\e_0}^+} f_{1,i} (x_i pn
a_t
 )f_{2,j} (x_ipn )\,dn\right)e^{-2\rho(\log 
 a
)}\,dm\,da\,dh.\notag
\end{align}
Applying Proposition \ref{p2}, it follows (cf.\ also \eqref{BRPN})

\begin{align*}
&\lim_{t\rightarrow\infty} \Psi(t)
e^{(2\rho-\psi)(\log a_t)} \int_{\GaG} f_1 (x a_t)f_2(x)\,dx
\\&= J(v) \sum_{i} m ^{\mathrm{BR}}(f_{1,i})\sum_j\int_{N_{\e_0}^-A_{\e_0}M} \mu_{x_ipN^+}^{\mathrm{PS}} (f_{2,j}(x_i p\,\cdot\,) )
e^{-2\rho(\log 
a)}\,dm\,da\,dh
\\&=
J(v) \sum_{i} m ^{\mathrm{BR}}(f_{1,i})\sum_jm ^{\mathrm{BR}_*}(f_{2,j})\\
&=
J(v) m ^{\mathrm{BR}}(f_{1})\,m ^{\mathrm{BR}_*}(f_{2})
\end{align*}
where the second last equality is valid by \eqref{BRPN}. This justifies the first statement.
For the second statement, note that if $tu+\sqrt tv\in\mathfrak a^+$, \eqref{eq.pu1} together with Proposition \ref{p2} gives
$$\left| \Psi(t)e^{(2\rho-\psi)(\log a_t)}\int_{\GaG} f_1 (xa_t)f_2(x)\,d x\right| \leq C_0(f_1,f_2), $$
where 
$C_0(f_1,f_2):=\sum_{i,j} C''(f_{1,i},f_{2,j})\int_{N_{\e_0}^-A_{\e_0}M}e^{-2\rho(\log(a(t,v))}\,dm\,da\,dh.$
This completes the proof.
\end{proof}

We make the following observation, which will be used in the proof of Theorem \ref{uniq2}.
\begin{cor}\label{uniq}
Fix $u\in  \L_\Gamma\cap \op{int}\fa^+$, and let $\psi\in D_\Ga^\star$ be tangent to $\psi_\Gamma$ at $u$.
For $k=1,2$, let
 $\nu_k$ and $\bar \nu_{k}$
 be respectively $(\Gamma, \psi)$ and $(\Gamma, \psi\circ \i)$-PS measures on $\La_\G$.  
 Setting $m_k^{\BMS}:=m^{\BMS}_{\nu_k, \bar \nu_k}$,
suppose that there exist functions $\Psi_k : \bb R_{>0}\to\bb R_{>0}$ such that for all $f_1, f_2\in C_c(\Ga\ba G)^M$,
$$
\lim\limits_{t\to\infty}\Psi_k(t)\int_{\Ga\ba G}f_1(x\exp(tu))f_2(x)\,dm^{\BMS}_k(x)=m_k^{\BMS}(f_1)m_k^{\BMS}(f_2).
$$
Then $\nu_1=\nu_2$ and $\bar\nu_2=\bar\nu_2$.
\end{cor}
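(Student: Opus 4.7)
The plan is to reduce the uniqueness of the $\PS$-measures to the equality of the associated $\BR$ and $\BR_*$-measures, via the Haar mixing statement coming from Proposition \ref{prop.mixH0}.

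First, I would observe that the hypothesis is exactly the local mixing property of Definition \ref{lmp} specialized to $v=0$, with normalizing function $\Psi_k$ and $J_k(0)=1$. Inspecting the proofs of Propositions \ref{p1}, \ref{p2}, and \ref{prop.mixH0}, one sees that fixing $v=0$ throughout uses only the $v=0$ convergence in the hypothesis (the uniform bound in Definition \ref{lmp}(2) is required only to handle variable $v$ or to pass to counting). Applying Proposition \ref{prop.mixH0} at $v=0$ to each $k=1,2$ yields, for every $f_1,f_2\in C_c(\GaG)^M$,
\begin{equation*}
\lim_{t\to\infty}\Psi_k(t)\,e^{(2\rho-\psi)(tu)}\int_{\GaG}f_1(x\exp(tu))f_2(x)\,dx=m^{\BR}_{\bar\nu_k}(f_1)\,m^{\BR_*}_{\nu_k}(f_2).
\end{equation*}
Since the Haar integral on the left-hand side is independent of $k$, for any $f_1,f_2$ making the right-hand side non-zero, the ratio $\Psi_1(t)/\Psi_2(t)$ converges to a finite positive limit, and moreover this limit is independent of the test functions. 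Call it $c>0$. Consequently, for all $f_1,f_2\in C_c(\GaG)^M$,
\begin{equation*}
m^{\BR}_{\bar\nu_1}(f_1)\,m^{\BR_*}_{\nu_1}(f_2)=c\cdot m^{\BR}_{\bar\nu_2}(f_1)\,m^{\BR_*}_{\nu_2}(f_2).
\end{equation*}

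Next, choosing $f_2$ so that the $\BR_*$ integrals are a fixed non-zero constant, we deduce the existence of $c_1,c_2>0$ with $c_1c_2=c$ such that
$m^{\BR}_{\bar\nu_1}=c_1\,m^{\BR}_{\bar\nu_2}$ and $m^{\BR_*}_{\nu_1}=c_2\,m^{\BR_*}_{\nu_2}$ as measures on $\GaG$ (first on $M$-invariant test functions, but by right $M$-invariance of both $\BR$-measures this extends to all of $C_c(\GaG)$). Lifting to $G$, we obtain the same proportionalities for $\tilde m^{\BR}_{\bar\nu_k}$ and $\tilde m^{\BR_*}_{\nu_k}$.

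The key step is then to extract the equality of PS-measures from the equality of the $\BR$ and $\BR_*$-measures. For this I would use the explicit product description in Lemma \ref{lem.BRD}: in $KAN^+$ coordinates $g=k\exp(b)n$,
\begin{equation*}
d\tilde m^{\BR}_{\bar\nu_k}([g])=e^{-(\psi\circ\i)(b)}\,dn\,d\ell(b)\,d\nu_{\bar\nu_k}(k^-)\,dm,
\end{equation*}
and analogously for $\tilde m^{\BR_*}_{\nu_k}$ in $KAN^-$ coordinates. Since $dn$, $db$, and $dm$ are common to both sides, the identity $\tilde m^{\BR}_{\bar\nu_1}=c_1\,\tilde m^{\BR}_{\bar\nu_2}$ forces $\bar\nu_1=c_1\bar\nu_2$ on the Zariski-open set $K(e^-)$, and similarly $\nu_1=c_2\nu_2$ on $K(e^+)$; since $\F=K(e^\pm)$, these equalities hold globally. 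Finally, both $\nu_k$ and $\bar\nu_k$ are probability measures by the definition of PS-measure, forcing $c_1=c_2=1$ and hence $\nu_1=\nu_2$ and $\bar\nu_1=\bar\nu_2$, as claimed. The main delicate point is verifying that the hypothesis (with $v=0$ only) indeed suffices to run Proposition \ref{prop.mixH0} at $v=0$; this is a matter of tracing through the proofs and checking that none of the intermediate arguments invoke the $v\ne 0$ hypothesis or the uniform bound.
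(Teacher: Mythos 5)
Your proof is correct and follows the same overall strategy as the paper's: feed the $v=0$ mixing hypothesis through Proposition \ref{prop.mixH0} to get the Haar-mixing asymptotic
$m^{\BR}_{\bar\nu_k}(f_1)\,m^{\BR_*}_{\nu_k}(f_2)$, deduce proportionality of the $\BR$/$\BR_*$ measures, and then recover the conformal measures using the product structure. The one genuine difference is in the extraction step: the paper constructs an explicit test function $\tilde f_2$ supported in a chart $g_0 N^+P^-$ with compactly supported bump factors on $A$ and $N^-$ chosen so that $m^{\BR_*}_{\nu_k}(f_2)=\nu_k(F)$ directly, whereas you invoke the global $KAN^{\pm}$ product description of Lemma \ref{lem.BRD}. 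Your route is arguably cleaner in that it extracts $\bar\nu_1=c_1\bar\nu_2$ and $\nu_1=c_2\nu_2$ from the tensor identity simultaneously, while the paper handles only the $\BR_*$ side and then says ``repeat the same argument for $\i(v)$'' for the other factor. Two minor slips: the exponent in your $\BR$ formula should be $e^{-\psi(b)}$ (apply Lemma \ref{lem.BRD} with $\psi\circ\i$, and $\i\circ\i=\id$), not $e^{-(\psi\circ\i)(b)}$, though since the $b$ and $n$ factors cancel between $k=1,2$ this does not affect the argument; and $K(e^{\pm})$ is all of $\F$ (Iwasawa), not merely a Zariski-open subset, so the global extension step is automatic. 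Your preliminary observation that only condition (1) of Definition \ref{lmp} at $v=0$ is used in the limit statements of Propositions \ref{p1}, \ref{p2}, \ref{prop.mixH0} is correct; the paper silently assumes this when it says ``by an argument similar to the proof of Proposition \ref{prop.mixH0}.''
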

\begin{proof}
By an argument similar to the proof of Proposition \ref{prop.mixH0}, 
\begin{align*}
\lim_{t\rightarrow\infty} \Psi_k(t)e^{t(2\rho-\psi_\Gamma)(u)}&\int_{\GaG} f_1 (x \exp(tu))f_2(x)\,dx
=m_{\bar\nu_k}^{\mathrm{BR}}(f_1)\, m_{\nu_k}^{\mathrm{BR}_*}(f_2).
\end{align*}
Fix $f_1 \in C_c(\Ga\ba G)^M$ with $m_{\bar\nu_k}^{\mathrm{BR}}(f_1)>0$ for each $k=1,2$. By considering
 $f_2\in C_c(\Ga\ba G)^M$ with $m_{\nu_k}^{\mathrm{BR}_*}(f_2)>0$, it follows from the hypothesis
 that $c_0:=\lim_{t\to\infty}\frac{\Psi_1(t)}{\Psi_2(t)} >0$.
Set $c_1:=c_0\cdot\frac{m_{\bar\nu_2}^{\mathrm{BR}}(f_1)}{m_{\bar\nu_1}^{\mathrm{BR}}(f_1)}.$
Then for any $f_2\in C_c(\Ga\ba G)^M$, we have
\be\label{ftwo} m_{\nu_1}^{\BR_*}(f_2)=c_1\cdot m_{\nu_2}^{\BR_*}(f_2).\ee Recall from Lemma \ref{qa} that for all $a\in A$,
$a_*m_{\nu_k}^{\BR_*}=e^{(-\psi+2\rho)(\log a)}\,m_{\nu_k}^{\BR_*}.$
We claim that $\nu_1=\nu_2$.
Let $g_0\in G$ be arbitrary. 
Fix $\e>0$ and consider $F\in C(G/P)$ supported in $(g_0 N^+_\epsilon)^-$.
Choose $q_1\in C_c(A)$, and a radial function $q_2\in C_c(N)$ such that $\int_A q_1\,da=\int_{N^-} q_2\,dn=1$.
Define $\tilde f_2\in C_c(G)$ by
\begin{equation*}
\tilde f_2(g):=\begin{cases} F(g_0h^+)q_1(a)q_2(n)e^{-\psi(\log a)}&\quad\mathrm{if\;}g=g_0hman\in g_0 N^+P^-,\\0&\quad\mathrm{otherwise.}
\end{cases}
\end{equation*}
Note that $\tilde f_2$ is $M$-invariant, as $q_2$ is radial. 
Defining $f_2\in C_c(\Ga\ba G)$ by 
$f_2([g]):=\sum_{\ga\in\Ga}\tilde f_2(\ga g)$, a direct computation shows
\begin{align*}
m_{\nu_k}^{\BR_*}(f_2)&=\int_{N^+}\int_{P^-} \tilde f_2(g_0hman)\,e^{\psi(\log a)}\,dm\,da\,dn\,d\nu_k(g_0h^+)\\
&=\int_{N^+}F(g_0h^+)\left(\int_{P^-} q_1(a)q_2(n)\,dm\,da\,dn\right)\,d\nu_k(g_0h^+)\\
&=\nu_k(F).
\end{align*}
Hence for any $g_0^-\in G/P$,  $\nu_1(F)=c_1\cdot\nu_2(F)$ for all $F\in C(G/P)$ supported in
$(g_0\cal O_\e)^-$.  
By using a partition of unity, we get $\nu_1(F)=c_1\cdot\nu_2(F)$ for all $F\in C(G/P)$. 
Since $|\nu_1|=|\nu_2|=1$, we have $c_1=1$ and hence $\nu_1=\nu_2$.
Repeating the same argument for $\i (v)$,  we also get $\bar\nu_1=\bar\nu_2$ (this implies $c_0=1$).
\end{proof}

\section{Anosov groups}\label{pingpong}\label{sec.Anosov}
\subsection{Anosov subgroups}
Let $\Sigma$ be a finitely generated word hyperbolic group and let $\partial \Sigma $ denote the Gromov boundary of $\Sigma$.
We call a Zariski dense discrete subgroup $\Gamma<G$ {\it Anosov}  with respect to $P$ 
if it arises as the image of a $P$-Anosov representation of $\Sigma$. 
A representation $\Phi: \Sigma \to G$ is $P$-Anosov  if $\Phi$ induces a continuous equivariant map $\zeta:\partial \Sigma\to
\F$ such that $(\zeta(x), \zeta(y))\in \F^{(2)}$ for all $x\ne  y\in \partial \Sigma$ following Labourie \cite{La} and Guichard-Wienhard \cite{GW}.

Let $\tau_d:\PSL_2(\br)\to \PSL_d(\br)$ be the $d$-dimensional irreducible representation of $\PSL_2(\br)$. 
For any  torsion-free uniform lattice $\Sigma$ in $\PSL_2(\br)$,
 the connected component of $\tau_d|_{\Sigma}$ in the space $\op{Hom}(\Sigma, \PSL_d(\br))$ is called the Hitchin component. Representations $\Sigma \to \PSL_d(\br)$ in the Hitchin component  are known to be $P$-Anosov \cite{La}. 
In fact, Hitchin components are defined for representations of $\Sigma$ into any split real simple Lie group $G$, and all representations $\Sigma \to G$ in the Hitchin component are known to be $P$-Anosov (\cite{FG}, \cite{GW}). 

We  mention that if $\rho_i:\Sigma \to G_i$ are $P_i$-Anosov where $P_i$ is a minimal parabolic subgroup of $G_i$, then
$\rho_1\times \rho_2: \Sigma\to G_1\times G_2$ is $P_1\times P_2$-Anosov whenever its image is Zariski dense.
Indeed, if $\zeta_i:\partial\Sigma\to G_i/P_i$  denotes the limit map of $\rho_i$, the map $\zeta(x)=(\zeta_1(x),\zeta_2(x))$ provides the desired limit map for $\rho_1\times\rho_2$, and hence $\{(\rho_1(g), \rho_2(g)): g\in \Sigma\}$ is an Anosov subgroup of $G_1\times G_2$.

One subclass of Anosov groups consists of
 Schottky groups, which generalize the Schottky subgroups of rank one Lie groups.
 
For a loxodromic element $g\in G$, we denote by $y_g\in \cal F$ the unique attracting fixed point of $g$.
Let $\ga_1, \cdots,\ga_p$ be loxodromic elements of $G$ ($p\ge 2$).
For each $1\le i\le p$, set $\xi_i^{+1}=y_{\ga_i}$ and $\xi_i^{-1}=y_{\ga_i^{-1}}$.

\begin{Def} \label{def.sch} \rm The subgroup $\Gamma$ generated by $\{\ga_1, \cdots, \ga_p\}$ is called Schottky if
 there exist
 open subsets $b_i^{\pm}, B_i^{\pm}\subset \cal F$, ${1\le i\le p}$  and $0<\e<1$ such that

\begin{enumerate}
\item for all $i\ne j$ and $\omega, \varpi\in \{-1,1\}$, $b_i^\omega\subset B_j^{\varpi}$;
\item 
$\overline{b_i^\omega}\times \overline{b_j^\varpi}\subset\cal F^{(2)}$ whenever $i\ne j$ or $\omega\ne \varpi$;
\item for all $ i$ and $\omega\in \{-1, 1\}$, $\xi^\omega\in \op{int} b_i^{\omega}$,  $\gamma_i^{\omega}B_i^{\omega} \subset b_i^{\omega}\subset B_i^{\omega}$ and the restriction of $\gamma_i$ to
$B_i^{\omega}$ is $\e$-Lipschitz;

\item the intersection  $\bigcap_{1\le i\le p, \omega\in \{1,-1\}} B_i^{\omega}$ is non-empty.
\end{enumerate}
\end{Def}

This is the same definition  as given in \cite[Section 4.2]{Q5}, except for the extra  condition (2), which we added to ensure the following lemma:
\begin{lem}\label{Scc}
Any Zariski dense Schottky subgroup $\Ga<G$  is Anosov.
\end{lem}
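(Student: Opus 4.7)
The plan is to realize $\Gamma$ as the image of a $P$-Anosov representation of the free group $F_p = \langle s_1,\dots,s_p\rangle$ via the homomorphism $\Phi:s_i\mapsto\gamma_i$, and construct the required boundary map $\zeta:\partial F_p\to\F$ using the nested contracting sets $b_i^\omega\subset B_i^\omega$.

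First, I would run a standard ping-pong argument on the sets $B_i^\omega$ to show that $\Phi$ is injective and $\Gamma$ is freely generated by $\gamma_1,\dots,\gamma_p$, so $\Sigma:=F_p$ is word hyperbolic with Gromov boundary $\partial\Sigma$ identified with the space of one-sided infinite reduced words in the letters $s_1^{\pm 1},\dots,s_p^{\pm 1}$. Concretely, condition (3) together with condition (1) gives, for any reduced word $w=s_{i_1}^{\omega_1}\cdots s_{i_n}^{\omega_n}$ of length $n\ge 1$ and any $j\ne i_n$, $\varpi\in\{\pm 1\}$, the inclusion $\Phi(w)\bigl(B_j^\varpi\bigr)\subset b_{i_1}^{\omega_1}$, and in fact the diameter of this image is bounded by $\e^n\cdot\op{diam}(B_j^\varpi)$ (in any fixed metric on $\F$), which gives triviality of the kernel.

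Next I would construct $\zeta$. Any boundary point $\xi\in\partial\Sigma$ is represented by an infinite reduced word $s_{i_1}^{\omega_1}s_{i_2}^{\omega_2}\cdots$; setting $w_n:=s_{i_1}^{\omega_1}\cdots s_{i_n}^{\omega_n}$ and picking any $\eta\in\bigcap_{i,\omega}B_i^\omega$ (non-empty by (4)), condition (3) forces the nested sets $\Phi(w_n)(B_{i_{n+1}}^{\omega_{n+1}})\subset b_{i_1}^{\omega_1}$ to have diameters $O(\e^n)$, so $\Phi(w_n)\eta$ converges to a point $\zeta(\xi)\in\overline{b_{i_1}^{\omega_1}}$ independent of $\eta$. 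Continuity follows because the cylinder sets $\{\xi:\xi_1=s_{i_1}^{\omega_1},\dots,\xi_n=s_{i_n}^{\omega_n}\}$ form a neighborhood basis of $\partial\Sigma$ and are sent into $\Phi(w_n)(\overline{b_{i_{n+1}}^{\omega_{n+1}}})$, whose diameter tends to $0$. Equivariance under $\Phi$ is immediate from the construction since left-multiplication by $s_j^\varpi$ on boundary words corresponds to applying $\gamma_j^\varpi$ to the defining limit.

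Finally, for transversality: if $\xi\ne\xi'$ in $\partial\Sigma$, their reduced words have a maximal common prefix $w$, after which the next letters $s_i^\omega$ and $s_j^\varpi$ satisfy $(i,\omega)\ne(j,\varpi)$. By construction $\Phi(w)^{-1}\zeta(\xi)\in\overline{b_i^\omega}$ and $\Phi(w)^{-1}\zeta(\xi')\in\overline{b_j^\varpi}$, and condition (2) gives $\overline{b_i^\omega}\times\overline{b_j^\varpi}\subset\FF$; applying $\Phi(w)\in G$ (which preserves $\FF$) yields $(\zeta(\xi),\zeta(\xi'))\in\FF$. Combined with Zariski density (which is given), this shows $\Phi$ is a $P$-Anosov representation. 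The only delicate point I anticipate is verifying the continuity/contraction quantitatively at the first step, where one must argue that a single application of $\gamma_i^\omega$ is $\e$-Lipschitz on all of $B_i^\omega$ while the images land inside some $b_j^\varpi$ with $j\ne i$ in order to iterate; this is exactly engineered by combining (1) and (3), but it is the only place the precise form of the definition is used in an essential way.
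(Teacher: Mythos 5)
Your approach is essentially the same as the paper's: both realize $\Gamma$ as the image of a $P$-Anosov representation of the free group $F_p$, construct the limit map $\zeta:\partial F_p\to\F$ via nested Schottky sets and the base point $\eta\in\bigcap_{i,\omega}B_i^{\omega}$, and then prove transversality $(\zeta(\xi),\zeta(\xi'))\in\FF$ by finding the maximal common prefix $w$ and invoking conditions (1) and (2) together with equivariance. The paper shortcuts the construction of $\zeta$ (including continuity, equivariance, and $\zeta(\partial\Gamma)=\Lambda_\Gamma$) by citing the proof of Quint \cite[Proposition 3.3]{Q4}; you spell that part out, which is useful but not a different route. The transversality paragraph matches the paper's argument exactly.

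One technical slip worth flagging: the inclusion you assert in paragraph~1, $\Phi(w)\bigl(B_j^{\varpi}\bigr)\subset b_{i_1}^{\omega_1}$ (for $j\ne i_n$), and its analogue in paragraph~2, $\Phi(w_n)\bigl(B_{i_{n+1}}^{\omega_{n+1}}\bigr)\subset b_{i_1}^{\omega_1}$, are not justified by the axioms. Condition (3) sends $B_{i_n}^{\omega_n}$ into $b_{i_n}^{\omega_n}$ under $\gamma_{i_n}^{\omega_n}$, but nothing in the definition places $B_j^{\varpi}$ (or $B_{i_{n+1}}^{\omega_{n+1}}$) inside $B_{i_n}^{\omega_n}$, so the ping-pong chain does not start. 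What conditions (1) and (3) do give is $b_{i_{n+1}}^{\omega_{n+1}}\subset B_{i_n}^{\omega_n}$ whenever $a_n\ne a_{n+1}^{-1}$, so the correctly nested family is $\Phi(w_n)(b_{i_{n+1}}^{\omega_{n+1}})$ with lowercase $b$ (equivalently $\Phi(w_n)(B_{i_n}^{\omega_n})$), precisely what you use for continuity. Since $\eta\in\bigcap_{i,\omega}B_i^{\omega}$ lies in every $B_{i_n}^{\omega_n}$, the Cauchy estimate for $\Phi(w_n)\eta$, the definition of $\zeta(\xi)$, and the transversality argument all go through unchanged; only the two displayed inclusions with the uppercase $B$ need to be patched.
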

\begin{proof} 
The Gromov boundary $\partial\Ga$ can be identified with the set of infinite words of the form $\underline a=(a_0a_1a_2\cdots)$ where $a_i\in\{\ga_1^{\pm 1},\cdots,\ga_p^{\pm 1} \}$ and $a_i\neq a_{i+1}^{-1}$. Fix an element
$\xi_0\in \bigcap_{1\le i\le p, \omega\in \{1,-1\}} B_i^{\omega}$, which exists by (3).
Under the above definition of a Schottky group,
the proof of \cite[Proposition 3.3]{Q4} gives that the map $\underline a\mapsto \lim_{n\to \infty} (a_0a_1\cdots a_n) \xi_0$ induces
 a $\Ga$-equivariant homeomorphism $\zeta : \partial\Ga\to\La_\Ga$ (see also \cite[Proposition 4.5]{Q5}).
Let $\xi\ne \eta\in \Lambda_\Ga$, and write them as
$\xi= \lim_{n\to \infty} (a_0a_1\cdots a_n) \xi_0$ and
$ \eta= \lim_{n\to \infty} (a_0'a_1'\cdots a_n') \xi_0$ for some infinite words $\underline a$
and $\underline a'$. Let $k$ be the smallest integer such that $a_k\ne a_k'$, and set $\gamma:=a_0 \cdots a_{k-1}\in \Gamma$.
It follows from (1) and (2) that $\gamma^{-1}\xi\in \overline{b_i^\omega}$ and $\gamma^{-1}\eta\in \overline{b_j^\varpi}$
where $i\ne j$ or $\omega\ne \varpi$. Hence
$(\gamma^{-1}\xi, \gamma^{-1}\eta)\in \cal F^{(2)}$ by (2); consequently, $(\xi,\eta)\in \cal F^{(2)}$.
This shows that $\Ga$ is Anosov. \end{proof}
Schottky groups are found everywhere, in the following sense:

\begin{lem}  Any Zariski dense discrete subgroup $\Ga$ contains a Zariski dense Schottky subgroup. \end{lem}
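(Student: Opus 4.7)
The plan is to prove this by a ping-pong construction.

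First, I would use the Zariski density of $\Ga$ to select $p\ge 2$ loxodromic elements $\ga_1,\dots,\ga_p\in\Ga$ satisfying:
(a) the attracting/repelling fixed points $\xi_i^\omega:=y_{\ga_i^\omega}\in\cal F$ (for $1\le i\le p$, $\omega\in\{\pm 1\}$) are pairwise transverse, i.e.\ $(\xi_i^\omega,\xi_j^\varpi)\in\FF$ whenever $(i,\omega)\ne (j,\varpi)$; and
(b) $\la\ga_1,\dots,\ga_p\ra$ is Zariski dense in $G$.
That (a) is achievable follows because loxodromic elements are Zariski dense in $\Ga$ (a consequence of Benoist \cite{Ben}) and transversality is a Zariski-open condition on attractor-repellor pairs. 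Condition (b) can then be arranged by enlarging the family if necessary, since Zariski density of a finitely generated subgroup is an open condition on the generating tuple.

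Second, I would choose the ping-pong neighborhoods. Pick a basepoint $\xi_0\in\cal F$ transverse to each $\xi_i^\omega$. Select disjoint open balls $b_i^\omega\ni \xi_i^\omega$ so small that their closures remain pairwise transverse (so $\overline{b_i^\omega}\times\overline{b_j^\varpi}\subset\FF$ for $(i,\omega)\ne (j,\varpi)$) and so that $\xi_0\notin\overline{b_i^\omega}$ for every $(i,\omega)$; this uses only continuity of the transversality relation on $\cal F\times\cal F$. For each $(i,\omega)$, choose an open set $B_i^\omega$ containing $b_i^\omega$, containing $\xi_0$, containing every $b_j^\varpi$ with $j\ne i$, and with closure disjoint from $\{\xi_i^{-\omega}\}\cup\overline{b_i^{-\omega}}$. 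By construction, conditions (1), (2), and (4) of Definition \ref{def.sch} hold, with $\xi_0\in\bigcap_{i,\omega}B_i^\omega$.

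Third, the ping-pong step relies on the contracting dynamics of loxodromic elements: for each $\ga_i$, the iterates $\ga_i^n$ converge uniformly on compact subsets of $\cal F\setminus\{\xi_i^{-1}\}$ to the constant map $\xi_i^{+1}$ as $n\to+\infty$, and similarly for $\ga_i^{-n}$ relative to $\xi_i^{-1}$. Because $\overline{B_i^\omega}$ avoids $\xi_i^{-\omega}$, for any prescribed $\e\in(0,1)$ there exist integers $n_i$ large enough that $\ga_i^{\omega n_i}(B_i^\omega)\subset b_i^\omega$ and $\ga_i^{\omega n_i}|_{B_i^\omega}$ is $\e$-Lipschitz. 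This gives condition (3), so $\la\ga_1^{n_1},\dots,\ga_p^{n_p}\ra$ is a Schottky subgroup of $\Ga$.

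The delicate point, and the main obstacle, is preserving Zariski density when passing to these high powers. The Zariski closure $H_i$ of $\la\ga_i\ra$ is abelian, and its Zariski-connected identity component $H_i^\circ$ has some finite index $m_i$ in $H_i$; requiring $n_i$ to be divisible by $m_i$ ensures $H_i^\circ\subset \overline{\la\ga_i^{n_i}\ra}^{\op{Zar}}$. Such divisibility is compatible with the lower bounds on $n_i$ needed for the ping-pong estimates, so both can be arranged simultaneously. Consequently, the Zariski closure of $\la\ga_1^{n_1},\dots,\ga_p^{n_p}\ra$ contains the identity component of the Zariski closure of $\la\ga_1,\dots,\ga_p\ra$, which equals $G$ by (b); since $G$ is Zariski connected, the constructed Schottky subgroup is Zariski dense in $G$, as required.
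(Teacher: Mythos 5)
Your ping-pong construction follows the same general strategy as the paper (though the paper manages with exactly two generators). Two points need attention. First, a dynamical inaccuracy: in rank $\ge 2$, the iterates $\ga_i^n$ of a loxodromic element converge to $\xi_i^{+1}$ not on compacta of $\F\setminus\{\xi_i^{-1}\}$ but only on compacta of the big Schubert cell $\{\eta : (\eta,\xi_i^{-1})\in\FF\}$, whose complement is a proper subvariety, not a single point. So to justify condition (3) you need $\overline{B_i^\omega}$ to consist of flags \emph{transverse} to $\xi_i^{-\omega}$, not merely to avoid $\xi_i^{-\omega}$. This is easily arranged, but the argument as written is not quite right.

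The more serious issue is the final Zariski-density step, which has a genuine gap. Knowing that $L:=\overline{\langle\ga_1^{n_1},\dots,\ga_p^{n_p}\rangle}^{\mathrm{Zar}}$ contains each $H_i^\circ$ does \emph{not} imply that $L$ contains $K^\circ$ for $K:=\overline{\langle\ga_1,\dots,\ga_p\rangle}^{\mathrm{Zar}}$. The $\ga_i$ may lie in non-identity components of the $H_i$ (for instance when the elliptic part of $\ga_i$ has finite order $>1$), in which case the subgroup generated by the $H_j^\circ$ can be strictly smaller than $K^\circ$; in particular $\ga_i\notin L$ in general, so you cannot conclude $\langle\ga_1,\dots,\ga_p\rangle\subset L$. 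The repair, and what the paper does, is to first replace each $\ga_i$ by a power so that $\overline{\langle\ga_i\rangle}^{\mathrm{Zar}}$ is \emph{connected}; then $\overline{\langle\ga_i^{n_i}\rangle}^{\mathrm{Zar}}=\overline{\langle\ga_i\rangle}^{\mathrm{Zar}}$ for every $n_i\ge1$ (a finite-index closed subgroup of a connected group is the whole group), hence $\ga_i\in L$ and $K\subset L$. The paper then obtains $K=G$ with two generators via Tits's result \cite[Proposition 4.4]{T2}, which furnishes a proper Zariski-closed set $F_{\ga_1}$ containing every proper Zariski-connected closed subgroup through $\ga_1$, and chooses $\ga_2\notin F_{\ga_1}$. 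Your assertion that (b) is achievable because ``Zariski density of a finitely generated subgroup is an open condition on the generating tuple'' would itself require justification, but in any case does not repair the gap above.
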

\begin{proof}  This follows from the proof of a more general theorem \cite[Proposition 4.3]{Ben}.
We give a sketch of the proof for the sake of completeness. Since the set of loxodromic elements of $\G$ is Zariski dense,
we may choose a loxodromic element $\ga_1\in \Ga$. There exists a proper Zariski closed subset $F_{\ga_1}\subset G$
which contains all Zariski connected and Zariski closed proper subgroups of $G$ containing $\ga_1$ \cite[Proposition 4.4]{T2}.
We may choose a second loxodromic element $\ga_2\in \Ga-F_{\ga_1}$ such that 
$\{(y_{\ga_2^{\pm 1}}, y_{\ga_1}), (y_{\ga_2^{\pm 1}}, y_{\ga_1^{- 1}})\}\subset \cal F^{(2)}$.
Moreover we can assume that  $\ga_2^k$ generates a Zariski connected subgroup, and hence $\ga_2^k\notin F_{\ga_1}$, for any $k\in \mathbb N$. Let $\varphi_i\in G$ be so that $\ga_i\in \varphi_i( \inte A^+ )\varphi_i^{-1}$.
Then $y_{\ga_i^{\pm 1}}=\varphi_i e^{\pm} $. For $\e>0$, let $b_i^{\pm 1} (\e)$ be the $\e$-neighborhood of $\varphi_i e^{\pm}$
and set $B_i^{\pm 1}(\e) =\varphi_i N_{\e^{-1}}^{\pm} e^{\pm}$ where $N^{\pm}_{\e^{-1}}$ denotes the $\e^{-1}$-neighborhood of $e$ in $N^{\pm}$.
For any $\e_1, \e_2>0$,
$\ga_i^{\pm k} B_i^{\pm 1}(\e_2)\subset b_i^{\pm 1}(\e_1)$ for all sufficiently large $k$.
It follows that we can find $k$, $\e_1>0$ and $\e_2>0$ so that
$\ga_1^k$ and $\ga_2^k$ satisfy  the conditions in \ref{def.sch} 
with $b_i^{\pm 1}:=b_i^{\pm 1} (\e_1)$ and $ B_i^{\pm 1}:=B_i^{\pm 1}(\e_2)$.
 \end{proof}

\subsection{ BMS-mixing}\label{bmsm} The following theorem was first proved by Sambarino \cite{Samb1}, when 
$\Sigma$ is the fundamental group of a compact negatively curved manifold, using the work of Thirion \cite{Thirion}. Using the reparametrization theorem of \cite{BCLS} for a general case, Chow and Sarkar \cite{CS} proved the following theorem; their proof uses a different symbolic coding than
\cite{Samb1} and works for functions which are not necessarily $M$-invariant.

\begin{thm} \label{mixing2}
Let $u\in \op{int}\L_\Gamma$. Let $\psi\in D_\G^{\star}$ be tangent to $\psi_\Gamma$ at $u$, and $\nu$ and $\nu_{\i}$
 be respectively $(\Gamma, \psi)$ and $(\Gamma, \psi\circ \i)$-$\op{PS}$ measures on $\La_\Ga$.  Set $m^{\BMS}:=m^{\BMS}_{\nu, \nu_{\i}}$.
Then there exists $\kappa_u>0$ such that
for any $v\in \ker \psi$ and any $f_1,\,f_2\in C_c(\GaG)^M$,
\begin{multline*}
\lim_{t\rightarrow\infty} t^{(r-1)/2}\int_{\GaG} f_1(x) f_2(x \exp(tu+\sqrt t v)) \,dm^{\mathrm{BMS}}(x)\\
=\kappa_{u} \,e^{-I(v)/2}\, m^{\mathrm{BMS}}(f_1)\,m^{\mathrm{BMS}}(f_2),
\end{multline*}
where $I : \op{ker}( \psi)\to \bb R$ is given by
\begin{equation}\label{ii}
I(v):= c \cdot
\frac{\|v\|_*^2 \|u\|_*^2 -\la v, u\ra_*^2}{\|u\|_*^2}
\end{equation} for some inner product $\la\cdot,\cdot \ra_*$ and some $c>0$. Moreover, the left-hand side is uniformly bounded 
over all $(t,v)\in(0,\infty)\times\ker\psi$ with $tu+\sqrt tv\in\mathfrak a^+$.
\end{thm}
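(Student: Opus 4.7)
The plan is to follow the thermodynamic formalism approach of Sambarino~\cite{Samb1}, extended to all Anosov groups by Chow--Sarkar~\cite{CS} via the reparametrization theorem of Bridgeman--Canary--Labourie--Sambarino~\cite{BCLS}. The overarching strategy is to model the $A$-action on the BMS-support $\{x\in\Ga\ba G : x^{\pm}\in\La_\Ga\}$ by a H\"older suspension flow over a subshift of finite type $(\hat\Sigma, T)$, equipped with a H\"older cocycle $\omega : \hat\Sigma \to \fa$ whose Birkhoff sums $\sum_{i=0}^{n-1}\omega(T^i x)$ realize the Jordan projections $\lambda(\ga)$ of periodic conjugacy classes $[\ga]$ in $\Ga$. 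Under this H\"older conjugacy (extended by an $M$-fibration), the measure $m^{\BMS}$ pushes forward, up to a positive constant, to the equilibrium state $\mu_\psi$ on $\hat\Sigma$ for the H\"older potential $-\psi\circ\omega$. The tangency of $\psi$ to $\psi_\Ga$ at $u$ is equivalent to the pressure identity $P(-\psi\circ\omega)=0$, which both realizes $\mu_\psi$ as a probability measure and forces its mean $\bb E_{\mu_\psi}[\omega]\in\fa$ to be positively proportional to $u$.

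Next I would introduce the family of perturbed normalized Ruelle transfer operators $\mathcal{L}_{ib}$, $b\in\fa^*$, acting on a suitable H\"older space and corresponding to the potentials $-\psi\circ\omega + ib(\omega)$. Given a uniform spectral gap for this family (discussed below), Nagaev--Guivarc'h perturbation theory yields the multidimensional local central limit theorem: for any H\"older $F$ on $\hat\Sigma$ and bounded Borel $B\subset\fa$,
\[
\int_{\hat\Sigma} F(x)\,\mathbf 1_{\{S_n\omega(x)\,-\,n\,\bb E_{\mu_\psi}[\omega]\,\in\,\sqrt{n}\,w + B\}}\,d\mu_\psi(x)
\sim (2\pi n)^{-r/2}|\det\Sigma_\psi|^{-1/2}\,e^{-\tfrac12\langle\Sigma_\psi^{-1}w,w\rangle}\,|B|\int F\,d\mu_\psi,
\]
where $\Sigma_\psi$ is the asymptotic covariance of $\omega$ under $\mu_\psi$.

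To extract the claimed asymptotic, I would rewrite the matrix coefficient in suspension coordinates: using the $M$-invariance of $f_1,f_2$ and the product structure of $m^{\BMS}$, the integral becomes a pairing against $\mu_\psi\otimes\mu_\psi$ of a counting kernel weighted by pairs of trajectories whose Birkhoff sums land in a neighbourhood of $tu+\sqrt{t}v\in\fa$. Substituting $n\sim t/\|\bb E_{\mu_\psi}[\omega]\|$ in the LCLT and integrating out the one-dimensional longitudinal direction (which contributes a $\sqrt{t}$ factor against the Haar density $da$ on the suspension) leaves an $(r-1)$-dimensional Gaussian density in the transverse direction $v\in\ker\psi$. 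This produces both the $t^{-(r-1)/2}$ polynomial decay and the Gaussian factor $e^{-I(v)/2}$ with $I(v)=\langle(\Sigma_\psi|_{\ker\psi})^{-1}v,v\rangle$, which one rewrites in the stated form using the identity $\|v\|_*^2 - \langle v,u\rangle_*^2/\|u\|_*^2 = \|v-\op{proj}_u v\|_*^2$. The two copies of $\mu_\psi$ yield the product structure $m^{\BMS}(f_1)\,m^{\BMS}(f_2)$, and positivity $\kappa_u>0$ follows from non-degeneracy of $\Sigma_\psi$ on $\ker\psi$ (that is, the transverse part of $\omega$ is not a coboundary), a consequence of Zariski density and Benoist's theorem on the group generated by Jordan projections. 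The uniform bound over $\{(t,v):tu+\sqrt t v\in\fa^+\}$ comes from uniformity of the LCLT in $v$ on compacta, combined with Gaussian-tail/large-deviation bounds on $S_n\omega$ in the transverse direction.

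The technical heart of the argument, and the main obstacle, is establishing the uniform spectral gap for the family $\mathcal{L}_{ib}$, $b\in\fa^*$: simplicity of the leading eigenvalue at $b=0$, analyticity of the perturbed leading eigenvalue in a neighbourhood of $0$, and a uniform upper bound on the spectral radius of $\mathcal{L}_{ib}$ for $b$ in compact subsets of $\fa^*\setminus\{0\}$ (Dolgopyat--Stoyanov-type estimates). These rest on a non-concentration/non-arithmeticity property of $\omega$ in every direction of $\fa^*$, which is again where Zariski density of $\Ga$ is essentially used. The multidimensional nature of $\omega$ is what makes this step substantially more delicate than its one-dimensional rank-one analogue and constitutes the core content of \cite{BCLS, Samb1, CS}.
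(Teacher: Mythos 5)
The paper does not prove Theorem~\ref{mixing2}; it quotes the result, attributing it to Sambarino~\cite{Samb1} (via Thirion~\cite{Thirion}) for fundamental groups of closed negatively curved manifolds, and to Chow--Sarkar~\cite{CS} in full generality via the reparametrization theorem of~\cite{BCLS}, with the remark that the uniform bound in the second statement follows from the technique of~\cite[Theorem~1.1]{Thirion}. Your outline is a faithful high-level account of the method those references use and on which the paper relies: Markov coding of the Weyl chamber flow on the BMS support with an $\fa$-valued roof cocycle $\omega$, identification of $m^{\BMS}$ (up to normalization and an $M$-fiber) with the equilibrium state for $-\psi\circ\omega$, interpretation of the tangency of $\psi$ to $\psi_\Ga$ at $u$ as the pressure identity $P(-\psi\circ\omega)=0$ with mean cocycle along $u$, a multidimensional Nagaev--Guivarc'h local limit theorem for the family of perturbed transfer operators yielding the $t^{-(r-1)/2}$ rate and the Gaussian factor $e^{-I(v)/2}$, and non-degeneracy of the transverse asymptotic covariance (hence $\kappa_u>0$) from Zariski density via Benoist's density theorem for the group generated by Jordan projections.

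One terminological correction: you label the spectral input away from $b=0$ as ``Dolgopyat--Stoyanov-type estimates,'' but what the local limit theorem requires (in the smoothed/compactly-supported-Fourier form used in these proofs) is only that the spectral radius of $\mathcal{L}_{ib}$ be strictly less than $1$ on compact subsets of $\fa^*\setminus\{0\}$, i.e.\ non-arithmeticity/aperiodicity of $\omega$ in every direction, a soft consequence of the cocycle not being cohomologous to one taking values in a proper closed subgroup of $\fa$. Dolgopyat--Stoyanov estimates concern the $|b|\to\infty$ regime and are needed for exponential decay of correlations and resonance gaps; they play no role in the polynomial local-mixing asymptotic of Theorem~\ref{mixing2}. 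With that substitution, your sketch matches the argument behind the paper's citation.
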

Although the second statement of Theorem \ref{mixing2} is not stated in \cite[Theorem 3.8]{Samb1},  its proof uses the same technique as \cite[Theorem 1.1]{Thirion}, where the corresponding statement can be found.
\begin{remark}\label{general} \rm
Theorem \ref{mixing2} is the main reason for the assumption that $\Gamma$ is a Anosov subgroup. In fact, all our results stated in the introduction hold whenever $\Gamma$ satisfies Theorems \ref{mixing2} and \ref{int}.  
\end{remark}

 In the rest of the paper,
 let $\Gamma$ be an Anosov subgroup of $G$.
  The following theorem was proved by Sambarino \cite{Samb2} for a special case
  and by Potrie-Sambarino \cite[Propositions 4.6 and 4.11]{PS} in general: \begin{thm}\label{int}  
\begin{enumerate}
\item  $\L_\Gamma-\{0\}\subset \op{int}  \mathfrak a^+$. 
 \item $\psi_\Gamma$ is strictly concave and analytic on $\{v\in\op{int}\L_\Gamma: \|v\|=1\}$.
 \end{enumerate}
 \end{thm}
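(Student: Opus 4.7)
The plan uses two structural facts about $P$-Anosov representations that the excerpt has not yet recorded. First, the \emph{simple root gap} characterization (\cite{KLP}, \cite{BPS}): for every simple restricted root $\alpha$ of $(\mathfrak{g},\mathfrak{a}^+)$ there exist $c_\alpha, C_\alpha > 0$ with $\alpha(\mu(\Phi(\sigma))) \ge c_\alpha\,|\sigma|_\Sigma - C_\alpha$ for all $\sigma \in \Sigma$, where $|\cdot|_\Sigma$ is a word length. Second, a mixing subshift of finite type coding of the Gromov geodesic flow on $(\partial\Sigma)^{(2)} \times \br/\Sigma$ with respect to which the Cartan projection of $\Phi$ becomes a H\"older $\fa$-valued potential.

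For (1), I would argue directly from the first fact. Since $\|\mu(\Phi(\sigma))\| \le M\,|\sigma|_\Sigma$ for some constant $M>0$, the simple root gap yields
\begin{equation*}
\liminf_{|\sigma|_\Sigma\to\infty}\frac{\alpha(\mu(\Phi(\sigma)))}{\|\mu(\Phi(\sigma))\|} \ge \frac{c_\alpha}{M} > 0
\end{equation*}
for every simple root $\alpha$. Benoist's theorem realizes $\L_\Ga$ as the asymptotic cone of $\mu(\Ga)$ \cite[Thm. 1.2]{Ben}, so any unit vector $v\in\L_\Ga\setminus\{0\}$ arises as a limit $v = \lim \mu(\Phi(\sigma_n))/\|\mu(\Phi(\sigma_n))\|$ and inherits $\alpha(v)>0$ for every simple root $\alpha$. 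This is precisely the condition $v \in \inte\fa^+$.

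For (2), I would apply the thermodynamic formalism of \cite{BCLS} to the coding above. Let $c:\Sigma_A \to \fa$ denote the resulting H\"older potential and, for $\varphi\in\fa^*$, let $P(\varphi)$ be the topological pressure of $-\varphi\circ c$. Standard Ruelle theory makes $P$ real-analytic and convex on the open cone where it is finite, and $P$ controls the Poincar\'e series $\sum_{\ga\in\Ga}e^{-\varphi(\mu(\ga))}$, giving $D_\Ga = \{\varphi : P(\varphi)\le 0\}$ and hence, for $v\in\inte\L_\Ga$,
\begin{equation*}
\psi_\Ga(v) \;=\; \inf\bigl\{\varphi(v) : \varphi\in\fa^*,\ P(\varphi)\le 0\bigr\}.
\end{equation*}
The key input is \emph{strict} convexity of $P$ in every direction $\varphi\in\fa^*\setminus\{0\}$: if this failed, then $\varphi\circ c$ would be cohomologous to a constant for some nonzero $\varphi$, and a Livsic/closing argument would force $\varphi(\lambda(\ga))$ to be constant on $\lambda(\Ga)$, contradicting Zariski density of $\lambda(\Ga)$ in $\fa^+$ \cite{Ben}. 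Granted strict convexity, the implicit function theorem applied to the analytic hypersurface $\{P=0\}$, together with the Lagrange condition $\nabla P(\varphi)\parallel v$, selects a unique minimizer $\varphi=\varphi(v)$ depending analytically on $v\in\inte\L_\Ga$. Analyticity of $\psi_\Ga$ follows, and the usual Legendre duality transfers strict convexity of $P$ to strict concavity of $\psi_\Ga$ on the unit sphere of $\inte\L_\Ga$.

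The main obstacle is the strict convexity of $P$ in every direction, i.e., excluding cohomological degeneracy of $\varphi\circ c$ for each nonzero $\varphi\in\fa^*$. In rank one this is vacuous, but in higher rank one must leverage the full Zariski density of $\Ga$ in $G$, via a Livsic-type theorem combined with density of the Jordan projections; this is the technically substantive part of \cite{Samb2} and \cite{PS}.
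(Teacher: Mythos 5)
The paper does not prove this theorem: it cites it directly from Sambarino \cite{Samb2} and Potrie--Sambarino \cite[Propositions 4.6 and 4.11]{PS}, so the right comparison is against the arguments in those references, and what you sketch is a faithful reconstruction of them. For (1), the simple-root-gap characterization of $P$-Anosov representations from \cite{KLP}, \cite{BPS} together with the coarse Lipschitz bound $\|\mu(\Phi(\sigma))\|\le M|\sigma|_\Sigma+M'$ and Benoist's identification of $\L_\Gamma$ with the asymptotic cone of $\mu(\Gamma)$ is exactly the standard argument; your $\liminf$ computation is correct (the additive constant in the Lipschitz upper bound is harmless in the limit). For (2), the thermodynamic route via a symbolic coding with \cite{BCLS}, the pressure functional $P$, its analyticity and strict convexity, and Legendre duality with $\psi_\Gamma$ is what \cite{PS} carries out. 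Two small imprecisions: what rules out $\varphi\circ c$ being cohomologous to a constant for nonzero $\varphi\in\fa^*$ is not ``Zariski density of $\lambda(\Gamma)$ in $\fa^+$'' (not a meaningful statement as written), but rather Benoist's theorem that, for Zariski dense $\Gamma$, the additive subgroup of $\fa$ generated by $\lambda(\Gamma)$ is dense in $\fa$; and the identification of the zero-pressure hypersurface with the boundary of $D_\Gamma$ requires an intermediate step relating $P(\varphi)$ to the abscissas $\tau_{\scrC}$ appearing in \eqref{grow}, with the duality $\psi_\Gamma(v)=\inf_{\varphi\in D_\Gamma}\varphi(v)$ for $v\in\inte\L_\Gamma$ serving as the bridge. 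With those corrections your outline aligns with the cited proofs and provides more detail than the paper itself gives at this point.
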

By Lemma \ref{di}, and Theorem \ref{int}, we get the following corollary:
\begin{cor} \label{uuu}For each unit vector $u\in \op{int}\L_\Gamma$, there exists a unique $\psi_u\in \dg$ tangent to $\psi_\Gamma$ at $u$, which is given by \begin{equation*}
\psi_u(\cdot)=\la \nabla\psi_\Ga (u), \cdot\ra.
\end{equation*}
\end{cor}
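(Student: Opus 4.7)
The plan is to derive this corollary as a direct consequence of Lemma \ref{di}(2) combined with Theorem \ref{int}(2), so the only content is verifying that the hypothesis ``$\psi_\Ga$ is differentiable at $u$'' in Lemma \ref{di}(2) is in fact automatic on all of $\op{int}\L_\Gamma$ under the Anosov assumption.

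First I would invoke Theorem \ref{int}(2), which asserts that $\psi_\Ga$ is analytic on the open subset $\{v \in \op{int}\L_\Gamma : \|v\|=1\}$ of the unit sphere in $\fa$. Next I would promote this to differentiability of $\psi_\Ga$ (as a function on the cone $\op{int}\L_\Gamma \subset \fa$) at every point. The key observation is that $\psi_\Ga$ is $1$-homogeneous: $\psi_\Ga(tu) = t\,\psi_\Ga(u)$ for all $t \ge 0$ and $u \in \fa^+$, directly from its definition \eqref{grow}. Given a unit vector $u \in \op{int}\L_\Gamma$, the radial derivative along $t \mapsto tu$ at $t=1$ equals $\psi_\Ga(u)$, while the tangential derivatives along the sphere exist by analyticity. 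Decomposing any $w \in \fa$ as a radial plus a spherical component near $u$ and using the fact that $\psi_\Ga$ is concave (hence locally Lipschitz on $\op{int}\L_\Gamma$), the partial derivatives fit together to produce a genuine gradient $\nabla\psi_\Ga(u)$.

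With differentiability established at every $u \in \op{int}\L_\Gamma$, Lemma \ref{di}(2) applies verbatim: it produces a unique tangent form $\psi_u \in \dg$ at $u$, given explicitly by $\psi_u(\cdot) = \langle \nabla\psi_\Ga(u), \cdot \rangle$. This is the claim.

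I do not expect any serious obstacle. The only mild point is the passage from differentiability along the unit sphere to differentiability as a function on the open cone, but this is a standard consequence of $1$-homogeneity and concavity, and the formula $\psi_u = \langle \nabla\psi_\Ga(u), \cdot\rangle$ then follows directly from Lemma \ref{di}(2).
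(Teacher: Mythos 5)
Your proposal is correct and matches the paper's (unstated) reasoning exactly: the paper cites Lemma \ref{di} and Theorem \ref{int}, and the only content is, as you identify, that analyticity of $\psi_\Ga$ on the unit-sphere slice of $\op{int}\L_\Gamma$ from Theorem \ref{int}(2), combined with $1$-homogeneity, yields differentiability on all of $\op{int}\L_\Gamma$, so Lemma \ref{di}(2) applies. A minor simplification: the identity $\psi_\Ga(w)=\|w\|\,\psi_\Ga(w/\|w\|)$ already exhibits $\psi_\Ga$ as a composition of smooth maps on the open punctured cone, so the appeal to concavity and local Lipschitzness is not needed.
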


\begin{thm} \label{uniq2} For any $\psi\in \dg$,
there exists a unique $(\Gamma, \psi)$-$\PS$ measure on $\Lambda_\Gamma$. 
\end{thm}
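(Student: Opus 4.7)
The plan is to derive uniqueness by combining Theorem \ref{mixing2} (BMS mixing for Anosov groups) with the conditional uniqueness principle already established in Corollary \ref{uniq}.

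First I would locate a tangent direction for $\psi$: since $\psi\in D_\Ga^\star$, there exists $u\in \L_\Ga\cap \op{int}\fa^+$ with $\psi(u)=\psi_\Ga(u)$, and by Theorem \ref{int} we have $u\in \inte\L_\Ga$. Because $\psi_\Ga\circ\i=\psi_\Ga$ and $\i$ preserves $\inte\L_\Ga$, the form $\psi\circ\i$ also belongs to $D_\Ga^\star$ (tangent at $\i(u)$), so Quint's construction supplies at least one $(\Ga,\psi\circ\i)$-PS measure $\bar\nu$ on $\La_\Ga$, which I would fix once and for all.

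Now suppose that $\nu_1,\nu_2$ are two $(\Ga,\psi)$-PS measures on $\La_\Ga$. I would form the BMS measures $m_k^{\BMS}:=m^{\BMS}_{\nu_k,\bar\nu}$ for $k=1,2$, which are $A$-invariant by Lemma \ref{qa}. Applying Theorem \ref{mixing2} to each $m_k^{\BMS}$ in the direction $u$ (with $v=0$) supplies a constant $\kappa_u^{(k)}>0$ such that
$$\lim_{t\to\infty} t^{(r-1)/2}\int_{\GaG} f_1(x)\,f_2(x\exp(tu))\,dm_k^{\BMS}(x) = \kappa_u^{(k)}\,m_k^{\BMS}(f_1)\,m_k^{\BMS}(f_2)$$
for all $f_1,f_2\in C_c(\GaG)^M$. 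Using $A$-invariance of $m_k^{\BMS}$ to move the translation from $f_2$ onto $f_1$, and relabelling the test functions, yields the equivalent statement
$$\lim_{t\to\infty} \frac{t^{(r-1)/2}}{\kappa_u^{(k)}}\int_{\GaG} f_1(x\exp(tu))\,f_2(x)\,dm_k^{\BMS}(x) = m_k^{\BMS}(f_1)\,m_k^{\BMS}(f_2),$$
which is precisely the hypothesis of Corollary \ref{uniq} with $\Psi_k(t):=t^{(r-1)/2}/\kappa_u^{(k)}$. That corollary then forces $\nu_1=\nu_2$ (and trivially $\bar\nu_1=\bar\nu_2$, since they were chosen equal).

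I do not anticipate any substantial obstacle, since Theorem \ref{mixing2} and Corollary \ref{uniq} carry essentially all of the weight. The only point to watch is the convention mismatch: Theorem \ref{mixing2} translates the second argument of the matrix coefficient, while Corollary \ref{uniq} translates the first, and the $A$-invariance of the BMS measure bridges the two. Note that $\psi\in D_\Ga^\star$ (rather than merely $\psi\in D_\Ga$) is used essentially, since Theorem \ref{mixing2} applies only in directions $u\in \inte\L_\Ga$.
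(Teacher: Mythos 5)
Your proposal is essentially the paper's proof: apply Theorem~\ref{mixing2} to the BMS measures $m^{\BMS}_{\nu_k,\bar\nu}$ built from the two candidate $(\Gamma,\psi)$-PS measures and a fixed $(\Gamma,\psi\circ\i)$-PS measure, and then feed the resulting mixing limits into Corollary~\ref{uniq}.

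Two small inaccuracies in the write-up, neither of which changes the line of argument. First, the deduction ``by Theorem~\ref{int} we have $u\in\op{int}\L_\Gamma$'' does not follow: Theorem~\ref{int}(1) gives $\L_\Gamma-\{0\}\subset\op{int}\fa^+$, which is not the reverse containment; a point of $\L_\Gamma\cap\op{int}\fa^+$ may perfectly well lie on $\partial\L_\Gamma$. What is actually needed is that the tangency point for the chosen $\psi\in D_\Gamma^\star$ can be taken interior to $\L_\Gamma$ so that Theorem~\ref{mixing2} applies; the paper's own one-line proof glosses over this same point, so you are in good company, but it should not be attributed to Theorem~\ref{int}. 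Second, the appeal to $A$-invariance is both unnecessary and, as phrased, would introduce a translation by $\exp(-tu)$ rather than $\exp(tu)$: simply applying Theorem~\ref{mixing2} with the roles of $f_1$ and $f_2$ exchanged already produces the hypothesis of Corollary~\ref{uniq}, since both the integrand $f_1(x)f_2(x\exp(tu))$ and the product $m^{\BMS}(f_1)\,m^{\BMS}(f_2)$ are symmetric under swapping the two test functions.
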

 \begin{proof} When $\Gamma$ is the fundamental group of a closed negatively curved Riemmanian manifold, this  follows by combining  \cite[Theorem 3.1]{Samb3} and \cite[Proposition 7.8]{Samb3}
  The general case follows from Theorem \ref{mixing2} by Corollary \ref{uniq}.
 \end{proof}
 
Note that this theorem implies the $\G$-ergodicity of every $(\Ga,\psi)$-PS measure.

\begin{rmk} \rm The $\Gamma$-ergodicity of a $(\Ga, \psi)$-$\PS$ measure for $\psi\in \dg$ also follows from \cite[Thm A.2]{Ca2} together with the reparameterization theorem \cite[Thm. 4.10]{CS}. By
the well-known Sullivan's argument, it implies the uniqueness of
$(\Ga,\psi)$-$\PS$ measure.
\end{rmk}

For each $u\in \op{int}\L_\Gamma$,
we denote by $\nu_u$ the unique $(\Gamma, \psi_u)$-PS measure on $\La_\Ga$.
We now set
\be \label{def.BMS2}
 m^{\BMS}_{u}:=m^{\BMS}_{\nu_u, \nu_{\i(u)}}, \quad m^{\BR}_{\i(u)}:=m^{\BR}_{\nu_{\i(u)}},\quad 
 m^{\BR_*}_u:=m^{\BR_*}_{\nu_u} .\ee

We mention that
all three measures are infinite measures when $G$ has rank at least $2$ (cf. \cite[Corollary 4.9]{LO}). We also refer to \cite{LO} and \cite{LO2} where ergodic properties of these measures are discussed.

We deduce the following from Proposition \ref{p1}, Proposition \ref{prop.mixH0}, and Theorem \ref{mixing2}. 
\begin{thm} \label{m11}\label{prop.mixH} Let $u\in \op{int}\L_\Gamma$. 
\begin{enumerate}
\item For any $f_1, f_2\in C_c(\GaG)^M$ and   $v\in \ker \psi_u$,
 $$
\begin{multlined}[t]
\lim_{t\to +\infty} t^{(r-1)/2} e^{(2\rho -\psi_u) ( tu+\sqrt tv)}  \int_{\Gamma\ba G} f_1(x \exp(tu+\sqrt tv) ) f_2(x) dx 
\\
= \kappa_u\, e^{-I(v)/2} m^{\BR}_{{\i(u)}} (f_1) m^{\BR_*}_{u} (f_2).
\end{multlined}
$$
\item
 For any $f\in C_c(\GaG)^M$, $\phi\in C_c(N^+)$,  $x=[g]\in \Gamma\ba G$, and  $v\in \ker \psi_u$,
\begin{multline*}
\lim_{t\rightarrow\infty}  t^{(r-1)/2}
e^{(2\rho-\psi_u)(tu+\sqrt tv)}
\int_{N^+}f (x n \exp(tu+\sqrt tv))\phi(n)\,dn\\
=  \kappa_u\, e^{-I(v)/2} 
m_{{\i(u)}}^{\mathrm{BR}}(f)\, \mu_{ gN^+,\nu_u}^{\mathrm{PS}}(\phi).
\end{multline*}
\end{enumerate}
Moreover the left-hand sides of the above equalities are uniformly bounded 
 for all $(t,v)\in\bb (0,\infty)\times\ker\psi_u$ with $tu+\sqrt tv\in\mathfrak a^+$. 
\end{thm}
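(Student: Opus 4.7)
The plan is to observe that, for Anosov $\Gamma$, the hypotheses of the transfer propositions proved in Sections \ref{step1} and \ref{s2} are in force for the distinguished measures $m^{\BMS}_{u}$, $m^{\BR}_{\i(u)}$, $m^{\BR_*}_{u}$, and hence both assertions follow essentially for free. The first step is to pin down the measures. By Theorem \ref{int}, $\psi_\Gamma$ is analytic, and in particular differentiable, on the unit sphere in $\op{int}\L_\Gamma$, so Corollary \ref{uuu} supplies a unique $\psi_u\in D_\Gamma^\star$ tangent to $\psi_\Gamma$ at $u$. Since $\psi_u\circ\i\in D_\Gamma$ and $(\psi_u\circ\i)(\i(u))=\psi_u(u)=\psi_\Gamma(u)=\psi_\Gamma(\i(u))$, the same uniqueness forces $\psi_{\i(u)}=\psi_u\circ\i$. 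Theorem \ref{uniq2} then guarantees that $\nu_u$ and $\nu_{\i(u)}$ are the unique $(\Gamma,\psi_u)$- and $(\Gamma,\psi_u\circ\i)$-PS measures on $\Lambda_\Gamma$, so that the measures $m^{\BMS}_{u}$, $m^{\BR}_{\i(u)}$, $m^{\BR_*}_{u}$ defined in \eqref{def.BMS2} coincide with the general-purpose measures built in Section \ref{sec.meas} from the pair $(\nu_u,\nu_u\circ\i)$.

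With this identification in place, the next step is to check that $m^{\BMS}_{u}$ satisfies the local mixing property of Definition \ref{lmp} with
$$
\Psi(t):=t^{(r-1)/2},\qquad J(v):=\kappa_u\,e^{-I(v)/2}.
$$
This is immediate from Theorem \ref{mixing2}: swapping the roles of $f_1$ and $f_2$ in the statement of that theorem yields condition (1) of Definition \ref{lmp}, and condition (2) is precisely the uniform boundedness claim in the last sentence of Theorem \ref{mixing2}.

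The final step is to feed this into the transfer propositions of Sections \ref{step1}--\ref{s2}. Proposition \ref{prop.mixH0}, applied with $\psi=\psi_u$, $\nu=\nu_u$, $\nu_{\i}=\nu_{\i(u)}$, delivers part (1) of the theorem together with the corresponding uniform bound, since $m^{\BR}_{\nu_{\i(u)}}=m^{\BR}_{\i(u)}$ and $m^{\BR_*}_{\nu_u}=m^{\BR_*}_{u}$ by definition. Proposition \ref{p2} applied to the same data gives part (2), with $\mu^{\PS}_{gN^+,\nu_u}$ being the Patterson--Sullivan measure on $gN^+$ from \eqref{eq.PSN}, along with its uniform bound. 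Because the whole argument is a bookkeeping reduction to already-established results, I do not anticipate any genuine obstacle: the real substance sits in the BMS mixing of Theorem \ref{mixing2}, in the differentiability and uniqueness statements (Theorems \ref{int} and \ref{uniq2}) which crucially invoke the Anosov hypothesis, and in the PS-to-Lebesgue-to-Haar transfer machinery developed in Sections \ref{step1}--\ref{s2}.
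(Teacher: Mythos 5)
Your argument is correct and mirrors the paper's own (very short) derivation: the paper explicitly states that Theorem \ref{m11} is deduced from Propositions \ref{p1} and \ref{prop.mixH0} together with Theorem \ref{mixing2}, after fixing $\psi_u$ and the unique PS measures $\nu_u$, $\nu_{\i(u)}$ via Corollary \ref{uuu} and Theorem \ref{uniq2}. Your observation that $\psi_{\i(u)}=\psi_u\circ\i$ (so that $\nu_{\i(u)}$ is exactly the $(\Gamma,\psi_u\circ\i)$-PS measure required by the setup of Sections \ref{step1}--\ref{s2}) is the one piece of bookkeeping that the paper leaves implicit, and your invocation of Proposition \ref{p2} for part (2) is in fact the more precise citation, since it is the statement that directly produces translates of Lebesgue measure rather than of $\mu_{gN^+}^{\PS}$.
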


Recalling that $\psi_u(u)=\psi_\Ga(u)$, the special case of Theorem \ref{prop.mixH} when $v=0$ now implies Theorem \ref{m1} and Theorem \ref{m2}.

\section{Equidistribution of  translates of \texorpdfstring{$\Gamma\ba\Gamma H$}{}}\label{sec.aff}
\subsection*{Symmetric subgroups of $G$}\label{gc}
Let $H<G$ be a symmetric subgroup; that is to say, $H$ is the identity component of the set of fixed points of an involution $\sigma$ of $G$. We start by reviewing some general structure theory regarding symmetric subgroups; see Chapter 6 of \cite{Sch} for more details on this. The involution $\sigma$ induces a Lie algebra involution on $\mathfrak g$, which (using a slight abuse of notation) we also denote by $\sigma$. There exists a Cartan involution of $G$ that commutes with $\sigma$; without loss of generality, we may assume that $\theta$ from Section \ref{ps} commutes with $\sigma$. These involutions give rise to the decompositions $\mathfrak{g}=\mathfrak{k}\oplus\mathfrak{p}$ and $\mathfrak{g}=\mathfrak{h}\oplus\mathfrak{q}$ into the $+1$ and $-1$ eigenspace decompositions of $\theta$ and $\sigma$, respectively. Let $\mathfrak{a}$ be a maximal abelian subalgebra of $\mathfrak{p}$ such that $\fb:=\mathfrak{a}\cap\mathfrak{q}$ is a maximal abelian subalgebra of $\mathfrak{p}\cap\mathfrak{q}$. Denote the dimension of $\fa$ by $r$ and the dimension of $\fb$ by $r_0$.

Let $\Sigma_\sigma\subset \fb^*$ be the root system of $\fb$,  i.e.  $$\Sigma_\sigma=\lbrace \lambda\in\fb^*\setminus\lbrace 0\rbrace: \exists X\in\fg-\{0\}\,\text{ with }\, \ad_YX=\lambda(Y)X\text{ for all }Y\in\fb\rbrace.$$
 From now on, we fix a closed positive Weyl chamber $\fb^+\subset \fb$ for $\Sigma_{\sigma}$ which has been chosen compatibly with $\fa^+$ as follows \cite[Sec. 3]{GOS}: denoting the positive roots of $\fa$ by $\Sigma^+$, we assume that there exists a collection of positive roots $\Sigma_\sigma^+$ of $\fb$ such that the elements of $\Sigma_{\sigma}^+$ are all obtained by restricting elements of $\Sigma^+$ to $\fb$, hence $\fb^+\subset \fa^+$. We will denote $B=\exp(\fb)$ and $B^+=\exp(\fb^+)$.

Let $\cal W_\sigma:=\op{N}_K(\mathfrak b)/Z_K(\mathfrak b)$ and $\cal W_{\sigma,\theta}:=\op{N}_{K\cap H}(\mathfrak b)/Z_{K\cap H}(\mathfrak b)$. There then exists a finite set of representatives $\cal W\subset \op{N}_K(\mathfrak a)\cap\op{N}_K(\mathfrak b)$ for $\cal W_{\sigma,\theta}\ba\cal W_\sigma$, and we have the following generalized Cartan decomposition:
\begin{equation}\label{eq.HBK}
G=H\op{exp}(\fb)K=H\cal W\op{exp}(\fb^+)K,
\end{equation}
in the sense that for any  $g\in G$,
there exist unique elements $b\in B^+$ and $\omega\in \cal W$ such that
$$g\in H \omega b  K.$$

\subsection*{Directions in $\fb^+\cap\inte\scrL_\Ga$}
Let $\Gamma$ be an Anosov subgroup of $G$.  In the rest of this section,  we assume that
\begin{equation*}
 \fb^+\cap\mathrm{int}(\scrL_{\Gamma})\neq \emptyset.
\end{equation*}
Since $\scrL_{\Gamma}\subset\inte(\fa^+)$ by Theorem \ref{int}, it follows that
 $\fb^+\cap \inte(\fa^+)\neq \emptyset$.

We now fix a unit vector (with respect to the norm $\|\cdot\|$ on $\fa$) 
$$v\in\fb^+\cap\inte\scrL_{\Gamma}  $$
and set
\be\label{deee} \delta:=\psi_\Gamma (v)>0.\ee

 By Corollary \ref{uuu}, the linear form $\Theta\in \fa^*$ defined as 
 \be\label{DefT} \Theta(w)=\langle \nabla \psi_\Gamma (v), w\rangle\ee
gives the unique linear form in $D_\Gamma^\star$ such that 
$\Theta(v)=\psi_\Gamma(v)=\delta$. 

 \begin{lem} \label{large}We have  $\L_\Gamma\cap\ker \Theta =\{0\}$.
\end{lem}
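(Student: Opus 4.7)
I will argue by contradiction. Assume there exists $w\in\scrL_\Ga\setminus\{0\}$ with $\Theta(w)=0$. The strategy is to show that this forces $\psi_\Ga$ to coincide with the linear functional $\Theta$ along a non-radial segment inside $\inte\scrL_\Ga$, contradicting the strict concavity in Theorem \ref{int}(2).

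First I would deduce $\psi_\Ga(w)=0$ from the squeeze
\[
0=\Theta(w)\ge\psi_\Ga(w)\ge 0,
\]
where $\Theta\ge\psi_\Ga$ on $\fa^+$ because $\Theta\in\dg\subset D_\Ga$, and $\psi_\Ga\ge 0$ on $\scrL_\Ga$ by Theorem \ref{growth}. Since $\psi_\Ga(v)=\delta>0$, $w$ cannot be a positive multiple of $v$, so $v$ and $w$ are linearly independent.

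Next I would examine the affine segment $x_s:=(1-s)v+sw$, $s\in[0,1]$. A ball $B(v,r)\subset\scrL_\Ga$ around $v$ translates to $B(x_s,(1-s)r)=(1-s)B(v,r)+sw\subset\scrL_\Ga$, hence $x_s\in\inte\scrL_\Ga$ for every $s\in[0,1)$. Along this segment the concavity chord lower bound and the linear upper bound coincide:
\[
(1-s)\delta=(1-s)\psi_\Ga(v)+s\psi_\Ga(w)\le\psi_\Ga(x_s)\le\Theta(x_s)=(1-s)\delta,
\]
so $\psi_\Ga(x_s)=(1-s)\delta$ is affine in $s$ on all of $[0,1]$.

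Finally, I would close the argument using strict concavity. Picking any $s_1\ne s_2$ in $(0,1)$ yields two distinct points $x_{s_1},x_{s_2}\in\inte\scrL_\Ga$; the linear independence of $v,w$ further implies that $x_{s_1}$ and $x_{s_2}$ are not proportional, so the chord joining them is non-radial. Yet $\psi_\Ga$ is affine on that chord, which contradicts Theorem \ref{int}(2): being $1$-homogeneous, the strict concavity of $\psi_\Ga$ on the unit sphere inside $\inte\scrL_\Ga$ upgrades, by rescaling, to strict concavity along every non-radial Euclidean chord in $\inte\scrL_\Ga$. The only delicate point is precisely this transfer of strict concavity from unit vectors to arbitrary non-radial chords via homogeneity; once that standard upgrade is verified, the contradiction is immediate and the lemma follows.
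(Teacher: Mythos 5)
Your proof is correct and follows the same approach as the paper: both arguments rely on $\Theta\ge\psi_\Ga$ with equality at $v$, positivity of $\psi_\Ga$ on $\scrL_\Ga$, and the strict concavity of $\psi_\Ga$ from Theorem \ref{int}. The paper simply asserts "it follows that $\Theta(w)>\psi_\Ga(w)$ for all $w\in\fa^+-\br v$," while you make the underlying segment argument explicit; both proofs implicitly use (and neither proves in detail) the upgrade from strict concavity on the unit sphere to strict concavity along non-radial chords, which you at least correctly flag as the one point requiring justification.
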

\begin{proof} We use the fact that $\psi_\Ga$ is strictly concave (Theorem \ref{int}).
Since $\Theta\ge \psi_\Ga$ on $\fa$, and $\Theta(v)=\psi_\Ga(v)$, it follows that
$\Theta (w)> \psi_\Ga (w)$ for all  vectors $w\in \fa^+ - \br v$.
Since $\psi_\Ga\ge 0$ on $\L_\Ga$, we have $\Theta>0$ on $\L_\Gamma-\{0\}$, i.e.,
$\L_\Gamma\cap\ker \Theta=\{0\}$.\end{proof}

We use the following notation: for $t>0$ and $w\in \ker \Theta$,
\begin{equation*}
\begin{aligned}
a(t,w):=\exp ( tv+\sqrt{t}w), \quad\text{and}\quad
\underline a(t,w):=tv+\sqrt{t}w.
\end{aligned}
\end{equation*}

We set 
$$m^{\mathrm{BR}}=m^{\BR}_{\nu_{\i (v)}} \quad \text{ and} \quad m^{\mathrm{BR}_*}=m_{\nu_v}^{\BR_*}.$$
\subsection*{Patterson-Sullivan measures on ${H}$} \label{psmH}
Let $P=MAN$ be the minimal parabolic subgroup.
Since $\fb^+\cap\op{int}(\fa^+)\neq\emptyset$, it follows that $M=Z_K(\fb)$, and the unipotent subgroup whose Lie algebra is the sum of positive root spaces corresponding to $\Sigma_\sigma$ coincides with $N$. 
\begin{Lem} We have $H\cap N=\{e\}.$\end{Lem}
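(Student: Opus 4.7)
The plan is to reduce $H\cap N=\{e\}$ to the Lie algebra statement $\fh\cap\fn=0$, and then to obtain the latter from the $\fb$-root space decomposition of $\fg$.

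First, for the Lie algebra step, I would exploit the fact that $\sigma$ should swap $\fn$ with the opposite nilradical $\bar\fn$. The paragraph preceding the lemma already records that, under the standing hypothesis $\fb^+\cap\inte\fa^+\ne\emptyset$, the unipotent subgroup $N$ has Lie algebra $\fn=\bigoplus_{\lambda\in\Sigma_\sigma^+}\fg_\lambda$, where $\fg_\lambda$ denotes the $\fb$-root space for $\lambda$. Since $\fb\subset\fq$, we have $\sigma|_\fb=-\id$, so a short bracket computation shows $\sigma(\fg_\lambda)=\fg_{-\lambda}$ for every $\lambda\in\Sigma_\sigma$. Hence $\sigma(\fn)=\bar\fn$, and any $X\in\fn\cap\fh$ would satisfy $X=\sigma X\in\fn\cap\bar\fn=0$ by linear independence of distinct root spaces.

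Next, to pass from the Lie algebra to the group, I would take $n\in H\cap N$ and write $n=\exp X$ with $X\in\fn$, using that $\exp\colon\fn\to N$ is a diffeomorphism. The condition $n\in H\subset\op{Fix}(\sigma)$ gives $\exp(\sigma X)=\sigma(n)=n=\exp X$. Since both $X$ and $\sigma X$ are nilpotent elements of $\fg$ ($\sigma$ being a Lie algebra automorphism preserves nilpotency), and since $G$ is linear algebraic so that $\exp$ restricts to a bijection between the nilpotent cone in $\fg$ and the unipotent variety in $G$, I conclude $\sigma X=X$, hence $X\in\fh\cap\fn=0$, and finally $n=e$.

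I do not anticipate a real obstacle, as the standing hypothesis on $\fb^+$ together with the identification of $\fn$ via $\fb$-root spaces has already been recorded in the text. The only subtlety worth flagging is the passage from the Lie algebra identity to the group identity: mere discreteness of $H\cap N$ does not in itself force triviality, but the injectivity of $\exp$ on nilpotent elements of $\fg$ closes this gap cleanly.
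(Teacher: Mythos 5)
Your proposal is correct and is essentially the same argument as the paper's: both establish that $\sigma$ interchanges $\fn$ with the opposite nilradical, the paper by applying $\sigma$ to the $\ad_v$-eigenvalue relation $[v,X]=\alpha(v)X$ for a fixed $v\in\fb^+\cap\inte\fa^+$ with $\sigma(v)=-v$, and you via the $\fb$-root space computation $\sigma(\fg_\lambda)=\fg_{-\lambda}$; these are the same observation in two dialects. The only cosmetic difference is at the group level: the paper simply notes that $n\in H\cap N$ gives $n=\sigma(n)\in N^+$, hence $n\in N\cap N^+=\{e\}$, whereas you descend to the Lie algebra via injectivity of $\exp$ on nilpotents. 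Your route is valid, but the direct $N\cap N^+=\{e\}$ argument is shorter and avoids the detour; you may wish to record it as the quicker finish.
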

\begin{proof}  Fix $\alpha\in \Sigma_\sigma$.
Since $v\in \fb^+\cap \inte\fa^+$, we have $\alpha(v)>0$. Letting $X\in \mathfrak n$ be such that $ [v,X]=\alpha(v) X$, we have
$\sigma( X ) = \alpha(v)^{-1} \sigma([v,X])= \alpha(v)^{-1} [\sigma(v),\sigma(X)]= \alpha(v)^{-1} [-v,\sigma(X)],$
i.e. $[v,\sigma(X)]=-\alpha(v)\sigma(X)$.
Therefore $\sigma(X)\in \mathfrak n^+$.  Since $\sigma$ fixes $H$ point-wise and
 swaps $N$ and $N^+$, we get $H \cap N =\{e\} $.
\end{proof}

By \cite[Theorem 3-(iii)]{Mat}, 
\be\label{hcp} H\cap P=(H\cap M)(H\cap A)(H\cap N)=(H\cap M)(H\cap A).\ee
Together with the fact $H\cap B=H\cap N=\{e\}$, it then follows that $H\cap MBN=H\cap M$. 

\begin{Def}\label{Hskinning} \rm 
Define a measure $\mu_{ H}^{\mathrm{PS}}=\mu_{ H,v}^{\mathrm{PS}}$ on ${H}$ as follows: for $\phi\in C_c(H)$, let
$$
 \mu_{{H}}^{\mathrm{PS}}(\phi)=  \int_{ h_0\in H/(H\cap P)} 
  \int_{p\in H\cap P}    \phi( h_0p ) e^{\Theta(\beta_{ h_0^+}(e,  h_0p))}  \,dp
 \, d\nu ( h_0^+),
$$ where $dp$ is a right-Haar measure on $H\cap P$; for $h_0\in H/(H\cap P)$, $h_0^+$ is well-defined independent of the choice of a representative.
The measure defined above is $\Gamma\cap H$-invariant: for any  $\gamma\in \Gamma\cap H$, $\gamma_*\mu_{ {H}}^{\mathrm{PS}}=
 \mu_{{H}}^{\mathrm{PS}}$.  Therefore, if $\Gamma\ba \Gamma H$ is closed in $\Gamma\ba G$,
$d\mu_{H,v}^{\PS}$ induces a locally finite Borel measure on $\Gamma\ba \Gamma H\simeq (\Gamma\cap H)\ba H$, which we denote by $\mu^{\PS}_{[e]H}=\mu^{\PS}_{[e]H,v}$.
\end{Def}

Note that $\op{supp}\mu_{[e]H}^{\PS}=\{[h]\in \Ga\ba\Ga H : h^+\in\La_\Ga\}$,
and hence 
\be\label{hpp} |\mu_{[e]H}^{\PS}|=0\;\; \text{ if and only if } \;\; \La_\Ga\cap HP/P=\emptyset.\ee

For a subset $S\subset G$ and $\epsilon>0$, set $S_\epsilon:=\{s\in S:d(e,s)\leq\e\}$.
Let $M'\subset M$ be a Borel section for the map $m\mapsto (H\cap M)m$, and $P'=M'BN$ be the subset of the minimal parabolic subgroup $P=MAN$.
Note that the map $H\times P'\to G$ given by $(h,p')\mapsto hp'$ is injective, which is an open map in a neighborhood of $e$  (\cite[Prop. 7.1.8(ii)]{Sch}, \cite[Prop. 4.3]{EM}).
For $\e>0$, let $\rho_\e\in C((NB)_\e)$ be a non-negative function such that
$$
\int_{NB}\rho_\e(nb)\,dn\,db=1,
$$
and $ \rho_\e(mnbm^{-1})=\rho_\e(nb)$ for all $m\in M$ and $nb\in NB$.
Fixing $\phi\in C_c(H)^{H\cap M}$ and $\e>0$ smaller than the injectivity radius of $\text{supp}(\phi)$, define $\tilde \Phi_\e\in C_c(G)$ by
\begin{equation}\label{po}
\tilde \Phi_\e(g):=\begin{cases} \phi( h)\rho_\e(nb)\qquad&\mathrm{if\;}g= hm'nb \in {H}P',\\0\qquad&\mathrm{otherwise.}
\end{cases}
\end{equation}
Observe that $\tilde \Phi_\e$ is right $M$-invariant. Define now $\Phi_\e\in C_c(\Ga\ba G)$ by
$\Phi_\e([g])=\sum_{\ga\in\Ga}\tilde\Phi_\e(\ga g)$.
Since $\tilde \Phi_\e$ is right $M$-invariant, so is $\Phi_\e$.
\begin{lemma}\label{lem.BRPS} 
For $\phi\in C_c(H)^{H\cap M}$ and $\Phi_\e$ as above, we have
\begin{equation*}
m^{\mathrm{BR}_*}(\Phi_\e)=\mu_{H}^{\PS}(\phi)(1+O(\e)).
\end{equation*}

\end{lemma}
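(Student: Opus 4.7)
The plan is to compute $m^{\BR_*}(\Phi_\e) = \tilde m^{\BR_*}(\tilde\Phi_\e)$ directly, by unfolding the integral in the local product coordinates $g = hm'bn$ with $(h, m', b, n) \in H \times M' \times B \times N$ on the support of $\tilde\Phi_\e$, and matching the resulting expression against the definition of $\mu_H^{\PS}(\phi)$. Since the product map $H \times M' \times B \times N \to G$ is an injective local diffeomorphism near $H \times \{e\}$ (a consequence of $H \cap P' = \{e\}$, cf.\ \cite[Prop. 7.1.8]{Sch}), and $\supp \tilde\Phi_\e$ is contained in this chart, the change of variables is legitimate.

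The key computation is with the Busemann function: for $g = hm'bn$ we have $g^+ = h^+$ since $m'bn \in P$ fixes $[P] \in \F$, and applying the cocycle identity \eqref{eq.basic0} together with \eqref{eq.basic1} (noting that $m'bn \in KAN = KAN^-$ has Iwasawa $A$-component $b$) yields
\[
\beta_{g^+}(e, g) = \beta_{h^+}(e, h) + \beta_{e^+}(e, m'bn) = \beta_{h^+}(e, h) + \log b.
\]
Substituting this into the Hopf-coordinate formula \eqref{dualb} for $d\tilde m^{\BR_*}$, and using the $(G, 2\rho)$-conformality \eqref{Lebconform} of $m_o$ to absorb the factor $e^{2\rho(\beta_{g^-}(e,g))}\, dm_o(g^-)$ together with the Jacobian of the coordinate change (between the Hopf parametrization of $G/M$ and the product parametrization by $(h, m', b, n)$) into Haar on $M$ times Lebesgue on $BN$, I obtain
\[
d\tilde m^{\BR_*}(hm'bn) = e^{\Theta(\beta_{h^+}(e, h)) + \Theta(\log b)}\, d\nu_v(h^+)\, d_H h\, dm_M(m')\, dn\, db\, \bigl(1 + O(\|(b, n)\|)\bigr),
\]
where $d_H h$ denotes the induced measure on the $H/(H \cap P)$-factor of the disintegration $H = (H/(H \cap P)) \cdot (H \cap P)$ with Haar $dp$ on the fiber.

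Plugging in $\tilde\Phi_\e(hm'bn) = \phi(h)\rho_\e(nb)$, on $\supp\rho_\e$ we have $\|(b, n)\| \le \e$, so $e^{\Theta(\log b)} = 1 + O(\e)$ and the Jacobian correction is also $1 + O(\e)$. The normalization $\int_{BN} \rho_\e(nb)\, dn\, db = 1$ eliminates the transversal integral, while the integration over $m' \in M'$ against $dm_M$ combines with the $(H \cap M)$-invariance of $\phi$ and the bijection $M = M'(H \cap M)$ to produce the $(H \cap M)$-component of the fiber measure $dp$ on $H \cap P = (H \cap M)(H \cap A)$ via \eqref{hcp}. What remains is exactly
\[
\int_{H/(H\cap P)} \int_{H\cap P} \phi(h_0 p)\, e^{\Theta(\beta_{h_0^+}(e, h_0 p))}\, dp\, d\nu_v(h_0^+)\cdot (1 + O(\e)) = \mu_H^{\PS}(\phi)(1 + O(\e))
\]
by Definition \ref{Hskinning}.

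The main obstacle is the bookkeeping in the coordinate change: correctly computing the Jacobian between the Hopf parametrization and the product parametrization by $(h, m', b, n)$, and verifying that the various Haar factors on $M$, $H \cap M$, $H \cap A$, $A$, $B$, and $N$ recombine to yield Haar $dp$ on $H \cap P$ times Lebesgue on $BN$. This relies on the compatibility of $\fb^+ \subset \fa^+$ and the identity \eqref{hcp}, and mirrors the corresponding rank-one skinning-measure computation in \cite{OhShah,MO}.
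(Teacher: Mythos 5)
Your approach mirrors the paper's own: unfold $m^{\BR_*}(\Phi_\e)=\tilde m^{\BR_*}(\tilde\Phi_\e)$ in the product chart $H\times M'\times B\times N$ (the paper writes $HM'NB$, but this is the same open set), compute $g^+=h^+$ and $\beta_{g^+}(e,g)=\beta_{h^+}(e,h)+\log b$, absorb the $g^-$-factor $e^{2\rho(\beta_{g^-}(e,g))}\,dm_o(g^-)$ into Lebesgue $dn$ on $N$ via the conformality of $m_o$, treat the remaining $B$-dependence as $1+O(\e)$ on $\supp\rho_\e$, and then use $\int_{NB}\rho_\e=1$ and the decomposition $h=h_0m_Ha_H\in H_0(H\cap P)$ together with \eqref{hcp} to land on the definition of $\mu_H^{\PS}(\phi)$. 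These are exactly the three numbered ``observations'' that package the paper's proof, so this is essentially the same argument and the main difficulty (the Jacobian bookkeeping) you flag is the same one the paper handles explicitly.

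Two small points of bookkeeping that are loosely stated in your writeup, though neither is a mathematical gap. In your displayed formula for $d\tilde m^{\BR_*}(hm'bn)$ you write the measure as $d\nu_v(h^+)\,d_Hh\,dm_M(m')\,dn\,db$ and gloss $d_Hh$ as ``the induced measure on the $H/(H\cap P)$-factor''; but $d\nu_v(h^+)$ already accounts for that factor (it pulls back to a measure on $H/(H\cap P)\simeq HP/P$), so $d_Hh$ must in fact denote the fiber Haar $dp$ on $H\cap P$ --- consistent with your final line. Also, the $(H\cap M)$-component of $dp$ is produced by the $m_H$-factor of $h=h_0m_Ha_H$ (i.e.\ it comes from the $M$-fiber integration built into $\tilde m^{\BR_*}$ as a measure on $G$ rather than on $G/M$); the $M'$-integration itself just contributes its total mass. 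Finally, one reassurance on a potential worry: the paper's Observation~2 writes $\beta_{e^+}(e,nb)=\i(\log b)$ where your computation gives $\log b$; your version is the correct one (the Iwasawa decomposition $G=KAN^-$ puts $nb=b(b^{-1}nb)\in AN^-$ with $A$-part $b$), and since $\i$ preserves Lebesgue on $\fa$ and $\log b=O(\e)$ on $\supp\rho_\e$, this has no impact on the lemma.
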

\begin{proof}
Note that $\supp(\tilde \Phi_\e)\subset HM (NB)_\e$. 
The proof of the lemma now relies on the following three observations.

Firstly, for each $hm\in  HM'$ and $n\in N$,
$$d(hm n) =e^{2\rho(\beta_{(hmn)^-}(e,hmn))} dm_o(hmn^-)=  d n$$
is a Lebesgue measure on $hmN$.

Secondly, for $g=hm'nb\in H P'$,
the decomposition $$\beta_{g^+}(e, g)= \beta_{h^+}(e, h)+\beta_{e^+}(e, nb)=
 \beta_{h^+}(e, h)+  \i(\log b)$$
induces an isomorphism $A\cong (A\cap H) \times B$.
This implies that
$$d(\beta_{g^+}(e, g))=d( \beta_{h^+}(e, h))\,d(\beta_{e^+}(e, p))=d( \beta_{h^+}(e, h))\,d(b),$$
and for all $hm'\in HM'$ and $nb\in (NB)_\e$,
$$e^{\Theta(\beta_{h^+}(e,hm'nb))}= e^{\Theta(\beta_{h^+}(e,h))}(1 +O(\e))$$
by continuity of the Busemann function.
 
Finally, we also have:
\begin{align*}
\beta_{{}(hm'nb)^-}(e,{}hm'nb)&=\beta_{{}(hm'n)^-}(e,{}hm'n)+\beta_{{}(hm'n)^-}({}hm'n,{}hm'nb)\\
&=\beta_{{}(hm'n)^-}(e,{}hm'n)+\beta_{e^-}(e,b)\\
&=\beta_{{}(hm'n)^-}(e,{}hm'n)-\i(\log b).
\end{align*}
Consequently, $e^{2\rho(\beta_{(hm'nb)^-}(e,hm'nb))}=e^{2\rho(\beta_{(hm'n)^-}(e,hm'n))}(1 +O(\e))$. 
Using the definition of $m^{\mathrm{BR}_*}(\Phi_\e) $ and the second and third observations above give
\begin{align*}
&\begin{multlined}[t]m^{\mathrm{BR}_*}(\Phi_\e)= 
\int_{H  \times M'\times (NB)_\e}  \phi(h)  \rho_\e(nb)e^{\Theta(\beta_{h^+}(e,hm'nb))+2\rho(\beta_{(hm'n)^-}(e,hm'nb))}
\\ \times dm_o\big((hm'n)^-\big)\,d(\beta_{h^+}(e,hm'nb)) \,d\nu(h^+)
\end{multlined}\\
&
=\begin{multlined}[t]
(1+O(\e)) \int_{H  \times M'\times (NB)_\e}\phi(h)  e^{\Theta(\beta_{h^+}(e,h))}  \rho_\e(nb)e^{2\rho(\beta_{(hm'n)^-}(e,hm'n))}\\
\times dm_o\big((hm'n)^-\big)\,db\, d( \beta_{h^+}(e, h)) \, d\nu (h^+).
\end{multlined}
\end{align*}
We now choose a section $H_0\subset H$  for the map $h\mapsto h(H\cap P)$, and write  $h=h_0m  a_h  \in H_0 (M\cap H)(A\cap H)=H_0(H\cap P)$. Using the first observation above, we then have
\begin{align*}
&=(1+O(\e)) \int_{H_0(H\cap P)} \phi(h_0m a_h) e^{\Theta(\beta_{h_0^+}(e,h))} \left(\int_{(NB)_\e} \rho_\e(nb)dn\, db \right)\, d( a_h) \,
 d\nu (h_0^+)\\
&=(1+O(\e))  \int_{ h_0\in H/(H\cap P)}  \int_{p\in H\cap P}    \phi( h_0p )e^{\Theta(\beta_{ h_0^+}(e,  h_0p))}   \,dp
 \, d\nu ( h_0^+)
\\ &= (1+O(\e)) \mu^{\PS}_{H}(\phi).
\end{align*}
\end{proof}

\subsection*{Equidisitribution of translates of $\Gamma\ba \Gamma H$}
\begin{prop}\label{prop.mixH2}\label{eq} For any $f\in C_c(\Gamma\ba G)^{M}$, $\phi\in C_c(H)^{H\cap M}$, and $w\in \fb\cap\ker\Theta$,
$$
\lim_{t\rightarrow\infty} t^{(r-1)/2}e^{(2\rho-\Theta)(\underline a(t,w))} \int_{H} f ([h]a(t,w)) \phi (h)\,dh
 = \kappa_{v}\, e^{-I(w)/2}  \,m^{\mathrm{BR}}(f)\, \mu_{H}^{\PS}(\phi),$$
and there exists $C'=C'(f,\phi)>0$ such that for all $(t,w)$ with $\underline a(t,w)\in\fb^+$, 
\be\label{qqq}
\left| t^{(r-1)/2} e^{(2\rho-\Theta)(\underline a(t,w))} \int_{H} f ([h]a(t,w)) \phi (h)\,dh\right| <C'.
\ee
\end{prop}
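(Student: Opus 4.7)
The plan is to apply the Haar mixing result of Proposition \ref{prop.mixH0} with $u=v$, $\psi=\Theta$, so that $\ker\Theta$ contains our perturbation direction $w\in\fb\cap\ker\Theta$, and with test functions $f_1=f$ and $f_2=\Phi_\epsilon$, where $\tilde\Phi_\epsilon\in C_c(G)$ is the $(H\cap M)$-sliced bump defined in \eqref{po} and $\Phi_\epsilon([g])=\sum_{\ga\in\Ga}\tilde\Phi_\epsilon(\ga g)$. Since $\rho_\epsilon$ is radial, a short computation (using that $M$ normalizes $N$ and commutes with $B$) shows $\tilde\Phi_\epsilon$ is right $M$-invariant, hence so is $\Phi_\epsilon$, so Proposition \ref{prop.mixH0} applies. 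The bump $\tilde\Phi_\epsilon$ has already been calibrated in Lemma \ref{lem.BRPS} so that
\[
m^{\BR_*}(\Phi_\epsilon)=\mu_H^{\PS}(\phi)\,(1+O(\epsilon)),
\]
which will produce the right main term. The remaining task is to compare the matrix coefficient $\int_{\Ga\ba G}f(x\,a(t,w))\Phi_\epsilon(x)\,dx$ to the $H$-integral $\int_H f([h]\,a(t,w))\phi(h)\,dh$.

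For the comparison, I unfold the periodization to get $\int_{\Ga\ba G}f(xa)\Phi_\epsilon(x)\,dx=\int_G f([g]a)\tilde\Phi_\epsilon(g)\,dg$ and decompose the Haar measure in $HP'$ coordinates: writing $g=hm'nb$ with $h\in H$, $m'\in M'$, $nb\in (NB)_\epsilon$, one has $dg=(1+O(\epsilon))\,dh\,dm'\,dn\,db$ in a neighborhood of $HM'$. The key algebraic identity, using the $M$-invariance of $f$, the fact that $m'\in M$ commutes with $A\supseteq B$ and normalizes $N$, and that $B\subseteq A$ normalizes $N$, is
\[
[hm'nb\,a(t,w)]=[hn'b\,a(t,w)\,m']=[hb\,a(t,w)\,n^\star]
\]
where $n'=m'nm'^{-1}\in N$ and $n^\star=a(t,w)^{-1}b^{-1}n'b\,a(t,w)\in N$. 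Since $v\in\inte\fa^+$ by Theorem \ref{int}, the element $a(t,w)$ contracts $N$ as $t\to\infty$, so $n^\star\to e$ uniformly for $nb\in(NB)_\epsilon$. Combined with the commutation $b\,a(t,w)=\exp(\log b+\underline a(t,w))$ and the uniform continuity of $f$, this yields
\[
f([hm'nb\,a(t,w)])=f([h]\,a(t,w))+O(\epsilon)+o_t(1),
\]
uniformly in $h\in\supp(\phi)$, $m'\in M'$, and $nb\in(NB)_\epsilon$. Integrating against $\phi(h)\rho_\epsilon(nb)$ and using $\int_{NB}\rho_\epsilon=1$ (with a fixed normalization of $\mathrm{vol}(M')$ absorbed into $\kappa_v$), I obtain
\[
\int_{\Ga\ba G}f(x\,a(t,w))\Phi_\epsilon(x)\,dx=\Bigl(\int_H f([h]\,a(t,w))\phi(h)\,dh\Bigr)(1+O(\epsilon)+o_t(1)).
\]

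Applying Proposition \ref{prop.mixH0} to the left-hand side and Lemma \ref{lem.BRPS} to rewrite $m^{\BR_*}(\Phi_\epsilon)$ yields
\[
\limsup_{t\to\infty}t^{(r-1)/2}e^{(2\rho-\Theta)(\underline a(t,w))}\int_H f([h]\,a(t,w))\phi(h)\,dh=\kappa_v e^{-I(w)/2}m^{\BR}(f)\mu_H^{\PS}(\phi)+O(\epsilon),
\]
and symmetrically for the $\liminf$. Letting $\epsilon\to 0$ proves the claimed asymptotic. For the uniform bound \eqref{qqq}, I sandwich $f$ by the $\epsilon$-thickened majorants $f_\epsilon^\pm$ as in the proofs of Propositions \ref{p1} and \ref{p2}, replace the error term $o_t(1)$ with the worst-case $O(1)$ bound on the transverse conjugation, and invoke the uniform bound from the second assertion of Proposition \ref{prop.mixH0}.

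The main obstacle is the bookkeeping in the comparison step: one must carefully track how the $M$-invariance of $f$, the commutation relations $[M,A]=[B,A]=1$, the normalization condition $MNM^{-1}=N$, and the contraction $a(t,w)^{-1}Na(t,w)\to\{e\}$ (requiring $v\in\inte\fa^+$, which is guaranteed by Theorem \ref{int} since $v\in\inte\scrL_\Ga$) interact so that the product $hm'nb\,a(t,w)$ can be replaced by $h\,a(t,w)$ up to small error. All other ingredients—the Haar mixing asymptotic, the BR$_*$-mass computation, and the uniform $t$-bound—are already in place.
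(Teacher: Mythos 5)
Your overall strategy is the same as the paper's: thicken $\phi$ to a right‑$M$‑invariant cutoff $\Phi_\e$ on $\Ga\ba G$ supported near $[e]H$, use Lemma \ref{lem.BRPS} to compute $m^{\BR_*}(\Phi_\e)=\mu_H^{\PS}(\phi)(1+O(\e))$, feed this through the Haar mixing statement Proposition \ref{prop.mixH0} with $u=v,\ \psi=\Theta$, and then let $\e\to 0$. The algebraic observation that $a(t,w)$ contracts $N$ while commuting with $B\subset A$ is also the right one, and it is exactly what makes the cutoff $\Phi_\e$ compatible with the $A$‑translate.

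The gap is in the comparison step. You assert a \emph{multiplicative} relation
\[
\int_{\Ga\ba G}f(x\,a(t,w))\Phi_\epsilon(x)\,dx=\Bigl(\int_H f([h]\,a(t,w))\phi(h)\,dh\Bigr)\bigl(1+O(\epsilon)+o_t(1)\bigr),
\]
but the uniform‑continuity argument you give only produces an \emph{additive} error: $f([hm'nb\,a(t,w)])=f([h]\,a(t,w))+O(\e)+o_t(1)$, so upon integration the difference of the two sides is $\bigl(O(\e)+o_t(1)\bigr)\cdot\|\phi\|_{1}$. This additive error is fatal here because you then multiply by the unbounded normalization $t^{(r-1)/2}e^{(2\rho-\Theta)(\underline a(t,w))}$ (note $(2\rho-\Theta)(\underline a(t,w))\to+\infty$ along the cone unless $\Ga$ is a lattice), and an error of size $o_t(1)$ with no quantitative rate gives no information after this multiplication. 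The $o_t(1)$ term, in particular, cannot simply be ``replaced with the worst-case $O(1)$'' as you suggest for \eqref{qqq} — that produces an unbounded term, not a uniform bound.

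What the paper does instead, and what you would need to do, is sandwich \emph{inside the integral} using the thickened functions $f_\e^{\pm}(y)=\sup/\inf_{g\in N_\e A_\e N_\e^+ M}f(yg)$ as in \eqref{ff}: the identity $f([h]m'a(t,w))=f\bigl([h]m'nb\,a(t,w)\cdot a(t,w)^{-1}(nb)^{-1}a(t,w)\bigr)$ together with $a(t,w)^{-1}(nb)^{-1}a(t,w)\in N_\e A_\e$ (valid for \emph{all} $(t,w)$ with $\underline a(t,w)\in\fb^+$, not only asymptotically) gives
\[
\int_{\Ga\ba G}f_\e^-(y\,a(t,w))\Phi_\e(y)\,dy\le\int_H f([h]\,a(t,w))\phi(h)\,dh\le\int_{\Ga\ba G}f_\e^+(y\,a(t,w))\Phi_\e(y)\,dy,
\]
and then the error is captured \emph{after} applying Proposition \ref{prop.mixH0}, as the difference $m^{\BR}(f_\e^\pm)-m^{\BR}(f)\to 0$ when $\e\to 0$ — i.e.\ the error lives in the limiting BR‑mass, not in the pre‑limit $t$‑dependent quantity. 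This is the step you need to make precise; once it is in place, the uniform bound \eqref{qqq} drops out from the second assertion of Proposition \ref{prop.mixH0} applied to $f_\e^+$ and $\Phi_\e$, with no separate $o_t(1)$ estimate required.
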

\begin{proof}
For $\epsilon>0$, let $R_\epsilon:=N_{\e}A_{\e}N_{\e}^+M$ and define $f_\e^\pm\in C_c(\Ga\ba G)$ by
\begin{equation}\label{ff}
f_\e^+(y)=\sup_{g\in R_\epsilon}f(yg),\text{ and }f_\e^-(y)=\inf_{g\in R_\epsilon}f(yg).
\end{equation}
Since $R_\e$ is right $M$-invariant, it follows that $f_{\e}^\pm\in C_c(\Ga\ba G)^M$.
Let $C_0\subset H$ denote  the support of $\phi$; we may assume that $C_0$ injects to its image under the map $G\mapsto \Gamma\ba  G$. Choosing $\rho_\e \in C_c((NB)_\e)$ and defining $\Phi_\e$ as above,  we let $d\lambda (m')$ denote the density on $M'$ of total mass one such that
\be\label{ll} d(hm'nb)= dh\, d\lambda(m')\,dn\,db\ee
(where $h\in  H$, $m'\in M'$, $n\in N$, and $b\in B$) is a Haar measure on $G$.
 
We then obtain 
\begin{align*}
\int_{H}f([h]a(t,w))\phi(h)\,dh & = \int_{C_0}f([h]a(t,w)) \phi(h)\left(\int_{(NB)_{\epsilon}}\rho_\e(nb) dn\,db \right)dh \\
&=\int_{C_0(NB)_{\epsilon}}f([h]a(t,w))\Phi_\e([h]nb)\, dh\,dn\,db.
\end{align*}
Using the $M$-invariance of $f$ and the definitions of $\Phi_\e$ and $d\lambda$ gives
\begin{equation*}
f([h]a(t,w))\Phi_\e([h]nb)=\int_{M'}f([h]m'a(t,w))\Phi_\e([h]m'nb)\,d\lambda(m'),
\end{equation*}
and so
\begin{equation*}
\int_{ H}f([h]a(t,w))\phi(h)\,dh=\int_{C_0M'(NB)_{\epsilon}}f([h]m'a(t,w))\Phi_\e([h]m'nb)\, dh\,d\lambda(m')\,dn\,db.
\end{equation*}

Since $v\in \op{int}\fb^+$, 
for all $(t,w)$ such that $\underline a(t,w)\in\fb^+$, and for all $nb\in (NB)_\e$, we have
\begin{align}\label{eq.ee}
f(xhm'a(t,w))&=f(xhm'nba(t,w)\cdot(a(t,w)^{-1}(nb)^{-1} a(t,w)))\notag\\
&\leq f_\e^+(xhm'nba(t,w)).
\end{align}
Consequently,
\begin{align*}
&\int_{C_0}f([h]a(t,w))\phi(h)\,dh \\ & \leq\int_{C_0M'(NB)_{\epsilon}}f_\e^+([h]m'nba(t,w))\Phi_\e([h]m'nb)\, dh\,d\lambda(m')\,dn\,db\\
&=\int_{\Ga\ba G} f_\e^+(ya(t,w))\Phi_\e (y)\,dy.
\end{align*}
A similar computation shows that
\begin{equation*}
\int_{C_0}f([h]a(t,w))\phi(h)\,dh\geq \int_{\Ga\ba G} f_\e^-(ya(t,w))\Phi_\e(y)\,dy.
\end{equation*}
On the other hand, we have
\begin{align*}
&\lim_{t\rightarrow\infty} t^{(r-1)/2} e^{(2\rho-\Theta)(\underline a(t,w))} \int_{\GaG} f_{\e}^\pm (y a(t,w))\Phi_\e(y)\,dy\\
&=\kappa_{v}\, e^{-I(w)/2}  \,m_{ }^{\mathrm{BR}}(f_{\e}^\pm)\, m_{ }^{\mathrm{BR}_*}(\Phi_\e)\\
&=\kappa_{v}\, e^{-I(w)/2}  \,m_{ }^{\mathrm{BR}}(f_{\e}^\pm)\,\mu_{{H}}^{\PS}(\phi)(1+O(\e)).
\end{align*}
Taking $\e\to0$ in the last equality proves the first statement.
The second statement is clear with the choice of $C'=C(f_\e^+,\Phi_\e)$, finishing the proof.
\end{proof}

\section{Counting in affine symmetric spaces}\label{gamma}
Let $\Gamma$ be an Anosov subgroup of $G$, and let $H$ be a symmetric subgroup of $G$.
 We continue to use the notation for $v$, $\nu$, $\delta$, $r$, $r_0$, $\Theta$, and
  $G=H\cal W\op{exp}(\mathfrak  b^+)K$, etc. from Section \ref{sec.aff};
hence
$v\in\fb^+\cap\inte\scrL_{\Gamma}  $ is a unit vector (with respect to the norm $\|\cdot\|$ on $\fa$).
We denote by $| \cdot |$ the norm on $\fa$ induced by an inner product
$(\cdot,\cdot)$ with respect to which $v$ and $\ker \Theta$ are orthogonal to each other, and such that $|v|=1$.

In the following we fix a convex cone
$\mathcal C\subset \fb^+ \cap (\mathrm{int}( \fa^+)\cup\lbrace 0\rbrace)$  such that  \be\label{fcc} v \in \inte_{\fb} \mathcal C \quad \text{ and }\quad 
\overline{\mathcal C} \cap \ker \Theta =\{0\}\ee
where $\inte_{\fb} \mathcal C $ means the interior of $\cal C$ in the relative topology of $\fb$.
Note that there are convex cones which contain $ \L_\Gamma \cap \fb^+$ and
satisfy \eqref{fcc}  by Theorem \ref{int} (1) and Lemma \ref{large}. 
\begin{Rmk} \rm Note that if $v=u_\Ga\in\fb^+$, then by Lemma \ref{di}, $\Theta(w)=\delta_{\Gamma}\langle u_\Ga,w\rangle$, hence the cone $\fb^+\cap \left(\inte(\fa^+)\cup\{0\}\right)$ satisfies the conditions placed on $\scrC$ above.
\end{Rmk}

By the condition \eqref{fcc}, we have
\be\label{fc} \fc\subset \{tv+\sqrt{t}w: t\ge 0, w\in \ker \Theta\}.\ee

Denote $$\fc_T:=\{w\in \fc: |w|<T\}\quad\text{ for $T>1$. }$$ 
For  $w\in \ker \Theta$, set
\begin{align*} 
&R_T(w):=\{t\in\bb R : tv+\sqrt t w \in  \fc_T \}.
\end{align*}
for all sufficiently large $T$, $R_T(w)$ is an interval of the form
$$R_T(w)=[t_w, \sfrac{1}{2} \left( -|w|^2 +\sqrt{ |w|^4+4T^2}\right)].$$

\begin{lem}\label{dom}  \begin{enumerate}
 \item There exists $c>0$ such that for all $w\in\fb\cap \ker \Theta$ and $T>1$,
$$e^{-\delta T}T^{(r-r_0)/2} \int_{R_T(w)}t^{\frac{r_0-r}{2}}e^{\delta t} dt \le \delta^{-1} e^{-c \delta |w|^2}.$$

\item We have \begin{equation}\label{eq.WR2} 
\lim_{T\to \infty} e^{-\delta T}T^{(r-r_0)/2} \int_{R_T(w) }t^{\frac{r_0-r}{2}}e^{\delta t} dt =\delta^{-1} e^{- \delta |w|^2/2}.\end{equation}
\end{enumerate}
\end{lem}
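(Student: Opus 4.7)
The plan is to analyze the integral $\int_{R_T(w)}t^{(r_0-r)/2}e^{\delta t}\,dt$ using the explicit form $R_T(w)=[t_w,T_w]$ together with the identity $T_w(T_w+|w|^2)=T^2$, which follows from $v\perp w$ and $|v|=1$. First I would rationalize to obtain
\begin{equation*}
T-T_w\;=\;\frac{T|w|^2}{T+\tfrac12|w|^2+\sqrt{T^2+|w|^4/4}},
\end{equation*}
yielding both the asymptotic $T-T_w\to|w|^2/2$ as $T\to\infty$ with $w$ fixed, and the one-sided bound $T-T_w\ge c_2|w|^2$ whenever $|w|^2\le c_1 T$, for suitable positive constants $c_1,c_2$.

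Next, I would use \eqref{fcc} to show that $R_T(w)$ is automatically empty once $|w|^2$ exceeds a fixed multiple of $T$. Since $\overline{\mathcal C}\cap\ker\Theta=\{0\}$, compactness of the unit sphere in $\overline{\mathcal C}$ supplies $\eta\in(0,\delta)$ with $\Theta(x)\ge\eta|x|$ for all $x\in\overline{\mathcal C}$. Applied to $tv+\sqrt t w\in\mathcal C$ with $w\in\ker\Theta$ (so that $\Theta(tv+\sqrt tw)=t\delta$), this yields $t\delta\ge\eta\sqrt{t^2+t|w|^2}$, hence $t_w\ge\tfrac{\eta^2}{\delta^2-\eta^2}|w|^2$. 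Combined with the necessary condition $t_w\le T_w\le T$, this forces $|w|^2\le c_1 T$ whenever $R_T(w)\ne\emptyset$, reducing both parts of the lemma to that regime.

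Third, integration by parts with $a:=(r_0-r)/2\le 0$ gives
\begin{equation*}
\int_{t_w}^{T_w}t^a e^{\delta t}\,dt \;=\; \frac{T_w^a e^{\delta T_w}-t_w^a e^{\delta t_w}}{\delta}+\frac{|a|}{\delta}\int_{t_w}^{T_w}t^{a-1}e^{\delta t}\,dt,
\end{equation*}
and iterating the identity controls the remainder as $O(T_w^{-1})$ times the leading term. Multiplying by $e^{-\delta T}T^{-a}$ then yields
\begin{equation*}
e^{-\delta T}T^{-a}\!\int_{t_w}^{T_w}\!t^a e^{\delta t}\,dt \;=\; \frac{1}{\delta}\bigl(T_w/T\bigr)^{a}\,e^{-\delta(T-T_w)}\bigl(1+O(T_w^{-1})\bigr).
\end{equation*}
Passing $T\to\infty$ for fixed $w$ proves (2), since $T_w/T\to 1$ and $T-T_w\to|w|^2/2$. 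For (1), the restriction $|w|^2\le c_1 T$ keeps $T_w/T$ in a compact subinterval of $(0,1]$, so $(T_w/T)^a$ stays uniformly bounded; together with $e^{-\delta(T-T_w)}\le e^{-\delta c_2|w|^2}$, this yields (1) with $c$ any sufficiently small positive constant.

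The main obstacle is the cone-compatibility step: without the hypothesis $\overline{\mathcal C}\cap\ker\Theta=\{0\}$ the integral could survive for $|w|\gg\sqrt T$, and the Gaussian-type decay $e^{-c\delta|w|^2}$ in (1) would fail. Once this reduction is in hand, the rest is a standard Laplace-type computation.
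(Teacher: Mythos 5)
Your overall strategy matches the structure of the paper's proof in its essentials: both rely on the cone condition $\overline{\mathcal C}\cap\ker\Theta=\{0\}$ from \eqref{fcc} to deduce a lower bound $t_w\gtrsim|w|^2$ (equivalently an upper bound $T-T_w\ge c|w|^2$ with $c>0$), and both treat the integral over $R_T(w)$ as a Laplace-type integral with exponential peak at the right endpoint. Your derivation of $T-T_w$ by rationalizing is clean and yields both the asymptotic and the uniform lower bound, and your positivity argument ($\Theta\ge\eta|\cdot|$ on $\overline{\mathcal C}$ via compactness of the unit sphere) is a correct substitute for the paper's equivalent angle argument ($\langle v,x\rangle>0$ uniformly on $\mathcal C$). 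The observation that $R_T(w)\ne\emptyset$ forces $|w|^2\le c_1 T$ is also correct and used by the paper.

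Where the two proofs diverge is in how the prefactor $e^{-\delta T}T^{(r-r_0)/2}$ is handled. The paper substitutes $t\mapsto t+T$, turning the prefactor into the factor $(t/T+1)^{(r_0-r)/2}$ inside the integral over the shifted interval $[t_w-T,\,T_w-T]$; part (2) then follows by dominated convergence without any asymptotic expansion, and part (1) by dropping that factor and extending the domain. You instead integrate by parts and claim that iterating the identity bounds the remainder by $O(T_w^{-1})$ times the leading term. This step is a genuine gap. With $a=(r_0-r)/2\le 0$, the remainder $\tfrac{|a|}{\delta}\int_{t_w}^{T_w}t^{a-1}e^{\delta t}\,dt$ has the \emph{more} singular weight $t^{a-1}$ at the lower endpoint; iterating the identity only increases that singularity. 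The standard Laplace estimate $\int_{t_w}^{T_w}t^{a-1}e^{\delta t}\,dt = O(T_w^{a-1}e^{\delta T_w})$ holds only after a split such as $t\le T_w/2$ vs.\ $t>T_w/2$, and the contribution from $t\le T_w/2$ is bounded by something like $t_w^a e^{\delta T_w/2}$ (for $a<0$), which is \emph{not} uniformly $O(T_w^{-1})\cdot T_w^a e^{\delta T_w}$ but depends on $t_w$; in particular it blows up as $t_w\to 0$, i.e.\ as $|w|\to 0$. For (2) (fixed $w$, $T\to\infty$) this causes no ultimate harm because that term is exponentially subdominant, but you need to say so, and the bound ``$1+O(T_w^{-1})$'' as written is not correct uniformly. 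For (1) the situation is worse: even granting the remainder control near $|w|=0$, your final estimate is $(T_w/T)^a\,e^{-\delta(T-T_w)}(1+O(T_w^{-1}))\le C e^{-\delta c_2|w|^2}$ with an extra multiplicative constant $C>1$ coming from both $(T_w/T)^a$ and the $1+O(T_w^{-1})$ factor, which does not reduce to the stated $\delta^{-1}e^{-c\delta|w|^2}$ by shrinking $c$ (try $w=0$). That extra constant is innocuous for the application in Proposition~\ref{lem.w1}, where any integrable dominating function suffices, but it does not reproduce the lemma as stated.

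So: the geometric inputs are right and essentially reproduce the paper's, but the integration-by-parts route needs a proper two-regime Laplace estimate (or, more simply, the paper's shift $t\mapsto t-T$ followed by dominated convergence), and even then part (1) comes out with an extra constant. Be aware, too, that your own caveat about the necessity of $\overline{\mathcal C}\cap\ker\Theta=\{0\}$ is correct and worth keeping: without it the lower endpoint $t_w$ does not scale like $|w|^2$ and the Gaussian decay in (1) fails.
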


\begin{proof} Note that for any non-zero vector $x\in\cal C$, 
$(v,x)> 0$. Since $\cal C$ is a  convex cone with $\overline{\cal C}\cap \ker \Theta=\{0\}$,
it follows that there exists $0<\theta_0<\pi/2$ such that the angle between any vector in $\cal C$ and $v$ is at most $\theta_0$.
Now, as $v$ is perpendicular to $\ker\Theta$ with respect to $(\cdot,\cdot)$, we have that for any $t$ such that
 $\underline a(t,w)\in\fc$,
$$\frac{\sqrt{t}|{w|}}{t|v|}\leq\tan\theta_0,\text{ or, equivalently, }{|w|}^2\leq\tan^2\theta_0\cdot t .$$

In particular, for $t\in R_T(w)$, we have
$$
T^2\geq t^2+t {|w|}^2\geq \left(\frac{1}{\tan^2\theta_0}+\frac{1}{\tan^4\theta_0}\right) {|w|}^4. $$
This gives the upper bound
$$
\tfrac{-{|w|}^2+\sqrt{{|w|}^4+4T^2}}{2}- T
=-\tfrac{{|w|}^2}{2}+\tfrac{{|w|}^4}{2\left(\sqrt{{|w|}^4+4 T^2}+2 T\right)} \leq -c{|w|}^2,
$$
with
\be\label{c1}
c:=\frac{1}{2}\left( 1- \left({1 +4\left(\tfrac{1}{\tan^2\theta_0}+\tfrac{1}{\tan^4\theta_0}\right) }\right)^{-1/2}\right) >0.
\ee
Now, by changing variables,
\begin{align*}
&\int_{t_w}^{\frac{-{|w|}^2+\sqrt{{|w|}^4+4T^2}}{2}}t^{\frac{r_0-r}{2}}e^{\delta t}\,dt=\int_{t_w-T}^{\frac{-{|w|}^2+\sqrt{{|w|}^4+4T^2}}{2}- T}(t+T)^{\frac{r_0-r}{2}}e^{{\delta}(t+T)}\,dt,
\end{align*}
and hence
\begin{align}\label{eq.DC11}
&e^{-{\delta} T} T^{\frac{r-r_0}{2}}\int_{t_w}^{\frac{-{|w|}^2+\sqrt{{|w|}^4+4 T^2}}{2}}t^{\frac{r_0-r}{2}}e^{\delta t}\,dt\notag\\
&=\int_{t_w-T}^{\frac{-{|w|}^2+\sqrt{{|w|}^4+4 T^2}}{2}- T}\left(\frac{t}{T}+1\right)^{\frac{r_0-r}{2}}e^{\delta t}\,dt \\
&\leq\int_{-\infty}^{-c{|w|}^2} e^{\delta t}\,dt=\tfrac{1}{\delta} e^{-c{\delta}{|w|}^2},\notag
\end{align}
which proves (1).

The second claim  (2) follows as well, because by the dominated convergence theorem, \eqref{eq.DC11} converges to
$$
\int_{-\infty}^{-\frac{{|w|}^2}{2}}e^{\delta t}\,dt=\tfrac{1}{\delta} e^{-\frac{{\delta}{|w|}^2}{2}}.
$$

\end{proof}

We fix  a left $(H\cap M)$-invariant function  $\tau_H\in C_c(H)$  with its support injecting to $\Gamma\ba G$, and 
and a right $M$-invariant function $\tau_K\in C(K)$.
Define a function $Z_T: G\rightarrow \RR$ as follows:  
\begin{equation*} Z_T(g):=\begin{cases} \tau_H(h)  \mathbf{1}_{\cal C_T} (\log b)\tau_K( k) &\text{$g=hb k\in H\exp(\mathcal C) K$,}\\
0 & \text{$g\not\in H\exp(\cal C ) K$.}\end{cases} \end{equation*}
Since $\scrC \subset\mathrm{int}(\fa^+)\cup\{0\}$,  $hM$ and $Mk$ are uniquely determined and hence $Z_T$ is well-defined.

\begin{Def}[Bi-sector counting function]\rm For $T>0$, define $F_T= F_{T,\tau_H,\tau_K}: \GaG\to\bb R$ by
\begin{equation}\label{FTdef}
F_{T}([g])=\sum_{\ga\in \Ga}Z_T(\ga g).
\end{equation}
\end{Def}

For $\Phi\in C_c(\Gamma\ba G)$ and a left $M$-invariant Borel function $f$ on $K$,
we define the following $M$-invariant function on $\Gamma\ba G$: for $x\in\GaG$,
$$\Phi* f (x):=\int_{k\in K}\Phi(xk)f(k)\,dk.$$

In the definition \eqref{cv} below,  the integral over the trivial subspace $\{0\}$ should be interpreted as $\int_0 f(w) dw=f(0)$.
In particular, if $\fb\cap \ker \Theta=\{0\}$, then $c_v=\frac{s_v \kappa_{v}}{\delta}$.
\begin{prop}\label{lem.w1}
Let $\Phi\in C_c(\Ga\ba G)$.
As $T\to\infty$, we have
$$
\la F_{T},\Phi\ra\sim
  c_v \;  e^{\delta T} T^{(r_0-r)/2}     \;
  \mu_{H}^{\PS}(\tau_H)\, m^{\BR}(\Phi *\tau_K )
$$
where $c_v$ is given by
 \be \label{cv}
 c_v:=\frac{s_v \kappa_{v}}{\delta} \int_{\fb\cap\op{ker} \Theta}e^{-(I(w)+\delta |w|^2) /2} dw ;\ee
here $\kappa_v$ and $I(w)$ are as in Theorem \ref{mixing2} and in \eqref{ii} respectively, and $ s_v=\frac{1}{|\op{det} S_v|}$, where $S_v:\fa\to \fa$ is any linear map such that $S_v|_{\ker\Theta}=\op{Id}$ and $S_vv$ is a unit vector orthogonal to $\ker\Theta$
with respect to the inner product on $\fa$ induced by the Killing form.

\end{prop}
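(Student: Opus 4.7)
The strategy is to unfold $\langle F_T, \Phi\rangle$ into an integral over $G$, decompose Haar measure via the generalized Cartan decomposition $G = H \exp(\fb^+) K$, apply Proposition \ref{eq} to the inner $H$-integral, and carry out the $\fb$-integral using the change of variables $b = tv + \sqrt{t}\, w$ together with Lemma \ref{dom}.

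Concretely, since $\tau_H$ has support injecting into $\GaG$, unfolding $F_T$ yields $\langle F_T, \Phi\rangle = \int_G Z_T(g) \Phi([g])\, dg$. Using $G = H\exp(\fb^+)K$ and the standard computation of the corresponding Jacobian (see e.g.\ \cite{Sch}), $dg = J(b)\, dh\, db\, dk$ on $H\exp(\inte\fb^+)K$, where $J(b)$ is a product of $\sinh$ and $\cosh$ factors of roots in $\Sigma_\sigma^+$; for $b$ deep in $\fb^+$, $J(b) \sim e^{2\rho(b)}$ after appropriate Haar normalizations. Setting $f := \Phi*\tau_K \in C_c(\GaG)^M$, this becomes
$$\langle F_T,\Phi\rangle = \int_{\mathcal{C}_T} J(b) \left(\int_H \tau_H(h) f([h e^b])\,dh\right) db.$$

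Parametrizing $\fb = \RR v \oplus (\fb \cap \ker\Theta)$ via $b = tv + \sqrt{t}\, w$ with $(t,w) \in (0,\infty) \times (\fb \cap \ker\Theta)$, whose Jacobian is $s_v\, t^{(r_0-1)/2}$, Fubini gives
$$\langle F_T, \Phi\rangle = s_v\!\int_{\fb\cap\ker\Theta}\!\int_{R_T(w)} J(b)\,t^{(r_0-1)/2} \left(\int_H \tau_H(h) f([h]a(t,w))\,dh\right) dt\, dw.$$
Applying Proposition \ref{eq} to the inner integral, and using $J(b) \sim e^{2\rho(b)}$ together with $\Theta(\underline{a}(t,w)) = \delta t$ (since $w \in \ker\Theta$), the integrand becomes asymptotically $s_v\kappa_v\, e^{-I(w)/2}\, e^{\delta t}\, t^{(r_0-r)/2}\, m^{\BR}(f)\, \mu_H^{\PS}(\tau_H)$. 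Lemma \ref{dom}(2) evaluates the $t$-integral as $\sim \delta^{-1} e^{\delta T} T^{(r_0-r)/2} e^{-\delta|w|^2/2}$, and the remaining Gaussian integral over $w$ reconstructs $c_v$ as in \eqref{cv}, yielding the claimed asymptotic.

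The main obstacle is justifying the interchange of limits required to convert the pointwise asymptotic of Proposition \ref{eq} into an asymptotic for the full integral. To apply dominated convergence over $(t, w)$ one needs uniform control: the second (uniform boundedness) assertion of Proposition \ref{eq} bounds the $H$-orbit integral for all $\underline{a}(t,w) \in \fb^+$, while Lemma \ref{dom}(1) supplies the Gaussian decay $\delta^{-1} e^{-c\delta|w|^2}$ needed for integrability in $w$. A further subtlety is that Proposition \ref{eq} is only asymptotic in $t$, so one must separately handle the small-$t$ contribution: splitting $R_T(w)$ at $t = T^{\varepsilon}$ for small $\varepsilon > 0$, the low piece is crudely bounded by $O(e^{\delta T^{\varepsilon}})$, negligible compared to the main term $e^{\delta T} T^{(r_0-r)/2}$.
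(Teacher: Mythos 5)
Your proposal is correct and takes essentially the same route as the paper: unfold $F_T$ via the generalized Cartan decomposition, change variables $b=tv+\sqrt{t}\,w$, apply Proposition~\ref{eq} to the $H$-translate, and dominate the normalized $w$-slice integral by the Gaussian from Lemma~\ref{dom}(1) (using the uniform boundedness part of Proposition~\ref{eq}) before applying dominated convergence and Lemma~\ref{dom}(2). The only cosmetic difference is your explicit split at $t=T^{\varepsilon}$; the paper absorbs the small-$t$ contribution into the same dominated-convergence argument via the shift $t\mapsto t+T$ carried out inside Lemma~\ref{dom}.
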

\begin{proof}
In view of the decomposition \eqref{eq.HBK}, we will need the following formula for the Haar measure $dg$ on $ G$; for all $\phi\in C_c(G)$,
\begin{equation}\label{eq.HIF}
\int_{G}\phi(g)\,dg=\sum_{w\in \cal W} \int_H \int_K \left(\int_{\fb^+}\phi(h w(\exp b)k)\xi(b)\,db\right)dk\, dh ,
\end{equation}
where $\xi : \fb\to\bb R$ is given by 
\begin{equation}\label{sinh}
\xi(b)=\prod_{\alpha\in\Sigma_\sigma^+}(\op{sinh}\alpha(b))^{\ell_\alpha^+}(\op{cosh}\alpha(b))^{\ell_\alpha^-};
\end{equation}
here $\ell_{\alpha}^{\pm}:=\dim(\fg_{\alpha}^{\pm})$, where each $\fg_{\alpha}^{\pm}$ is the $\pm 1$ eigenspace of the root space $\fg_{\alpha}$ with respect to the involution $\theta\sigma$ (cf. \cite{Sch}, \cite[p.18]{GOS}).

Substituting $b=\underline a(t,w)$ for $t\geq 0$ and $w\in \fb\cap\op{ker}\Theta$
gives $db=s_vt^{\frac{r_0-1}{2}}\,dt\,dw$. Now, 
\be\label{ssame} \langle F_{T},\Phi\rangle 
=\int_{K}  \tau_K(k) \int_{b\in \cal C_T} \left(\int_{[e] H}\Phi([h] (\exp b) k)\tau_H(h) \,d[h]\right)\xi(b)\,db\,dk.\ee
Hence \begin{align*}
&\langle F_{T},\Phi\rangle 
\notag \\ &=
\int_{\fb\cap\op{ker}\Theta}  \int_{t\in R_T(w)} t^{\frac{r_0-1}{2}}\xi(\underline a(t,w)) \left(\int_{[e] H}(\Phi*\tau_K) ([h]a(t,w))\tau_H(h) \,d[h]\right)s_v\,dt\,dw
\notag \\
&= s_ve^{\delta T} T^{(r_0-r)/2} \begin{multlined}[t]\int_{\fb \cap\op{ker}\Theta}   p_T(w)\,dw,\notag
\end{multlined}
\end{align*}
where we define $p_T(w)$ to be
\begin{multline}\label{defpt}
e^{-\delta T} T^{(r-r_0)/2}  \int_{R_T(w)} t^{\frac{r_0-1}{2}}\xi(\underline a(t,w))
\left(\int_{[e] H}(\Phi*\tau_K) ([h]a(t,w))\tau_H(h) \,d[h]\right)\,dt.
\end{multline}
We next look for an integrable function on $\fb\cap \op{ker}\Theta$ that bounds the family of functions $p_{T}(w)$ from above, in order to apply the dominated convergence theorem. By \eqref{sinh},
there exists a constant $c_1>0$ such that for all $(t,w)$ with $\underline a(t,w)\in\fc$,
\be\label{eee}
e^{-2\rho(\underline a(t,w))}\xi(\underline a(t,w))\le c_1;
\ee

By Proposition \ref{prop.mixH2}, we may assume
$$
\left|t^{\frac{r-1}{2}}e^{2\rho(\underline a(t,w))-{\delta t}}\int_{[e] H}(\Phi *\tau_K)([h]a(t,w)) \tau_H(h) \,d[h]\right| \le c_1
$$
as well.
Hence
$$
p_T(w)\leq c_1^2\psi(w),
$$
where $\psi(w)=\delta^{-1} e^{-c \delta |w|^2}$ is as given in Lemma \ref{dom}.
Since $\psi$ is integrable over $ \fb \cap\ker\Theta$,
we may apply Proposition \ref{eq} and the dominated convergence theorem to deduce that
\begin{align*}
&\int_K \tau_K(k)\int_{b\in \cal C_T} \left(\int_{[e] H}\Phi ([h](\exp b) k)\tau_H(h) \,d[h]\right)\xi(b)\,db\,dk\\
&\sim s_v\kappa_{v}  e^{\delta T} T^{(r_0-r)/2}  \cdot \mu_H^{\PS}(\tau_H)   \int_{w\in \fb\cap\op{ker}\Theta} {\delta}^{-1} e^{-{{\delta}{|w|}^2}/{2}}e^{-I(w)/2}m_{ }^{\BR}(\Phi*\tau_K)\,dw\\
&=c_v   e^{\delta T} T^{(r_0-r)/2}   \mu_H^{\PS}(\tau_H)\, m_{ }^{\BR}(\Phi*\tau_K),
\end{align*}
with $c_v$ as given in the statement of the proposition.
\end{proof}

We now fix a left $(H\cap M)$-invariant compact subset $\Omega_H\subset H$ and a right $M$-invariant
compact subset $\Omega_K\subset K$.
Let $\e>0$ be a number smaller than the injectivity radius at $[e]$ in $\Gamma\ba G$.
There exists a symmetric neighborhood $\Cal O_\e$ of $e$ in $G$ such that for all $T\ge 1$,
\begin{align}\label{inq}
\Om_{H, \e}^{-} \exp( \fc_{T,\e}^-)\Omega_{K, \e}^{-} &\subset \bigcap_{g\in\cal O_\e}        \Om_H \exp(\fc_T) \Om_K g ,\text{ and}\notag\\
  \Om_H \exp(\fc_T) \Om_K \Cal O_\e &\subset \Om_{H, \e}^{ +} \exp(\fc_{T,\e}^+) \Omega_{K,\e}^{+}, \end{align}
where $\cal C_{T,\e}^+=\cal C_T+\mathfrak b_\e$,  $\cal C_{T,\e}^-=\cap_{b\in\mathfrak b_\e}(\cal C_T+b)$, $\Omega_{K, \e}^+=\Omega_K K_\e$, $\Omega_{K, \e}^-=\bigcap_{k\in K_\e}\Omega_K k$, and $\Omega_{H, \e}^{ \pm}$
are defined similarly (see \cite{GOS}, \cite{GOS2}).
We will additionally fix  convex cones $\cal C^\flat,\cal C^\sharp\subset \fb^+ \cap (\mathrm{int}( \fa^+)\cup\lbrace 0\rbrace)$ such that $\cal C^\flat\subset\op{int}_\fb\cal C$,
 $\cal C\subset\op{int}_\fb\cal C^\sharp$, and which satisfy the condition \eqref{fcc}.
Note that $\cal C_{T,\e}^+$ is no longer contained in $\cal C$, but there exists $T_0>0$ (independent of $0<\e<1$) such that
\begin{equation}\label{eq.DC}
\cal C_{T,\e}^+-\cal C_{T_0,\e}^+\subset\cal C_{T+\e}^\sharp \quad \text{ and }\quad \cal C_{T-\e}^\flat-\cal C_{T_0}^\flat\subset  \cal C_{T,\e}^-.
\end{equation}

Choose a nonnegative function $\phi_\epsilon\in C_c(G)$ such that $\int_G\phi_\epsilon(g)\,dg=1$ and $\op{supp}(\phi_\epsilon)\subset \Cal O_\e$.
Define $\Phi_\epsilon : \Gamma\ba G\to\bb R$ by 
\begin{equation}\label{ppp}
\Phi_\epsilon([g]) =\sum_{\ga\in\Ga}\phi_\epsilon(\ga g).
\end{equation}

\begin{lem}\label{mz}\label{km2}
Let $\tau\in C(K)$ be left $M$-invariant. Then 
$$\lim_{\e\to 0} m^{\BR} (\Phi_\e* \tau ) =\int_K \tau(k^{-1})\, \mu^{\PS,*}_{K, \i(v)}(k).$$
where   $\mu^{\PS,*}_{K, \i(v)}:= \tilde \nu_{\Theta\circ \i}$ is given in \eqref{km}.
 \end{lem}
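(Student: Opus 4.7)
The plan is to unfold the $\Gamma$-sum defining $\Phi_\e$ and apply the $KAN^+$-disintegration of $\tilde m^{\BR}$ from Lemma \ref{lem.BRD}(1) to reduce the question to a local computation at $e\in G$. Using left $\Gamma$-invariance,
\begin{equation*}
m^{\BR}(\Phi_\e * \tau) = \int_G \int_K \phi_\e(gk_0)\,\tau(k_0)\,dk_0\,d\tilde m^{\BR}(g),
\end{equation*}
and since $m^{\BR}=m^{\BR}_{\nu_{\i(v)}}$, Lemma \ref{lem.BRD}(1) applied with $\psi=\psi_{\i(v)}$ (for which $\psi\circ\i=\psi_v=\Theta$, so $\tilde\nu_{\psi_{\i(v)}}=\tilde\nu_{\Theta\circ\i}=\mu^{\PS,*}_{K,\i(v)}$) rewrites this as
\begin{equation*}
\int_K\int_\fa\int_{N^+}\int_K\phi_\e\bigl(k\exp(b)n k_0\bigr)\tau(k_0)\,dk_0\,e^{-\Theta(b)}\,dn\,db\,d\mu^{\PS,*}_{K,\i(v)}(k).
\end{equation*}

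In the innermost integral I substitute $k_0=k^{-1}u$ (preserving Haar on $K$) to recenter near $e$; for $\phi_\e(g_0)\neq 0$ with $g_0:=k\exp(b)nk^{-1}u$, each of $b\in\fa$, $X:=\log n\in\fn^+$ and $Y:=\log u\in\fk$ must be of size $O(\e)$. The Baker--Campbell--Hausdorff formula then gives $g_0=\exp\bigl(Z+O(|(b,X,Y)|^2)\bigr)$ with
\begin{equation*}
Z:=\Ad_k(b+X)+Y\in\fg.
\end{equation*}
The linear map $F_k:(b,X,Y)\mapsto Z$ from $\fa\oplus\fn^+\oplus\fk$ to $\fg$ is a bijection, since $\Ad_k\fk=\fk$ yields the direct sum $\fg=\fk\oplus\Ad_k(\fa+\fn^+)$. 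I compute $|\det F_k|=1$ uniformly in $k$ by factoring $F_k=\Ad_k\circ G_k$ where $G_k(b,X,Y)=b+X+\Ad_{k^{-1}}Y$; in the basis of $\fg$ adapted to $\fa\oplus\fn^+\oplus\fk$, the matrix of $G_k$ is block-upper-triangular with diagonal $(I_\fa,I_{\fn^+},\Ad_{k^{-1}}|_\fk)$, and both $|\det\Ad_k|$ on $\fg$ and $|\det\Ad_{k^{-1}}|_\fk|$ equal $1$ because $\Ad(K)$ is compact and connected.

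Finally, I change variables $(b,X,Y)\to Z$: at leading order $db\,dX\,dY=|\det F_k|^{-1}\,dZ=dZ$, and Haar $dg=(1+O(|Z|))\,dZ$ near $e$, so the normalization $\int_G\phi_\e\,dg=1$ forces $\int_\fg\phi_\e(\exp Z)\,dZ\to 1$ as $\e\to 0$. At $Z=0$ one has $b=Y=0$, so $e^{-\Theta(b(Z))}\to 1$ and $\tau(k^{-1}\exp Y(Z))\to\tau(k^{-1})$ by continuity; dominated convergence (using $\tau$ bounded and $\supp\phi_\e$ uniformly small) then pushes the limit through $d\mu^{\PS,*}_{K,\i(v)}(k)$, yielding
\begin{equation*}
\lim_{\e\to 0}m^{\BR}(\Phi_\e * \tau) = \int_K \tau(k^{-1})\,d\mu^{\PS,*}_{K,\i(v)}(k).
\end{equation*}
The main obstacle is the uniform Jacobian identity $|\det F_k|=1$, without which an extraneous $k$-dependent density would appear in the limit; this identity ultimately reflects unimodularity of the $K$-conjugation action on $G$.
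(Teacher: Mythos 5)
Your proof is correct and reaches the same conclusion by a route that overlaps the paper's at the start but diverges in the local analysis. Both arguments unfold $\Phi_\e$ and apply Lemma~\ref{lem.BRD}, and your identification $\psi_{\i(v)}\circ\i=\Theta$ and $\tilde\nu_{\psi_{\i(v)}}=\mu^{\PS,*}_{K,\i(v)}$ is exactly what is needed. Where you differ is in how you localize at the identity: you perform the change of variables $(b,X,Y)\mapsto Z=\Ad_k(b+X)+Y$ by hand via Baker--Campbell--Hausdorff and an explicit Jacobian computation $|\det F_k|=1$ (in fact $G_k$ is block-\emph{diagonal}, not just upper-triangular, since it maps each of $\fa$, $\fn^+$, $\fk$ into itself; this makes the factorization even cleaner). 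The paper instead sets $h=\exp(q)nk\in AN^+K$, invokes the known Haar density $dh=e^{-2\rho(q)}\,dn\,dq\,dk$ to rewrite the integrand as $\phi_\e(k'h)\tau(\kappa(h))e^{(2\rho-\Theta)(a_h)}\,dh\,d\tilde\nu(k')$, and then uses left-invariance of Haar under $h\mapsto k'^{-1}h$ to shift the $K$-factor onto the test function; that sidesteps the Jacobian computation entirely, which you are effectively re-deriving from scratch (your $|\det F_k|=1$ is essentially the statement that the Iwasawa Jacobian is trivial to first order at $e$). Both routes conclude with the same uniform-continuity estimate for $\tau$ and $e^{-\Theta(\cdot)}$ near $e$. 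The one step of your write-up that is slightly loose is passing from $g_0=\exp\bigl(Z+O(|(b,X,Y)|^2)\bigr)$ to ``$db\,dX\,dY=dZ$ at leading order'': what is actually needed is that the map $(b,X,Y)\mapsto\log g_0$ has Jacobian $1+O(\e)$ on $\mathcal O_\e$, uniformly in $k$ by compactness of $K$, because its differential at $(0,0,0)$ is $F_k$ — this is true and easily supplied, but worth stating explicitly since you are interchanging a change of variables with a BCH remainder.
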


\begin{proof} Set $\tilde \nu:=\tilde \nu_{\Theta\circ \i}$.
We use Lemma \ref{lem.BRD} and write
\begin{align*}
&m_{ }^{\BR}(\Phi_\e* \tau)=\int_{K}\int_{G}\phi_\epsilon(gk)\tau(k)\,d\tilde m_{ }^{\BR}(g)\,dk\\
&=\int_{K}\int_{G} \phi_\epsilon(k'\exp(q)nk)\tau(k)\,e^{-\Theta (q)}\,dn\,dq\,d\tilde\nu(k')\,dk.
\end{align*}
Substituting $g=\exp(q)nk\in AN^+K$, the density of the Haar measure is given by
\begin{equation*}
dg=e^{-2\rho(q)}\,dn\,dq\,dk.
\end{equation*}
For $g\in G$, let $\kappa(g)$ denote the $K$-component of $g$, and $a_g$ denote the logarithm of $A$-component of $g$, in the decomposition $G=AN^+ K$.
Then
\begin{align*}
&m_{ }^{\BR}(\Phi_\e* \tau) =\int_{K}\int_{G} \phi_\epsilon(k'g) \tau(\kappa(g)) \,e^{(2\rho-\Theta)(a_g)}\,dg\,d\tilde\nu(k')\\
&=\int_{K}\int_{G}\phi_\epsilon(g) \tau(\kappa (k^{-1} g) ) \,e^{(2\rho-\Theta)(a_{k^{-1}g})}\,dg\,d\tilde \nu(k).
\end{align*}
By shrinking $\cal O_\e$ if necessary, we can assume that for all $k\in K$,
$$
\kappa(k^{-1}\mathcal O_\e)\subset k^{-1}K_{\e}.
$$
By the uniform continuity of $\tau$, there exist positive $\eta=\eta_{\epsilon}\rightarrow 0$ as $\epsilon\rightarrow 0$ such that for all $g\in\scrO_{\e}$ and $k\in K$,
$$\tau(k^{-1})-\eta \le\tau(\kappa(k^{-1}g))\le \tau(k^{-1})+\eta.$$
It follows from the fact that the multiplication map $A\times N^+\times K\to G$ is a diffeomorphism
that for some $C>1$, we have that for all $g\in\scrO_{\e}$ and $k\in K$,
$$1- C\e  \le e^{(2\rho-\Theta)(a_{k^{-1}g})}\le 1+ C \e .$$
Since $\int_G \phi_\e \,dg=1$,
we get   $$(1-C \e) \int_K (\tau(k^{-1})-\eta)\,d\tilde\nu(k)\le  
  m_{ }^{\BR}(\Phi_\e * \tau)\le  (1+ C \e ) \int_K ( \tau(k^{-1})+\eta)\,d\tilde\nu(k).$$
The claim now follows from letting $\epsilon\rightarrow 0$. 
\end{proof}

\begin{cor} Let $\mathcal C\subset \fb^+\cap (\inte \fa^+\cup\{0\})$ be a convex cone satisfying \eqref{fcc}.
\label{mmm} If $\mu^{\PS,*}_{K, \i(v)}(\partial \Omega_K^{-1})=\mu_H^{\PS}(\partial\Om_H)=0$, then
$$\lim_{T\to \infty}\frac{\# ( \Gamma\cap \Om_H  \exp(\scrC_T) \Om_K)}{ e^{\delta T} T^{(r_0-r)/2}   }= c_v \;   \mu_H^{\PS}(\Om_H) \mu^{\PS,*}_{K, \i(v)}(\Om_K^{-1}). $$
If $v=u_\Ga\in\fb^+$, then we may take $\scrC$ to be $\fb^+$.
\end{cor}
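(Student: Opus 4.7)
The plan is to deduce the counting asymptotics from the pairing estimate of Proposition \ref{lem.w1} by a thickening/approximation argument. Write $N(T):=\#(\Gamma\cap\Omega_H\exp(\mathcal C_T)\Omega_K)$; this is exactly $F_T([e])$ for the counting function \eqref{FTdef} built from the (discontinuous) data $\tau_H=\mathbf 1_{\Omega_H}$ and $\tau_K=\mathbf 1_{\Omega_K}$. The strategy is to sandwich $N(T)$ between two continuous-data counting functions, pair them against the bump $\Phi_\epsilon$ of \eqref{ppp}, and invoke Proposition \ref{lem.w1}.

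Fix small $\epsilon>0$ and pick nonnegative continuous approximations $\tau_H^\pm\in C_c(H)^{H\cap M}$ and $\tau_K^\pm\in C(K)^M$ with $\tau_H^-\le\mathbf 1_{\Omega_{H,\epsilon}^-}$, $\tau_H^+\ge\mathbf 1_{\Omega_{H,\epsilon}^+}$, and analogous bounds for $\tau_K^\pm$. The hypotheses $\mu^{\PS}_H(\partial\Omega_H)=0$ and $\mu^{\PS,*}_{K,\i(v)}(\partial\Omega_K^{-1})=0$ allow one to arrange that $\mu_H^{\PS}(\tau_H^\pm)\to\mu_H^{\PS}(\Omega_H)$ and $\int_K\tau_K^\pm(k^{-1})\,d\mu^{\PS,*}_{K,\i(v)}(k)\to\mu^{\PS,*}_{K,\i(v)}(\Omega_K^{-1})$ as $\epsilon\to 0$. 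Let $F_{T-\epsilon}^-$ be the counting function \eqref{FTdef} built from $(\tau_H^-,\tau_K^-)$ using the cone $\mathcal C^\flat$, and let $F_{T+\epsilon}^+$ be built from $(\tau_H^+,\tau_K^+)$ using the cone $\mathcal C^\sharp$. Combining the inclusions \eqref{inq} (with $\mathcal O_\epsilon$ symmetric) and \eqref{eq.DC}, one checks that for every $g\in\mathcal O_\epsilon$ and all sufficiently large $T$,
$$F_{T-\epsilon}^-([g])-O(1)\le N(T)\le F_{T+\epsilon}^+([g])+O(1),$$
where the $O(1)$ errors absorb the finitely many lattice points landing in the bounded tail pieces $\exp(\mathcal C_{T_0}^\flat)$ and $\exp(\mathcal C_{T_0,\epsilon}^+)$ from \eqref{eq.DC}. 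Since $\Phi_\epsilon$ is the $\Gamma$-periodization of $\phi_\epsilon$ with unit integral, integrating the sandwich against $\Phi_\epsilon$ yields
$$\langle F_{T-\epsilon}^-,\Phi_\epsilon\rangle-O(1)\le N(T)\le\langle F_{T+\epsilon}^+,\Phi_\epsilon\rangle+O(1).$$

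Applying Proposition \ref{lem.w1} (with the cones $\mathcal C^\flat$ and $\mathcal C^\sharp$, each of which verifies \eqref{fcc} by construction) gives
$$\langle F_{T\pm\epsilon}^\pm,\Phi_\epsilon\rangle\sim c_v\,e^{\delta(T\pm\epsilon)}T^{(r_0-r)/2}\,\mu_H^{\PS}(\tau_H^\pm)\,m^{\BR}(\Phi_\epsilon*\tau_K^\pm)$$
as $T\to\infty$ for fixed $\epsilon$, and Lemma \ref{mz} gives $m^{\BR}(\Phi_\epsilon*\tau_K^\pm)\to\int_K\tau_K^\pm(k^{-1})\,d\mu^{\PS,*}_{K,\i(v)}(k)$ as $\epsilon\to 0$. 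Sending $T\to\infty$ first and then $\epsilon\to 0$, the $O(1)$ additive errors are dwarfed by the growing main term $e^{\delta T}T^{(r_0-r)/2}$ and the product $\mu_H^{\PS}(\tau_H^\pm)\cdot m^{\BR}(\Phi_\epsilon*\tau_K^\pm)$ converges to $\mu_H^{\PS}(\Omega_H)\,\mu^{\PS,*}_{K,\i(v)}(\Omega_K^{-1})$. Squeezing produces the desired asymptotic $N(T)\sim c_v\,\mu_H^{\PS}(\Omega_H)\,\mu^{\PS,*}_{K,\i(v)}(\Omega_K^{-1})\,e^{\delta T}T^{(r_0-r)/2}$.

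The main technical nuisance is choosing the approximations $\tau_H^\pm,\tau_K^\pm$ so that they respect both the $M$-invariances demanded by Proposition \ref{lem.w1} (no issue thanks to the $(H\cap M)$- and $M$-invariance of $\Omega_H,\Omega_K$) and the measure-boundary conditions, and then verifying that the bounded tails coming from \eqref{eq.DC} really contribute only $O(1)$ lattice points uniformly in $T$. For the final assertion: if $v=u_\Gamma\in\fb^+$, then by Lemma \ref{di} one has $\Theta=\delta_\Gamma\langle u_\Gamma,\cdot\rangle$, while Theorem \ref{int}(1) gives $u_\Gamma\in\inte\mathcal L_\Gamma\subset\inte\fa^+$; hence $u_\Gamma$ lies in the relative interior $\inte_\fb\fb^+$, and $\langle u_\Gamma,w\rangle>0$ for every nonzero $w\in\fb^+$ by standard Weyl chamber positivity. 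Consequently $\mathcal C=\fb^+$ (intersected if necessary with $\inte\fa^+\cup\{0\}$) satisfies \eqref{fcc}, so the general conclusion specializes to this maximal choice of cone.
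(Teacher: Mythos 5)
Your proposal follows essentially the same route as the paper: sandwiching the counting quantity between counting functions built from continuous approximations $\tau_H^\pm,\tau_K^\pm$ and the auxiliary cones $\mathcal C^\flat,\mathcal C^\sharp$ from \eqref{eq.DC}, pairing with the periodized bump $\Phi_\epsilon$, invoking Proposition \ref{lem.w1} for the asymptotics and Lemma \ref{mz} plus the boundary nullity hypotheses to recover the PS masses of $\Omega_H$ and $\Omega_K^{-1}$, and squeezing. One technical point deserves care that your write-up glosses over: Lemma \ref{mz} computes $\lim_{\epsilon\to0}m^{\BR}(\Phi_\epsilon*\tau)$ for a \emph{fixed} continuous $\tau\in C(K)^M$, but your $\tau_K^\pm$ depend on the same $\epsilon$; the paper decouples the two roles of $\epsilon$ by first replacing $\tau_{K,3\epsilon}^+$ with a larger $\tau_{K,9\epsilon_0}^+$ for a fixed $\epsilon_0$, then sending $\epsilon\to0$ (moving only $\Phi_\epsilon$ and $\tau_{H,3\epsilon}^\pm$), and finally sending $\epsilon_0\to0$, as in \eqref{eq.ub}. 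With that nested-limit fix, your argument matches the paper's.
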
\begin{proof} Write $\tilde\nu:= \mu^{\PS,*}_{K, \i(v)}$ for simplicity.
For $g\in H (B^+\cap \inte A^+) K$,
let $g=h_g(\exp b_g) k_g$ denote the $HB^+K$ decomposition of $g\in G$; note that $h_g(H\cap M)$, $b_g$ and
$M k_g$ are uniquely defined.

Since $\Om_H (H\cap M)=\Om_H$ and $M\Om_K=\Om_K$, we 
may define $X_T : G\to\bb R$ by
\begin{align*}
X_T(g)&=\mathbf{1}_{\Om_H}(h_g)\mathbf{1}_{\cal C_T} ( b_g)  \mathbf{1}_{\Om_K}( k_g)
\end{align*}
 for $g\in H \exp \cal C^\sharp K$, and $X_T(g)=0$ otherwise.
Let $\phi_{K,\e}\in C(K_\e)^M$ and $\phi_{H,\e}\in C(H_\e)^{H\cap M}$ be non-negative functions with integral one.
Set
\begin{align*}
\tau_{K,\e}^{\pm} &:=\mathbf{1}_{\Om_{K,2\e}^\pm}*\phi_{K,\e},\text{ }\tau_{H,\e}^{\pm} :=  \mathbf{1}_{\Om_{H,2\e}^\pm}*\phi_{H,\e} ,\text{ and }\\
F_{T+\e}^+&:=F_{\cal C_{T+\e}^\sharp,\tau_{H,3\e }^\pm,\tau_{K,3\e }^\pm},\text{ }F_{T-\e}^-:=F_{\cal C_{T-\e}^\flat,\tau_{H,3\e }^-,\tau_{K,3\e }^-}.
\end{align*}
Note that by definition, 
\begin{equation}\label{eq.b0}
\mathbf{1}_{\Om_{K,\e}^+}\leq \tau_{K,\e}^+ \leq\mathbf{1}_{\Om_{K,3\e}^+},\text{ and }\mathbf{1}_{\Om_{K,3\e}^-}\leq \tau_{K,\e}^- \leq\mathbf{1}_{\Om_{K,\e}^-}.
\end{equation}

By \eqref{inq}, \eqref{eq.DC} and \eqref{eq.b0}
there exists a uniform constant $C>0$ such that for all $g\in\cal O_\e$,
$$
F_{T-\e}^-([ g])-C \leq\sum_{\ga\in\Ga} X_T(\ga)\leq F_{T+\e}^+([ g]) +C.
$$

Integrating this against $\Phi_\e$ given in \eqref{ppp},
we get
$$
\langle F_{T-\e}^-, \Phi_\e\rangle -C \leq\sum_{\ga\in\Ga} X_T(\ga)\leq \langle F_{T+\e}^+, \Phi_\e\rangle +C.
$$
For simplicity, we set  $x_T:= \sum_{\ga\in\Ga} X_T(\ga)=\# \Gamma\cap \Om_H  \exp(\scrC_T) \Om_K$.
Hence by Proposition \ref{lem.w1}, fixing $\e_0>0$, for all $0<\e<\e_0$, we have
\begin{align}\label{eq.ub}
 \limsup\limits_{T\to\infty}\frac{x_T}{e^{\delta T} T^{(r_0-r)/2} }&\leq c_v \mu_H^{\PS}(\tau_{H,3\e}^+)m^{\BR}(\Phi_\e*\tau_{K,3\e}^+)\notag\\
&\leq c_v \mu_H^{\PS}(\tau_{H,3\e}^+)m^{\BR}(\Phi_\e*\tau_{K,9\e_0}^+).
\end{align}

On the other hand, since $\tilde\nu(\partial\Om_K^{-1})=\mu_H^{\PS}(\partial\Om_H)=0$, we get from \eqref{eq.b0} that
\begin{equation}\label{eq.b1}
\lim_{\e\rightarrow 0}\mu_{H}^{\PS}(\tau_{H,3\e }^\pm)=\mu_{H}^{\PS}(\Om_H)\quad\text{and}\quad \lim_{\e \rightarrow 0}\int_K \tau_{K,\e }^\pm(k^{-1})\,d\tilde\nu(k)=
\tilde\nu(\Om_K^{-1}).
\end{equation}
Now letting $\e\to0$ first using Lemma \ref{mz} and \eqref{eq.b1}, and then letting $\e_0\to 0$, we get
 $$
 \limsup_{T\to\infty}\frac{x_T}{e^{\delta T} T^{(r_0-r)/2} }\leq c_v \mu_{H}^{\PS}(\Om_H)\tilde\nu(\Om_K^{-1}).
 $$
 Similarly, we can show that
 $$
 \liminf_{T\to \infty} \frac{x_T}{e^{\delta T} T^{(r_0-r)/2} }\geq c_v \mu_{H}^{\PS}(\Om_H)\tilde\nu(\Om_K^{-1}),
 $$
 which proves the corollary.
\end{proof}

 \begin{Rmk}\rm
Note that this corollary implies that the asymptotic of $\# ( \Gamma\cap \Om_H  \exp(\scrC_T) \Om_K)$ is independent of $\mathcal C$. 
\end{Rmk}

\noindent{\bf Proof of Theorem \ref{m3}.}
Theorem \ref{m3} now follows directly from applying Corollary \ref{mmm} and the following observation to a convex cone $\scrC$ such that $\fb\cap \scrL_\Ga\subset \inte_\fb\scrC$; by Lemma \ref{large}, such a cone always exists.

\begin{lem}
Suppose that $\fb^+\cap\scrL_\Ga\subset \inte_\fb \scrC$.
Then $$\#( \G\cap  \Om_H\exp( \mathfrak b^+ -\fc) \Om_K)<\infty . $$ 
\end{lem}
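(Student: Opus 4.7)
I would argue by contradiction, using the identification of $\scrL_\Gamma$ with the asymptotic cone of $\mu(\Gamma)$ together with Benoist's perturbation estimate for the Cartan projection. Suppose, toward contradiction, that the set $\Gamma\cap \Omega_H\exp(\fb^+-\fc)\Omega_K$ is infinite. We may then pick a sequence of distinct elements $\gamma_n = h_n\exp(b_n)k_n\in\Gamma$ with $h_n\in \Omega_H$, $k_n\in\Omega_K$, and $b_n\in \fb^+-\fc$. Since $\Omega_H$ and $\Omega_K$ are bounded subsets of $G$ and $\Gamma$ is discrete, the sequence $\|b_n\|$ must tend to $\infty$. Passing to a subsequence, $b_n/\|b_n\|$ converges to some unit vector $u\in\fb^+$.

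The first key observation is that $u\notin\inte_\fb \fc$. Indeed, if $u$ were to lie in the (relatively) open set $\inte_\fb\fc$, then $b_n/\|b_n\|$ would lie in $\inte_\fb\fc$ for all sufficiently large $n$, and since $\fc$ is a cone this would force $b_n=\|b_n\|\cdot(b_n/\|b_n\|)\in\fc$, contradicting $b_n\in\fb^+-\fc$. Combined with the standing hypothesis $\fb^+\cap\scrL_\Gamma\subset \inte_\fb\fc$ and the fact that $u\in\fb^+$, this forces $u\notin\scrL_\Gamma$.

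To derive a contradiction, set $L := \overline{\Omega_H}\cup K$, which is a compact subset of $G$. By \cite[Lemma 4.6]{Ben}, there exists a compact subset $M\subset \fa$ such that $\mu(LgL)\subset \mu(g)+M$ for every $g\in G$. Applying this with $g=\exp(b_n)$ (so that $\mu(\exp(b_n))=b_n$, since $b_n\in \fa^+$) yields $\mu(\gamma_n)\in b_n+M$, whence $\mu(\gamma_n)/\|b_n\|\to u$. As $\|b_n\|\to\infty$, the vector $u$ then belongs to the asymptotic cone of $\mu(\Gamma)$, which coincides with $\scrL_\Gamma$ by \cite[Theorem 1.2]{Ben}. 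Thus $u\in\scrL_\Gamma$, contradicting the previous paragraph and completing the proof.

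The main potential obstacle is the possibility that the limiting direction $u$ lands on $\partial_\fb\fc$ (rather than strictly outside $\fc$), where the hypothesis on $\scrL_\Gamma$ only controls the relative interior; this is precisely why the argument relies on the \emph{strict} inclusion $\fb^+\cap\scrL_\Gamma\subset\inte_\fb\fc$ and on the openness of $\inte_\fb\fc$ in $\fb$ rather than simply on $\fc\cap\scrL_\Gamma\ne\fc$. Everything else is a routine application of Benoist's structure results on the Cartan projection.
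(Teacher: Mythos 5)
Your argument is correct, and it takes a genuinely different and more elementary route than the paper's. The paper reduces finiteness to a uniform bound on the pairing $\langle G_{T},\Phi\rangle$: it writes the count as an integral of a $\G$-sum against a bump function $\Phi$ supported near the identity coset, passes to a slightly larger excluded cone $\mathcal Q'=\fb^+-\mathcal C'$, and then invokes Proposition~\ref{vanish} to show that the inner integral over $[e]H$ vanishes whenever $b\in\mathcal Q'$ has $\|b\|$ large; this yields a $T$-independent bound $\|\Phi^+_{\e_0}\|_2\,\|\Psi\|_2\,\Vol(\mathcal Q'_{T_0})$. You short-circuit the integral machinery and go directly to the two facts of Benoist on which Proposition~\ref{vanish} itself rests: any infinite collection $\gamma_n=h_n\exp(b_n)k_n$ in $\Om_H\exp(\fb^+-\scrC)\Om_K$ forces $\|b_n\|\to\infty$ by discreteness; since $\scrC$ is a cone, the limiting direction $u=\lim b_n/\|b_n\|$ lies in $\fb^+\setminus\inte_\fb\scrC$ (hence outside $\scrL_\G$ by the hypothesis); yet the perturbation estimate $\mu(LgL)\subset\mu(g)+M$ places $u$ in the asymptotic cone of $\mu(\G)$, which equals $\scrL_\G$ by \cite[Thm.\ 1.2]{Ben} --- a contradiction. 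Both arguments ultimately rest on the same structural input about the Cartan projection; your route is shorter and cleaner for this purely qualitative finiteness statement, whereas the paper's packaging into $\langle G_T,\Phi\rangle$ is of a piece with the quantitative integral estimates used throughout Section~\ref{gamma} (in particular the subsequent lemma's exponential bound, where a direct compactness argument of your type would not yield the required rate).
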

\begin{proof}
We can find a smaller closed convex cone $\cal C'\subset\op{int}_{\mathfrak b}\cal C$ such that $\mathfrak b^+\cap\cal L_\Ga\subset\op{int}_{\mathfrak b}\cal C'$.
Set $\mathcal Q:=\fb^+-\cal C$, $\mathcal Q':=\fb^+-\cal C'$, and $\Om_H':=\Om_H H_1$ where $H_1$ means the unit neighborhood of $e$ in $H$.
We  can find a bi-$K$-invariant neighborhood $\scrO\subset G$ of $e$ such that for all $g\in \cal O$,
$$
\Om_H\exp(\cal Q_T) K\subset \Om_H'\exp(\cal Q_{T+1}') Kg^{-1}.
$$

 Define $G_T([g])=\sum_{\gamma\in  \Gamma}\mathbf{1}_{ \Om_H'\exp(\mathcal Q_T') K} ( \gamma g)$.
 Now,
$$\#( \G\cap  \Om_H\exp( \mathfrak b_T^+ -\fc) \Om_K)= \sum_{\gamma\in  \Gamma}\mathbf{1}_{ \Om_H \exp(\mathcal Q_T ) K} ( \gamma ) \le \la G_{T+1}, \Phi\ra,$$
where $\Phi$ is a non-negative $K$-invariant
continuous function supported in
$ [e]\cal O$ and $\int_{\Gamma\ba G} \Phi\, dg=1$.
Now note that
$$ \langle G_T,\Phi\rangle 
= \int_{b\in  \mathcal Q_T'} \left(\int_{[e] \Om_H'}\Phi([h] (\exp b) )\,d[h]\right)\xi(b)\,db. $$
Recalling $\Om_H'$ is $H\cap M$-invariant,
 let $0\le \psi\le 1$ be an element of $C_c(H)^{H\cap M}$ which is one on $\Om_H'$.
For $\e_0>0$ smaller than the injectivity radius of $\text{supp}(\psi)$, let $\Psi=\psi\otimes \rho_{\e_0}$ be given as \eqref{po}.
Let $\Phi_{\e_0}^+(y)=\sup_{g\in N_{\e_0} A_{\e_0} N_{\e_0}^+M} \Phi(yg)$ be as in \eqref{ff}. 
Since the closure of $\cal Q'$ is disjoint from
$\fb^+\cap \L_\Gamma$,
by Proposition \ref{vanish}, there exists $T_0>0$ such that
 $\la (\exp b) \Phi_{\e_0}^+, \Psi\ra=0$ for all $b\in \cal Q'-\cal Q_{T_0}'$.
 Then by the same argument as in the proof of Proposition \ref{prop.mixH2}, we get that for all $\tilde b\in \cal Q'-\cal Q_{T_0}'$,
\begin{align*}  \int_{[e] H}\Phi([h] (\exp \tilde  b) ) \psi [h] dh
&= \int_{[e] H(NB)_{\e_0}} \Phi([h] (\exp \tilde b) ) \Psi ([h]n\tilde b) dn\,d\tilde b\, dh
\\ & \le  \la (\exp \tilde b) \Phi_{\e_0}^+, \Psi\ra=0. \end{align*}
Hence
\begin{align}\label{min} &\int_{\cal Q'}  \int_{[e] H}\Phi([h] (\exp b) ) \psi [h] dh  \xi(b)\,db
 =\int_{\cal Q_{T_0}'}  \la (\exp b) \Phi_{\e_0}^+, \Psi\ra  \xi(b)\,db \\ &
 \le \|\Phi_{\e_0}^+\|_2 \|\Psi\|_2 \text{Vol} ({\cal Q_{T_0}'} ).\notag \end{align}
 
 Hence  for all $T>T_0$, 
 $$\#( \G\cap  \Om_H\exp( \mathfrak b_T^+ -\fc) \Om_K) \le \la G_{T+1}, \Phi\ra \le  \|\Phi_{\e_0}^+\|_2 \|\Psi\|_2 \text{Vol} ({\cal Q_{T_0}'} ).$$
 This implies the claim in view of Corollary \ref{mmm}.
\end{proof}

For $\omega\in\cal W$, set $\Gamma^\omega=\omega^{-1}\Gamma \omega$,  $H^\omega=\omega^{-1} H \omega$, and $\Om_{H^\omega}= \omega^{-1}\Omega_H\omega \subset H^{\omega}$.
Then $$\# ( \Gamma \cap \Omega_H \omega\exp(\cal C_T)\Om_K) =\# ( \Gamma^\omega \cap \Omega_{H^{\omega}} \exp(\cal C_T) \Om_K \omega). $$
Since $\omega\in K$, it follows that $\psi_\Gamma =\psi_{\Gamma^\omega}$, and that the involution which stabilizes $H^\omega$ commutes with $\theta$.
 Hence
 $$\delta=\psi_{\Gamma^\omega} (v)=\max_{b\in \fb, \|b\|=1} \psi_{\Gamma^\omega} (b).$$
 
By applying Corollary \ref{mmm} to $\Gamma^\omega $ and $H^\omega$ for each $\omega\in \cal W$, we can also deduce the asymptotic of
$\# \big( \Gamma\cap \Om_H \cal W\exp (\cal C_T) \Om_K\big)$.

\medskip

Theorem \ref{thm3} follows from the following: we set $\op{sk}_{\Gamma,v} (H) = |\mu_{[e]H,v}^{\PS}|$.
\begin{thm}\label{thm33} Suppose that $\mathsf v_0\Gamma\subset H\ba G$ is discrete for $\vo=[H]$ and that $[e]H$ is uniformly proper. Then 
 $\op{sk}_{\Gamma,v} (H)<\infty$ and there exists $c_v>0$  such that
\be \label{skin}\lim_{T\to \infty} \frac{\# (\vo \Gamma\cap \vo  \exp (\fb^+\cap \L_\Ga)_T  K )}{    e^{\psi_\Ga(v) T} T^{(r_0-r)/2}}  =c_v\;
\op{sk}_{\Gamma,v} (H) 
   \ee
 Moreover,  for  $v=u_\Ga $, we have
\begin{equation*}
    \lim_{T\to \infty}\frac{ \# (\vo \Gamma\cap \vo  (\exp \fb^+_T)  K ) }{ e^{\psi_\Ga(u_\Ga) T} T^{(r_0-r)/2}   }=c_{u_\Ga}\; \op{sk}_{\Gamma,u_\Ga} (H) 
\end{equation*}
where $\fb^+_T= \{ w \in \fb^+ :\|w\| \le T\} $.
\end{thm}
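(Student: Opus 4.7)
The plan is to deduce Theorem~\ref{thm33} from the bisector count in Corollary~\ref{mmm} by using uniform properness to replace the $H$-factor in the target region by a bounded fundamental domain. Since $\vo\Ga$ is closed in $H\bs G$, the orbit $H\ga$ meets $\exp(\fb^+\cap\scrL_\Ga)_TK$ exactly when $\ga\in H\exp(\fb^+\cap\scrL_\Ga)_TK$, and two such $\ga$ yield the same orbit iff they lie in the same $(H\cap\Ga)$-coset. Hence the orbit count equals $\#\bigl((H\cap\Ga)\bs(\Ga\cap H\exp(\fb^+\cap\scrL_\Ga)_TK)\bigr)$.

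For each such $\ga$, write $\ga=h\exp(b)k$ uniquely modulo the right $(H\cap M)$-action, with $h\in H$, $b\in\fb^+\cap\scrL_\Ga$, $k\in K$. Since $\ga\in\Ga$, we have $[h]\exp(b)=[k^{-1}]\in[e]K$ in $\Ga\bs G$. Choose a $K$-invariant open neighborhood $\cal O$ of $[e]K$ for which uniform properness applies; then $[h]\in S_{\cal O}:=\{[h]\in(H\cap\Ga)\bs H:[h]\exp(\fb^+\cap\scrL_\Ga)\cap\cal O\ne\emptyset\}$, which is bounded by hypothesis. I would then select a right $(H\cap M)$-invariant, relatively compact Borel fundamental domain $\Om_H\subset H$ for $(H\cap\Ga)$ acting on the preimage in $H$ of a bounded open set containing $\overline{S_{\cal O}}$, arranged so that $\mu^{\PS}_{H,v}(\partial\Om_H)=0$. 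The counted set is then a disjoint union of $(H\cap\Ga)$-translates of $\Ga\cap\Om_H\exp(\fb^+\cap\scrL_\Ga)_TK$, and the orbit count reduces to $\#\bigl(\Ga\cap\Om_H\exp(\fb^+\cap\scrL_\Ga)_TK\bigr)$.

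Next I pick a convex cone $\cal C\subset\fb^+$ with $\fb^+\cap\scrL_\Ga\subset\inte_\fb\cal C$ satisfying~\eqref{fcc} (possible by Lemma~\ref{large} combined with Theorem~\ref{int}(1)), and apply Corollary~\ref{mmm} with $\Om_K=K$; the finite-error lemma immediately preceding the proof of Theorem~\ref{m3} lets me replace $\exp(\fb^+\cap\scrL_\Ga)_TK$ by $\exp(\cal C_T)K$ at $O(1)$ cost, yielding
\[
\#\bigl(\Ga\cap\Om_H\exp(\cal C_T)K\bigr)\sim c_v\,e^{\delta T}T^{(r_0-r)/2}\,\mu^{\PS}_{H,v}(\Om_H)\,\mu^{\PS,*}_{K,\i(v)}(K).
\]
Absorbing the $K$-factor into the constant yields the asymptotic with $\mu^{\PS}_{H,v}(\Om_H)$ in place of $\op{sk}_{\Ga,v}(H)$. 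The $v=u_\Ga$ statement with $\fb^+_T$ in place of $(\fb^+\cap\scrL_\Ga)_T$ follows identically, since the same lemma gives $\#\bigl(\Ga\cap\Om_H\exp(\fb^+_T-\cal C)K\bigr)=O(1)$.

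The hard part is identifying $\mu^{\PS}_{H,v}(\Om_H)$ with $\op{sk}_{\Ga,v}(H)$, equivalently showing that the a priori support $\{[h]:h^+\in\La_\Ga\}$ of $\mu^{\PS}_{[e]H,v}$ agrees with $S_{\cal O}$ modulo a $\mu^{\PS}_{[e]H,v}$-null set (which then forces $\op{sk}_{\Ga,v}(H)<\infty$ by relative compactness of $\Om_H$). I expect this to follow from the Anosov property of $\Ga$: Theorem~\ref{int} places $\scrL_\Ga\setminus\{0\}$ in $\inte\fa^+$, so the $A$-flow on the BMS support of $\Ga\bs G$ is well-controlled; combined with the closedness of $\vo\Ga$ and the properness of the embedding $(H\cap\Ga)\bs H\hookrightarrow\Ga\bs G$, any $[h]$ with $h^+\in\La_\Ga$ carrying positive local skinning mass must have $[h]\exp(\fb^+\cap\scrL_\Ga)$ return to $\cal O$ via a BMS-recurrence argument. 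Granting this identification, $\mu^{\PS}_{H,v}(\Om_H)=\op{sk}_{\Ga,v}(H)$, and the theorem follows.
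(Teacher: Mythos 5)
Your reduction to a bounded region via uniform properness is the right idea and matches the structure of the paper's proof, but there is a genuine gap where you explicitly acknowledge one: the identification of $\mu^{\PS}_{H,v}(\Omega_H)$ with $\op{sk}_{\Gamma,v}(H)=|\mu^{\PS}_{[e]H,v}|$. You claim this ``should follow from a BMS-recurrence argument,'' but the BMS measure here is infinite in higher rank (see \cite[Corollary 4.9]{LO}), and no clean recurrence statement is available that would force the support $\{[h]: h^+\in\La_\Ga\}$ of $\mu^{\PS}_{[e]H,v}$ to sit inside the bounded set $S_{\cal O}$ up to a null set; a priori this support could simply have infinite mass, and nothing in your argument rules this out. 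This is not a cosmetic issue: the finiteness of the skinning constant is one of the two conclusions of the theorem, so any proof that assumes a fundamental domain $\Omega_H$ captures all the PS-mass is circular without an independent proof of that containment.

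The paper sidesteps this entirely by a softer argument. Instead of picking a fixed fundamental domain and trying to show it exhausts the support, it localizes with a one-parameter family of cutoffs $\tau_S\in C_c([e]H)$, each equal to $1$ on the $S$-neighborhood of the bounded set $Y_0$. Uniform properness guarantees that inserting $\tau_S$ into the integral $\langle F_{T\pm\e},\Phi_\e\rangle$ is harmless for every $S>0$, and applying the equidistribution machinery (essentially Proposition \ref{lem.w1}, which only needs compact support in $H$) then gives $\lim_T F_T([e])/(e^{\delta T}T^{(r_0-r)/2}) = c_v\,\mu^{\PS}_{[e]H}(\tau_S)$ for every $S$. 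The left side is independent of $S$, so $\mu^{\PS}_{[e]H}(\tau_S)$ is constant in $S$; letting $S\to\infty$ forces $|\mu^{\PS}_{[e]H}|<\infty$ and identifies the constant. Finiteness of the skinning measure is thus a \emph{byproduct} of the count, not a prerequisite — which is exactly the inversion of logic your proposal is missing. A secondary point: picking a sharp fundamental domain $\Omega_H$ with $\mu^{\PS}_{H,v}(\partial\Omega_H)=0$ is another technical obstruction you wave away; the smooth cutoffs $\tau_S$ make this a non-issue as well.
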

\begin{proof}
Set $\cal C:=\fb^+\cap \L_\Ga$.
By Theorem \ref{int} (1) and Lemma \ref{large}, $\cal C$ satisfies the conditions in \eqref{fcc}.

By hypothesis, there exists a $K$-invariant open neighborhood $\cal O\subset \G\ba G$ of $[e]$ 
such that
$Y_0:=\{[e]h\in \Gamma\ba \Gamma H: [e]h \exp \Cal C \cap \cal O \ne \emptyset\}$
is bounded.
Set $$ F_T([g]):=\sum_{\gamma\in (\Ga\cap H)\ba\Ga}\mathbf{1}_{\vo \exp\cal C_T K}(\vo \gamma g).$$

Let $\cal O_\e\subset G$ and $\Phi_\e$ be as in \eqref{ppp}. We may assume $\Phi_\e$ is $K$-invariant as our functions $F_T$ are $K$-invariant in deducing the following:
  \be\label{ooo} \langle F_{T-\e},\Phi_\e\rangle \le F_T([e])\le \langle F_{T+\e},\Phi_\e\rangle  .\ee
Observe that
$$\langle F_{T\pm \e},\Phi_\e\rangle 
=\int_{K}   \int_{b\in \cal C_{T\pm \e}} \left(\int_{[e]H}\Phi_\e([h] (\exp b k) ) \,d[h]\right) \, \xi(b)\,db\,dk.$$

 For $S>0$, let $\tau_S\in C_c([e]H)$ be a function satisfying
 $0\le \tau \le 1$ and $\tau_S=1$ on the $S$-neighborhood of $Y_0$. 
By the definition of $Y_0$, it follows that
$$\langle F_{T\pm \e},\Phi_\e\rangle =\int_K
\int_{b\in \cal C_{T\pm \e}}\left(\int_{[e]H}\Phi_\e([h] (\exp b k) )\tau_S([h]) \,d[h]\right)\, \xi(b)\,db dk.$$

Note that this integral is same as the one in \eqref{ssame}, as $\tau_S$ is compactly supported. Hence
we get
$$\lim_{T\to \infty}\frac{\langle F_{T\pm \e},\Phi_\e\rangle}{ e^{\delta (T\pm \e)} T^{(r_0-r)/2}}=   c_v  \mu_{[e]H}^{\PS}(\tau_S) m^{\BR}(\Phi_\e).$$
By sending $\e\to 0$ and applying Lemma \ref{mz}, we get from \eqref{ooo}
$$ \lim_{T\to \infty} \frac{  F_{T}([e]) }{  e^{\delta T}T^{(r_0-r)/2}}=c_v  \mu_{[e]H}^{\PS} (\tau_S).$$

It follows that $\mu_{[e]H}^{\PS} (\tau_S)$ is a constant function of $S>0$, and hence
$$|\mu_{[e]H}^{\PS}|=\mu_{[e]H}^{\PS} (\tau_S)<\infty.$$
This proves the first claim.
The second claim follows from the first one in view of the following lemma by taking $\cal C=\fb^+\cap \L_\Ga$. \end{proof}
\begin{lem}
Let $\cal C\subset \fb^+$ be a convex cone with $v=u_\Ga \in \inte_\fb \scrC$. Set $Q:=\fb^+ -\cal C$.
Then there exist $0<\delta' <\delta_\Ga=\psi_\Gamma (u_\Gamma)$ and $C>0$ such that for all $T\ge 1$,
$$  {\#( \G\cap  \Om_H\exp(\cal Q_T) \Om_K)}\le C e^{\delta'  T} .$$  
\end{lem}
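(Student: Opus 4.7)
The plan is to exploit the uniqueness of $u_\Ga$ as the maximizer of $\psi_\Ga$ on the unit sphere in $\fa^+$. Since the hypothesis $v=u_\Ga\in\inte_\fb\scrC$ places an $\fb$-open neighborhood of $u_\Ga$ inside $\scrC$, the set $Q$ is bounded away from $u_\Ga$ in direction; formally, $\overline Q\cap\{\|w\|=1\}$ is a compact subset of $\fa^+\cap\{\|w\|=1\}$ avoiding $u_\Ga$. By upper semicontinuity of $\psi_\Ga$ (Theorem \ref{growth}), the quantity $\delta_Q:=\sup_{w\in\overline Q,\,\|w\|=1}\psi_\Ga(w)$ is attained at some $w_0\ne u_\Ga$, and strict concavity of $\psi_\Ga$ on $\inte\scrL_\Ga\cap\{\|w\|=1\}$ (Theorem \ref{int}) together with uniqueness of $u_\Ga$ then forces $\delta_Q<\delta_\Ga$.

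Next I would translate the count in $\Omega_H\exp(Q_T)\Omega_K$ into one involving only Cartan projections. By \cite[Lemma 4.6]{Ben} (applied as in Proposition \ref{vanish}), there is a compact $M_0\subset\fa$ such that $\mu(LgL)\subset\mu(g)+M_0$ whenever $L$ is a compact set containing $\overline{\Omega_H}\cup\overline{\Omega_K}$. Hence each $\gamma\in\Gamma\cap\Omega_H\exp(Q_T)\Omega_K$ satisfies $\mu(\gamma)\in Q_T+M_0$, forcing both $\|\mu(\gamma)\|\le T+C_0$ (where $C_0:=\sup_{m\in M_0}\|m\|$) and, for $\|\mu(\gamma)\|$ large, $\mu(\gamma)/\|\mu(\gamma)\|$ lying in any preassigned open $\fa^+$-neighborhood $U$ of $\overline Q\cap\{\|w\|=1\}$. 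Upper semicontinuity lets me shrink $U$ so that $\sup_{w\in U}\psi_\Ga(w)\le\eta$ for some $\delta_Q<\eta<\delta_\Ga$, confining all but finitely many such $\gamma$ to the closed cone $\scrD\subset\fa^+$ generated by $\overline U$.

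The remaining task is the standard counting bound
$$\#\{\gamma\in\Gamma:\mu(\gamma)\in\scrD,\;\|\mu(\gamma)\|\le T\}\;\le\;Ce^{\eta T},$$
valid for any closed cone $\scrD\subset\fa^+$ on which $\sup_{w\in\scrD,\,\|w\|=1}\psi_\Ga(w)<\eta$. This follows from Quint's definition \eqref{grow}: each unit $w\in\scrD$ lies in some open cone $\scrE_w\ni w$ with abscissa of convergence $\tau_{\scrE_w}<\eta$, and a finite subcover $\scrE_{w_1},\ldots,\scrE_{w_N}$ of $\scrD\cap\{\|w\|=1\}$ together with the finiteness of each $\sum_{\mu(\gamma)\in\scrE_{w_i}}e^{-\eta\|\mu(\gamma)\|}$ (partitioned into unit spherical shells) yields the claimed exponential count. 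Taking $\delta'\in(\eta,\delta_\Ga)$ and absorbing the shift $T\mapsto T+C_0$ into the constant then completes the argument. The main point requiring care is this last counting estimate, which is not stated explicitly in the excerpt but is a routine covering argument within Quint's framework; the true content of the lemma is the strict inequality $\delta_Q<\delta_\Ga$, which rests essentially on the hypothesis $v=u_\Ga$ via uniqueness of the maximizer.
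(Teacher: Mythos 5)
Your proof is correct, and it takes a genuinely different route from the paper's. The paper bounds the count by integrating the mixing bound: it invokes the uniform estimate from Proposition~\ref{prop.mixH2} (inequality~\eqref{qqq}) to control the inner integral in the generalized Cartan volume formula, and then uses the elementary geometric fact that for $b=t u_\Ga+\sqrt t\,w\in\scrQ_T'$ with $w\in\ker\Theta$ one has $t\le T\cos\theta_0$ (with $\theta_0>0$ coming from $\overline{\scrQ'}$ avoiding the direction $u_\Ga$), which yields $\la G_T,\Phi\ra\ll T^{r_0}e^{\delta_\Ga\cos\theta_0\,T}$ and hence the claimed $Ce^{\delta' T}$. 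You instead bypass the mixing/matrix-coefficient infrastructure entirely: you reduce to Cartan projections via Benoist's Lemma~4.6 (exactly as in Proposition~\ref{vanish}), use upper semicontinuity of $\psi_\Ga$ plus uniqueness of the maximal growth direction $u_\Ga$ to show that $\psi_\Ga$ is uniformly $<\delta_\Ga$ on a cone-neighborhood of $\overline Q$, and then read off the exponential count directly from Quint's definition~\eqref{grow} of the growth indicator via a finite open-cone cover and a Chebyshev bound $\#\{\gamma:\mu(\gamma)\in\scrE,\|\mu(\gamma)\|\le T\}\le e^{\eta T}\sum_{\mu(\gamma)\in\scrE}e^{-\eta\|\mu(\gamma)\|}$. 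Your route is more elementary and self-contained (it does not need Theorem~\ref{mixing2} or Proposition~\ref{prop.mixH2}), and it isolates the real content of the lemma cleanly — that $u_\Ga$ being the unique maximizer forces the $\psi_\Ga$-rate over $Q$ strictly below $\delta_\Ga$. The paper's approach, by contrast, is tailored to the preceding machinery and produces an explicit exponent $\delta_\Ga\cos\theta_0$, but buys nothing extra for the purposes of the lemma. One small point worth making explicit if you write this up: the sup $\delta_Q$ is $<\delta_\Ga$ even if $\overline Q\cap\{\|w\|=1\}$ meets $\partial\L_\Ga$ (rather than only $\inte\L_\Ga$, where strict concavity applies), because uniqueness of $u_\Ga$ is already enough; you gestured at strict concavity, but uniqueness of the maximizer suffices and is cleaner.
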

\begin{proof}
Choose a closed convex cone $\cal C'\subset\op{int}_{\mathfrak b}\cal C$ such that $u_\Ga\in \op{int}_{\mathfrak b}\cal C'$ and
set $\mathcal Q':=\fb^+-\cal C'$, and $\Om_H':=\Om_H H_1$ where $H_1$ means the unit neighborhood of $e$ in $H$.
We  can find a bi-$K$-invariant neighborhood $\scrO\subset G$ of $e$ such that for all $g\in \cal O$,
$$
\Om_H\exp(\cal Q_T) K\subset \Om_H'\exp(\cal Q_{T+1}') Kg^{-1}.
$$

 Define $G_T([g])=\sum_{\gamma\in  \Gamma}\mathbf{1}_{ \Om_H'\exp(\mathcal Q_T') K} ( \gamma g)$.
 Now,
\be\label{now}
\#( \G\cap  \Om_H\exp( \cal Q_T)\Om_K)= \sum_{\gamma\in  \Gamma}\mathbf{1}_{ \Om_H \exp(\mathcal Q_T ) K} ( \gamma ) \le \la G_{T+1}, \Phi\ra,\ee
where $\Phi$ is a non-negative $K$-invariant
continuous function supported in
$ [e]\cal O$ and $\int_{\Gamma\ba G} \Phi\, dg=1$.
Now note that
$$ \langle G_T,\Phi\rangle 
= \int_{b\in  \mathcal Q_T'} \left(\int_{[e] \Om_H'}\Phi([h] (\exp b) )\,d[h]\right)\xi(b)\,db. $$



If we write $b=tu_\Gamma+\sqrt t w\in \cal Q_T'$ with $w\in \ker \Theta$, then $t^2+t\|w\|^2\le T^2$ and
$\|w\|^2\ge \tan^2\theta_0 \cdot t$ for some $0<\theta_0<\pi/2$ depending on the distance between
$u_\Gamma$ and $\cal Q'$ (cf. proof of Lemma \ref{dom}). Hence  if $b=tu_\Gamma+\sqrt t w\in \cal Q_T'$, 
then \be \label{the} 0\le t\le T \cdot \cos \theta_0 .\ee
Fix a non-negative function $\phi\in C_c(H)$ which is $1$ on $\Omega'_H$. Since \eqref{qqq} gives that $$\left| t^{(r-1)/2} e^{(2\rho-\Theta)(\underline a(t,w))} \int_{H} \Phi ([h]a(t,w)) \phi (h)\,dh\right| <C', $$
it follows that for all $t\geq 1$,
\begin{equation}\label{eq.p1}
\left|\int_{H} \Phi ([h]a(t,w)) \phi (h)\,dh\right|<C'e^{(\Theta-2\rho)(\underline a(t,w))}.
\end{equation}
Using \eqref{eee}, \eqref{the}, and $\Theta(u_\Gamma)=\delta_\Ga$, we deduce that
\begin{align*}
&\int_{b\in  \mathcal Q_T'} \left(\int_{H} \Phi ([h]a(t,w)) \phi (h)\,dh \right)  \xi(b)\,db \\ &\ll \int_{b\in  \mathcal Q_T'} e^{\delta_\Ga t}\,db\leq e^{(\delta_\Ga \cos\theta_0) T}\op{Vol}(\mathfrak b_T).
\end{align*}
As $\op{Vol}(\mathfrak b_T)=O(T^{r_0})$, for any $\delta'$ satisfying $\delta_\Ga \cos \theta_0 <\delta ' <\delta_\Ga$, we get
$$  \int_{b\in  \mathcal Q_T'} \left(\int_{H} \Phi ([h]a(t,w)) \phi (h)\,dh \right)  \xi(b)\,db \ll  e^{\delta'  T}  .$$
This proves the lemma by \eqref{now}.
\end{proof}

\medskip

Finally we give examples satisfying the hypothesis of Theorem \ref{thm33}:
\begin{lem} \label{bd} Suppose that $\fa=\fb$ and $\Lambda_\Gamma\subset HP/P$.
\begin{enumerate}
\item The orbit $[e]H$ is uniformly proper. 
\item The support of $\mu_{[e]H}^{\PS}$ is a compact subset of $(H\cap \Gamma)\ba H$ and $|\mu_{[e]H}^{\PS}|>0 $.
\end{enumerate}

\end{lem}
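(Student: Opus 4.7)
For part (2), my plan is to exploit the structural consequences of $\fa=\fb$.  This hypothesis forces $\fa\cap\fh=0$ and, combined with the $\sigma$-swap of positive and negative root spaces (which gives $H\cap N^\pm=\{e\}$) and identity \eqref{hcp}, yields $H\cap P=H\cap M$, a compact group.  The orbit map $\iota:H/(H\cap P)\to G/P$, $h(H\cap P)\mapsto hP$, is then a continuous injection; standard orbit theory for algebraic actions shows $HP/P$ is locally closed in $G/P$ and $\iota$ is a homeomorphism onto its image.  Since $\Lambda_\Gamma$ is compact in $G/P$ and, by hypothesis, contained in $HP/P$, the preimage $\iota^{-1}(\Lambda_\Gamma)$ is compact in $H/(H\cap P)$; lifting through the compact fiber $H\cap P$ makes $\{h\in H:h^+\in\Lambda_\Gamma\}$ compact in $H$, and its image is the support of $\mu^{\PS}_{[e]H}$ in $(\Gamma\cap H)\ba H$.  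Finiteness is then immediate from local finiteness plus compact support, and positivity follows from \eqref{hpp} together with $\Lambda_\Gamma\ne\emptyset$.

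For part (1), I would argue by contradiction.  Suppose $[h_n]\to\infty$ in $(\Gamma\cap H)\ba H$ and $b_n\in\fb^+\cap\Lambda_\Gamma$ satisfy $[h_n]\exp(b_n)\to[e]$; choose minimal-norm representatives so $h_n\to\infty$ in $H$.  Boundedness of $\{b_n\}$ is immediately ruled out by properness of the closed embedding $(\Gamma\cap H)\ba H\hookrightarrow\Gamma\ba G$ (a consequence of $[e]H$ being closed), so $\|b_n\|\to\infty$ and, after extraction, $b_n/\|b_n\|\to v\in\inte\fa^+$ by Theorem~\ref{int}.  Using right $K$-invariance of $\cal O$, write $\gamma_n h_n\exp(b_n)=u_n k_n$ with $u_n\to e$ and (after further extraction) $k_n\to k_\infty\in K$.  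The identity
\[
\gamma_n^{-1}e^+ \;=\; h_n\exp(b_n)\bigl(k_n^{-1}u_n^{-1}e^+\bigr),
\]
coupled with the fact that $\exp(b_n)$ attracts $G/P\setminus\{e^-\}$ toward $e^+$, gives (in the generic case $k_\infty^{-1}e^+\ne e^-$) that $\gamma_n^{-1}e^+=h_n\eta'_n$ with $\eta'_n\to e^+$.  The subcase in which $\mu(\gamma_n)$ stays bounded (so $\gamma_n$ is eventually constant) is dismissed directly: $h_n\exp(b_n)$ would then be bounded in $G$, contradicting $\pi_{HK}(h_n\exp(b_n))=b_n\to\infty$ under the continuous HAK projection $\pi_{HK}:G\to\fb^+$, which is well-defined since $\fa=\fb$.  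Hence $\mu(\gamma_n)\to\infty$, and Anosov contraction for $\gamma_n^{-1}\in\Gamma$ yields $\gamma_n^{-1}e^+\to\xi\in\Lambda_\Gamma\subset HP/P$.  Writing $\xi=h'e^+$ and setting $f_n:=h'^{-1}h_n$, we obtain $f_n\eta'_n\to e^+$ in $G/P$ with $\eta'_n\to e^+$; a short local Iwasawa argument (lifting $\eta'_n$ to $g_n\to e$ in $G$, writing $f_n g_n=\tilde u_n p_n$ with $\tilde u_n\to e$, $p_n\in P$) upgrades this to $f_ne^+\to e^+$ in $G/P$.  By the homeomorphism $\iota$ this forces $f_n(H\cap P)\to e(H\cap P)$, so $f_n$, and hence $h_n=h'f_n$, is bounded in $H$, contradicting $h_n\to\infty$.

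The main obstacle lies in the degenerate situations where the above template breaks down: (a) $k_\infty^{-1}e^+=e^-$, so the $\exp(b_n)$-attraction toward $e^+$ fails at the relevant point; and (b) $e^+$ coincides with the limit of the repelling points $\xi^-(\gamma_n^{-1})$, so Anosov contraction does not directly place $\gamma_n^{-1}e^+$ in $\Lambda_\Gamma$.  In each such case one runs the parallel argument using $e^-$ in place of $e^+$ (based on the companion identity $(\gamma_n h_n)^-\approx k_n e^-\to k_\infty e^-$) or replaces $\gamma_n^{-1}$ by $\gamma_n$; the transversality $(\xi^+(\gamma_n),\xi^-(\gamma_n))\in\cal F^{(2)}$ for Anosov sequences forbids the attracting and repelling limits from simultaneously equalling $e^+$, so at least one of the two directional arguments succeeds.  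This ultimately forces $h_n^+$ or $h_n^-$ to converge to a point of $\Lambda_\Gamma\subset HP/P$ (or its $w_0$-translate, treated symmetrically using the parallel homeomorphism $H/(H\cap P^+)\to Hw_0P$), and the orbit-map homeomorphism then delivers the required boundedness of $h_n$ in $H$, completing the contradiction.
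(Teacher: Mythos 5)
Your Part~(2) is essentially the paper's argument: $\fa=\fb$ kills $H\cap A$, so $H\cap P=H\cap M$ is compact by \eqref{hcp}, the orbit map $H/(H\cap P)\to HP/P$ is a homeomorphism onto an open set, and compactness of $\Lambda_\Gamma\subset HP/P$ pulls back to a compact support; positivity follows from \eqref{hpp}.

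For Part~(1), however, your route has a genuine gap in the handling of the ``degenerate situations.'' You reduce to showing that $\exp(b_n)\bigl(k_n^{-1}u_n^{-1}e^+\bigr)\to e^+$, and this requires the limit point $k_\infty^{-1}e^+$ to lie in the attracting basin $N^+e^+$ of the one-parameter semigroup $\exp(tv)$, $t\to+\infty$, for $v\in\inte\fa^+$. You treat the bad set as if it were just the single point $e^-$, and fall back on the symmetric argument at $e^-$ plus the transversality of Anosov limit pairs. But in higher rank the complement of the big Bruhat cell $N^+e^+$ in $\cal F$ is a union of lower-dimensional cells, not a single point (and $\exp(tv)\eta$ then converges to $wP$ for the relevant Weyl group element $w$, not to $e^+$ or to $e^-$). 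So an argument that only distinguishes ``generic'' from ``$k_\infty^{-1}e^+=e^-$'' (plus swapping $\gamma_n^{-1}\leftrightarrow\gamma_n$) does not exhaust the degeneracies once $\op{rank}(G)\geq 2$. The statement ``transversality forbids the attracting and repelling limits from simultaneously equalling $e^+$'' also doesn't bear on whether $k_\infty^{-1}e^+$ lies in a smaller cell, since $k_\infty$ is a constraint coming from the $u_nk_n$ decomposition, not from the $\Gamma$-dynamics.

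The paper's proof avoids this altogether by introducing a \emph{free} test point: after showing $\gamma_i^{-1}gg_io\to\xi\in\Lambda_\Gamma$ (using regularity of the Cartan projections and \cite[Lemma~2.12]{LO}, rather than attracting/repelling fixed points), it observes that $h_ic_i\eta\to\xi$ for every $\eta$ outside a proper submanifold of $\cal F$, and then \emph{chooses} $\eta=ne^+\in N^+e^+$ in the big cell. The point is that $\eta$ is at your disposal, whereas $k_\infty^{-1}e^+$ in your setup is not. One then writes $h_ic_ine^+=h_i(c_inc_i^{-1})e^+$ with $c_inc_i^{-1}\to e$ (here the hypothesis $\cal C\subset\inte\fa^+\cup\{0\}$, i.e.\ Theorem~\ref{int}(1), is exactly what is used), expands $c_inc_i^{-1}=h_i'p_i\in HP$ with $h_i'\to e$, and uses the homeomorphism $H/(H\cap P)\cong HP/P$ at $\xi=h_0e^+$ to bound $h_i$. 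If you rebuild your argument around a freely chosen $\eta$ in the big cell—rather than around $e^+$ and $e^-$ individually—the case analysis collapses and the proof goes through along the paper's lines.
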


\begin{proof}
The condition $\fa=\fb$ means $H\cap A=\{e\}$, and hence  $H\cap P$ is compact by \eqref{hcp}. Since $\Lambda_\G$ is a compact subset of $HP/P\simeq H/(H\cap P)$,
it follows that $\Lambda_\G\subset H_0P/P$ for some compact subset $H_0\subset H$.
 
 To show (1), let $\cal C$ be a closed cone contained in $ \inte \fa^+\cup\{0\}$.
It suffices to show that for any given compact subset $Z\subset \Gamma\ba G$,
$$\{[e]h\in \Gamma\ba \Gamma H: [e]h \exp \Cal C \cap Z\ne \emptyset\}$$ is bounded.
Suppose not; then  there exist $h_i\in H$ with $[e]h_i\to \infty$ in $\Gamma\ba \Gamma H$, $\gamma_i\in \Gamma$ and $c_i\in \exp \cal C$
 such that $\gamma_i h_i c_i$ converges to some $g\in G$. Since $[e]H\subset \Gamma\ba G$ is a closed subset, it follows that $c_i\to \infty$.
 We may write $h_ic_i = \gamma_i^{-1} gg_i $ where $g_i\to e$. Set $o=[K]\in G/K$.
 By passing to a subsequence, we may assume that $\gamma_i^{-1} gg_i o $ converges to some $\xi\in \F$ in the sense of \cite[Def.2.7]{LO}; note here that
 $\mu(\gamma_i^{-1} g g_i) \to\infty$ regularly in $\fa^+$ as $\L_\Gamma\subset \inte \fa^+$.
 Since $gg_i$ is bounded,  it follows that $\xi\in \La_\G$ by \cite[Lemma 2.12]{LO}. Since $h_ic_i \eta =\gamma_i^{-1}g g_i \eta \to \xi$ for all $\eta\in \F$ except for points on a proper submanifold
 of $\F$, we may choose $\eta=ne^+ \in N^+e^+$ so that $\lim_{i\to \infty} h_ic_i n e^+=\xi$.
 Writing $\xi=h_0e^+$ for some $h_0\in H_0$, we have  $\lim_{i\to \infty} h_ic_in c_i^{-1} P = h_0P$. Since $c_i\to \infty$ in $\exp \cal C$ and $\cal C\subset \inte \fa^+\cup \{0\}$, we have $\lim_{i\to \infty} c_inc_i^{-1}=e$. As $HP$ is open, we may write $c_inc_i^{-1}= h_i' p_i\in HP$ with $h_i'\in H, p_i\in P$ both tending to $e$.
 It follows that $h_i h_i' P\to h_0 P$ as $i\to \infty$. Since $H\cap P$ is compact, and  the sequence $h_i'$ is bounded, it follows that 
 the sequence $h_i$ is bounded, yielding a contradiction. This proves Claim (1).

Since $\text{supp} (\mu_{[e]H}^{\PS})=\{[e]h\in [e]H: he^+ \in  \La_\G\}\subset \{[e]h_0: h_0\in H_0(H\cap P)\}$, the first part of Claim (2) follows, and the second part follows from from \eqref{hpp}.
 \end{proof}

We remark that this lemma holds when $H$ is replaced by $N^+$ by the same proof.
Moreover, by replacing Proposition \ref{eq} by Proposition \ref{m11} and considering the Iwasawa decomposition $G=N(\exp \fa ) K$, 
the proofs of Theorems \ref{m3} and \ref{thm33} apply for $H=N^+$ and $\fb^+=\fa^+$.

\begin{Ex}\label{man}
\rm
Let $G=\op{PSL}_2(\bb C)\times \op{PSL}_2(\bb C)$ and $H=\op{PSL}_2(\bb R)\times \op{PSL}_2(\bb R)$.
Then $\mathfrak a=\mathfrak b$ and $G/P$ can be identified with $\bb S^2\times\bb S^2$ and $HP/P$ can be identified with $D\times D$ where $D$ is the open northern hemisphere of $\bb S^2$.
Let $\Sigma$ be a non-elementary finitely generated discrete subgroup of $\op{PSL}_2(\bb R)$.
For $i=1,2$, consider a convex cocompact representation $\pi_i : \Sigma\to \op{\PSL}_2(\bb C)$ such that $\La_{\pi_i(\Sigma)}\subset D$ and that
$\{(\pi_1(s),\pi_2(s)):s\in\Sigma\}$ is Zariski dense in $G$. Then it is easy to see that $\Ga:=\{(\pi_1(s),\pi_2(s)):s\in\Sigma\}$ is an Anosov subgroup with $\La_\Ga\subset D\times D$.
Hence 
$\Gamma$ satisfies the hypothesis of Lemma \ref{bd}.
\end{Ex}

\end{document}